\tikzset{
    axis/.style={thick,->},
    wedge/.style={thick,blue},
    airycurve/.style={thick,blue}
 }
\newcommand{\anglabel}[3]{
    \node at ({#1*cos(#2)},{#1*sin(#2)}) {#3};
}
\global\long\def\re{\text{Re }}%
\global\long\def\im{\text{Im }}%
\global\long\def\arccosh{\text{arccosh}}%
\newcommand{\ab}[1]{\left[ #1 \right]}
\newcommand{\abs}[1]{\left| #1 \right|}
\newcommand{\norm}[1]{\left\lVert #1 \right\rVert}
\newcommand{\rb}[1]{\left( #1 \right)}
\newcommand{\set}[1]{\left\{ #1 \right\}}
\newcommand{\wt}[1]{\widetilde{#1}}
\newcommand{\one}{\mathbf{1}}
\renewcommand{\d}{\mathrm{d}}
\newcommand{\rh}{\rho}
\newcommand{\si}{\sigma}
\newcommand{\Si}{\Sigma}
\newcommand{\ps}{\psi}
\newcommand{\ph}{\phi}
\renewcommand{\th}{\theta}
\newcommand{\ld}{\ldots}
\newcommand{\given}{\vert}
\newcommand{\uind}[2]{{#1}^{(#2)}}
\newcommand{\empmeas}[2]{\mu_{#1,#2}}
\theoremstyle{definition}
\newtheorem*{defn*}{\protect\definitionname}
\theoremstyle{remark}
\newtheorem*{rem*}{\protect\remarkname}
\theoremstyle{plain}
\newtheorem*{thm*}{\protect\theoremname}
\theoremstyle{plain}
\newtheorem*{prop*}{\protect\propositionname}
\theoremstyle{plain} 
\newtheorem{theorem}{Theorem}[section]
\theoremstyle{definition} 
\newtheorem{definition}[theorem]{Definition}
\newtheorem{example}[theorem]{Example}
\newtheorem{proposition}[theorem]{Proposition}
\theoremstyle{remark} 
\newtheorem{remark}[theorem]{Remark}
\newtheorem{lemma}[theorem]{Lemma}
\newtheorem{corollary}[theorem]{Corollary}
\providecommand{\definitionname}{Definition}
\providecommand{\propositionname}{Proposition}
\providecommand{\remarkname}{Remark}
\providecommand{\theoremname}{Theorem}
\title{A Gaussian integral formula for the Hermite polynomials: Combinatorics, Asymptotics and Applications}
\author{%
  Mihai Nica \\
  Department of Mathematics and Statistics\\
  University of Guelph\\
  \texttt{nicam@uoguelph.ca} \\
  \and
  Janosch Ortmann \\
  D\'epartement AOTI\\
  Universit\'e du Qu\'ebec \`a Mont\'eal\\
  \texttt{ortmann.janosch@uqam.ca} \\
}
\date{} 
\newcommand{\de}{\delta}
\newcommand{\ep}{\epsilon}
\newcommand{\bC}{\mathbb{C}}
\newcommand{\bE}{\mathbb{E}}
\newcommand{\bN}{\mathbb{N}}
\newcommand{\bP}{\mathbb{P}}
\newcommand{\p}{\mathbb{P}}
\newcommand{\bR}{\mathbb{R}}
\newcommand{\bZ}{\mathbb{Z}}
\newcommand{\cF}{\mathcal{F}}
\newcommand{\cN}{\mathcal{N}}
\newcommand{\cP}{\mathcal{P}}
\newcommand{\fH}{H\!\!\operatorname{e}} 
\newcommand{\Id}{\operatorname{Id}} 
\newcommand{\A}{A\!\operatorname{i}} 
\newcommand{\vp}{\varphi}
\newcommand{\AiryCurve}{\langle}
\newcommand{\RotatedAiryCurve}{\wedge}
\newcommand{\mg}[2]{M^{(#1)}_{#2}}
\newcommand{\half}{\frac{1}{2}}
\newcommand{\sgn}{\operatorname{sgn}}
\newcommand{\dysonkernel}{K^{\text{DBM}}}
\newcommand{\rescdysonkernel}{\wt{K}^{\text{DBM}}}
\newcommand{\spikedkernel}{K^{(c)}}
\newcommand{\conjspikedkernel}{\wt K^{(c)}}
\newcommand{\npart}{N_{n,\sigma}}
\newcommand{\npartspiked}{N^{(c)}_{n,\sigma}}
\begin{document}

	\maketitle

	\begin{abstract}
	    The Hermite polynomials are ubiquitous but can be difficult to work with due to their unwieldy definition in terms of derivatives. To remedy this, we showcase an underappreciated Gaussian integral formula for the Hermite polynomials, which is especially useful for generalizing to multivariable Hermite polynomials. Taking this as our definition, we prove many useful consequences, including:
        \\ 1. Combinatorial interpretations for the Hermite polynomials, including a proof of orthogonality. \\ 2. A more elementary proof of Plancherel--Rotach asymptotics that does not involve residues. \\ 3. Limit theorems for GUE random matrices and Dyson's Brownian motion, including bulk convergence to the semi-circle law and edge convergence to the Airy limit/Tracy-Widom law. \\ 4. An analysis of a phase transition in the spiked GUE random matrix as the top eigenvalue goes from well-separated to attached to the bulk, analogous to the BBP phase transition. \\ 5. Elementary derivations of Edgeworth expansions and multivariable Edgeworth expansions. \\ This article is primarily expository and features many illustrative figures.  
	\end{abstract}
    
    \tableofcontents

    \listoffigures

\section{Introduction to the formula and its uses}
The Hermite polynomials, $\fH_n(x)$\footnote{Note that $\fH_n(x)$ are the \emph{probabilist}'s Hermite polynomials, which are not to be confused with the \emph{physicist}'s Hermite polynomials $H_n(x)$. See Section \ref{sec:physicists} for details.}, are the most well known of all the orthogonal polynomials due to the importance of the Gaussian distribution, $\frac{1}{\sqrt{2\pi}} e^{-\half x^2}$, which is the underlying weight function that makes them orthogonal,     \begin{equation}\label{eq:orthog_1} \intop_{-\infty}^\infty \fH_n(x)\fH_m(x) \frac{1}{\sqrt{2\pi}} e^{-\half x^2} \d x = \begin{cases} 0 &\text{ if }n\neq m \\ n! &\text{ if }n=m\end{cases}.\end{equation}
Their most common definition is through the derivative formula $\fH_n(x)=(-1)^n e^{\half x^2} \frac{\d ^n}{\d x^n}e^{-\half x^2}$ or the recursion $\fH_{n+1}(x)=x\fH_n(x)-n\fH_{n-1}(x)$. However, these classical definitions can be inconvenient! In contrast, we champion here a Gaussian integral formula for the Hermite polynomials. Equivalently, this integral formula can be thought of as an expectation over Gaussian random variables.
    \begin{definition}\label{def:Gaussian_def}
    The Gaussian integral formula, or equivalently the Gaussian expectation formula, for the Hermite polynomials $\fH_n$ is 
    \begin{equation} \label{eq:formula} \fH_n(x)=\mathbb{E}_{Z\sim\mathcal{N}(0,1)}[(x+\imath Z)^n]= \int_{-\infty}^\infty (x+\imath z)^n \frac{1}{\sqrt{2\pi}}e^{-\half z^2} \d z, \end{equation}
where $\imath = \sqrt{-1}$ is the imaginary unit\footnote{Note that despite the appearance of $\imath = \sqrt{-1}$, the Hermite polynomials are purely real because the expected odd powers of a Gaussian vanish, $\mathbb{E}[Z^k]=0$ when $k$ is odd.}, and $Z \sim \mathcal{N}(0,1)$ is a standard (mean 0, variance 1) Gaussian random variable.
\end{definition}
This formula has appeared in many places, for example Exercise 6.8 in \cite{Temme} or (4.9) in \cite{janson}. Integral formulas for other orthogonal polynomials also exist, such as (4.5.17) in \cite{ismael}. 

For probabilists, who are adept at manipulating expected values with Gaussians, this formula can be exceptionally useful. For example, the identity for the value of $\fH_n$ at $x=0$ follows from $\mathbb{E}[Z^n]=(n-1)!!$ when $n$ is even, \begin{equation}\label{eq:zero} \fH_n(0)= \mathbb{E}[(0+\imath Z)^n] = \begin{cases}(-1)^{n/2}(n-1)!!& \text{ for }n\text{ even} \\ 0& \text{ for }n\text{ odd} \end{cases}.\end{equation} 
See also Section \ref{sec:combinatorics} for a combinatorial interpretation of $\mathbb{E}[Z^n] = (n-1)!!$.

Another example is the identity for $\fH_n(x+y)$, which easily follows by the binomial theorem and linearity of expectation,
    \begin{equation} \label{eq:binomial} \fH_n(y+x)= \mathbb{E}[(y+x+\imath Z)^n] =\mathbb{E}\left[ \sum_{k=0}^n \binom{n}{k}y^k(x+\imath Z)^{n-k}\right] = \sum_{k=0}^n \binom{n}{k} y^k \fH_{n-k}(x).  \end{equation}
One can also generalize this approach to more summands to get expressions for $\fH_n(x_1+\ldots+x_m)$, see Theorem 5.5. of \cite{MollVignat}. Note also that by plugging in $x=0$ into \eqref{eq:binomial} and using \eqref{eq:zero}, one obtains an explicit sum formula for $\fH_n$. (See again Section \ref{sec:combinatorics} for a combinatorial interpretation.) 

These properties are much more difficult to extract from the traditional definitions. Even simple facts like $\frac{\d}{\d x} \fH_{n}(x)=n\fH_{n-1}(x)$ are difficult to see with the traditional definition, but immediate from the Gaussian integral formula (in this case by differentiating under the integral sign; see Lemma \ref{lem:deriv-of-H}).

\subsection{Roadmap of this article}\label{sec:roadmap}

The ethos of this article is to use the Gaussian integral formula \eqref{eq:formula} of Definition \ref{def:Gaussian_def} as our starting definition for the Hermite polynomials, and then to showcase alternate derivations of many of the properties of Hermite polynomials and obtain several applications. We also show throughout how this formula allows one to easily generalize the Hermite polynomials and their properties to multiple dimensions where possible. Below is a brief summary of each section.

\begin{enumerate}[font=\bfseries,leftmargin=2cm]
\item[\S \ref{sec:combinatorics} Combinatorics] Starting from the Wick formula/Isserilis theorem, which gives a combinatorial iterpretation to expectation of Gaussian powers, we show a combinatorial meaning for the Hermite polynomials. This is then used to prove that Definition \ref{def:Gaussian_def} actually satifies the orthogonality \eqref{eq:orthog_1} by a clever enumeration. We also generalize all of this to higher dimensional Hermite polynomials. Figure \ref{fig:h4_combinatorics} illustrates the combinatorics of this section. 
\item[ \S \ref{sec:asymptotics-hermite} Asymptotics ] By using the Laplace method for approximating integrals, we provide several different approximation formulas for $\fH_n$ that hold when $n$ is large. The most powerful of these approximations are equivalent to famous Plancherel–Rotach asymptotics for the Hermite polynomials, but because we start with the Gaussian integral formula instead of the derivative formula, our proofs are simpler (no residues needed!) and the final result does not involve $n!$. Figure \ref{fig:Hermite_approx} illustrates the approximations in this section.

\item [\S \ref{sec:applications-oscillator} $\Psi_n(x)$ and $K_n(x,y)$] We define the oscillator wave function $\Psi_n(x)$ and the Hermite kernel $K_n(x,y)$, which are a common way the Hermite polynomials appear in applications. We prove several properties of these using the results of Section \ref{sec:asymptotics-hermite}, including convergence (after appropriate rescaling) to the arcsine law (see Figure \ref{fig:arcsine}), semi-circle law (see Figure \ref{fig:semicircle}) and the Airy limit.  
\item [\S \ref{sec:Dyson} Random matrices I] The eigenvalues of the Gaussian Unitary Ensemble (GUE) Random Matrix have a distribution that is intimately connected to the Hermite polynomials. This is also the distribution of Dyson's Brownian motion or equivalently non-intersecting Brownian motions (see Figure \ref{fig:Watermelon}). We derive these probabilistic connections and use the results of Section \ref{sec:applications-oscillator} to obtain limit theorems as $n \to \infty$ for these random objects, including bulk convergence to the semi-circle law and convergence at the edge to the Airy kernel/Tracy-Widom law.

\item [\S \ref{sec:spiked} Random matrices II] It turns out that a rank one perturbation to the GUE (sometimes called a ``spiked random matrix'') also has explicit formulas given purely in terms of Hermite polynomials. By analyzing these in detail, we find a phase transition where the top eigenvalue is well separated from the bulk and then another phase where it is glued to the bulk. This is analogous to the so-called ``BBP phase transition'' \cite{BBP} that appears in the sample covariance matrix.

\item[\S \ref{sec:fourier} Fourier/Edgeworth] By simple change of variables, we transform the Gaussian integral formula into other useful integral formulas. These are related to the effect of the Hermite polynomials in Fourier space and the Gaussian integration by parts formula. The most notable application of this perhaps is the Edgeworth expansion for approximating sums of random variables more accurately than the simple the Central Limit Theorem approximation (see Figure \ref{fig:1d-edgeworth} for an example).

\item [\S \ref{sec:misc} Miscellaneous] This section collects several other miscellaneous results where the Gaussian integral formula is useful. The subsection titles give a good summary of what is here.

\end{enumerate}
       
Each section can be understood without having understood the detailed proof of the other section. However, there is a logical progression within $\S$\ref{sec:asymptotics-hermite} $\to \S$\ref{sec:applications-oscillator}  $ \to \S$\ref{sec:Dyson} $\to \S$\ref{sec:spiked}, where each subsequent section uses pre-packaged results from previous sections. In $\S$\ref{sec:asymptotics-hermite} the actual detailed asymptotic analysis of $\fH_n$ is calculated ``by hand'', then in $\S$\ref{sec:applications-oscillator}, the asymptotics are repackaged to give nice results for $\Psi_n(x)$ and $K_n(x,x)$. In $\S$\ref{sec:Dyson}, those results for convergence of functions are used to prove limit theorems for random matrices and Dyson Brownian motion and extended to spiked Dyson Brownian motion in \S \ref{sec:spiked}.

    \subsection{Generalization to arbitrary variance \texorpdfstring{$\wt H_n(x,t)$}{} and its dual \texorpdfstring{$H_n(x,t)$}{}}
    
    One place in particular where the formula particularly shines is generalizing the Hermite polynomials to work with Gaussians of arbitrary variance. A particularly useful case is to add a time dependence using a Gaussian of variance $t$. This is denoted by $\wt H_n(x,t)$ and defined as follows.
    \begin{definition}    
    $$\wt{H}_n(x,t) := \mathbb{E}_{ B_t \sim \mathcal{N}(0,t) } \left[(x+\imath B_t)^n \right] = \mathbb{E}_{Z\sim\mathcal{N}(0,1)}\left[(x+ \imath \sqrt{t} Z)^n \right].
$$ \end{definition}
  \begin{example}
   The first few polynomials $\wt H_n(x,t)$ are 
$$\wt H_1(x,t) = x, \quad \wt H_2(x,t) = x^2 - t, \quad \wt H_3(x,t) = x^3 - 3xt,\quad \wt H_4(x,t) = x^4 - 6x^2t + 3t^2$$
\end{example} $\wt{H}_n(x,t)$  has many nice probabilistic properties. For example if $B_t$ is a Brownian motion in time, then $\wt{H}_n(B_t,t)$  is a martingale for any $n$, see Section \ref{subsec:martingale}. The reason these are denoted by $\wt{H}$ in the literature instead of $H$ is a historical preference for the derivative formula instead of the integral formula. The notation $H$ is reserved for the ``dual'' to this $\wt{H}$  which are defined below
\begin{definition}
\label{def:dual-H}
 \begin{align} H_n(x,t) :=\bE_{Z\sim N(0,\frac{1}{t})}\left[\left(\frac{x}{t}+\imath Z\right)^{n}\right] =\bE_{Z\sim N(0,1)}\left[\left(\frac{x}{t}+\imath \frac{Z}{\sqrt{t}}\right)^{n}\right].
	\end{align}    
\end{definition}
 These are dual to each other in the sense that 
 $$ \intop_{-\infty}^\infty \wt H_n(x,t)  H_m(x,t) \frac{1}{\sqrt{2 \pi t}} e^{-\half \frac{x^2}{t}} \d x = \begin{cases} n! &\text{ if }n_{} = m_{} \\ 0 &\text{ if }n_{} \neq m_{}\end{cases}, $$
Note also by factoring out a factor of $t$ that $ H_n(x,t) = t^{-n} \wt H_n(x,t).$ By pulling out a factor of $\sqrt{t}$  from the bracket it is also possible to write both  in terms of the classic $\fH_n$ as follows
		\begin{align}
			\label{eq:time-space-scaling-1d}
			\wt H_n(x,t) = t^{n/2} \wt H_n\rb{\frac x{\sqrt{t}},1} = t^{n/2} \fH_n\left(\frac{x}{\sqrt{t}}\right) 
,\quad
			H_n(x,t) = t^{-n/2} H_n\rb{\frac x{\sqrt{t}},1}=t^{-n/2} \fH_n\left(\frac{x}{\sqrt{t}}\right)
		\end{align}
Note also that when $t=1$, everything is the same $\wt H_n(x,1)=H_n(x,1)=\fH_n(x)$.

\subsection{Generalization to multi-variable Hermite polynomials  \texorpdfstring{$\wt H_{\vec{n}}(\vec{x},V)$}{} and the dual \texorpdfstring{$H_{\vec{n}} (\vec{x},V)$}{}}

Hermite polynomials naturally generalize to higher dimensions, see for example \cite{Appell, AKF}.
As suggested in \cite{Withers2000}, the Gaussian expectation formula \eqref{eq:formula} can also be used here. The generalization to $d$ dimensions is to simply replace the single Gaussian $Z \sim \mathcal{N}(0,1)$ with a multi-variate Gaussian $\vec{Z} \sim \mathcal{N}(\vec{0},V)$ for some $d \times d$ covariance matrix $V$. The index of the Hermite polynomial is now a $d$-tuple $(n_1,\ldots,n_d)$, and the Hermite polynomial is the the expectation.

\begin{definition}\label{def:multi-variable-hermite}
   Let $d \in \mathbb{N}$ and let $V$ be a $d \times d$ covariance matrix. Define $\wt H_{n_1,\ldots,n_d}(\cdot;V):\mathbb{R}^d \longrightarrow \mathbb{R}$  by 
$$\wt{H}_{n_1,\ldots,n_d}(x_1,\ldots,x_d;V) :=\mathbb{E}_{\vec{Z}\sim\mathcal{N}(0,V)}\left[ \prod_{a=1}^d (x_a + \imath Z_a)^{n_a} \right].$$
The dual polynomial $H_{n_1,\ldots,n_d}$ is given by 
\begin{align}
		\label{eq:HExpectation}
		H_{\vec{n}}\left(x_1,\ldots,x_d;V\right) & :=\bE_{\vec Z\sim N(0,V^{-1})}\left[\prod_{a=1}^{d}\Big((V^{-1}\vec{x})_{a}+\imath Z_{a}\Big)^{n_{a}}\right].
	\end{align}
\end{definition}
\begin{example}
    Suppose $d=2$ and $V = \begin{bmatrix} 1 & \varrho \\ \varrho & 1 \end{bmatrix}$ where $\varrho \in (-1,1)$ is a parameter. In Example \ref{ex:4-hermite} we use a combinatorial method to explicitly calculate the following polynomials
    \begin{align}
    \wt H_{4,0}(x_1,x_2;V) &= x_1^4 - 6x_1^2 + 3 \\ 
\wt H_{3,1}(x_1,x_2;V) &= x_1^3 x_2 - 3\varrho x_1^2 - 3x_1x_2 +  3 \varrho \\ 
\wt H_{2,2}(x_1,x_2;V) &= x_1^2 x_2^2 - x_1^2-x_2^2 - 4 \varrho x_1x_2 +  2 +\varrho^2
\end{align}
\end{example}
$H$ and $\wt H$ are dual to each other in the sense that
$$\bE_{X \sim \mathcal{N}(\vec{0},V)}\left[H_{\vec{n}}(X,V)\widetilde{H}_{\vec{m}}(X,V)\right]=\begin{cases}  \prod_{j=1}^{d}n_{j}! &\text{ if }\vec{n}=\vec{m} \\ 0&\text{ if }\vec{n}\neq \vec{m} \end{cases}.$$
We provide a combinatorial proof of this orthogonality in Section \ref{sec:combinatorics}.

\subsection{A warning regarding the Physicist's Hermite polynomials \texorpdfstring{$H_n(x)$}{}}\label{sec:physicists}

Other authors use the notation $H_n(x)$ to mean the so-called ``\emph{physicist's} Hermite polynomials'', which are \emph{not} monic polynomials and which are orthogonal to the weight function $e^{-x^2}$ (no half!), namely $H_n(x) = (-1)^n e^{x^2}\frac{\d^n}{\d x^n}e^{-x^2}$. The first few are $H_1(x)=2x, H_2(x)=4x^2 - 2, H_3(x) = 8x^3 - 12x$. 

These are related to the (probabilist's) Hermite polynomials $\fH_n$ by $H_n(x) = \sqrt{2}^n \fH_n(\sqrt{2}x)$ or conversely $\fH_n(x) = H_n(x/\sqrt{2})/\sqrt{2}^{n}$. A Gaussian integral formula for these can be derived by using the one for $\fH$, for example $H_n(x) = 2^n \intop_{-\infty}^\infty (x+\imath y)^n \frac{1}{\sqrt{\pi}}e^{-y^2} \d y $ by changing variables $y = \sqrt{2}z$.

\textbf{WARNING:} Do not confuse the notation for the multivariable (probabilists) Hermite polynomials $H_{\vec{n}}(\vec{x};V)$ from Definition \ref{def:multi-variable-hermite} with the single variable physicists Hermite polynomials $H_n(x)$. Everything in this article is written using only the probabilist's Hermite polynomials $\fH_n(x)$, $\wt H_{\vec{n}}(\vec{x};V)$ or $H_{\vec{n}}(\vec{x};V)$. We do \emph{not} mention the physicists Hermite polynomials $H_n$ in this article except in this section in order to warn the reader that they are different!

\section{Combinatorics of the Hermite polynomials}\label{sec:combinatorics}

The Wick theorem/Isserlis formula gives a combinatorial meaning to the expected value of Gaussian polynomials. Using this, the Gaussian integral formula for the Hermite polynomials can be interpreted combinatorially. For example, the normalizing constant in the Hermite polynomials is $n!$, namely
$$\int_{-\infty}^{\infty} \fH_n(x)\fH_n(x) \frac{1}{\sqrt{2\pi}}e^{-\half x^2} \d x = n!$$
and our combinatorics method here will allow us to realize this $n!$ as counting permutations on $n$ elements, see Subsection \ref{subsec:orthog} for this.

Note that previous authors have derived many combinatorial interpretations of the Hermite polynomials, see \cite{HermiteCombinatorics, PolynomialCombinatorics,janson} and references therein. However, the starting point in those presentation is the recurrence relation for the Hermite polynomials, rather than the Gaussian integral formula. Our presentation here gives an extra level of meaning to the combinatorics. For example, in those papers, the authors declare that certain edge weight should be $-1$, and prove that this recovers the Hermite polynomials by induction with the recurrence relation. In contrast, our starting point realizes the $-1$ as arising from an expectation of imaginary Gaussians  squared: $-1 = \mathbb{E}[(\imath Z)(\imath Z)]$. This gives a new probabilistic meaning to the combinatorial methods. We also generalize the proofs to the multivariate Hermite polynomials. 

\subsection{Tool: Wick formula/Isserlis' theorem for Gaussian expectations }

The entry point into combinatorics is the Wick formula/Isserlis' theorem, which states that a Gaussian expectation for mean zero Gaussians can be seen as a sum over pairings. (These also sometimes go by the name of ``Feynman Diagrams'' see e.g. \cite{janson} Sections 1.5 and 3.1. Be warned that ``Feynman Diagrams'' can also refer to an entirely different set of diagrams from subatomic physics.) 

Consider as follows: if $Z_1, \ldots, Z_n$ are a collection of mean zero, jointly Gaussian random variables, then the expectation of the product $Z_1 \cdots Z_n$ can be written in terms of the individual correlations $\mathbb{E}[Z_a Z_b]$ as:
\begin{equation}\label{eq:wick} \mathbb{E}\left[ Z_1 \cdots Z_n \right] = \sum_{\pi \in \cP(\{1,2,\ldots,n\})}  \prod_{ \{a,b\} \in \pi } \mathbb{E}[Z_a Z_b ], \end{equation}
where $\cP(\{1,2,\ldots,n\})$ denotes the set of \textit{pairings} (also called \textit{pair partitions} or \textit{perfect matchings}) on the set $S = \{1, \ldots , n\}$. That is,  $\cP(S)$ denotes the set of ways to divide $S$ into disjoint pairs
$$\mathcal{P}(S) := \left\{ \left\{ \{a_1, b_1\}, \{a_2, b_2\}, \dots, \{a_{n/2}, b_{n/2}\} \right\} \,\middle|\, \bigcup_{i=1}^{n/2} \{a_i, b_i\} = S \text{ and } \{a_i, b_i\} \cap \{a_j, b_j\} = \emptyset \text{ for } i \ne j \right\}$$

\begin{figure}[!ht]
    \centering
    \includegraphics[width=0.75\linewidth]{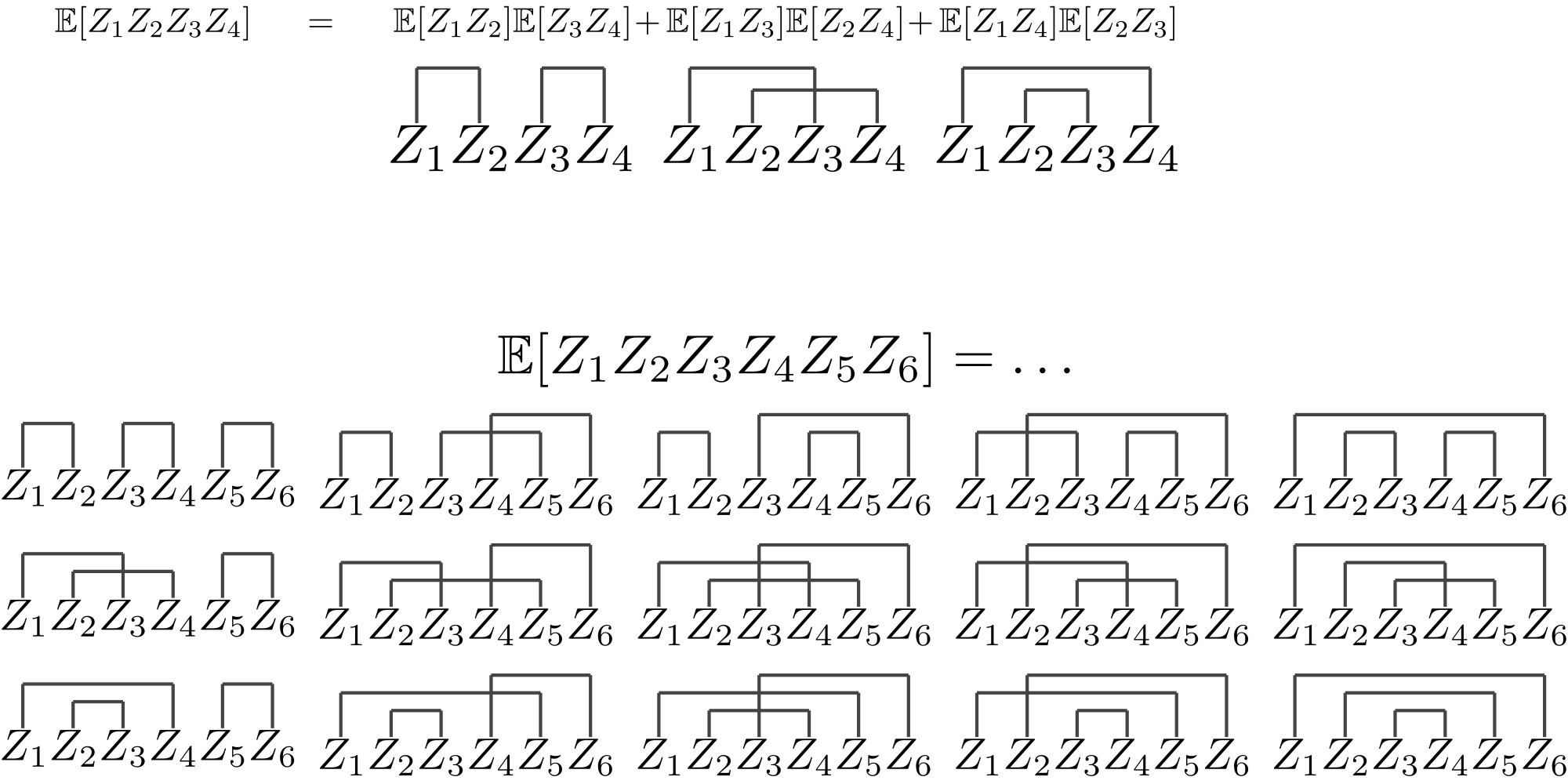}
    \caption[Illustration of Wick Theorem/Isserlis Formula when $n=4$ and $n=6$]{Two examples of the Wick/Isserlis theorem when $n=4$ and $n=6$. When $n=4$, $\mathbb{E}[Z_1 Z_2 Z_3 Z_4]$ is a sum of $3$ terms, each of which is a product of 2 expected values, and when when $n=6$ , $\mathbb{E}[Z_1 Z_2 Z_3 Z_4 Z_5 Z_6]$ is a sum of 15 terms, each of which is a product of 3. }
    \label{fig:Wick-example}
\end{figure}

Figure \ref{fig:Wick-example} shows an illustration of this in the case $n=4$ and $n=6$. Note that the $Z$'s can have any correlation structure whatsoever and can even be repeated. When one puts in the same $Z$ repeated $n$ times, one arrives at the identity for the moments of a centred Gaussian in terms of the number of such pairings, which are given by the double factorials

\begin{equation} \label{eq:powers-of-Z} \mathbb{E}[Z^n] = | \mathcal{P}(\{1,\ldots,n\}) | \cdot \mathbb{E}[Z^2]^{n/2} = \begin{cases} (n-1)!! \cdot \mathbb{V}ar{[Z]}^{n/2} & \text{ if }n\text{ is even} \\ 0& \text{ if }n\text{ is odd} \end{cases} \end{equation}

\subsection{Combinatorics of one-dimensional Hermite polynomials \texorpdfstring{$\fH_n(x)$ and $\wt H_n(x,t)$}{}} \label{sec:Hermite_1d_combinats}

To apply the Wick/Isserlis theorem to the Hermite polynomial  $\wt H_n(x,t) = \mathbb{E}_{Z\sim \cN(0,1)}[(x+  \imath \sqrt{t}Z)^n]$ (or equally well for $\fH_n(x)=\wt H_n(x,1)$ by setting $t=1$), one expands the product choosing either the term $x$ or $\imath \sqrt{t} Z$ in each of the $n$ brackets, yielding a sum over $2^n$ monomials. Then, taking the expectation over $Z$ amounts to adding up the pairings for each term as per \eqref{eq:wick}. Note that the number of pairings for each monomial depends on the number of times $\imath \sqrt{t} Z$ was chosen: there are $(m-1)!!$ such pairings with $m$ is even and no pairings at all when $m$ is odd. Therefore, only the choices where $\imath \sqrt{t} Z$ is chosen an even number of times can contribute. Figure \ref{fig:h4_combinatorics} illustrates these non-zero contributors when $n=4$.

\begin{figure}[!ht]
    \centering
    \includegraphics[width=1\linewidth]{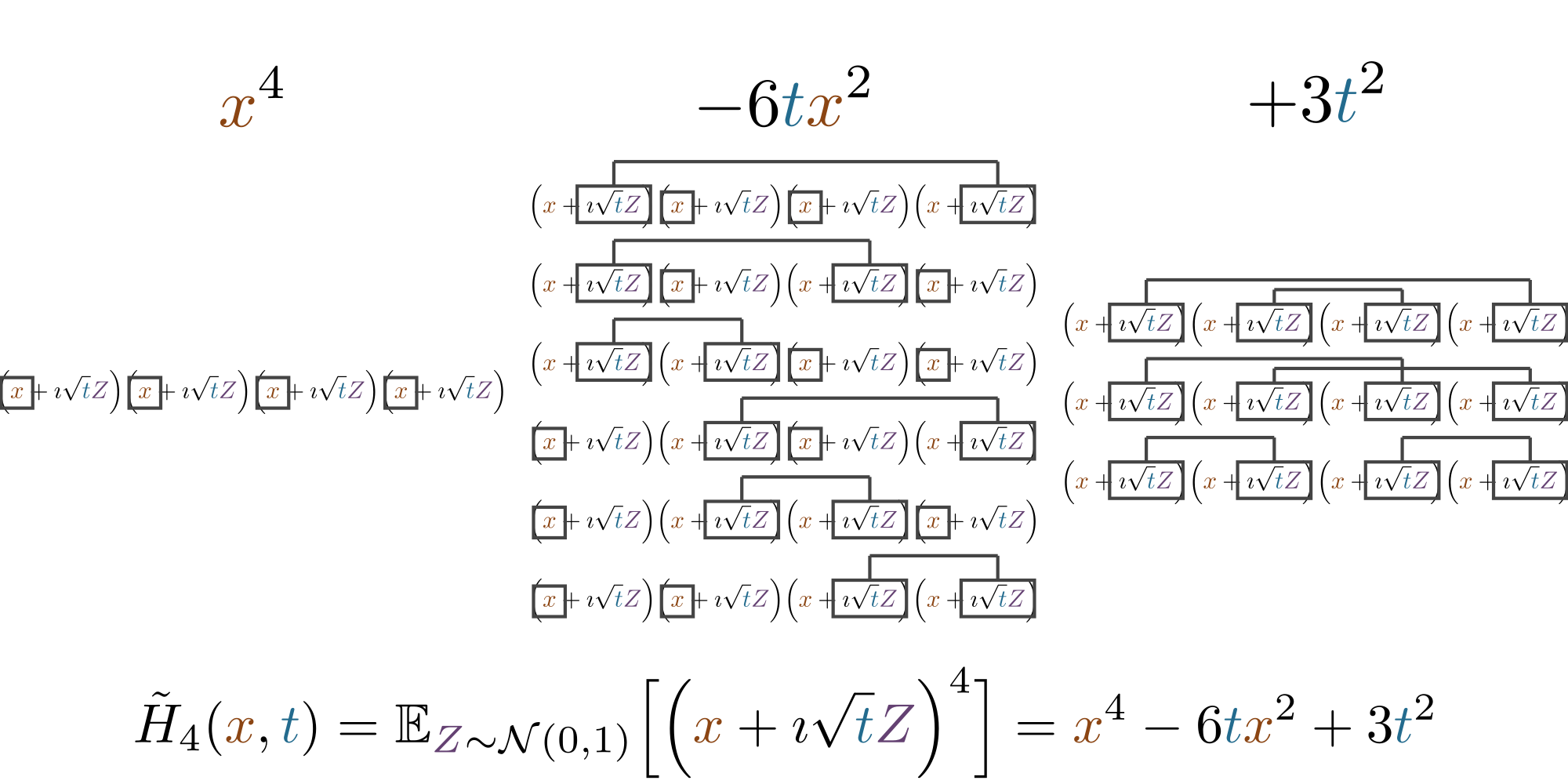}
    \caption[The combinatorics of the Hermite polynomial when $n=4$]{The combinatorics of the Hermite polynomial $\wt{H}_4(x,t)$. There are $2^4 = 16$ possible monomials in the expansion of $(x+\imath \sqrt{t} Z)^4$, but only $1+6+1=8$  monomials have an even power of $Z$, and therefore have a non-zero contribution. The monomial with $Z^4$  becomes 3 pairings due to the Wick/Isserlis theorem for the expectation of Gaussian monomials, yielding 10 total terms. Each pair contributes a multiplicative factor of $(\imath \sqrt{t})^2=-t$ .  }
    \label{fig:h4_combinatorics}
\end{figure}
Some authors refer to the diagrams generated in this way as ``incomplete pairings" or ``partial pairings'', since some of the $\imath \sqrt{t} Z$'s are paired up, but not all $n$ of them (More specifically: whenever $x$ is chosen in a bracket, the corresponding $\imath \sqrt{t} Z$  is left out). We prefer to reserve the word pairing to always mean a complete pairing of the Gaussian variables involved, and think of choosing a subset of the $x$'s as a separate operation.

When exactly $m$ of factors of $\imath \sqrt{t} Z$ that are chosen, there must be $n-m$  copies of $x$ chosen, and so the resulting contribution from each is $(\imath \sqrt{t})^m x^{n-m}$. Fortunately, since $m$ must be even, the result is actually an integer power of $t$ with a real coefficient, either $+1$  or $-1$ depending on the parity of $m/2$. Moreover there are $\binom{n}{m}(m-1)!!$ such terms, from the $m$ choices of $\imath \sqrt{t} Z$ out of the $n$ brackets and the subsequent $(m-1)!!$ ways to pair them up. This yields the combinatorial sum formula for the Hermite polynomials, where we use $S\subset\{1,\ldots,n\}$ to denote the indices where a pair was chosen:
\begin{align}\label{eq:H_sum} \wt H_n(x,t) &= \sum_{ S \subset \{1,\ldots,n\}, \pi \in \mathcal{P}( S )} (\imath \sqrt{t})^{|\pi|} x^{n-|S|} \\ &= \sum_{\substack{m=0 \\ m \text{ even}}}^n \binom{n}{m}(m-1)!! (\imath \sqrt{t})^m x^{n-m}\\ &= \sum_{k=0}^{\lfloor{n/2}\rfloor} \binom{n}{2k}(2k-1)!!(-1)^k t^k x^{n-2k} \\  &= x^n - \binom{n}{2}1!!\cdot tx^{n-2} + \binom{n}{4}3!!\cdot t^2 x^{n-4} - \ldots  \end{align}
This formula can also be derived by the binomial theorem from equations \eqref{eq:binomial} and \eqref{eq:zero}. Note that some authors rewrite this without an explicit double factorial using $\binom{n}{2k}(2k-1)!! = \frac{n!}{2^k \cdot k! \cdot (n - 2k)!}$

\subsection{Combinatorics of the multivariable Hermite polynomials \texorpdfstring{$\wt H_{\vec{n}} (\vec{x},V)$}{}}

Starting from the Gaussian integral formula $\wt H_{\vec{n}}(\vec{x};V)=\mathbb{E}_{Z \sim \mathcal{N}(0,V)}[\prod_{a=1}^d(x_a + \imath Z_a)^{n_a}] $ we can again expand out and use the Wick/Isserlis theorem. There are now $n_1$ copies of the bracket $\big(x_1 + \imath Z_1\big)$,  $n_2$  copies of the bracket $\big(x_2 + \imath Z_2 \big)$ and so on. One must choose either the variable $x$  or the random variable $\imath Z$  in each one. Then, one chooses a pairing $\pi$ of the chosen $\imath Z$  variables and computes the weight $\prod_{\{a,b\}\in \pi}\mathbb{E}[Z_aZ_b]$. By the Wick formula, summing over all these choices gives the Hermite polynomial. In symbols, this is

		\[
		\wt H_{\vec{n}}(x;V)=\sum_{\substack{C\in\{x_1,\imath Z_1\}^{n_1} \times \cdots \times\{x_d,\imath Z_d\}^{n_d}\\\pi\in\cP( \{j : C_j =\imath Z \})}} \prod_{ \{j : C_j = x_j\}} x_j \cdot  \prod_{\{a,b\}\in\pi}\bE\left[(\imath Z_{a})(\imath Z_b)\right]
		\]
where we notice that $\mathbb{E}[\imath Z_a \imath Z_b ] = - V_{ab}$. While this formula seems quite complicated at first, once understood it gives a quick way to calculate the multivariable Hermite polynomials in practice, as shown in the example below.

\begin{remark} These combinatorial expansions are intimately related to the combinatorial expansions for Wick products, see e.g. \cite{janson} Section 3.1. 
\end{remark}

\begin{example}\label{ex:4-hermite}
    Suppose $d=2$  and we have $\vec{Z}=(Z_1,Z_2)\sim \mathcal{N}(0,V)$ with covariance matrix $V = \begin{bmatrix} 1 & \varrho \\ \varrho  &1 \end{bmatrix} $, or in other words $Z_1,Z_2$ are marginally standard mean-0 variance-1 Gaussians with correlation $\mathbb{E}[Z_1 Z_2] = \varrho $. Let us use the combinatorial formula to calculate the Hermite polynomials of order $4$, $\wt H_{4,0}(x_1,x_2), \wt H_{3,1}(x_1,x_2), \wt H_{2,2}(x_1,x_2)$  in this case. Note that all of these examples follow the same combinatorial diagram for the pairings as Figure \ref{fig:h4_combinatorics}, just that the $x$'s/$\imath Z$'s are now distinguished as either $x_1\text{ vs. }x_2$ or $\imath Z_1 \text{ vs }\imath Z_2$, and the edge weights are not always $-1$ but instead are $\mathbb{E}[(\imath Z_a)(\imath Z_b)]$ where $a,b$ are the labels of the chosen $Z$ variables.

    $\mathbf{\wt H_{4,0}}$: $(4,0)$  means to take 4 copies of $(x_1 + \imath Z_1)$ and nothing else. Since $Z_1$  is marginally Gaussian, this is identical to the ordinary Hermite polynomial $\fH_4$ . We have:
    $$\wt H_{4,0}(x_1,x_2;V) = \mathbb{E}[(x_1+\imath Z_1)^4] = x_1^4 - 6x_1^2 + 3$$
    
$\mathbf{\wt H_{3,1}}:$ $(3,1)$ means to take $3$ copies of $(x_1 + \imath Z_1)$ and one copy of $(x_2+\imath Z_2)$. From these, there is one way to choose all $x$'s which yields $x_1^3 x_2$. There are 6 ways to choose two $x$'s and two $\imath Z$'s: three of them choose $x_1,x_2$ with coefficient $\mathbb{E}[(\imath Z_1)(\imath Z_1)]=-1$ and the other three choose $x_1$ twice with coefficient $\mathbb{E}[(\imath Z_1)(\imath Z_2)]=-\varrho $. Lastly, there are $3$ ways to choose all $\imath Z$'s. Every choice has coefficient $\mathbb{E}[(\imath Z_1)(\imath Z_1)]\mathbb{E}[(\imath Z_1)(\imath Z_2)]=\varrho $. Hence, all together we have:
$$\wt H_{3,1}(x_1,x_2;V) = \mathbb{E}[(x_1+\imath Z_1)^3(x_2+\imath Z_2)] = x_1^3 x_2 - 3\varrho x_1^2 - 3x_1x_2 +  3 \varrho$$

$\mathbf{\wt H_{2,2}}:$ (2,2)  means to take $2$ copies of $(x_1 + \imath Z_1)$ and $2$ copies of $(x_2+\imath Z_2)$ . There is one way to choose all $x$'s which yields $x_1^2 x_2^2$. There are 6 ways to choose two $x$'s and two $\imath Z$'s: four ways to choose $x_1,x_2$ with coefficient $\mathbb{E}[(\imath Z_1)(\imath Z_2)]=-\rho$, one way to choose $x_1$  twice with coefficient $\mathbb{E}[(\imath Z_2)(\imath Z_2)] = -1$, or one way to choose $x_2$ twice with with coefficent $\mathbb{E}[(\imath Z_2)(\imath Z_2)] = -1$. Lastly, there are $3$ ways to choose all $\imath Z$'s. One way is to pair $\imath Z_1$ with itself and $\imath Z_2$  with itself,  $\mathbb{E}[(\imath Z_1)^2]\mathbb{E}[(\imath Z_2)^2]=1$ and two ways to pair with the opposte variable giving $\mathbb{E}[(\imath Z_1)(\imath Z_2)]^2 =\varrho^2$   Hence, all together we have:
$$\wt H_{2,2}(x_1,x_2;V) = \mathbb{E}[(x_1+\imath Z_1)^2(x_2+\imath Z_2)^2] = x_1^2 x_2^2 - x_1^2-x_2^2 - 4 \varrho x_1x_2 +  2 +\varrho^2$$

\end{example}

    \subsection{Combinatorial proof of orthogonality}\label{subsec:orthog}

	In this section, we begin with the Gaussian integral formula \eqref{eq:formula} and use it to prove the orthogonality (Theorem \ref{thm:og}) and recursive (Proposition \ref{prop:recursive}) properties of the Hermite polynomials from \eqref{eq:HExpectation}.  
    
 The inner product between two Hermite polynomials $\fH_{n_{}},\fH_{m_{}}$, $n_{},m_{}\in \mathbb{N}$ is again a Gaussian expectation, where now the input to the polynomial is a standard Gaussian random variable,
$$ \intop_{-\infty}^\infty \fH_{n_{}}(x) \fH_{m_{}}(x) \frac{1}{\sqrt{2 \pi}} e^{-\half x^2} \d x = \mathbb{E}_{X\sim \mathcal{N}(0,1)}\left[ \fH_n(X) \fH_m(X)\right]$$
    By the Gaussian integral formula \eqref{eq:formula} and the tower rule for expectation, we can write this as an expectation over three independent Gaussian variables taken in any order. For notational clarity, we use $L \sim \mathcal{N}(0,1)$ for the \emph{left} Hermite polynomial and use $R \sim \mathcal{N}(0,1)$ for the \emph{right} Hermite polynomial. 
    $$\mathbb{E}_{X\sim \mathcal{N}(0,1)}\left[ \fH_{n_{}}(X) \fH_{m_{}}(X)\right] = \mathbb{E}_{X,L,R \sim \mathcal{N}(0,1)} \left[ (X+\imath L)^{n_{}} (X+\imath R)^{m_{}}\right]$$
    Once again, by expanding the brackets out and using the Wick/Isserlis theorem on $X, L, R$ separately this can be interpreted as a sum over certain pairings. To formalize this with notation we make the following definition.

    \begin{figure}[!ht]
        \centering
        \includegraphics[width=0.75\linewidth]{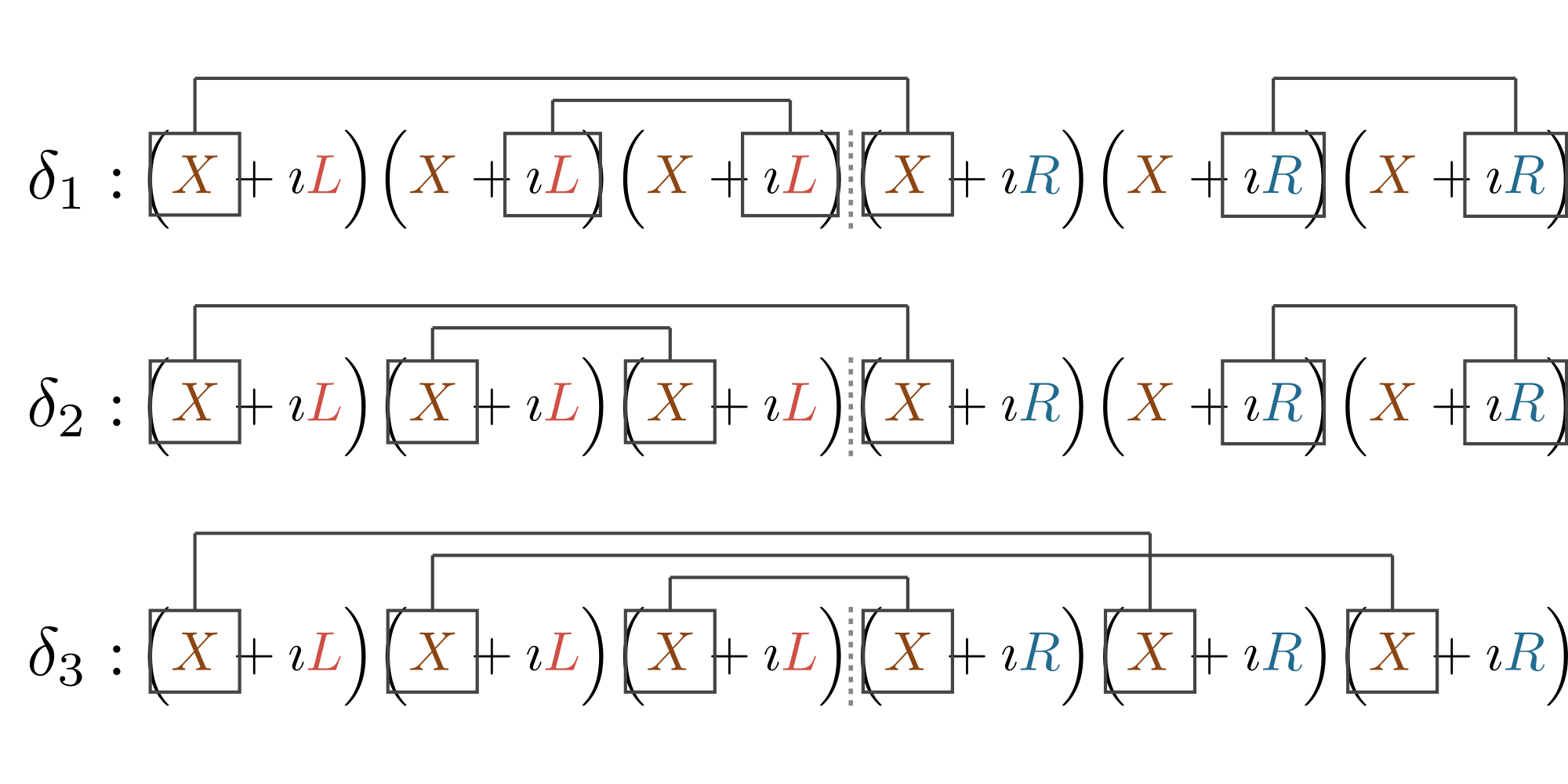}
        \caption[Three examples of diagrams in the combinatorial proof of orthogonality]{Three examples of diagrams for $\mathbb{E}[\fH_3(X)\fH_3(X) ]$  as defined in Definition \ref{def:diagrams} when $n_{} = 3$, $m_{}=3$. The product of the 6 terms in $(X+\imath L)^3(X+\imath R)^3$ is illustrated here with a dotted line for greater visual separation of the left/right halves. In this example: $\delta_1 = (C_1,\pi_1)$ has $C_1 = (X,\imath L,\imath L,X,\imath R, \imath R)$ and $\pi_1 = \{ \{1_L, 1_R\}, \{2_L,3_L\},\{2_R,3_R\}\}$ ; $\delta_2 = (C_2, \pi_2)$ where  $C_2 = (X,X,X,X,\imath R, \imath R)$ and $\pi_2 = \{ \{1_L, 1_R\}, \{2_L,3_L\},\{2_R,3_R\}\}$ ;  $\delta_3 = (C_3,\pi_3)$ has $C_3 = (X,X,X,X,X,X)$ and $\pi_3 = $  $\{ \{1_L,2_R\},\{2_L,3_R\},\{3_L,1_R\}\}$ . The involution $\tau$ defined in Definition \ref{def:involution} swaps the labels of the first entry which is entirely on one half. In this example, $\tau(\delta_1)=\delta_2$ and $\tau(\delta_2)=\delta_1$ . If there are no pairs entirely on one half, then $\tau$ does nothing, so $\tau(\delta_3)=\delta_3$. When $\tau(\delta)=\delta$, since every pair has one end on the left half and one end on the right half, the diagram can be interpreted as a bijection. For example, the permutation for $\delta_3$ is $(2, 3, 1)$ when written in one line notation. }
        \label{fig:example-diagrams}
    \end{figure}
    \begin{definition}\label{def:diagrams}
    Given $n_{}, m_{} \in \mathbb{N}$,  a \textbf{diagram} $\delta$ for the expectation $\mathbb{E}[\fH_{n_{}}(X)\fH_{m_{}}(X)]$ is a choice of variables and a pairing, $\delta = (C,\pi) $ where $C$ is a choice of variables $C \in \{X,\imath L\}^{n_{}} \times \{X, \imath R\}^{m_{}}$ and $\pi$ is a pairing of the indices $\pi \in \cP\left( \{1,\ldots,n_{}\}\dot\cup\{1,\ldots,m_{}\} \right)$ which is \textbf{compatible} with $C$ in the sense that every pair in $\pi$ connects to two \emph{alike} variables from $C$ : $$ \pi\text{ is compatible with }C \iff  C_{a} = C_b \; \: \forall \{a,b\}\in \pi.$$
In other words, every pairing from $\pi$ must be one of three possible connections from $C$: either $\imath L \leftrightarrow \imath L$ ,  $X\leftrightarrow X$, or $\imath R \leftrightarrow \imath R$.  We denote the set of such diagrams by $\Delta(n_{},m_{})$.  Figure \ref{fig:example-diagrams}
shows 3 examples of diagrams.

We define the \textbf{weight} of diagram $w(\delta) \in \mathbb{R}$ by taking the product of all the expected value of the variables in all pairs from $\pi$,
$$w(\delta) := \prod_{ \{a,b\} \in \pi } \mathbb{E}[C_a C_b]$$
Note that pairings of $\imath L$ or $\imath R$ contribute a factor of $-1$ (since $\mathbb{E}[(\imath L)(\imath L)]=\mathbb{E}[(\imath R)(\imath R)]=-1$) while pairings of $X$ contribute a $+1$ (since $\mathbb{E}[X \cdot X]=1$). So an equivalent way to write the weight is:
    \begin{align} \label{eq:weight} w(\delta) &= \prod_{\substack{\{a,b\} \in \pi \\ C_a = C_b = \imath L}} \mathbb{E}[(\imath L)(\imath L)] \cdot \prod_{\substack{\{a,b\} \in \pi \\ C_a = C_b = \imath R}} \mathbb{E}[(\imath R)(\imath R)] \cdot \prod_{\substack{\{a,b\} \in \pi \\ C_a = C_b = X}} \mathbb{E}[X\cdot X] \\ &=  (-1)^{\Big|  \{a,b \} \in \pi : C_a = C_b = \imath L  \Big| + \Big|  \{a,b \} \in \pi : C_a = C_b = \imath R  \Big| }  \end{align}
    \end{definition}
By expanding out the product and applying the Wick/Isserlis theorem with this notation, the expectation can be written as a sum over diagrams:
    \begin{equation}\label{eq:sum-of-diagrams}
\mathbb{E}_{X,L,R \sim \mathcal{N}(0,1)} \left[ (X+\imath L)^{n_{}} (X+\imath R)^{m_{}}\right] = \sum_{\delta \in \Delta(n_{}, m_{})} w(\delta).
    \end{equation}
It seems at first to be quite annoying to evaluate the expectation this way, as the space of diagrams is rather wild. However, noticing that $w(\delta)$ is $\pm 1$ valued, there is a clever way to match many of the $+1$ terms to a corresponding $-1$ term in order to cancel out almost all of the diagrams and remain with a much smaller more manageable sum. To do this we define the following involution on $\Delta(n_{},m_{})$

\begin{definition} \label{def:involution}
The \textbf{involution}  on diagrams $\tau : \Delta(n_{},m_{}) \to \Delta(n_{},m_{})$ is defined as follows. First, fix an order the set $S = S_{n_{}} \dot\cup S_{m_{}} = \{1,\ldots,n_{}\}\dot\cup\{1,\ldots,m_{}\}$ e.g.  the first $n_{}$ elements of $S_{n_{}}$ are declared to be lower than the next $m_{}$ elements of $S_{m_{}}$. Now, given a diagram $\delta = (C,\pi)$,  look at the elements in order and find the \textbf{first} element $a^\ast \in S=S_{n_{}} \dot\cup S_{m_{}}$ which is paired by $\pi$ so that both ends are in the \textbf{same half of} $S$: in other words, the first element $\{a^\ast, b^\ast\} \in \pi$ such that either  $\{a^\ast, b^\ast\} \subset  S_{n_{}}$  or $\{a^\ast, b^\ast\} \subset  S_{m_{}}$ . The involution $\tau(\delta) = (C^\prime,\pi)$  is defined by keeping the same pairing $\pi$  but \textbf{swapping the variable choice} at the indices of $C_{a^\ast}$ and $C_{b^\ast}$ from $C \in \{X,\imath L \}^{n_{}} \times \{X,\imath R\}^{m_{}}$. This means swapping $ \imath L \leftrightarrow X$ or $\imath R \leftrightarrow X$ depending on which half we are in, namely:

\begin{align} C^\prime_{a^\ast}  = C^\prime_{b^\ast} :=& \begin{cases} \imath L &\text{ if } C_{a^\ast} = \phantom{\imath}X\text{ and }{a^\ast}\in S_{n_{}} \\ X &\text{ if } C_{a^\ast} = \imath L\text{ and }{a^\ast}\in S_{n_{}} \\ \imath R &\text{ if } C_{a^\ast} = \phantom{\imath}X\text{ and }a^\ast\in S_{m_{}} \\ X &\text{ if } C_{a^\ast} = \imath R\text{ and }{a^\ast}\in S_{m_{}} \end{cases} \\
C^\prime_j :=& C_j \forall j \notin \{a^\ast,b^\ast\}
\end{align}

Figure \ref{fig:example-diagrams} shows an example of the swapping action that the involution does.
 
Note also in the case that there are \textbf{no pairs} from $\pi$  paired to the same half (i.e. all pairings of $\pi$  have one side in $S_{n_{}}$  and one side in $S_{m_{}}$), then $\{ a^\ast,b^\ast \}$ does not exist whence $C^\prime = C$ exactly and $\tau(\delta)=\delta$ leaves the diagram entirely unchanged. We say $\delta$ is a \textbf{fixed point} of $\tau$  in this case. Figure \ref{fig:example-diagrams} shows an example of this.

It is evident that $\tau$  is an involution because applying $\tau$  twice will first swap the choice of variables $C_{a^\ast},C_{b^\ast}$ and then swap them back to their original setting, so $\tau(\tau(\delta))= \delta$ for all diagrams $\delta$.

\end{definition}

\begin{remark}\label{rem:involution}
The involution $\tau$ is created in such a way to make dealing with the weights $w(\delta)$ convenient. Notice that when $\{a^\ast,b^\ast\}$ exists and $\tau(\delta)\neq\delta$, then the swapping of $\tau$ has the effect of replacing a pair of imaginary variables with $\imath$'s to a pair of $X$s or vice versa, and therefore flips the sign of the weight $w(\delta)$ from \eqref{eq:weight} . This establishes:
\begin{equation}\label{eq:sign-flip}
w(\tau(\delta)) = \begin{cases} -w(\delta)&\text{ if } \tau(\delta) \neq \delta \\ \phantom{-}w(\delta) &\text{ if } \tau(\delta)=\delta \end{cases}. \end{equation}
Notice also that in the situation where $\{a^\ast,b^\ast\}$ does not exist and $\tau(\delta)=\delta$ is a fixed point, we have by definition that $\pi$ has no pairs on the same half of $S$. Since only the variable $X$ can be paired from one half to the other half (both $\imath L$ and $\imath R$ live on exactly one half), it must be that $C_j = X$ for all $j \in \{1,\ldots,n_{}\}\dot\cup\{1,\ldots,m_{}\}$, and therefore the weight of the diagram is $+1$. This establishes:

\begin{equation} w(\delta) = 1\text{ whenever }\tau(\delta)=\delta \end{equation}
\end{remark}

With this setup of diagrams $\delta \in \Delta(n_{},m_{})$ and the involution $\tau$, we are ready to prove the orthogonality of the Hermite polynomials. 

\begin{proposition} \label{prop:orthog-1d}
  Let $n_{},m_{} \in \mathbb{N}$. Then: $$ \intop_{-\infty}^\infty \fH_{n_{}}(x) \fH_{m_{}}(x) \frac{1}{\sqrt{2 \pi}} e^{-\half x^2} \d x = \begin{cases} n! &\text{ if }n_{} = m_{} = n \\ 0 &\text{ if }n_{} \neq m_{}\end{cases} $$
\end{proposition}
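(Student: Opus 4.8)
The plan is to evaluate the left-hand side via the sum-over-diagrams representation \eqref{eq:sum-of-diagrams} and then collapse that sum using the involution $\tau$ to cancel almost everything. First I would start from
$$\intop_{-\infty}^\infty \fH_{n}(x) \fH_{m}(x) \frac{1}{\sqrt{2 \pi}} e^{-\half x^2} \d x = \sum_{\delta \in \Delta(n,m)} w(\delta),$$
which is exactly the content of \eqref{eq:sum-of-diagrams}. Since $\tau$ is an involution on $\Delta(n,m)$, it partitions the diagrams into fixed points and two-element orbits $\{\delta, \tau(\delta)\}$. By the sign-flip property \eqref{eq:sign-flip}, each two-element orbit contributes $w(\delta) + w(\tau(\delta)) = w(\delta) - w(\delta) = 0$. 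Hence the entire sum reduces to a sum over the fixed points of $\tau$, and since $w(\delta) = 1$ for every fixed point (Remark \ref{rem:involution}), the value equals simply the number of fixed points.

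The second step is to identify the fixed points combinatorially. A diagram $\delta = (C,\pi)$ is a fixed point precisely when $\pi$ has no pair lying entirely within one half of $S = S_n \dot\cup S_m$; equivalently, every pair of $\pi$ has one endpoint in $S_n$ and one in $S_m$. Because only the variable $X$ can be matched across the two halves (the choices $\imath L$ live only on the left and $\imath R$ only on the right), compatibility of $\pi$ with $C$ forces $C_j = X$ for all $j$. Thus a fixed point is determined by, and is equivalent to, a perfect matching of $S_n = \{1,\ldots,n\}$ with $S_m = \{1,\ldots,m\}$ in which every edge crosses from one half to the other.

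Finally I would count these crossing matchings. Such a matching must saturate all of $S_n$ and all of $S_m$ with edges going across, which is possible only when the two halves have equal size, i.e. $n = m$; if $n \neq m$ there are no fixed points and the sum is $0$, giving the off-diagonal case. When $n = m$, a crossing perfect matching is exactly the graph of a bijection $\{1,\ldots,n\} \to \{1,\ldots,n\}$, i.e. a permutation of $n$ elements (as illustrated by $\delta_3$ in Figure \ref{fig:example-diagrams}), and there are precisely $n!$ of these. Hence the integral equals $n!$ on the diagonal, completing both cases.

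The main obstacle is not any single hard computation but rather making the fixed-point characterization airtight: one must argue carefully that the ``no same-half pair'' condition together with compatibility forces every variable choice to be $X$, and that the resulting all-crossing pairings biject with permutations when $n = m$ and cannot exist when $n \neq m$. Everything else — the orbit-wise cancellation and the final enumeration — follows immediately from the orbit structure of the involution and the fact that $w \equiv 1$ on its fixed points.
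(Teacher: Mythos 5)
Your proposal is correct and follows essentially the same route as the paper's own proof: the Wick/Isserlis sum over diagrams, cancellation of non-fixed diagrams via the sign-flipping involution $\tau$, and enumeration of the fixed points (all-$X$ choices with crossing pairings) as bijections, giving $n!$ or $0$. Your phrasing of the cancellation in terms of two-element orbits of $\tau$ is a slightly tidier packaging of the paper's pairing argument, but the substance is identical.
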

\begin{proof}
As in the setup from Definition \ref{def:diagrams}, we write the integral as a sum of weights over diagrams by application of the Wick/Isserlis theorem as in \eqref{eq:sum-of-diagrams}
\begin{align}
\intop_{-\infty}^\infty \fH_{n_{}}(x) \fH_{m_{}}(x) \frac{1}{\sqrt{2 \pi}} e^{-\half x^2} \d x &= \mathbb{E}_{X\sim\mathcal{N}(0,1)}[\fH_{n_{}}(X)\fH_{m_{}}(X)] \\
&= \mathbb{E}_{X,L,R \sim \mathcal{N}(0,1)}[ (X+\imath L)^{n_{}} (X+\imath R)^{m_{}} ] \\
&= \sum_{\delta \in \Delta(n_{},m_{}) } w(\delta).
\end{align}
Now, by using the involution $\tau$  from Definition \ref{def:involution}, we can pair every diagram $\delta$  which has $\tau(\delta)\neq \delta$ with its image $\tau(\delta)$ and observe they cancel out in the sum because $w(\tau(\delta))=-w(\delta)$ for such diagrams by Remark \ref{rem:involution}!  Therefore the sum over all diagrams $\delta$ with $\phi(\delta)\neq \delta$ is $0$. (Formally, one could write $2\sum_{\{\delta : \tau(\delta)\neq \delta\}} w(\delta) = \sum_{\{\delta : \tau(\delta)\neq \delta\}} w(\delta) + \sum_{\{\delta : \tau(\delta)\neq \delta\}} w(\tau(\delta)) = 0$. )  We remain with only the fixed points of the involution $\tau$:
\begin{align}
\intop_{-\infty}^\infty \fH_{n_{}}(x) \fH_{m_{}}(x) \frac{1}{\sqrt{2 \pi}} e^{-\half x^2} \d x &=  \sum_{\substack{\delta \in \Delta(n_{},m_{}) \\ \tau(\delta)=\delta }} w(\delta) + 0\\
&=  \Big| \Big\{ \delta \in \Delta(n_{},m_{}):\tau(\delta)=\delta \Big\} \Big|,
\end{align}
where we have used the fact that $w(\delta)=1$ whenever $\phi(\delta)=\delta$  as in Remark \ref{rem:involution}.  The fixed points of the involution occur precisely when the diagram $\delta = (C,\pi)$ consists of $C_j = X \; \forall j \in  \{1,2\ldots,n_{}\}\dot\cup\{1,2\ldots,m_{}\}$ and all the pairings $\{a,b\}\in \pi$ of have one side on the left half in $a\in \{1,2,\ldots,n_{}\}$ and the other side on the right half $b\in \{1,2,\ldots,m\}$. But such a pairing can be interpreted as a bijection between $\{1,2,\ldots,n\}$ and $\{1,2,\ldots,m\}$  by simply associating the pair $\{a,b\} \in \pi$ with the mapping  $f(a)=b$ where  $f:\{1,2,\ldots,n_{}\}\to \{1,2,\ldots,m_{}\}$.  $f$  defined in this way must be a bijection because $\pi$  is a pairing with every pair having one end on each half. Thus, we can count the fixed points by counting bijections:
\begin{align}
\int_{-\infty}^\infty \fH_{n_{}}(x) \fH_{m_{}}(x) \frac{1}{\sqrt{2\pi}} e^{-\frac{1}{2} x^2} \, \d x 
&= \left| \left\{ \delta \in \Delta(n_{}, m_{}) : \tau(\delta) = \delta \right\} \right| \\
&= \left| \left\{ f : \{1, 2, \ldots, n_{}\} \to \{1, 2, \ldots, m_{}\} : f\text{ is a bijection} \right\} \right| \\
&= \begin{cases} n! &\text{ if }n_{} = m_{} = n \\ 0 &\text{ if }n_{} \neq m_{}\end{cases},
\end{align}
where we have enumerated the set of bijections as either $n!$ or $0$ to give the desired result.
\end{proof}
    The theorem and the combinatorial argument generalize to arbitrary covariances and multiple variables as follows in the Theorem below. In higher dimensions the Hermite polynomials are orthogonal to their dual, which are conjugated by the covariance matrix $V$. (This does not come up in one dimension since $V = I$!)
    \begin{theorem}
		\label{thm:og}
		Let $d \in \mathbb{N}$  be a dimension, and let $X \in \mathbb{R}^d$ be a centred Gaussian random variable with covariance matrix $V \in \mathbb{R}^{d \times d}$. Recall the Hermite polynomial and its dual given by the Gaussian integral formula indexed by integer vectors from $\bZ_{\geq0}^{d}$ in Definition \ref{def:dual-H}

\begin{align}
		H_{n_1,\ldots,n_d}\left(x_1,\ldots,x_d;V\right) & =\bE_{\vec Z\sim \mathcal{N}(0,V^{-1})}\left[\prod_{a=1}^{d}\left((V^{-1}x)_{a}+\imath Z_{a}\right)^{n_{a}}\right].\\
        \wt{H}_{m_1,\ldots,m_d}(x_1,\ldots,x_d;V) & =\mathbb{E}_{\vec{Z}\sim\mathcal{N}(0,V)}\left[ \prod_{b=1}^d (x_b + \imath Z_b)^{m_b} \right].
	\end{align}
        For any  $\vec{m},\vec{n}\in\bZ_{\geq0}^{d}$, we have the orthogonality relation
        \begin{align*}
			\bE_{X \sim \mathcal{N}(\vec{0},V)}\left[H_{\vec{n}}(X,V)\widetilde{H}_{\vec{m}}(X,V)\right]=\begin{cases}  \prod_{j=1}^{d}n_{j}! &\text{ if }\vec{n}=\vec{m} \\ 0&\text{ if }\vec{n}\neq \vec{m} \end{cases}\end{align*}
	\end{theorem}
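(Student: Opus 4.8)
The plan is to follow the same involution strategy used in the one-dimensional Proposition~\ref{prop:orthog-1d}, but now tracking the matrix-valued weights that appear in the multivariable setting. First I would invoke the tower rule together with the mutual independence of three jointly Gaussian families $X \sim \mathcal{N}(\vec 0, V)$, $\vec Z^L \sim \mathcal{N}(\vec 0, V^{-1})$ and $\vec Z^R \sim \mathcal{N}(\vec 0, V)$ to collapse everything into a single expectation,
\begin{align*}
\bE_{X}\left[H_{\vec n}(X,V)\,\wt H_{\vec m}(X,V)\right] = \bE_{X,\vec Z^L, \vec Z^R}\left[\prod_{a=1}^d \rb{(V^{-1}X)_a + \imath Z^L_a}^{n_a} \prod_{b=1}^d \rb{X_b + \imath Z^R_b}^{m_b}\right].
\end{align*}
Expanding every bracket and applying the Wick/Isserlis formula \eqref{eq:wick} to the resulting jointly Gaussian monomials rewrites this as a sum over diagrams $\delta = (C,\pi)$ exactly as in Definition~\ref{def:diagrams}: here $C$ records whether each bracket contributes its $X$-type term or its $\imath Z$-type term, and $\pi$ is a compatible pairing. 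Since $X$-type terms are built entirely from $X$ while $\imath Z^L$ lives only on the left half and $\imath Z^R$ only on the right half, the compatible pairs are again of three kinds — $X\leftrightarrow X$ (either half), $\imath Z^L \leftrightarrow \imath Z^L$, and $\imath Z^R \leftrightarrow \imath Z^R$.

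The one genuinely new computation is the pairwise weights, which all follow from $V^{-1}V = I$. Treating $(V^{-1}X)_a = \sum_c (V^{-1})_{ac}X_c$ as a single Gaussian, a left--left $X$-pair contributes $(V^{-1})_{aa'}$, a right--right $X$-pair contributes $V_{bb'}$, and a cross-half $X$-pair collapses to a clean Kronecker delta,
\begin{align*}
\bE\left[(V^{-1}X)_a \, X_b\right] = \sum_c (V^{-1})_{ac} V_{cb} = (V^{-1}V)_{ab} = \delta_{ab},
\end{align*}
while the matching same-half $\imath Z$-pairs contribute $-(V^{-1})_{aa'}$ and $-V_{bb'}$. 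This last point is the heart of the matter and the whole reason the dual is defined with covariance $V^{-1}$ and argument $V^{-1}x$: because $\bE[(\imath Z^L_a)(\imath Z^L_{a'})] = -(V^{-1})_{aa'}$ is exactly the negative of the left--left $X$-pair weight (and likewise on the right), flipping a same-half pair between its $X$-type and $\imath Z$-type choice multiplies that pair's weight by precisely $-1$. Hence I can reuse the involution $\tau$ of Definition~\ref{def:involution} verbatim — swap the variable choice at the first same-half pair — and the sign-flip identity \eqref{eq:sign-flip} of Remark~\ref{rem:involution} remains intact.

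With $\tau$ established, every diagram admitting a same-half pair cancels against its image, leaving only the fixed points: diagrams in which every pair is cross-half. Because $\imath Z^L$ and $\imath Z^R$ can only pair within their own half, a fixed point must have $C \equiv X$ in every bracket, with $\pi$ a perfect matching of the left brackets to the right brackets (which forces equal totals $\sum_a n_a = \sum_a m_a$). Each pair then contributes the factor $\delta_{ab}$, so the weight is $1$ if every pair joins a left bracket of index $a$ to a right bracket of the same index $a$, and $0$ otherwise. Counting these index-preserving matchings block by block yields $\prod_{j=1}^d n_j!$ when $\vec n = \vec m$, and forces the sum to vanish when $\vec n \neq \vec m$, since then some index must be mismatched and kill the weight. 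I expect the only real obstacle to be bookkeeping rather than conceptual: justifying that the composite Gaussians $(V^{-1}X)_a$ may be fed into Wick's theorem as single variables and that the cross-half weights reduce to $\delta_{ab}$. Once the sign-flip identity is verified, the cancellation and the final enumeration are identical in spirit to the one-dimensional argument.
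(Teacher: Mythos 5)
Your proposal is correct and follows essentially the same route as the paper's proof: the same diagram expansion via Wick's theorem, the same first-same-half-pair involution with its sign-flip cancellation, and the same key identity $\bE\left[(V^{-1}X)_a\,X_b\right]=\mathbf{1}\{a=b\}$ that reduces the surviving fixed points to index-preserving matchings counted by $\prod_{j=1}^d n_j!$. The only cosmetic difference is that you sample $\vec Z^L$ directly from $\mathcal{N}(0,V^{-1})$ where the paper writes it as $V^{-1}L$ with $L\sim\mathcal{N}(0,V)$; these agree in distribution, so the two arguments coincide.
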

	\begin{proof}
		We first notice that when $\vec {Y} \sim \mathcal{N}(0,V)$, then the linear transformation $V^{-1}\vec {Y} \sim \mathcal{N}(0,V^{-1})$. This allows us to rewrite the polynomial $H_{\vec{n}} (\vec{x};V)$ as
       \begin{align}
H_{\vec{n}}\left(x_1,\ldots,x_d;V\right) & = \bE_{\vec Z\sim \mathcal{N}(0,V^{-1})}\left[\left(V^{-1} \vec x+\imath \vec Z\right)^{\vec{n}}\right]\\
        &= \bE_{\vec Y\sim \mathcal{N}(0,V)}\left[\left(V^{-1} \vec x+ \imath V^{-1} \vec Y\right)^{\vec{n}}\right].
	\end{align}
  where we are using the notation $\vec x^{\vec{n}} = \prod_{a=1}^d x_a^{n_a}$ for vectors of integers $\vec n$ and of real numbers $\vec x$. This allows us to write both $H_{\vec{n}}$ and $\wt H_{\vec{n}}$ as expectations over $\mathcal{N}(0,V)$ so we have
\begin{align}
			\bE_{\vec{X}\sim \mathcal{N}(0,V)} \left[H_{\vec{n}}(X,V)\widetilde{H}_{\vec{m}}(X,V)\right] & =\bE_{\vec{X},\vec{L},\vec{R}\sim \mathcal{N}(0,V)}
			\left[\left(V^{-1} \vec X+ \imath V^{-1} \vec L\right)^{\vec{n}}\left(\vec X+ \imath  \vec R\right)^{\vec{m}}\right] \\
            & =\bE_{\vec{X},\vec{L},\vec{R}\sim \mathcal{N}(0,V)}
			\left[\left(\vec X^\#+ \imath \vec L^\# \right)^{\vec{n}}\left(\vec X+ \imath  \vec R\right)^{\vec{m}}\right] \\
    \end{align}
    where we introduce the shorthand $$\vec{v}^\# := V^{-1} \vec{v},$$ for notational convenience. Now let $n=\abs{\vec{n}}$ and $m=\abs{\vec{m}}$. We will rewrite the product to be a factor with $n$ terms of the left and $m$ on the right. Let $a_{1},...,a_{n}$ be defined as follows: $a_{1}=\ldots=a_{n_{1}}=1$, $a_{n_{1}+1}=\ldots,a_{n_{1}+n_{2}}=2$ and so on. For example, if $\vec{n}=(2,3,0,2,\ldots)$, then $a$ is the sequence 1,1,2,2,2,4,4,$\ldots$ Similarly define $b_{1},\ldots,b_{m}$ in terms of $\vec{m}.$ Then we can rewrite the product as
		\begin{align*}
			\bE\left[H_{\vec{n}}(X,V)\widetilde{H}_{\vec{m}}(X,V)\right] & =\bE
			\left[\prod_{j=1}^{d}\left(X^\#_{j}+\imath L^\#_{j}\right)^{n_{j}}\prod_{j=1}^{d}\left(X_{j}+\imath R_{j}\right)^{n_{j}}\right]\\
			& =\bE
			\left[\left(X^\#_{a_{1}}+\imath L^\#_{a_{1}}\right)\cdots\left(X^\#_{a_{n}}+\imath L^\#_{a_{n}}\right)\left(X_{b_{1}}+\imath R_{b_{1}}\right)\cdots\left(X_{b_{m}}+\imath R_{b_{m}}\right)\right]
            \end{align*}
Proceeding akin to the 1D setup of diagrams in  Definition \ref{def:diagrams}, we now define a diagram for this expectation as $\delta = (C,\pi)$  where $C \in \{X^\#_{a_1},\imath L^\#_{a_1}\} \times \cdots \times \{X^\#_{a_n},\imath L^\#_{a_n}\} \times \{X_{b_1},\imath R_{b_1} \} \times \cdots \times  \{X_{b_m},\imath R_{b_m} \}  $ is a choice of variable in each bracket and $\pi \in \cP(\{1,\ldots,n\}\dot \cup \{1,\ldots,m\})$ is a pairing which is \emph{compatible} with the choice $C$ in the sense that two ends of any pairing always pair the same \emph{letter} of pairing, either $\imath L^\# \leftrightarrow \imath L^\#$ or $\imath R \leftrightarrow \imath R$ or some combination of $X$'s, $X \leftrightarrow X$, $X^\# \leftrightarrow X$ or $X^\# \leftrightarrow X^\#$. We denote the set of such diagrams as $\Delta(\vec n,\vec m)$. We define the weight of a diagram to be the product of the expectations:
$$w(\delta) = \prod_{ \{\alpha,\beta \} \in \pi } \mathbb{E}[C_\alpha C_\beta ]$$
(Note that unlike the 1D case, because the random variables $X^\#,L^\#,X,R$ are not simply standard Gaussians, the weights are no longer $\pm 1$ valued), By the Wick/Isserlis theorem, the expectation is the weighted sum over all the diagrams 
$$\bE\left[H_{\vec{n}}(X,V)\widetilde{H}_{\vec{m}}(X,V)\right]  = \sum_{\delta \in \Delta(\vec n,\vec m)} w(\delta).$$
Proceeding analogously to the 1D involution of the involution in Definition \ref{def:involution}, we now define the involution $\tau : \Delta(\vec{n},\vec{m}) \to \Delta(\vec{n},\vec{m})$ as follows. Given a diagram $\delta = (C,\pi)$, order the set of indices from left to right, and find the lowest ordered pair $\{ \alpha^\ast, \beta^\ast \}$ that is entirely on the same half. Then define the action of $\tau$  as swapping the choice $C$ at those indices, i.e. swapping $\imath L^\#\leftrightarrow X^\#$ when both indices are on the left, and swapping $\imath R \leftrightarrow X$ when both are on the right.  If no such $\{\alpha^\ast,\beta^\ast\}$ exists (e.g. when all the pairings of $\pi$  have one side on the left and one side on the right), then declare that $\tau$ nothing and $\tau(\delta)=\delta$ 

Since $L^\#,X^\#$ and $R,X$ have the same distribution, and since we always swap a pair of variables with an $\imath$ to one without or vice versa, the effect of the swapping is always a multiplicative factor of $-1$, i.e.:
$$\mathbb{E}[(\imath L^\#)(\imath L^\#)] = (-1) \cdot \mathbb{E}[ X^\# \cdot X^\#], \: \mathbb{E}[(\imath R)(\imath R)] = (-1) \cdot \mathbb{E}[ X \cdot X] $$
Hence we observe that the effect of the involution $\tau$  is to multiply the weight by $-1$ whenever $\delta$ is not a fixed point.

\begin{equation}\label{eq:sign-flip-md}
w(\tau(\delta)) = \begin{cases} -w(\delta)&\text{ if } \tau(\delta) \neq \delta \\ \phantom{-}w(\delta) &\text{ if } \tau(\delta)=\delta \end{cases}. \end{equation}

We hence see that the sum over all diagrams $\delta$ for which $\tau(\delta)\neq \delta$ is zero, since every diagram $\delta$ can be paired with its image $\tau(\delta)$ and the resulting weights cancel out! We hence realize the sum over all diagrams as a sum over only the fixed points 

\begin{align} \bE\left[H_{\vec{n}}(X,V)\widetilde{H}_{\vec{m}}(X,V)\right]  = \sum_{\substack{\delta \in \Delta(\vec n,\vec m) \\ \tau(\delta) \neq \delta}}  w(\delta) + \sum_{\substack{\delta \in \Delta(\vec n,\vec m) \\ \tau(\delta) = \delta}} w(\delta) = 0 + \sum_{\substack{\delta \in \Delta(\vec n,\vec m) \\ \tau(\delta)=\delta}} w(\delta). \end{align}
It remains to enumerate the effect of the diagrams which are fixed points $\tau(\delta)=\delta$. The key observation is that if$\delta=(C,\pi)$ has $\tau(\delta)=\delta$, then every pairing $\pi$  has one side on the left, with endpoint in $\{1,\ldots,n\}$  and one side on the right, with endpoint in $\{1,\ldots,m\}$ .  If $n\neq m$,  there are no such pairings: by the pigeonhole principle it is impossible to make such a pairing. This already shows that $\bE\left[H_{\vec{n}}(X,V)\widetilde{H}_{\vec{m}}(X,V)\right]  = 0$ when $\abs{\vec{n}}\neq\abs{\vec{m}}$.

When $n=m$ , these pairings can be interpreted as bijections $\sigma \in S_n$ by the $\sigma(a)=b$  whenever $\{a,b\}\in \pi$ and $a \in \{1,\ldots,n\}$   and $b\in \{1,\ldots,m\}$ .  The choice of the variables $C$ is also forced to be all $X^\#$  on the left and all $X$ on the right (since $\pi$ must always connect letters of the same letter). We hence have from the definition of the weight $w(\delta)$, in this case $|\vec{n}|=|\vec{m}|$ that
\begin{align*}
			\bE\left[H_{\vec{n}}(X,V)\widetilde{H}_{\vec{m}}(X,V)\right] & =\sum_{\sigma\in S_{n}}\prod_{j=1}^{n}\bE\left[X^\#_{a_{j}}X_{b_{\sigma(j)}}\right] \\ &=\sum_{\sigma\in S_{n}}\prod_{j=1}^{n}\bE\left[ (V^{-1}X)_{a_{j}}X_{b_{\sigma(j)}}\right] \\
			& =\sum_{\sigma\in S_{n}}\prod_{j=1}^{n}\left(\bE\left[X\left(V^{-1}X\right)^{T}\right]\right)_{a_{j},b_{\sigma(j)}}.
		\end{align*} 
        Now $V$ is a covariance matrix and hence symmetric. Therefore so
		is $V^{-1}$and we get $X(V^{-1}X)^{T}=XX^{T}V^{-1}.$ Since $X \sim \mathcal{N}(0,V)$, by linearity
		of expectation, we see that the expected value of this is the identity matrix,
		\begin{align*}
			\bE\left[X\left(V^{-1}X\right)^{T}\right] & =\bE\left[XX^{T}\right]V^{-1}=VV^{-1}=I.\\
		\end{align*}
		In other words, $\bE\left[X_{a}\left(V^{-1}X\right)_{b}\right]=\mathbf{1}\{a=b\}$and so
		\begin{align*}
			\bE\left[H_{\vec{n}}(X,V)\widetilde{H}_{\vec{m}}(X,V)\right] & =\sum_{\sigma\in S_{n}}\prod_{j=1}^{n}\mathbf{1}\{{a_{j}=b_{\sigma(j)}}\}.
		\end{align*}
Since the $a_{j}$and $b_{j}$ are ordered increasingly, the only way all the terms are non-zero is if
if $\vec{m}=\vec{n}.$ If so, then for each $n_{j}$, there are $n_{j}!$ ways of mapping each, which gives the result.
	\end{proof}

	\subsection{Combinatorial proof of the recurrence relation}

    The recurrence relation for the Hermite polynomials can be realized as a consequence of the combinatorial sums.  This also generalizes to the multidimensional Hermite polynomials.
    \begin{proposition}
		\label{prop:recursive}
        For any $n\in \mathbb{N}$, the Hermite polynomials satisfy the recurrence
        $$\fH_{n+1}(x) = x \fH_n(x) - n \fH_{n-1}(x)$$
        \end{proposition}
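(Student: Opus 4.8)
The plan is to give a combinatorial decomposition argument directly from the sum formula \eqref{eq:H_sum} (specialized to $t=1$), which expresses
$$\fH_n(x) = \sum_{\substack{S \subseteq \{1,\ldots,n\} \\ \pi \in \mathcal{P}(S)}} \imath^{|S|}\, x^{n-|S|},$$
i.e. $\fH_n(x)$ enumerates pairs $(S,\pi)$ consisting of a subset $S$ of the $n$ brackets in which $\imath Z$ was chosen, together with a perfect matching $\pi$ of $S$, weighted by $\imath^{|S|} x^{n-|S|}$. Since each matched pair contributes $\imath^2 = -1$, this weight is really $(-1)^{|\pi|} x^{n-2|\pi|}$. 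First I would write $\fH_{n+1}(x)$ as this same sum over pairs $(S,\pi)$ with $S \subseteq \{1,\ldots,n+1\}$, and then split the sum according to the fate of the distinguished last index $n+1$.

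In the first case, $n+1 \notin S$: the bracket $n+1$ contributes a factor $x$, and the remaining data is exactly a pair $(S,\pi)$ with $S \subseteq \{1,\ldots,n\}$, whose weighted sum reproduces $\fH_n(x)$; this case therefore contributes $x\,\fH_n(x)$. In the second case, $n+1 \in S$: then $n+1$ must be matched by $\pi$ to some partner $j \in \{1,\ldots,n\}$, of which there are $n$ choices. The pair $\{j,n+1\}$ contributes weight $\imath^2 = -1$, and after deleting both $j$ and $n+1$ the remaining data is a pair $(S',\pi')$ on the $n-1$ leftover indices, whose weighted sum is $\fH_{n-1}(x)$. Summing over the $n$ choices of partner gives $n \cdot (-1) \cdot \fH_{n-1}(x) = -n\,\fH_{n-1}(x)$. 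Adding the two cases yields $\fH_{n+1}(x) = x\,\fH_n(x) - n\,\fH_{n-1}(x)$.

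The step requiring the most care is the bookkeeping in the second case: I must verify that, for each fixed partner $j$, deleting $\{j,n+1\}$ sets up a weight-preserving bijection between the configurations on $\{1,\ldots,n+1\}$ in which $n+1$ is matched to $j$ and the configurations on a set of $n-1$ indices, and crucially that the resulting weighted sum equals $\fH_{n-1}(x)$ regardless of which $j$ was chosen — this invariance under relabeling of the residual indices is precisely what produces the clean factor of $n$. As a sanity check and analytic alternative, the same identity falls out of the Gaussian formula: writing $\fH_{n+1}(x)=\mathbb{E}[(x+\imath Z)^{n+1}] = x\,\fH_n(x) + \imath\,\mathbb{E}[Z(x+\imath Z)^n]$ and applying Gaussian integration by parts $\mathbb{E}[Zg(Z)]=\mathbb{E}[g'(Z)]$ with $g(z)=(x+\imath z)^n$ gives $\imath \cdot \imath n\,\fH_{n-1}(x) = -n\,\fH_{n-1}(x)$, confirming the recurrence.
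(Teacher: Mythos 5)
Your proof is correct and takes essentially the same approach as the paper: both decompose the combinatorial sum for $\fH_{n+1}(x)$ according to whether the distinguished last bracket contributes an $x$ or an $\imath Z$ (in your notation, whether $n+1 \in S$), obtaining $x\,\fH_n(x)$ from the first case and $n \cdot \mathbb{E}[(\imath Z)(\imath Z)]\,\fH_{n-1}(x) = -n\,\fH_{n-1}(x)$ from the second. Your closing integration-by-parts verification also appears in the paper as a separate proof, Lemma \ref{lem:recurrence-with-integration-by-parts}.
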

        \begin{proof} Recall the interpretation of the Hermite polynomial $\fH_{n+1}$ as summing over choices of variables $\{x,\imath Z\}^{n+1}$ and pairings on on the set of $\imath Z$ choices. The set of pairings can be divided into two disjoint sets: Either the the variable $x$ is chosen in the last bracket, or the variable $\imath Z$  is chosen in the last bracket. 
        
        When $x$ is chosen in the last bracket, we have a factor of $x$ and the pairings of the first $n$ brackets is simply $\fH_n(x)$. This case yields $x \fH_n(x)$. 
        
        When $\imath Z$ is chosen in the last bracket, there are $n$  ways to choose which other entry this $\imath Z$ will be paired with. For each choice, with this last bracket and its partner pair chosen, there are $n-1$ brackets remaining, thus giving a contribution of $\fH_{n-1}(x)$. All together the contribution from the $n$ ways is $n \mathbb{E}[(\imath Z)(\imath Z)] \fH_{n-1}(x)=-n\fH_{n-1}(x)$.
        
        Adding up the total contribution from these two disjoint cases gives the desired result.
        \end{proof}

        \begin{proposition}
            Let $d \in \mathbb{N}$. For any $\vec{n}\in\bN^d$, $j\in \{1,\ldots,d\}$ and $\vec{x}\in\bR^d$, the multidimensional Hermite polynomials satisfy
		\begin{align*}
			\wt H_{\vec{n}+\vec{e}_{j}}(\vec x;V)=x_{j}\wt H_{\vec{n}}(\vec x;V)- \sum_{a=1}^{d} n_a V_{j a} \wt H_{\vec{n}-\vec{e}_a}(\vec x;V),
		\end{align*}
        where $\vec{e}_k = (0,0,\ldots,1,\ldots,0)$ is the basis vector with a $1$ in the $k$-th position.  
    \end{proposition}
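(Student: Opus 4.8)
The plan is to mimic the combinatorial proof of the one-dimensional recurrence in Proposition~\ref{prop:recursive}, now working with the multivariable combinatorial sum for $\wt H_{\vec{n}}(\vec x;V)$, in which one expands each bracket into a choice of either $x_a$ or $\imath Z_a$, then sums the Wick weights over compatible pairings $\pi$ of the chosen $\imath Z$ variables, recalling that $\mathbb{E}[(\imath Z_a)(\imath Z_b)] = -V_{ab}$. The key observation is that $\wt H_{\vec{n}+\vec{e}_j}$ has exactly one more bracket of type $j$ than $\wt H_{\vec{n}}$, namely one extra copy of $(x_j+\imath Z_j)$. I would single out this extra bracket and split the sum over configurations according to whether $x_j$ or $\imath Z_j$ is chosen in it.

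First I would treat the case where $x_j$ is chosen in the extra bracket. This contributes a multiplicative factor of $x_j$, and the choices and pairings on the remaining $\abs{\vec n}$ brackets range over exactly the configurations counted by $\wt H_{\vec{n}}(\vec x;V)$. Hence this case contributes $x_j \wt H_{\vec{n}}(\vec x;V)$.

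Next I would treat the case where $\imath Z_j$ is chosen in the extra bracket. Then this $\imath Z_j$ must be paired by $\pi$ with some other chosen $\imath Z_a$, for a letter $a\in\{1,\ldots,d\}$. After removing the extra type-$j$ bracket, the number of brackets of each type $a$ that remain is exactly $n_a$ (uniformly: if $a\neq j$ this is immediate, and if $a=j$ it is $(n_j+1)-1=n_j$), so there are $n_a$ choices for which bracket supplies the partner. Each such pairing carries the weight $\mathbb{E}[(\imath Z_j)(\imath Z_a)]=-V_{ja}$, and once the extra bracket and its partner are deleted, the leftover brackets range over the configurations counted by $\wt H_{\vec{n}-\vec{e}_a}(\vec x;V)$. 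Summing over the letter $a$ and the choice of partner gives $-\sum_{a=1}^{d} n_a V_{ja}\,\wt H_{\vec{n}-\vec{e}_a}(\vec x;V)$; note that terms with $n_a=0$ drop out automatically, so no convention for negative indices is required. Adding the two cases yields the stated recurrence.

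The one point requiring care is the bookkeeping in the second case: one must verify that deleting the extra bracket leaves precisely $n_a$ brackets of type $a$ (including the subtle self-referential case $a=j$, where the extra bracket is itself of type $j$), and that after also deleting the partner bracket the remaining configuration is exactly $\wt H_{\vec{n}-\vec{e}_a}$ rather than something off by one. This is the natural place where an index error could creep in, so I would record the count $n_a$ explicitly before performing the sum over $a$.
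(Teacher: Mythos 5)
Your proof is correct and follows essentially the same route as the paper's: single out the extra bracket $(x_j+\imath Z_j)$, split according to whether $x_j$ or $\imath Z_j$ is chosen there, and in the latter case count the $n_a$ possible partners of each type $a$ with Wick weight $\mathbb{E}[(\imath Z_j)(\imath Z_a)]=-V_{ja}$. If anything, your bookkeeping is slightly more careful than the paper's — you explicitly verify the self-referential count when $a=j$ and keep the sign inside the expectation straight, where the paper's write-up glosses over both.
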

	\begin{proof}

        The identity for the multidimensional follows by the same argument as the proof of Proposition \ref{prop:recursive}, where we now consider the choices for the variable in a particular  bracket $(x_j + \imath Z_j)$ for the choice of index $j$.
        
        If $x_j$  is chosen as the variable in the bracket, we get a contribution of $x_j \wt H_{\vec{n}}(\vec{x};V)$ since all the other brackets yield $H_{\vec{n}}(\vec{x};V)$. 
        
        If $\imath Z_j$  is chosen as the variable in the bracket, then there are $|\vec{n}|=n_1+n_2+\ldots+n_d$ choices for who the pair will be, namely $n_1$ copies of $\imath Z_1$, $n_2$ copies of $\imath Z_2$ and so on. When $\imath Z_a$  is chosen, the coefficient from that pair is $\mathbb{E}[(\imath Z_j)(\imath Z_a)]=V_{ja}$. The rest of the brackets are $\wt H_{\vec{n}-\vec{e}_a}(\vec{x};V)$  in this case since we have ``used up'' a bracket of the form $(x_a + \imath Z_a)$. 
        
        Adding up over all the choices gives the desired result.
        
	\end{proof}
 \begin{remark}
See also Lemma \ref{lem:recurrence-with-integration-by-parts} where we prove the recurrence relation directly from the Gaussian integral formula using the Gaussian integration by parts identity $\mathbb{E}[Zg(Z)] = \mathbb{E}[g^\prime(Z)]$.
    \end{remark}

\section{Asymptotics for the Hermite polynomials\texorpdfstring{as $n \to \infty$}{}}
\label{sec:asymptotics-hermite}
	
	The Gaussian integral formula \eqref{eq:HExpectation} also yields an easy entry point to the large $n$ limit of the Hermite polynomials because it is naturally set up for the Laplace approximation method for integrals. Other existing approaches to the Hermite  asymptotics instead use the derivative formula and then must go through the Cauchy integral formula and residue analysis to the turn the derivative into a complex integral e.g. Section 3 of \cite{AGZ} and \cite[Section 8]{Szego1975} . For this reason, unlike the typical statement,  the asymptotic formulas we derive here does not involve the factorial $n!$ (which appears due to the Cauchy integral formula), thereby giving ``clean'' asymptotics without the need for Stirling's formula. 
    
    We will use in this section the notation $f(n)=g(n)(1+o(1))$ to mean that $\lim_{n\to\infty}\frac{f(n)}{g(n)}=1$. When $f,g$ also depend on other variables, we will state our theorems as holding pointwise with those variables fixed while $n$ grows to infinity.

    \begin{figure}[!ht]
    \centering
    \includegraphics[width=\linewidth]{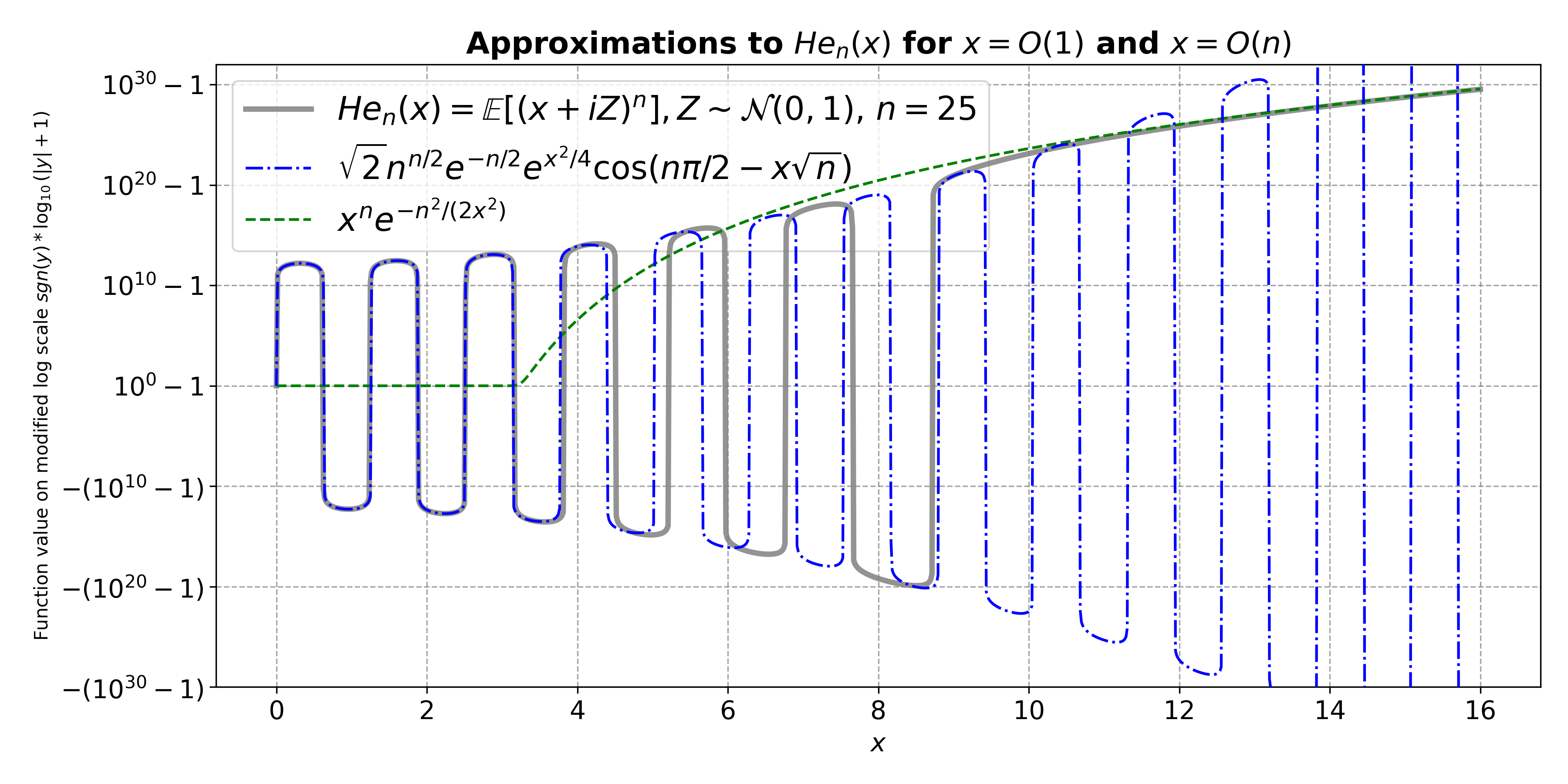}
    \includegraphics[width=\linewidth]{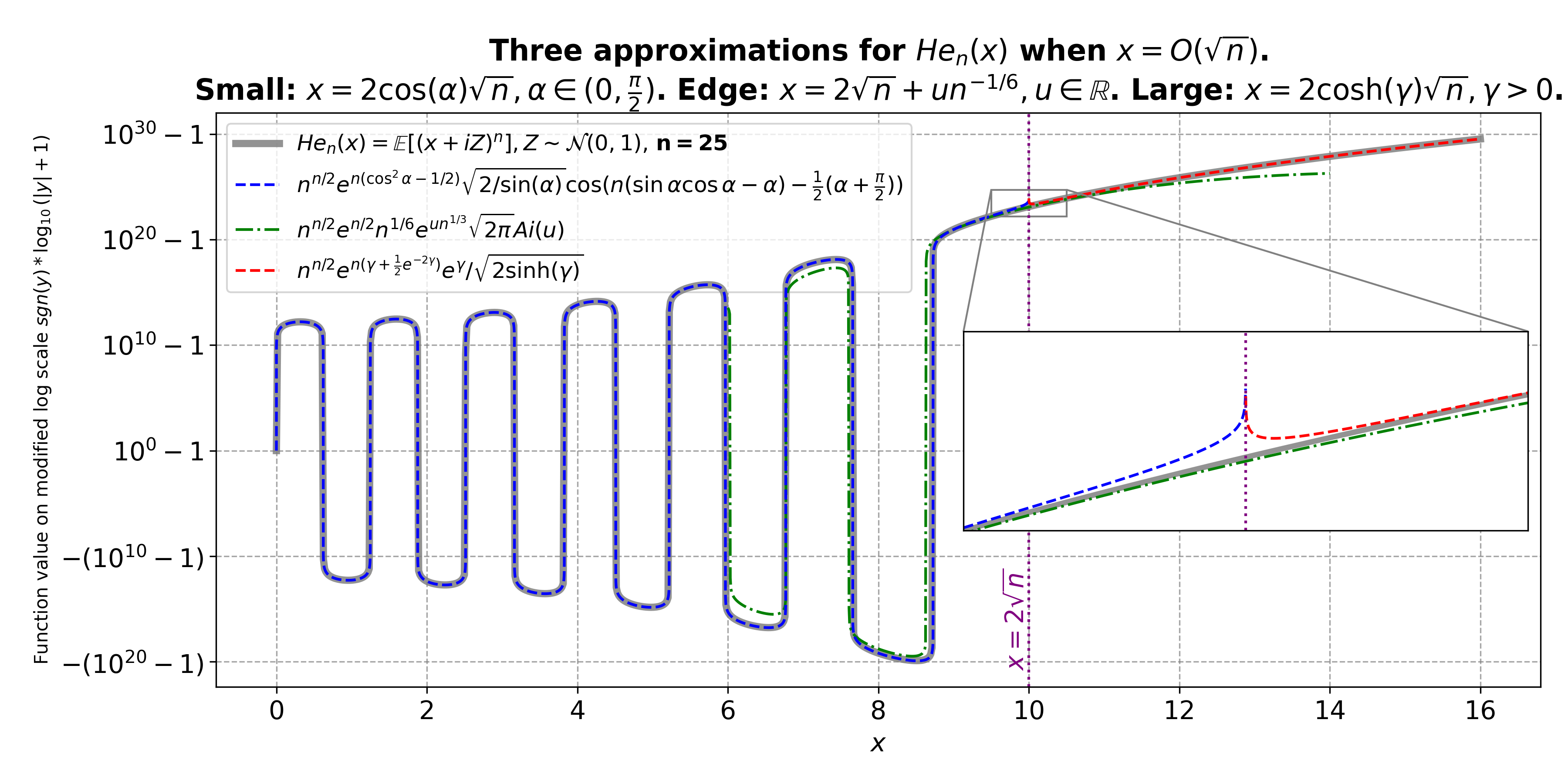}
    \vspace{-3em}
    \caption[Asymptotic approximations to the Hermite polynomial $\fH_n(x)$ when $x=O(1)$, $x=O(n)$ and three formulas when $x=O(\sqrt{n})$]{The approximation formulas for various regimes from  Propositions \ref{prop:unscaled}, \ref{prop:superlarge} and \ref{thm:main-asymptotics} compared to $\fH_n(x)$ in the case $n=25$. The $y$-axis is a modified log scale, so that the sign of each $y$ value is preserved and $0$ is mapped to $0$; the map is $y \to sgn(y) \log_{10}(|y|+1)$. The relatively simple asymptotics when $x=O(1)$ and $x=O(n)$ are only accurate when $x \ll \sqrt{n}$ and $x \gg \sqrt{n}$ (and in fact can also be recovered as limits of the more complicated $x=O(\sqrt{n})$ asymptotics). The $x=O(\sqrt{n})$ asymptotics are divided into three regimes; $x=2\sqrt{n}\cos(\alpha) < 2\sqrt{n}$ for small $x$ values, $x=2\sqrt{n}\cosh(\gamma) > 2\sqrt{n}$ for large $x$ values, and $x=2\sqrt{n} + n^{-1/6}u$ for $x$ values on the edge near $x=2\sqrt{n}$. Notice from the zoomed insert that the small-x and large-x approximations are both singular at the edge $x=2\sqrt{n}$, and this is the regime where the edge approximation with the Airy function is accurate. }	
    \label{fig:Hermite_approx}
    \end{figure}

\subsection{Approximation to \texorpdfstring{$\fH_n(x)$}{H} when \texorpdfstring{$x=O(n)$}{for large values}}
    
\begin{proposition}
\label{prop:superlarge}
Let $c \in \bR$ be fixed, and set $x=cn$. We have the following asymptotic formula as $n\to \infty$:
\begin{align*}
				\fH_n(cn)&=(cn)^ne^{-\frac{1}{2c^2}}(1+o(1)).
			\end{align*}
In other words, $\fH_n(x) \approx x^n e^{-\frac{n^2}{2x^2}}$  for $x=O(n)$.
\end{proposition}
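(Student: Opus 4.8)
The plan is to start from the Gaussian integral formula $\fH_n(cn) = \mathbb{E}_{Z\sim\mathcal{N}(0,1)}[(cn + \imath Z)^n]$ and apply the Laplace method. Since $x = cn$ is large, the typical fluctuation $Z = O(1)$ is tiny compared to $cn$, so I would factor out $(cn)^n$ and write
\begin{equation*}
\fH_n(cn) = (cn)^n \, \mathbb{E}\left[\left(1 + \frac{\imath Z}{cn}\right)^n\right].
\end{equation*}
The remaining task is to show the expectation converges to $e^{-1/(2c^2)}$ as $n\to\infty$. The heuristic is that $\left(1 + \frac{\imath Z}{cn}\right)^n \approx \exp\!\left(n \log\!\left(1 + \frac{\imath Z}{cn}\right)\right) \approx \exp\!\left(\frac{\imath Z}{c} + \frac{Z^2}{2c^2 n} + \cdots\right)$, and since $Z/n \to 0$ the exponent tends to $\imath Z / c$. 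Then I would use the Gaussian characteristic function $\mathbb{E}[e^{\imath Z/c}] = e^{-1/(2c^2)}$ to identify the limit.

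First I would make the heuristic rigorous. The cleanest route is to expand $\left(1 + \frac{\imath Z}{cn}\right)^n$ by the binomial theorem and take the expectation term-by-term using \eqref{eq:powers-of-Z}, namely $\mathbb{E}[Z^k] = (k-1)!!$ for $k$ even and $0$ for $k$ odd. This turns the expectation into an explicit (finite) sum in powers of $1/(cn)$, which I would then compare against the series expansion of $e^{-1/(2c^2)}$. Alternatively, and perhaps more transparently, I would establish the limit $\mathbb{E}\left[\left(1 + \frac{\imath Z}{cn}\right)^n\right] \to e^{-1/(2c^2)}$ via dominated convergence: pointwise in $Z$, the integrand converges to $e^{\imath Z/c}$, and I must produce an integrable dominating bound. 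The modulus satisfies $\left|1 + \frac{\imath Z}{cn}\right|^n = \left(1 + \frac{Z^2}{c^2 n^2}\right)^{n/2} \le \exp\!\left(\frac{Z^2}{2c^2 n}\right) \le \exp\!\left(\frac{Z^2}{2c^2}\right)$ for $n\ge 1$, and the Gaussian tail $e^{-Z^2/2}$ decays fast enough to make $\exp\!\left(\frac{Z^2}{2c^2}\right) e^{-Z^2/2}$ integrable precisely when $c^2 > 1$; for general $c$ one must be slightly more careful near $Z^2 \sim c^2$.

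I expect the main obstacle to be the domination step when $|c|$ is small (i.e. $c^2 < 1$), where the naive bound $\exp(Z^2/(2c^2)) e^{-Z^2/2}$ is \emph{not} integrable and dominated convergence fails outright. The resolution is that the convergence is really a Laplace/saddle-point phenomenon, not a crude pointwise-plus-domination one: the oscillatory factor $e^{\imath Z/c}$ and the genuine decay coming from $\re\log\!\left(1 + \frac{\imath Z}{cn}\right) = \frac{1}{2}\log\!\left(1 + \frac{Z^2}{c^2 n^2}\right)$ must be handled together. I would therefore favor the term-by-term binomial approach, which sidesteps integrability issues entirely: since $Z$ has moments of all orders and the sum is finite for each $n$, exchanging sum and expectation is automatic, and the asymptotic then reduces to showing that the partial sums of the relevant series stabilize to $e^{-1/(2c^2)}(1 + o(1))$, with the tail terms (those of order $k \gtrsim n$, where $(k-1)!!$ outgrows the $n^{-k}$ suppression) shown to be negligible. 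Controlling that crossover tail is the delicate estimate; everything else is routine bookkeeping.
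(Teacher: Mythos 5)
Your skeleton is exactly the paper's proof: factor out $(cn)^n$, observe the almost-sure limit $\lim_{n\to\infty}\rb{1+\imath\frac{Z}{cn}}^n = e^{\imath Z/c}$, and identify the limit of the expectation via $\mathbb{E}[e^{\imath Z/c}] = e^{-\frac{1}{2c^2}}$. (The paper is in fact terser than you are: it cites Durrett for the pointwise limit and passes the limit through the expectation without further comment.) The problem is in what you add. You correctly sense that the interchange of limit and expectation needs justification, but your diagnosis of the difficulty is wrong, and the route you then choose is left unfinished --- the proposal ends by declaring the ``crossover tail'' estimate delicate and not carrying it out. That unfinished estimate is the gap.

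Both obstacles you name are phantoms. For dominated convergence: you only need a dominating function for all \emph{sufficiently large} $n$, not for all $n\ge 1$. Since $\left|1+\frac{\imath Z}{cn}\right|^n = \rb{1+\frac{Z^2}{c^2n^2}}^{n/2} \le \exp\rb{\frac{Z^2}{2c^2 n}}$, fix any integer $N_0 > 1/c^2$; then for all $n \ge N_0$ the integrand is dominated by $\exp\rb{\frac{Z^2}{2c^2 N_0}}$, which is integrable against the Gaussian weight $\frac{1}{\sqrt{2\pi}}e^{-\half Z^2}$ precisely because $\frac{1}{2c^2 N_0} < \half$. So dominated convergence closes the proof for every $c \ne 0$, with no case distinction on $|c|$ and no appeal to oscillation. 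For the binomial route: the crossover tail you fear does not exist, because you should not bound $\binom{n}{2j}(2j-1)!!$ and $n^{-2j}$ separately. Using the identity $\binom{n}{2j}(2j-1)!! = \frac{n!}{2^j\, j!\, (n-2j)!}$ (stated at the end of Section \ref{sec:Hermite_1d_combinats} of the paper), the $j$-th nonvanishing term of your sum is
\begin{equation*}
\binom{n}{2j}(2j-1)!!\,\frac{(-1)^j}{(cn)^{2j}} \;=\; \frac{(-1)^j}{j!\,(2c^2)^j}\,\prod_{i=0}^{2j-1}\rb{1-\frac{i}{n}},
\end{equation*}
whose absolute value is at most $\frac{1}{j!\,(2c^2)^j}$ uniformly in $n$, a summable series; each term converges to $\frac{(-1)^j}{j!\,(2c^2)^j}$ as $n\to\infty$, so dominated convergence for series (Tannery's theorem) gives $\mathbb{E}\ab{\rb{1+\imath\frac{Z}{cn}}^n} \to e^{-\frac{1}{2c^2}}$ immediately. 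Either one-paragraph patch turns your proposal into a complete proof --- and in fact a more careful one than the paper's, which never addresses the interchange at all.
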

\begin{proof}
Write by our integral formula that $$\fH_n(cn) = \mathbb{E}[(cn+iZ)^n] = (cn)^n \mathbb{E}\left[\rb{1+i\frac{Z}{cn}}^n\right].$$Now observe the limit  $\lim_{n\to\infty}(1+i\frac{Z}{cn})^n = e^{iZ/c}$ almost surely in  $Z$ (see e.g. Theorem 3.4.2 in \cite{durrettPTE}), and in light of the expectation $\mathbb{E}[e^{\beta Z}]=e^{\half \beta^2}$ we find that $\mathbb{E}[(1+i\frac{Z}{cn})^n]=e^{-\frac{1}{2c^2}}(1+o(1))$, whence the result follows. 
    
    \subsection{Tool: Laplace approximation for integrals}

All the other approximations in this section rely on the Laplace approximation method for the integrals of the form $\intop e^{nf(x)} g(x) \d x$ .  Below we state one version of this method and discuss some ideas surrounding it.

\begin{proposition}\label{prop:laplace} [Laplace Approximation] Suppose $g(x)$ is a continuous function, and suppose $f(x)$ is a function with a unique global maximum at $x^\ast$ with zero-derivative there, $f'(x^\ast)=0$, and strictly negative second derivative there $f''(x^\ast) < 0$. Suppose also $x^\ast \in (a,b)$ is some interval containing $x^\ast$ as an interior point. Then, under certain regularity conditions on $f$ (see e.g. Theorem 3.5.3 from \cite{AGZ} for a precise statement), we have the following Laplace approximation of the integral as $n\to \infty$,
\begin{equation}\label{eq:laplace_approx_eq} \intop_{a}^b e^{n f(x)} g(x) \d x  = e^{n f(x^\ast) } g(x^\ast) \sqrt{\frac{2\pi }{n |f''(x^\ast)|}} (1+o(1)).\end{equation}
    \end{proposition}
    \begin{remark}
The proof idea of the Laplace approximation can be more powerful than the mere statement of the result, so we briefly elucidate it here.  The idea is to approximate $f$ around its global maximum by Taylor's theorem $f(x) = f(x^\ast) - \frac{1}{2} |f''(x^\ast)|(x-x^\ast)^2 + \epsilon_2 \rb{x-x^\ast}$, and then do a change of variables $\frac{s}{\sqrt{n}}= x-x^\ast $, so that for any constants $c_a < 0 < c_b$ one has  \begin{equation}\label{eq:laplace_sketch}\intop_{x^\ast+c_a/\sqrt{n}}^{x^\ast+c_b/\sqrt{n}} e^{nf(x)} \d x = e^{nf(x^\ast)}\intop_{c_a}^{c_b} e^{ - \half |f''(x^\ast)|s^2 + n \epsilon_2\rb{\frac{s}{\sqrt{n}}}} \frac{\d s}{\sqrt{n}}.\end{equation}
Then, by using the technical regularity conditions of the function $f$, one argues that Taylor error term $n \epsilon_2(\frac{s}{\sqrt{n}})$ goes to zero fast enough to be negligible (For example, if $f$ has bounded third derivative, one has $\epsilon_2(y)=O(y^3)$ and $n \epsilon_2(\frac{s}{\sqrt{n}})$ goes to zero like $\frac{1}{\sqrt{n}}$ ) , whence one remains with the Gaussian integral $\intop_{c_a}^{c_b} e^{ -\frac{1}{2} |f''(x^\ast)| s^2} \d s$. As $c_a \to -\infty$ and $c_b \to +\infty$ , this converges to $\sqrt{ \frac{2\pi}{|f''(x^\ast)|}}$ which is the term that appears on the RHS of \eqref{eq:laplace_approx_eq}. To complete the argument, one also needs to argue that the integral outside of this neighbourhood of $x^\ast$ is exponentially smaller than $e^{n f(x^\ast)}$ and use continuity of $g$ to bring $g(x)$ along for the ride. (see e.g. Theorem 3.5.3 from \cite{AGZ} where these details are carried out and a rigourous proof is given.)

This proof sketch illustrates a few important concepts:
\begin{enumerate}
    \item More precise error estimates can be obtained by being more careful with the Taylor series expansion of $f$ or by expanding to higher order.  We limit ourselves to proving to $1+o(1)$ for ease of readability here. 
    \item The dominant contribution comes from a window of on the scale $\frac{1}{\sqrt{n}}$ near the maximizer $x^\ast$. Keeping this in mind allows one to apply it to more complicated domains of integration. For example, if the endpoint of the interval of integration coincides with the maximizer, $a=x^\ast$, the approximation will still apply with a factor of $\frac{1}{2}$. It also applies for contour integrals on curves in $\mathbb{C}$ in the same way by zooming in around the maximizer $x^\ast$ because every curve is approximately a tangent line when zoomed in. We will see precisely this idea of applying the Laplace approximation to contour integrals going through a local maximum in Section \ref{subsec:three-scaling}.  
    \item Since the dominant contribution happens near the maximizer $x^\ast$, one can also give limits that give the asymptotic behaviour on windows that zoom in around $x^\ast$. For example, by changing variables $z = \sqrt{|f''(x^\ast)|}s$ in the integral of \eqref{eq:laplace_sketch}, one sees that for $c_a < 0 < c_b$ one has the limit
\begin{equation}\label{eq:zoomed_in_laplace_approx_eq} \intop_{x^\ast + c_a/\sqrt{n}}^{x^\ast + c_b/\sqrt{n}} e^{n f(x)} g(x) \d x  = e^{n f(x^\ast) } g(x^\ast) \sqrt{\frac{2\pi}{n |f''(x^\ast)|}} F_\Phi \Bigg(c_a\sqrt{|f''(x^\ast)|},c_b\sqrt{|f''(x^\ast)|}\Bigg)   (1+o(1)),\end{equation} where $F_\Phi(u,v) = \frac{1}{\sqrt{2\pi}}\intop_{u}^{v} e^{-\frac{1}{2}z^2}\d z = \mathbb{P}_{Z\sim\mathcal{N}(0,1)}(u < Z < v)$ is the cumulative probability function for the standard Gaussian random variable.
    \item For maximizers $x^\ast$ which have $f''(x^\ast)=0$ (aka a double critical point), the Laplace approximation idea can still be used by now expanding to 3rd order in the Taylor series expansion $f(x) = f(x^\ast) - \frac{1}{6} f'''(x^\ast)(x-x^\ast)^3 + \epsilon_3 \rb{x-x^\ast}$ and making a change of variable $\frac{s}{n^{1/3}} = x-x^\ast$ instead. This is precisely what will occur in Section \ref{subsec:edge-scaling} and explains the factors of $n^{1/3}$ that appear in these edge scalings.
\end{enumerate} 

\end{remark}    
\begin{example}\label{ex:stirling}
    The Laplace expansion can be used to derive a version of Stirling's formula for approximating $n!$. We begin by writing the $n!$ as the Gamma function integral    $$n! = \intop_0^\infty x^n e^{-x} \d x = n^{n+1} \intop_0^\infty y^n e^{-ny} \d y = n^{n+1} \intop_0^\infty e^{n (\ln y - y)} \d y, $$
where we have used a change of variables $x = n y$ so that the maximizer to the integral occurs at the fixed value $y^\ast = 1$ (as opposed to the maximum $x^\ast =n$  growing with $n$). The function $f(x) = \ln y - y$ satisfies our criteria with the maximum at $y^\ast = 1$, $f(y^\ast)=-1$, $f''(y^\ast)=-1$ and so we obtain:
    $$n! = n^{n+1} \intop_0^\infty e^{n (\ln y - y)} \d y = n^{n+1} e^{n (-1)} \sqrt{\frac{2\pi}{n \cdot  1}} (1+o(1)) = \sqrt{2\pi n}\left(\frac{n}{e}\right)^n (1+o(1))   $$\end{example}

\begin{remark}
\label{rmk:Stirling-bound}
    While we do not prove this here, the formula given for the limiting behaviour of $n!$ is in fact a lower bound. It will be helpful for us to use the following upper and lower bounds that follow from \cite{Robbins-Stirling}: for any $n\in\bN$,
    \begin{align*}
        \sqrt{2\pi n} \rb{\frac n e}^n < n!\leq e \sqrt{n}\rb{\frac n e}^n .
    \end{align*}
\end{remark}

\subsection{Approximation to \texorpdfstring{$\fH_n(x)$}{H} \texorpdfstring{when $x=O(1)$}{near 1}}

\begin{proposition}
\label{prop:unscaled}
For $x \in \bR$ fixed, we have the following asymptotics as $n\to \infty$
			\begin{align*}
				\fH_n(x)&=\sqrt2 \, \rb{\frac ne}^{n/2}\,  e^{x^2/4} \cos\rb{\frac{n\pi}{2} -x\sqrt{n} }\rb{1+o\rb{1}}.
			\end{align*}
\end{proposition}
\end{proof}

\begin{proof}
Starting with the integral formula \eqref{eq:HExpectation}, we proceed as follows:
\begin{align*}
\fH_{n}(x)& =\frac{1}{\sqrt{2\pi}} \intop_{-\infty}^{\infty}(x+it)^{n}e^{-t^{2}/2}\d t =\frac{2}{\sqrt{2\pi}}\Re\left\{ \intop_{0}^{\infty}(x+it)^{n}e^{-t^{2}/2}\d t\right\} \\
 & =\frac{2}{\sqrt{2\pi}}\Re\left\{ \intop_{\Re(z)=x,\Im(z)>0}e^{n\ln(z)+\half(z-x)^{2}}\frac{\d z}{i}\right\} \\ & =\frac{2}{\sqrt{2\pi}}\Re\left\{ \intop_{\Re(z)=0,\Im(z)>0}e^{n\ln(z)+\half(z-x)^{2}}\frac{\d z}{i}\right\}, 
\end{align*}
where we have thought of this as a line integral in $\mathbb{C}$ and moved the contour over to the positive imaginary axis  $\Re(z)=0$. This can be justified by Cauchy's theorem and the exponential decay of the integrand near infinity. Note that the contribution to the integral is dominated by the locations $z$
where $n\ln(z)+\half(z-x)^{2}$ is maximum along this contour. Taking a derivative, we see the maximum occurs at $z^\ast=i\sqrt{n}+o(\sqrt{n})$, which gives us the idea to change variables by a factor of $\sqrt{n}$ so that this location of maximum contribution is moved to an O(1) location as $n$ grows. Specifically, let us change do a change of variable to a new variable $s$ so that:
\[
z=i\sqrt{n}s,\quad \frac{\d z}{i}=\sqrt{n}\d s, \quad \ln(z)=\ln(\sqrt{n})+\ln(s)+i\frac{\pi}{2}
\]
 and expanding out gives
\[
\frac{1}{2}(z-x)^{2}=\half(i\sqrt{n}s-x)^{2}=-\half ns^{2}+\half x^{2}-i\sqrt{n}sx
\]
so we remain with:
\begin{align*}
\fH_n(x) & =\frac{2}{\sqrt{2\pi}}\Re\left\{ \intop_{0}^{\infty}e^{n\ln\sqrt{n}}e^{n\ln s}e^{ni\frac{\pi}{2}}e^{-\half ns^{2}}e^{\half x^{2}}e^{-i\sqrt{n}sx}\sqrt{n}\d s\right\} \\
 & =\frac{2}{\sqrt{2\pi}} \sqrt{n}e^{n\ln\sqrt{n}}e^{\half x^{2}}\intop_{0}^{\infty}e^{n\left(\ln s-\half s^{2}\right)}\cos\left(n\frac{\pi}{2}-\sqrt{n}sx\right)\d s
\end{align*}
Now by the Laplace approximation \ref{prop:laplace} , the contribution to the integral is coming from the maximizer of $f(s)=\ln s-\half s^{2,}$ which has $f'(s)=\frac{1}{s}-s$ and $f''(s)=-\frac{1}{s^{2}}-1$, and a global maximum at $s^\ast=1$ with $f(1)=-\half$, $f''(1)=-2$, yielding:
\begin{align*}
\fH_n(x) & = \frac{2}{\sqrt{2\pi}} \sqrt{n}e^{n\ln\sqrt{n}}e^{\half x^{2}}\left(e^{nf(1)}\sqrt{\frac{2\pi}{n\abs{f^{''}(1)}}}\right)\cos\left(n\frac{\pi}{2}-\sqrt{n}\cdot1\cdot x\right)(1+o(1))\\
 & =\frac{2}{\sqrt{2\pi}} \sqrt{n} e^{n\ln\sqrt{n}}e^{\half x^{2}}\left(e^{-n\half}\sqrt{\frac{2\pi}{2 n}}\right)\cos\left(n\frac{\pi}{2}-\sqrt{n} x \right)(1+o(1)),
\end{align*}
from which the result follows after simplifying.
\end{proof}

    \subsection{Three approximations to \texorpdfstring{$\fH_n(x)$}{H} \texorpdfstring{when $x=O(\sqrt{n})$}{at square root}}\label{subsec:three-scaling}

It is also natural to obtain asymptotics for $\fH_n(x)$ when $x$ grows proportionally to  $\sqrt{n}$, i.e. when $x = 2a \sqrt{n}$ for some constant $a$.  The results here are equivalent to the so-called Plancherel--Rotach asymptotics for the Hermite polynomials (see e.g. Theorem 8.22.9 of \cite{Szego1975}). However, our results are stated and proven without any reference to $n!$ (which arises from the Cauchy integral formula in the traditional Plancherel--Rotach setup). 

There are three asymptotic regimes depending on the value of  $\lim_{n\to \infty} x/\sqrt{n}$, each of which has qualitatively different behaviour, see Figure \ref{fig:Hermite_approx} for an illustration.
\begin{itemize}
    \item \textbf{Small $x$ values $-2\sqrt{n} < x < 2\sqrt{n}$:} In this regime, the Hermite polynomials are oscillating, and a similar qualitative behaviour to the $x=O(1)$ of Proposition \ref{prop:unscaled} with modifications to the phase and amplitude.  The Hermite polynomials in this region are also intimately connected to the arcsine law and the semi-circle law, see Subsection \ref{subsec:arcsine}  and \ref{subsec:semicircle-limit}.
    \item \textbf{Large $x$ values $|x| > 2\sqrt{n}$}: In this regime the polynomial has no more oscillations and is instead monotone increasing. As $x$ grows larger and larger, this approximation approaches the approximation of Proposition \ref{prop:superlarge}.
    \item \textbf{Values near the edge $x=\pm 2\sqrt{n}+O(n^{-1/6})$:} In this regime right on the edge between the small and large behaviours, the asymptotics are approximated using the Airy function. This interpolates between the oscillatory small-$x$ regime and monotone large-$x$ regimes (corresponding to the oscillations of the Airy function $\A(x)$  for $x<0$ and monotone behaviour for $x>0$). This regime is important to the KPZ universality class and gives rise to the so-called Airy kernel, for example controlling fluctuations of the largest eigenvalue of certain random matrices. See Section \ref{sec:Dyson}. 

\end{itemize}

Mathematically, these three regimes arise from a certain quadratic equation in our integral formula  having either two distinct real roots (the small $x$ regime), a single repeated root (the values near the edge regime) or two complex conjugate roots (the large $x$ regime). See Figure \ref{fig:constant_phase_curves} for how these roots appear in the 3 regimes.

While we state our result for $x>0$, the analogous results for $x<0$ can be determined by the even/oddness of the Hermite polynomials,  $\fH_n(-x)=(-1)^n\fH_n(x)$.

\begin{theorem} \label{thm:main-asymptotics}
The Hermite polynomials $\fH(x)$ have the following three approximations when $x=O(\sqrt{n})$ and $x>0$ 
\begin{itemize}[label={}, leftmargin=*]
\item \textbf{1. Small-$x$ regime $\mathbf{0 < x < 2\sqrt{n}}$ :} For fixed $a\in (0,1)$, set $x=2a\sqrt{n}$ and we obtain:
\begin{equation}
\fH_n\rb{2a\sqrt n} = n^{n/2}\,
				{\frac{\sqrt{2}}{({1-a^2})^{1/4}}}
				e^{n(a^2-1/2)}\,
				\cos\rb{n \rb{a\sqrt{1-a^2} -\arccos(a)} -\frac 12\rb{\arccos(a)+\frac\pi2}}\rb{1+o(1)},
\end{equation}
with the convention $\arccos(a)\in (0,\pi)$. Equivalently, writing $a=\cos(\alpha)$ with $\alpha\in (0,\pi/2)$, we have
			\begin{align}\label{eq:small-x-approx}
				\fH_n\rb{2\cos(\alpha)\sqrt n} &= n^{n/2}
				\sqrt{\frac{2}{\sin(\alpha)}}
				e^{n(\cos^2\alpha-1/2)}\,
				\cos\rb{n \rb{\sin\alpha\cos\alpha -\alpha} -\frac 12\rb{\alpha+\frac\pi2}}\rb{1+o(1)}
			\end{align} See Remark \ref{rem:small-x} for an interpretation of the oscillating factor here.
\item \textbf{2. Large-$x$ regime $\bm{x > 2\sqrt{n}}$:} For fixed $a>1$, write $a=\cosh(\gamma)$ with $\gamma\geq 0$ and set $x=2a\sqrt{n}$. We obtain:
			\begin{align}\label{eq:large-x-approx}
				\fH_n \rb{2a\sqrt n }  = \fH_n( 2\cosh(\gamma)\sqrt{n} )
				& = n^{n/2}e^{n (\gamma+\frac12 e^{-2\gamma})}\, \frac{e^{\gamma/2}}{\sqrt{2\sinh(\gamma)}}
				\rb{1+o(1)}. 
			\end{align}
\item \textbf{3. Edge regime} $\mathbf{x=2\sqrt{n}+O(n^{-1/6})}$: For fixed $u\in\bR$ , set $x = 2\sqrt{n} + n^{-1/6}u$  and we obtain:
			\begin{align}
				\fH_n\rb{2\sqrt n+n^{-1/6}u}
				& = \sqrt{2\pi} e^{n/2} n^{n/2} n^{1/6} e^{un^{1/3}} \A(u) \rb{1+o\rb{1} }
                \end{align}
\end{itemize}

                \end{theorem}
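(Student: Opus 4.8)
The plan is to run the Laplace/steepest-descent method on the Gaussian integral formula, exactly as in the proof of Proposition \ref{prop:unscaled}, but now tracking how the saddle point moves as $x$ grows like $\sqrt n$. Starting from $\fH_n(x) = \frac{1}{\sqrt{2\pi}}\int_{-\infty}^\infty (x+it)^n e^{-t^2/2}\,\d t$ and rewriting it as a contour integral of $e^{n\ln z + \half(z-x)^2}$ as before, I would substitute $x = 2a\sqrt n$ and rescale $z = \sqrt n\, w$. Pulling out the resulting factor $n^{n/2}$ from $e^{\frac n2\ln n}$, the exponent becomes $n f(w)$ with the single phase function $f(w) = \ln w + \half(w-2a)^2$, which no longer depends on $n$. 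This reduces all three regimes to a saddle-point analysis of one fixed function.

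The saddle equation $f'(w) = \tfrac1w + (w-2a) = 0$ is the quadratic $w^2 - 2aw + 1 = 0$ with roots $w = a \pm \sqrt{a^2-1}$, and the three regimes are exactly the three cases of its discriminant $a^2 - 1$. For the small-$x$ regime $a = \cos\alpha \in (0,1)$ the roots are the complex-conjugate pair $w^\ast = e^{\pm i\alpha}$ on the unit circle; I would deform the upper-half contour to pass through $w^\ast = e^{i\alpha}$ along its steepest-descent direction and apply Proposition \ref{prop:laplace}, and the $2\Re\{\cdot\}$ in front then combines this with its conjugate to produce the stated cosine. Here the bookkeeping is purely a matter of evaluating $f(e^{i\alpha}) = (a^2 - \half) + i(\arccos a - a\sqrt{1-a^2})$, which supplies the exponential rate and the $n$-dependent phase, and $f''(e^{i\alpha}) = 2\sin\alpha\, e^{i(\pi/2 - \alpha)}$, whose modulus gives the $(1-a^2)^{-1/4}$ amplitude and whose argument, together with the orientation of the steepest-descent contour and the $\tfrac{\d z}{i}$ factor, contributes the constant phase shift $-\half(\arccos a + \tfrac\pi2)$ through the complex square root $\sqrt{2\pi/(n|f''|)}$.

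For the large-$x$ regime $a = \cosh\gamma > 1$ the same computation applies, except the two saddles $w^\ast = e^{\pm\gamma}$ are now real. By continuity of the saddle as $a$ crosses $1$ (the conjugate pair $e^{\pm i\alpha}$ merges at $w=1$ and splits along the real axis), the relevant saddle is $w^\ast = e^{\gamma}$, where $f(e^\gamma) = \gamma + \half e^{-2\gamma}$ is real; hence there is no oscillation, matching the monotone behaviour, and $f''(e^\gamma) = 2\sinh\gamma\, e^{-\gamma}$ produces the amplitude $e^{\gamma/2}/\sqrt{2\sinh\gamma}$. For the edge regime I would write $x = 2\sqrt n + n^{-1/6}u$, i.e. $2a = 2 + n^{-2/3}u$, so that the two saddles collide at the double point $w=1$ where $f''(1)=0$. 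This is precisely the degenerate case flagged in the remark after Proposition \ref{prop:laplace}: I would Taylor-expand to third order using $f'''(1)=2$, rescale $w - 1 = n^{-1/3}s$, and check that the $O(n^{-2/3})$ perturbation of $a$ contributes exactly a linear term $-us$, so that $n f(w)$ becomes $\tfrac n2 + u n^{1/3} + (\tfrac13 s^3 - us) + o(1)$. The remaining $s$-integral is then identified with the contour representation $\A(u) = \frac{1}{2\pi i}\int_C e^{s^3/3 - us}\,\d s$, and collecting the Jacobian factors $\sqrt n$ (from $z = \sqrt n w$) and $n^{-1/3}$ (from $w-1 = n^{-1/3}s$) yields the $n^{1/6}$ prefactor.

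The main obstacle throughout is rigorously justifying the contour manipulations rather than the algebra: one must verify by Cauchy's theorem and exponential decay at infinity that the original contour can be deformed onto the steepest-descent path through the relevant saddle, and --- most delicately --- that it is the saddle $e^{\gamma}$ rather than $e^{-\gamma}$ that governs the large-$x$ regime, even though $f(e^{-\gamma})$ is the larger of the two values. The edge case demands the most care: the cubic Laplace argument must be controlled uniformly in the window $w-1 = O(n^{-1/3})$, the deformed contour must be shown to coincide with the Airy contour $C$ running from $\infty e^{-i\pi/3}$ to $\infty e^{i\pi/3}$, and the exact constant $\sqrt{2\pi}$ emerges only after correctly accounting for the factor of $2$ from the $2\Re\{\cdot\}$ folding, since the two halves of the contour assemble into a single copy of $C$ rather than two.
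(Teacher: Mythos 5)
Your proposal is correct and takes essentially the same approach as the paper: steepest descent applied to the Gaussian integral formula, with the three regimes arising from the discriminant of the saddle-point quadratic, the conjugate pair of saddles folding via $2\Re\{\cdot\}$ into the cosine for small $x$, the saddle $e^{\gamma}$ (which is exactly the paper's $ie^{-\gamma}$ under the affine substitution $w \mapsto 2a + iw$ relating your phase function $\ln w + \tfrac12(w-2a)^2$ to the paper's $F_a$, up to the additive constant $\log 2$ that accounts for the paper's $2^n$ prefactor) for large $x$, and a third-order expansion at the merged double saddle producing the Airy contour integral at the edge. Your saddle values, second and third derivatives, prefactor bookkeeping, and identification of the delicate points (saddle selection for $a>1$, the assembly of the two folded halves into a single Airy contour) all agree with the paper's Lemmas \ref{lem:a-less-one}, \ref{lem:a-bigger-one} and \ref{lem:Hermite-Airy-asymptotics}.
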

                \begin{proof}
                All three results begin in the same way, by first writing $\fH_n$ as the Gaussian integral:
                \begin{align*}
			\fH_n\rb{2a\sqrt n} &= \mathbb{E}_{Z\sim \cN(0,1)} \left[ \left(2a\sqrt{n} + iZ \right)^n\right] \\
            &= 2^n n^{n/2}\bE_{Z \sim \cN(0,1)} \ab{\rb{a+i\frac{Z}{2\sqrt{n}}}^n} \\
             &= 2^n n^{n/2}\wt H_{n}\rb{a,\frac1{4n}} 
		\end{align*}
where we recall the definition $\wt H_n(x,t) = \mathbb{E}_{W\sim\cN(0,t)}[(x+iW)^n]$. We now manipulate the Gaussian variances to arrange for a power of $n$ to appear as follows:
$$\wt H_n\rb{a,\frac1{4n}} = \mathbb{E}_{W\sim \cN(0,\frac{1}{n})}\left[ \rb{a+i\frac{W}{2}}^n \right] = \sqrt{\frac{n}{2\pi}} \intop_{-\infty}^\infty \rb{a+i\frac{w}{2}}^n e^{-\frac{1}{2}nw^2} \d w$$
which we now write as the following Laplace-style integral:
             \begin{align}
			\label{eq:Hn-integral-Fa}
			\wt H_n\rb{a,\frac1{4n}} = \sqrt{\frac n{2\pi}}\, \int_{-\infty}^\infty \exp\set{n F_a(w)}\, \d w,\quad \quad F_a(w) = \log\rb{a+\frac i2 w}-\frac{w^2}2.
		\end{align}
This is now perfectly ripe for analysis with the Laplace method. The function $F_a$ has derivative $F_a'(w) = \frac1{w-2ai}-w$, whence the critical points are found by the roots of the quadratic equation $(w-2ai)w-1$. Depending on the value of $a$, one of three things can happen:  there can be two conjugate pair roots, two distinct roots or a repeated root; see Figure \ref{fig:constant_phase_curves} for the location of these critical points. This trichotomy is what leads to the three different regimes. 

In each regime, to compute the formula we must find the critical point(s) and then deform the contour of integration from the real axis to a curve in the complex plane passing through the critical point(s) so that the method of Laplace can be used (this shift is sometimes called "the method of stationary phase" since the imaginary part is constant on each curve). One can explicitly compute these curves by finding the real and imaginary parts of $F_a$  as follows
		\begin{align}
			\label{eq:RealFa}
			\re\rb{F_a(x+iy)} & =\frac12\ln\rb{ \rb{a-\frac{y}{2}}^{2}+\rb{\frac{x}{2}}^{2}}-\frac{x^{2}}{2}-\frac{y^{2}}{2}\\
			\label{eq:ImaginaryFa}
			\im\left(F_a(x+iy)\right) & =\arctan\left(\frac{x/2}{a-y/2}\right)-xy.
		\end{align}
This leads to constant phase curves visualized in Figure \ref{fig:constant_phase_curves} . Then, it remains to apply the Laplace method to the integral to get the limiting behaviour of $\wt H(a,\frac{1}{4n})$ in each of the cases. The details of the this are explained in Lemmas \ref{lem:a-less-one}, \ref{lem:a-bigger-one}  and the analogous requited limit for $\wt H_n(1 + \frac{1}{2} u n^{-2/4},\frac{1}{4n}) $ in Lemma  \ref{lem:Hermite-Airy-asymptotics}
                \end{proof}
           \begin{remark}\label{rem:small-x}
The phase term $\theta_n(a)  := n(a\sqrt{1-a^2}-\arccos(a))-\frac{1}{2}\arccos(a)+\frac{1}{4}\pi$ that appears inside the cosine $\cos(\theta_n(a))$ in the small $x$ approximation \eqref{eq:small-x-approx} has a nice interpretation which allows one to analyse the zeros. 

First notice that the function $\theta_n:[-1,1]\to \bR$, is monotone increasing from $f(-1)=-(n+\frac{1}{4})\pi$ to $f(1)=\frac{1}{4}\pi$ . Therefore $\cos(\theta_n(a))$ will have exactly $n$ zeros. This captures the general behaviour of the $n$ zeros of the $n$-th Hermite polynomial. (But note that the proof we give here only gives convergence of the function values; not the location of the zeros. More care would be needed to control the location of the zeros rigorously.)

Moreover,  notice that $\frac{\d}{\d a}\theta_n(a)=2n\sqrt{1-a^2}+\frac{1}{2\sqrt{1-a^2}}$ , which is a mixture of the semi-circle density $\sqrt{1-a^2}$ on $[-1,1]$ and the arcsine density $\frac{1}{\sqrt{1-a^2}}$ on $[-1,1]$. The weighting of the mixture is $n$ parts semi-circle law to $\frac{1}{2}$ a part arcsine law. By normalizing, we can create a cumulative distribution function (CDF) from $\theta_n$ which is this weighted mixture of the semi-circle CDF and the arcsine CDF:
$$CDF_{\theta_n}(a) := \frac{\theta_n(a)+(n+\frac{1}{4})\pi}{(n+\frac{1}{2})\pi} = \frac{n}{n+\frac{1}{2}} CDF_\text{semi-circle}(a)+\frac{ \frac{1}{2} }{n+\frac{1}{2}} CDF_\text{arcsine}(a)$$
Using the zeros for $\cos(\theta)$, we find that the location of the zeros of $\cos(\theta_n(a))$ occur at the locations $a_k, k=1,\ldots,n$ that satisfy $CDF_{\theta_n}(a_k) = \displaystyle\frac{k-\frac{1}{4}}{n+\frac{1}{2}} $. In other words, the zeros are certain quantiles of this CDF, with the space between any two consecutive zeros always capturing a mass of $\frac{1}{n+\half}$ . As $n$ grows large, the contribution from the arcsine law is asymptotically diminished and we recover the known fact that the $n$ zeros of the Hermite polynomial are approximately located according to the semi-circle density, with fewer zeros near the edges and more zeros in the centre.    
\end{remark}
            
\begin{figure}[!ht]
    \centering
    \includegraphics[width=1\linewidth]{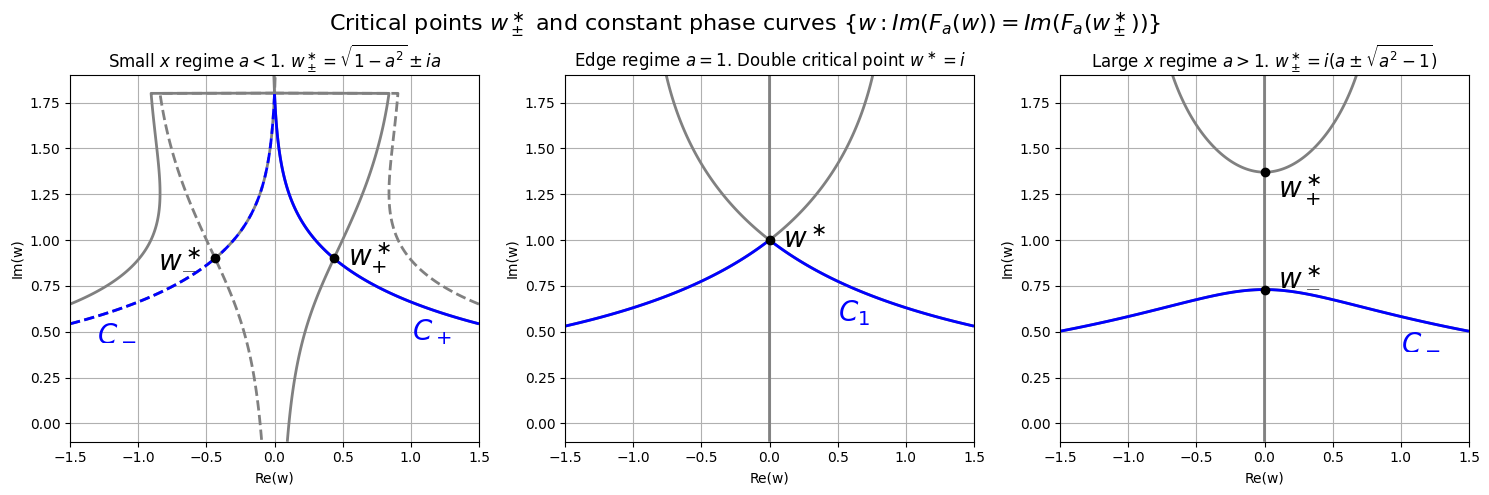}
    \caption[The location of critical points $w^\ast_\pm$ and constant phase curves]{The location of critical points $w^\ast_\pm$ and constant phase curves $\{w : Im(F_a(w)) = Im(F_a(w^\ast_\pm)) \}$ shown in grey. The location of the critical points changes from conjugate pair of roots when $a<1$ (small x regime), to a double critical point when $a=1$, to two distinct roots when $a>1$ (large $x$ regime). The blue curves $C_\pm$ and $C_1$ are used in the proofs: we deform the integral from the real axis to these curves so that we can apply the Laplace appropriation method through the appropriate critical points. }
    \label{fig:constant_phase_curves}
\end{figure}

            \subsubsection{Proof of small-\texorpdfstring{$x$ and large-$x$}{x and large-x} asymptotics}
            
            Both the small and large asymptotics are proved by applying the ordinary Laplace approximation idea from Proposition \ref{prop:laplace}. 

\begin{lemma}
		\label{lem:a-bigger-one}
		For $a>1$, writing $a=\cosh(\gamma)$,
		\begin{align}
			\wt H_n \rb{a,\frac1{4n}} = \wt H_n \rb{\cosh(\gamma),\frac1{4n}} & = e^{n (-\log(2)+\gamma+\frac12 e^{-2\gamma})}\, \frac{e^{\gamma/2}}{2\sqrt{\sinh(\gamma)}}
			\rb{1+o\rb{1}}
		\end{align}
	\end{lemma}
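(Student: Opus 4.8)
The plan is to apply the saddle-point (Laplace) method directly to the integral representation already derived in the proof of Theorem \ref{thm:main-asymptotics},
\[
\wt H_n\rb{a,\tfrac1{4n}} = \sqrt{\tfrac{n}{2\pi}}\intop_{-\infty}^\infty \exp\set{n F_a(w)}\,\d w, \qquad F_a(w)=\log\rb{a+\tfrac i2 w}-\tfrac{w^2}2,
\]
specialising the trichotomy to the case $a>1$. First I would locate the critical points by solving $F_a'(w)=\frac{1}{w-2ai}-w=0$, i.e. the quadratic $w^2-2aiw-1=0$, whose roots are $w^\ast_\pm=i\rb{a\pm\sqrt{a^2-1}}=ie^{\pm\gamma}$ after writing $a=\cosh\gamma$ so that $\sqrt{a^2-1}=\sinh\gamma$. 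For $\gamma>0$ these are two distinct points on the positive imaginary axis, and both lie strictly below the logarithmic branch point at $w=2ai=i(e^\gamma+e^{-\gamma})$.

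The decisive step is to pick the correct saddle and deform the real axis onto its steepest-descent (constant-phase) curve $C_-$ from Figure \ref{fig:constant_phase_curves}. Restricting $\re F_a$ to the imaginary axis gives, via \eqref{eq:RealFa}, the function $g(y)=\ln\abs{a-\tfrac y2}+\tfrac{y^2}2$, and a one-line computation yields $g''(e^{-\gamma})=1-e^{-2\gamma}>0$ but $g''(e^\gamma)=1-e^{2\gamma}<0$. Hence along the imaginary axis the lower saddle $w^\ast_-=ie^{-\gamma}$ is a local \emph{minimum} of $\re F_a$ while the upper saddle $w^\ast_+=ie^{\gamma}$ is a local \emph{maximum}; by analyticity the steepest-descent path through $w^\ast_-$ is therefore \emph{horizontal} and homotopic to the real axis, whereas the one through $w^\ast_+$ runs vertically. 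This is precisely why $w^\ast_-$ governs the asymptotics even though one checks $\re F_a(w^\ast_+)-\re F_a(w^\ast_-)=\sinh(2\gamma)-2\gamma>0$, i.e. the \emph{larger} value of $\re F_a$ sits at the saddle the contour cannot use. The deformation itself is justified by Cauchy's theorem, since the branch cut runs upward from $2ai$ (strictly above $C_-$) and the $-\tfrac{w^2}2$ term forces the integrand to decay as $\re w\to\pm\infty$ along the shifted contour.

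With the contour fixed, $\im F_a$ is constant along $C_-$ (equal to $\im F_a(w^\ast_-)=0$), so $F_a$ is real there with a unique nondegenerate global maximum at $w^\ast_-$, and Proposition \ref{prop:laplace} applies after parametrising the curve by a real variable. The two inputs are the critical value $F_a(w^\ast_-)=\gamma-\log 2+\tfrac12 e^{-2\gamma}$, read off from $a+\tfrac i2 w^\ast_-=\tfrac12 e^{\gamma}$ and $(w^\ast_-)^2=-e^{-2\gamma}$, and the second derivative $F_a''(w^\ast_-)=-\frac{1}{(w^\ast_--2ai)^2}-1=e^{-2\gamma}-1$. Combining the Laplace output $e^{nF_a(w^\ast_-)}\sqrt{2\pi/(n\abs{F_a''(w^\ast_-)})}$ with the prefactor $\sqrt{n/2\pi}$ cancels all powers of $n$ and leaves the amplitude $1/\sqrt{\abs{F_a''(w^\ast_-)}}=1/\sqrt{1-e^{-2\gamma}}$, which simplifies through $1-e^{-2\gamma}=2e^{-\gamma}\sinh\gamma$ to $e^{\gamma/2}/\sqrt{2\sinh\gamma}$.

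I expect the contour argument of the second paragraph to be the only genuine obstacle: one must argue carefully that the relevant saddle is the lower one, $w^\ast_-$, rather than the maximiser of $\re F_a$, and verify that the steepest-descent curve $C_-$ really does join the two decaying ends of the real axis without crossing the branch cut. Everything else — the root computation, the evaluation of $F_a$ and $F_a''$ at $w^\ast_-$, and the final simplification of constants — is routine.
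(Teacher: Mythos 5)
Your proof is correct and takes essentially the same route as the paper: the same integral representation \eqref{eq:Hn-integral-Fa}, the same saddle $w^\ast_- = ie^{-\gamma}$, the same deformation onto the constant-phase curve $C_-$, and the same application of Proposition \ref{prop:laplace} with identical values of $F_a(w^\ast_-)$ and $F_a''(w^\ast_-)$. Your second paragraph simply makes explicit the saddle-selection argument (why the contour must use the lower saddle even though $\re F_a$ is larger at $w^\ast_+$) that the paper delegates to Figure \ref{fig:constant_phase_curves}; note also that your final amplitude $e^{\gamma/2}/\sqrt{2\sinh(\gamma)}$ agrees with the paper's own proof and with Theorem \ref{thm:main-asymptotics}, the Lemma's displayed constant $e^{\gamma/2}/\bigl(2\sqrt{\sinh(\gamma)}\bigr)$ being a typographical slip with $2$ in place of $\sqrt{2}$.
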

		\begin{proof}
		Recall \eqref{eq:Hn-integral-Fa}:
		\begin{align*}
			\wt H_n\rb{a,\frac1{4n}} = \sqrt{\frac n{2\pi}}\, \int_{-\infty}^\infty \exp\set{n F_a(w)}\, \d w,\quad \quad F_a(w) = \log\rb{a+\frac i2 w}-\frac{w^2}2.
		\end{align*}
So the critical points of $F_a$ for $a>1$ are found by the quadratic formula $w^\ast_\pm=w^\ast_\pm(a)=				i(a\pm\sqrt{a^{2}-1})$. By \eqref{eq:ImaginaryFa}, $\im F_a(w)=0$ for any purely imaginary $w\ne 2ai$, in particular for $w^\ast_\pm$. The constant-phase curve $ C_a = \{ w :  \im F_a(w)=0 \}$ is the disjoint union of two parts $C_+$ and $C_-$, which pass through the points $w^\ast_+$ and $w^\ast_-$ respectively, see Figure \ref{fig:constant_phase_curves} on the right.
		
		Thus we deform the contour of integration from $(-\infty,\infty)$ to $C_-$, which can be justified because $F_a$ is entire in any bounded subset of the region between the two curves.   We have now:
      $$  \wt H_n\rb{a,\frac1{4n}} = \sqrt{\frac n{2\pi}}\, \int_{C_-} \exp\set{n F_a(w)}\, \d w $$In preparation for the Laplace method, we now compute the values of the function $F_a$ and its derivatives at the critical point $w_-^\ast$ where $F_a$ has its global maximum in terms of $\gamma$ as follows:
      \begin{align*}
			w_-^\ast &= i(a-\sqrt{a^{2}-1})=i(\cosh(\gamma)-\sinh(\gamma))=ie^{-\gamma},\\
			F_a\rb{w_-^\ast} 
			& =\log\rb{\frac12 e^{\gamma}}+\frac12 e^{-2\gamma}=-\log(2)+\gamma+\frac12 e^{-2\gamma},\\
			F_a''\rb{w_-^\ast} &=\frac{1}{4\,(e^{\gamma}/2)^{2}}-1
			= e^{-2\gamma}-1 
		\end{align*}
Hence, applying the Laplace approximation in the form of Proposition \ref{prop:laplace}, we obtain
            \begin{align*}
			\wt H_n\rb{a,\frac{1}{4n}} & =  e^{n (-\log(2)+\gamma+\frac12 e^{-2\gamma})}\, \frac1{\sqrt{1-e^{-2\gamma}}}\rb{1+O\rb{n^{-1}}}\\
			& =  2^{-n} e^{n (\gamma+\frac12 e^{-2\gamma})}\, \frac{e^{\gamma/2}}{\sqrt{2\sinh(\gamma)}}
			\rb{1+O\rb{n^{-1}}}
		\end{align*}
		as claimed.
	\end{proof}

\begin{lemma}
		\label{lem:a-less-one}
		For $a<1$, setting $a=\cos(\alpha)$ , we have
		\begin{align}
			\wt H_n \rb{\cos(\alpha),\frac1{4n}} & = 2^{-n}\,
			\sqrt{\frac{2}{\sin(\alpha)}}
			e^{n(\cos^2\alpha-1/2)}\,
			\cos\rb{n \rb{\sin\alpha\cos\alpha -\alpha} -\frac 12(\alpha+\pi/2)}\rb{1+O\rb{n^{-1}}}
		\end{align}
        \end{lemma}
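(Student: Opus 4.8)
The plan is to run the same steepest-descent analysis as in Lemma \ref{lem:a-bigger-one}, now accounting for the fact that when $a<1$ the two critical points of $F_a$ are genuinely complex rather than purely imaginary. Starting from \eqref{eq:Hn-integral-Fa}, the roots of $(w-2ai)w-1$ are $w^\ast_\pm = ai \pm \sqrt{1-a^2} = ie^{\pm i\alpha}$ (writing $a=\cos\alpha$, so $\sqrt{1-a^2}=\sin\alpha$), a pair of points symmetric about the imaginary axis, both at height $\cos\alpha$ and hence strictly below the branch point of $\log\rb{a+\frac i2 w}$ at $w=2ai$. The structural fact I would exploit is the symmetry $F_a(-\bar w)=\overline{F_a(w)}$: it exchanges the two saddles under $w\mapsto -\bar w$ and makes their local data complex conjugate, which is precisely what turns the sum of the two saddle contributions into a single real cosine.

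First I would deform the contour off the real axis. Unlike the $a>1$ case, the two saddles lie on different level sets of $\im F_a$, so there is no single constant-phase curve through both; instead I would deform to a contour that follows the local steepest-descent direction through each of $w^\ast_+$ and $w^\ast_-$, joined by arcs on which $\re F_a$ stays strictly below the common saddle value. This is justified by Cauchy's theorem, since $F_a$ is holomorphic in the region between the real axis and the saddles (the logarithmic cut can be taken upward from $2ai$), together with the Gaussian decay of $e^{nF_a}$ at infinity, exactly as in the previous lemma.

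Next I would apply Proposition \ref{prop:laplace} in its complex, steepest-descent form at each saddle. The local data I would compute are
\[
F_a(w^\ast_\pm) = \rb{-\log 2 + \cos^2\alpha - \half} \pm i\rb{\sin\alpha\cos\alpha - \alpha}, \qquad F_a''(w^\ast_\pm) = 2\sin\alpha\, e^{\pm i(\alpha+\pi/2)},
\]
so that $|F_a''(w^\ast_\pm)| = 2\sin\alpha$, and each saddle contributes amplitude $\sqrt{n/2\pi}\cdot\sqrt{2\pi/(n|F_a''|)} = 1/\sqrt{2\sin\alpha}$. The real part of $F_a$ supplies the factor $2^{-n}e^{n(\cos^2\alpha-\half)}$, while the imaginary part supplies the oscillation $e^{\pm i n(\sin\alpha\cos\alpha-\alpha)}$. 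Since the two contributions are complex conjugates, their sum is twice the real part of one, and the factor $2$ combines with $1/\sqrt{2\sin\alpha}$ to give the stated amplitude $\sqrt{2/\sin\alpha}$; taking the real part of the oscillatory factor produces the cosine.

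The main obstacle is fixing the $O(1)$ constant inside the cosine. The amplitude and the $n$-dependent phase $n(\sin\alpha\cos\alpha-\alpha)$ fall out cleanly from $\re F_a$ and $\im F_a$, but the constant phase shift requires carefully combining $n\,\im F_a(w^\ast_+)$ with the steepest-descent phase $\tfrac12\arg\!\rb{-1/F_a''(w^\ast_+)}$ and with the orientation in which the deformed contour traverses the saddle; an error in the branch of the square root or in the traversal orientation shifts the reported constant by a multiple of $\pi/2$. To pin it down unambiguously I would check the answer against the $\alpha\to\pi/2$ degeneration (equivalently $a\to 0$, $x=O(1)$), which must reproduce the phase $\cos\rb{\frac{n\pi}2 - x\sqrt n}$ of Proposition \ref{prop:unscaled}; this consistency requirement fixes the constant phase term. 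Finally, the $O(n^{-1})$ error is the standard non-degenerate steepest-descent remainder, valid since $F_a''(w^\ast_\pm)\neq 0$ throughout $0<\alpha<\pi/2$.
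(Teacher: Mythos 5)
Your proposed route is essentially the paper's own proof: the same representation \eqref{eq:Hn-integral-Fa}, the same conjugate pair of saddles, the same symmetry $F_a(-\overline{w})=\overline{F_a(w)}$ used to reduce the deformed integral to twice the real part of one saddle's contribution, and then the Laplace method at that saddle. Your local data are correct: $F_a$ at the saddles is $-\log 2+\cos^2\alpha-\tfrac12\pm i(\sin\alpha\cos\alpha-\alpha)$ and $F_a''=e^{\pm 2i\alpha}-1=2\sin\alpha\,e^{\pm i(\alpha+\pi/2)}$, so each saddle has amplitude $1/\sqrt{2\sin\alpha}$ and doubling gives $\sqrt{2/\sin\alpha}$ (modulo a harmless labeling slip: $ai+\sqrt{1-a^2}=ie^{-i\alpha}$, not $ie^{+i\alpha}$). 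Your contour, built from local steepest-descent arcs joined by arcs of strict decrease, is a legitimate substitute for the paper's choice, which is the union of the two constant-phase curves meeting at the branch point $w=2ai$ where the integrand vanishes.

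The genuine gap is your treatment of the constant phase. Fixing it by matching with Proposition \ref{prop:unscaled} is not a proof: that proposition is pointwise in fixed $x$, the present lemma is pointwise in fixed $\alpha$, and the two regimes only meet in a joint limit ($\alpha\to\pi/2$ with $2\sqrt{n}\cos\alpha$ bounded) in which neither error term has been shown to be uniform. The rigorous resolution is purely local and you already hold every ingredient: the steepest-descent direction $\theta_0$ at the saddle $ie^{i\alpha}$ satisfies $\arg F_a''+2\theta_0\equiv\pi$ (mod $2\pi$), and the left-to-right orientation of the deformed contour forces $\theta_0=\tfrac{\pi}{4}-\tfrac{\alpha}{2}$; equivalently, take the principal branch in $\sqrt{2\pi/(n(1-e^{2i\alpha}))}$ using $1-e^{2i\alpha}=2\sin\alpha\,e^{i(\alpha-\pi/2)}$. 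Carrying this out gives the constant $-\tfrac12\rb{\alpha-\tfrac{\pi}{2}}=-\tfrac{\alpha}{2}+\tfrac{\pi}{4}$, which is \emph{not} the constant $-\tfrac12\rb{\alpha+\tfrac{\pi}{2}}$ displayed in the lemma. In fact the displayed constant is erroneous: the paper's proof evaluates the phase as if the relevant quantity were $e^{2i\alpha}-1=2\sin\alpha\,e^{i(\alpha+\pi/2)}$ rather than $1-e^{2i\alpha}$, introducing a spurious $\pi/2$. The corrected constant agrees with the paper's own Remark \ref{rem:small-x} (whose phase $\theta_n$ carries $+\tfrac{\pi}{4}$), with the classical Plancherel--Rotach formula, and with direct numerics: for $n=4$, $\alpha=\pi/4$ one has $\fH_4(2\sqrt2)=19$, while the asymptotic formula gives $\approx 19.7$ with the constant $-\tfrac{\alpha}{2}+\tfrac{\pi}{4}$ but $\approx -18$ with the lemma's stated constant. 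So your consistency check, pushed through honestly, would not confirm the lemma as written; it would expose this $\pi/2$ discrepancy. Replacing the matching heuristic by the traversal-direction computation above closes the gap and yields the corrected statement.
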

		\begin{proof}
			Again \eqref{eq:Hn-integral-Fa} gives us
			\begin{align*}
				\wt H_n\rb{a,\frac1{4n}} = \sqrt{\frac n{2\pi}}\, \int_{-\infty}^\infty \exp\set{n F_a(w)}\, dw
			\end{align*}
			with $F_a$ as in \eqref{eq:Hn-integral-Fa}. For $a<1$, the critical points are $w^\ast_\pm=ia\pm\sqrt{1-a^{2}}$.
			By \eqref{eq:ImaginaryFa}, 
			\begin{align*}
				\im\left(F_a(w_\pm^\ast)\right) & =\arctan\left(\frac{\pm \sqrt{1-a^2}}{a}\right)\mp a\sqrt{1-a^2}.
			\end{align*}
			The constant-phase curves for $F_a$ passing through $w_\pm^\ast$ are each a union of three curves. The real line can be deformed to a union of constant-phase curves $C_- \cup C_+$  passing through both $w^\ast_-$ and $w^\ast_+$ (these two curves meet at $w=2ai$ where the integrand is $0$)  See Figure \ref{fig:constant_phase_curves}.
			We note that $C_+$ and $C_-$ are images of each other under the transformation $z\longmapsto -\overline{z}$, which follows from the fact that $\re F_a$ is an odd and $\im F_a$ an even function in the real part and that $\im F_a(w_+^\ast)=-\im F_a(w_-^\ast)$. The parity of the real and imaginary part of $F_a$ then imply that the integrals along $C_-$ and $C_+$ are complex conjugates of each other. In other words, the integral along $C_- \cup C_+$ is twice the real part of the integral along $C_-$. We have then
            \begin{align*}
				\wt H_n\rb{a,\frac1{4n}} & = 2\sqrt{\frac{n}{2\pi}}\, \re \left\{  \int_{C_-} \exp\set{n F_a(w)}\, \d w \right\}
			\end{align*}
We compute the value at the critical point and its second derivative in terms of in terms of $\alpha\in(0,\pi/2)$ with $a=\cos(\alpha)$ to obtain:
\begin{align*}
				F_a\rb{w_-^\ast} &= -\log(2) + \frac{e^{2i\alpha}}2-i\alpha = -\log(2) + \frac12\cos(2\alpha) + i\rb{\frac{\sin(2\alpha)}2-\alpha},\\
				F_a''\rb{w_-^\ast} &= e^{2i\alpha}-1,
			\end{align*}
			Therefore we obtain by use of the Laplace approximation as in Proposition \ref{prop:laplace} that
             \begin{align*}
				\wt H_n\rb{a,\frac1{4n}} & = 2\sqrt{\frac{n}{2\pi}}\, \re \left\{\rb{\frac12}^n\,\exp\set{ n\rb{\frac{\cos(\alpha)}2+i\rb{\frac{\sin(\alpha)}2-\alpha} }} \sqrt{\frac{2\pi}{n(1-e^{2i\alpha})}}
				\rb{1+o\rb{1}}\right\}.
			\end{align*}
	We need to compute the real part of this last expression. First note that $1-e^{2i\alpha} = 2e^{i\alpha}\sin(\alpha)$, so that (choosing the branch of the square root to preserve continuity in $\alpha$ on $[0,\pi)$,
			\begin{align*}
				\sqrt{\frac{2\pi}{n(1-e^{2i\alpha})}} & = \sqrt{\frac{2\pi}n} \rb{2e^{i\alpha}\sin(2\alpha)}^{-1/2} = e^{-\frac i2(\alpha+\pi/2)} \sqrt{\frac{\pi}{n \sin(\alpha)}}
			\end{align*}
		Hence, we obtain
			\begin{align*}
				& \re \left\{\rb{\frac12}^n\,\exp\set{ n\rb{\frac{\cos(\alpha)}2+i\rb{\frac{\sin(\alpha)}2-\alpha} }} \sqrt{\frac{2\pi}{n(1-e^{2i\alpha})}} \right\} \\
				=& \rb{\frac12}^n\,
				\sqrt{\frac{\pi}{n\sin(\alpha)}}
				e^{n\frac{\cos(2\alpha)}2}\,
				\re\left\{
					\exp\set{i\rb{\frac{n}2\,\sin(2\alpha)-n\alpha -\frac 12(\alpha+\pi/2)}}\right\}\\
				=& \rb{\frac12}^n\,
				\sqrt{\frac{\pi}{n\sin(\alpha)}}
				e^{n\frac{\cos(2\alpha)}2}\,
				\cos\rb{n \rb{\frac{\sin(2\alpha)}2\,-\alpha} -\frac 12(\alpha+\pi/2)}.
			\end{align*}
	Finally we apply the trigonometric identities $\frac{\cos(2\alpha)}2=\cos^2(\alpha)-\frac12$ and $\frac{\sin(2\alpha)}2=\sin\alpha\cos\alpha$ and obtain
			\begin{align*}
				\wt H_n\rb{a,\frac1{4n}} & = 2\sqrt{\frac n{2\pi}}\, \rb{\frac12}^n\,
				\sqrt{\frac{\pi}{n\sin(\alpha)}}
				e^{n\frac{\cos(2\alpha)}2}
				\cos\rb{n \rb{\frac{\sin{2\alpha}}{2} -\alpha} -\frac 12(\alpha+\pi/2)}(1+o(1))\\
				& =
				\rb{\frac12}^{n-1/2}\,
				\sqrt{\frac{1}{\sin(\alpha)}}
				e^{n(\cos^2\alpha-1/2)}\,
				\cos\rb{n \rb{\sin\alpha\cos\alpha -\alpha} -\frac 12(\alpha+\pi/2)}(1+o(1)).
			\end{align*}
			as claimed.

            \end{proof}
        

            \subsubsection{Proof of edge asymptotics }
            \label{subsec:edge-scaling}
The edge scaling occurs because of a double critical point, so the ordinary Laplace method, which features a 2nd order Taylor expansion and the the Gaussian integral, must be replaced by a 3rd order expansion and the integral of a exponential-cubic function. This gives rise to the Airy function, which we denote $\A$  and define below in the form that it appears in our asymptotics.            \begin{definition}[Airy function]\label{def:airy}
		Let $\AiryCurve$ denote the union of the line segments	$\set{t e^{i\pi/3}\colon t\geq 0}$ and $\set{t e^{-i\pi/3}\colon t\geq 0}$, that is the union of the two infinite rays in the complex plane emanating from the origin that make angles $\pi/3$ and $-\pi/3$ with the real axis (see Figure \ref{fig:AiryContourCurves}). For $u\in\bC$, the \emph{Airy function} is defined by the complex contour integral
		\begin{align}
			\label{eq:defAiry-standard}
			\A(u) & = \frac1{2\pi i}\,\int_\AiryCurve \exp\set{\frac{t^3}3-ut}\, \d t
		\end{align}
        Let $\wedge$  be the union of the line segments$\set{t e^{-i\pi/6}\colon t\geq 0}$ and $\set{t e^{-5i\pi/6}\colon t\geq 0}$. Applying the change of variables $t=iz$ which rotates the integration contour $\RotatedAiryCurve$ into $\AiryCurve$, with $dw = \frac{dt} i$ and $-z^3 = t^3$, we see that the Airy function can also equivalently be written as:
        \begin{align*}
			\A(u) &= \frac1{2\pi}\int_{\RotatedAiryCurve}\exp\left(-\frac{i}{3}z^{3}-iuz\right)\, \d z 
		\end{align*}
     See Figure \ref{fig:AiryContourCurves} for an illustration of these curves.

	\end{definition}

    \begin{figure}[!ht]
        \centering
        \begin{tikzpicture}[>=latex,scale=1.25]

\tikzset{
    axis/.style={thick,->},
    wedge/.style={thick,blue},
    airycurve/.style={thick,blue}
 }


\begin{scope}[shift={(-2.5,0)}]
    \draw[axis] (-1.5,0) -- (1.5,0) node[right] {$\Re$};
    \draw[axis] (0,-1.5) -- (0,1.5) node[above] {$\Im$};
    
    \draw[wedge] (0,0) -- ({1.2*cos(-30)},{1.2*sin(-30)});
    \draw[wedge] (0,0) -- ({1.2*cos(-150)},{1.2*sin(-150)});
    
    \anglabel{1.35}{-30}{$-\pi/6$};
    \anglabel{1.35}{-150}{$-5\pi/6$};
    
    \node at (0,-1.8) {The curve $\wedge$};
\end{scope}

\begin{scope}[shift={(2.5,0)}]
    \draw[axis] (-1.5,0) -- (1.5,0) node[right] {$\Re$};
    \draw[axis] (0,-1.5) -- (0,1.5) node[above] {$\Im$};
    
    \draw[airycurve] (0,0) -- ({1.2*cos(60)},{1.2*sin(60)});
    \draw[airycurve] (0,0) -- ({1.2*cos(-60)},{1.2*sin(-60)});
    
    \anglabel{1.35}{60}{$\pi/3$};
    \anglabel{1.35}{-60}{$-\pi/3$};
    
    \node at (0,-1.8) {The curve $\langle$};
\end{scope}

\end{tikzpicture}
        
        \caption[The curves $\wedge$ and $\langle$ that appear in definitions of $Ai(u)$]{The curves $\wedge$ and $\langle$ that appear in two equivalent definitions of $Ai(u)$ from Definition \ref{def:airy}}.
        \label{fig:AiryContourCurves}
    \end{figure}


	\begin{remark}
		\label{rem:sketch-a-zero}
		Before giving a detailed proof, it is instructive to first sketch an explanation of the appearance of the Airy function without the other complicating details. For this, we take $u=0$  and sketch out the limit for $\fH_n(2\sqrt{n})$, the point exactly on the edge. The proof follows three steps:
        
	\paragraph{Step 1:} Write the integral:
    \begin{align*}
			\fH_n\rb{2\sqrt n} = 2^n n^{n/2}\wt H_{n}\rb{1,\frac1{4n}} & =2^n n^{n/2}\bE \ab{\rb{1+i\frac W2}^n} = 2^n n^{n/2}{\sqrt\frac{n}{2\pi}} \int_{-\infty}^\infty \exp\set{n F_1(w)}\d w,
		\end{align*}
		where $F_1(w)=\log\rb{1+\frac i2\,w}-\frac {w^2}2$. We will use steepest descent to take the $n\to\infty$ limit.
		\paragraph{Step 2:} The function $F_1$ has a double critical point at $w^\ast = i$. The steepest descent curve is $C_1$ defined by $\im F_1(w)=0$.  See Figure \ref{fig:constant_phase_curves} . 
	
		\paragraph{Step 3:} Deform the contour of integration from the real axis to $C_1$. Then only the contribution near the critical point contributes to the integral. Computing the Taylor approximation to third order now, and using $F_1(w^\ast) = -\log 2 + \frac{1}{2}$, $F_1'(w^\ast)=F_1''(w^\ast)=0$, $F_1^{'''}(w^\ast) = 2i $ we get the Taylor approximation $F_1(w) \approx -\log 2 + \frac{1}{2} - i \frac{1}{3} (w-w^\ast)^3 $.  Now change variables by $z = \frac{1}{n^{1/3}}(w-w^\ast)$ and we obtain:
		\begin{align}
			\wt H_n\rb{1,\frac1{4n}} &\approx {\sqrt\frac{n}{2\pi}} \int_{C_1} \exp\set{n \rb{-\log 2 + \frac{1}{2} - i \frac{1}{3} (w-w^\ast)^3}}\d w \\ &= \sqrt{\frac 1{2\pi}} \rb{\frac{\sqrt e}2}^n n^{1/2 - 1/3}
			\int_{\RotatedAiryCurve} \exp\set{-\frac i3 z^3}\, \d z.
		\end{align}
		where $\RotatedAiryCurve$ is the union of the two rays emanating from the origin at angles $-\frac\pi 6$ and $-\frac{5\pi}6$ respectively, which are the left and right tangent directions through the original curve $C_1$ at $w^\ast$ .
	
		\paragraph{Step 4:} Recognise $\frac1{2\pi}\int_{\RotatedAiryCurve} \exp\set{-\frac i3 z^3}\, dz$ as $\A(0)$, obtained by rotating the contour in \eqref{eq:defAiry-standard} by $-\frac\pi2$. This gives
		\begin{align*}
			\fH_n\rb{2\sqrt n}= 2^{n}n^{n/2}
			\wt H_n\rb{1,\frac1{4n}} \approx \sqrt{2\pi} e^{n/2} n^{n/2} n^{1/6}  \A(0)\quad\text{ as } n\to\infty.
		\end{align*}
We will see the general proof for $\fH_n(2\sqrt{n} + un^{1/6})$ is essentially a perturbation around this idea.    \end{remark}
		\begin{lemma}
		\label{lem:Hermite-Airy-asymptotics}
		For $u\in\bR$ and $n\in\bN$, as $n\to\infty$
		\begin{align}
			\wt H_n\rb{1+\frac12 n^{-2/3}u,\frac1{4n}} & = \frac{n^{1/6}}{\sqrt{2\pi}}\left(\frac{\sqrt{e}}{2}\right)^{n} e^{un^{1/3}}\intop_{\wedge}\exp\left(-\frac{i}{3}z^{3}-iuz\right) \d z \rb{1+o(1)}. \\
            &= \sqrt{2\pi} n^{1/6} \left(\frac{\sqrt{e}}{2}\right)^{n} e^{un^{1/3}} \A(u)  \rb{1+o(1)}.
		\end{align}
	\end{lemma}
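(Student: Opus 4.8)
The plan is to treat this as a degenerate (Airy-type) steepest descent applied to the integral representation \eqref{eq:Hn-integral-Fa}, following the template of the $u=0$ sketch in Remark \ref{rem:sketch-a-zero} but carrying the edge parameter $u$ through the whole expansion. I set $a = 1 + \frac12 n^{-2/3}u$ and start from
$$\wt H_n\rb{a,\frac1{4n}} = \sqrt{\frac n{2\pi}}\int_{-\infty}^\infty \exp\set{nF_a(w)}\,\d w, \qquad F_a(w) = \log\rb{a + \frac i2 w} - \frac{w^2}2.$$
At $a=1$ the function $F_1$ has a \emph{double} critical point at $w^\ast = i$, with $F_1(i) = -\log 2 + \half$ and $F_1'''(i) = 2i$, so the ordinary Laplace Proposition \ref{prop:laplace} does not apply and a cubic expansion is required.

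First I would deform the contour from the real axis to the constant-phase curve $C_1 = \set{w : \im F_1(w) = 0}$ through $w^\ast = i$ (see Figure \ref{fig:constant_phase_curves}), whose two branches are tangent at $i$ to the rays of $\wedge$ at angles $-\pi/6$ and $-5\pi/6$. This is justified by Cauchy's theorem, since $F_a$ is holomorphic away from the branch point $w = 2ai$ (which stays near $2i$, off $C_1$, for $a$ close to $1$), together with the exponential decay of the integrand at infinity. Because $a\to1$, the curve $C_1$ remains an admissible descent contour for $F_a$: the perturbation $F_a - F_1 = \log\frac{a + \frac i2 w}{1 + \frac i2 w}$ is $O(n^{-2/3})$ uniformly on $C_1$ and does not spoil the descent property at leading order.

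Next I localize and rescale. Substituting $w = i + n^{-1/3}z$, so $\d w = n^{-1/3}\,\d z$ and the local tangents of $C_1$ become the rays $\wedge$ of Definition \ref{def:airy}, I Taylor-expand. Writing $a + \frac i2 w = \half\rb{1 + n^{-2/3}u + i n^{-1/3}z}$ and expanding $\log(1 + \cdot)$ to third order, the terms organize as
$$F_a\rb{i + n^{-1/3}z} = \log\tfrac12 + \half + n^{-2/3}u - n^{-1}\rb{\frac i3 z^3 + iuz} + O(n^{-4/3}),$$
where the $n^{-1/3}z$ terms cancel (reflecting $F_1'(i)=0$) and the $n^{-2/3}z^2$ terms cancel (reflecting $F_1''(i)=0$), while the $a$-perturbation leaves the constant $n^{-2/3}u$ and the cross term $-n^{-1}iuz$. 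Multiplying by $n$ gives
$$nF_a\rb{i + n^{-1/3}z} = n\log\tfrac12 + \frac n2 + n^{1/3}u - \rb{\frac i3 z^3 + iuz} + O(n^{-1/3}),$$
so the constant factors out as $\rb{\tfrac{\sqrt e}2}^n$, the edge parameter produces the prefactor $e^{un^{1/3}}$, and the remaining integrand converges to $\exp\rb{-\frac i3 z^3 - iuz}$. Collecting $\sqrt{n/2\pi}\cdot n^{-1/3} = n^{1/6}/\sqrt{2\pi}$ yields the first displayed equality, and recognizing $\int_\wedge \exp\rb{-\frac i3 z^3 - iuz}\,\d z = 2\pi\,\A(u)$ from Definition \ref{def:airy} gives the second.

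The main obstacle is making this passage to the limit rigorous rather than formal, since Proposition \ref{prop:laplace} cannot be cited for a degenerate critical point. Concretely, I must show that (i) the contribution of $C_1$ outside an $O(n^{-1/3})$ window of $w^\ast=i$ is exponentially negligible relative to $e^{nF_a(i)}$, uniformly in $n$, using that $\re F_1$ strictly decreases along $C_1$ away from $i$ while the $a$-perturbation of $\re F_a$ is uniformly $O(n^{-2/3})$; and (ii) on the window the exponent error $O(n^{-1/3})$, together with the deviation of $C_1$ from its tangent rays, is controlled well enough to pass to the limit by dominated convergence and obtain the $\rb{1+o(1)}$. Establishing a dominating bound of the form $\abs{\exp\set{-\frac i3 z^3 - iuz + \text{error}}} \le e^{-c\abs{z}^3}$ on the rescaled contour, valid for all large $n$, is the crux; this is the standard but delicate Airy steepest-descent estimate, and it is precisely where the geometry of $\wedge$ matters, since $\re\rb{-\frac i3 z^3} \to -\infty$ exactly along its two rays.
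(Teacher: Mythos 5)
Your proposal is correct and is essentially the paper's own proof: deform to the constant-phase curve $C_1$ through the double critical point $w^\ast=i$, rescale $w=i+n^{-1/3}z$, Taylor-expand to cubic order to obtain $-\frac{i}{3}z^3-iuz$ together with the prefactors $\left(\frac{\sqrt{e}}{2}\right)^n e^{un^{1/3}}\frac{n^{1/6}}{\sqrt{2\pi}}$, and recognize the limiting integral over $\wedge$ as $2\pi \A(u)$; the only cosmetic difference is that the paper first isolates the $u$-perturbation as $n^{1/3}\frac{u}{2+iw}$ and expands it separately rather than expanding everything in one step (also, $F_1'''(i)=-2i$ rather than $2i$, which is consistent with the sign $-\frac{i}{3}z^3$ in your expansion, the paper's Remark containing the same slip). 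The technical estimates you flag as the crux are exactly what the paper carries out: it splits the rescaled contour at $\abs{z}=n^{\epsilon}$ with $4\epsilon<1/3$ so the Taylor error $O(n^{4\epsilon-1/3})$ is uniformly negligible in the window, bounds the tail using $\re\left(-\frac{i}{3}z^3\right)\le -c\abs{z}^3$ to get an $e^{-cn^{3\epsilon}}$ contribution, and extends the window to all of $\wedge$ at exponentially small cost.
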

    
	\begin{proof}
		Using \eqref{eq:Hn-integral-Fa} for $x=1+\frac12 u n^{-2/3}$, we have
		\begin{align}
			\wt H_{n}\rb{1+\frac12 n^{-2/3}u,\frac1{4n}} & 
			= \sqrt\frac{n}{2\pi} \int_{-\infty}^\infty \exp\set{n\rb{\log\rb{1+\frac12 n^{-2/3}u+\frac i2\,w}-\frac {w^2}2}}\\
			\notag
			& = \sqrt\frac{n}{2\pi} \int_{-\infty}^\infty \exp\set{G_n(u,w)}\\
			\text{ Where }
			\notag
			G_n(u,w) &:= n\log\rb{1+\frac u2\, n^{-2/3}+i\frac w2} - n\frac{w^2}2\\
			\notag
			&=n\rb{\log\left(1+i\frac{w}{2}\right) - \frac{w^2}2} +n\ln\left(1+\frac{un^{-2/3}}{2+iw}\right)\\
			\label{eq:TaylorGn}
			& = n F_1(w)+n^{1/3}\,\frac{u}{2+iw} + O\rb{n^{-1/3}}.
		\end{align}
		Here, the last equality is due to the Taylor expansion of the logarithm around 1. We have $F_1'(w)=\frac1{w-2i}-w$ and  and $F_1''(w) = -\frac1{(w-2i)^2}-1$, both of which evaluate to 0 at $w=i$. So $F_1$ has a double critical point at $w=i$.  Let $C_1$ be the curve of $w\in\bC$ such that $\im F_1(w)=\im F_1(i)=0$ and $\im w\leq 1$ as shown in Figure \ref{fig:constant_phase_curves}. It has coordinate parametrisation
		\begin{align*}
			C_{1} &= \set{x+iy \colon \arctan\rb{\frac x{2-y}}=xy}.
		\end{align*}
		At the double critical point $w^\ast = i$ we also have $F_{1}(i)=\frac12-\log(2)$. Moreover, $F_1'''(i)=
		-\tfrac{i}{4}\bigl(1+\tfrac{i}{2}w\bigr)^{-3}$, which evaluates to $-2i$ at $w=i$. Therefore, we obtain the 3rd order  Taylor expansion for $w$  near $i$ which will be the basis for our Laplace approximation.
        \begin{align}
			\label{eq:Taylor-a-one}
			F_1(w)
			&=
			\frac12-\log(2) - \frac{i}{3}\,(w-i)^{3} + O((w-i)^4)
		\end{align}
	We deform the contour of integration from the real axis to $C_1$  (see Figure \ref{fig:AiryContourCurves}) to obtain an integral going through this critical point:

		\begin{align*}
			\wt H_n\rb{1+n^{-2/3}\,\frac u2,\frac1{4n}} & = \sqrt{\frac n{2\pi}} \int_{C_1} \exp \set{G_n(x,w)}\, \d w 
		\end{align*}
		We now make the change of variables $w=i+n^{-1/3}z$ with $\d  w = n^{-1/3}\d z$.  Let $\Gamma^\ast$ denote the $z$-curve corresponding to the $w$-curve $C_1$ and write $\Gamma^\ast = \Gamma^\ast_>\cup \Gamma^\ast_\leq$ where   $\Gamma^\ast_\leq =\set{z\in \Gamma^\ast \colon \abs{z} \leq n^{\epsilon}}$  where $\epsilon$  is some small power to be chosen a-posteriori and $\Gamma^\ast_> =\set{z\in \Gamma^\ast \colon \abs{z} > n^{\epsilon}}$ is everything else. We first show that the contribution of $\Gamma_>^\ast$ is negligible: notice that:
		$\re( -\frac{i}{3} z^{3})\le -c_2 |z|^{3}$ on $\Gamma^{\ast}_>$ for $c_2=\frac1{3\sqrt2}$, and therefore $\abs{e^{nG_n(x,w)}}
		\le e^{nF_1(i)}\exp\set{- c_2 n^{3\epsilon}}$ . Thus we can bound 
		\begin{align*}
			\sqrt{\frac{n}{2\pi}}\,\abs{
				\int_{ \Gamma_>^\ast } e^{G_n(x,w)}\,dw 
			}
			\leq C 			\sqrt{\frac{n}{2\pi}}\, e^{-c_{2}n^{3\epsilon}}.
		\end{align*}	
Hence, this part of the integral is going to zero exponentially fast and $\wt H_n$ is well approximated by an integral over only the part near the critical point $\Gamma^\ast_\leq$.  Notice also that since $|z|\leq n^{\epsilon}$ on $\Gamma^\ast_\leq$, so we can control the error term in our series expansion \eqref{eq:Taylor-a-one} by $O((w-i)^4) = O(\frac{z^4}{n^{4/3}}) = O(n^{4\epsilon-4/3})$. Hence, as long as $4\epsilon - 1/3 < 0$, even when multiplying by $n$ we have a good approximation in this region:  
\begin{align}
			\label{eq:Taylor-a-one-with-n}
			n F_1(w)
			&=
			n\rb{\frac12-\log(2)} - \frac{i}{3}\,z^{3} + O(n^{-1/3 + 4\epsilon})
		\end{align}
Near $w=i$, the segment $\Gamma_\leq^\ast$ looks like two rays emanating from the origin at angles $-\frac\pi 6$ and $-\frac{5\pi}6$,  as can be seen in Figure \ref{fig:constant_phase_curves}. On the segment $\Gamma_\leq^\ast$ near the critical point, we have by \eqref{eq:TaylorGn} 
        \begin{align*}
e^{G_n(u,w)} &= \exp\left\{n^{1/3} \frac{u}{2+iw} + O\left(n^{-1/3}\right)\right\} e^{nF_1(w)}
\end{align*}
Now from our change of variables \( w = i + n^{-1/3}z \), so that \( 2 + iw = 1 + in^{-1/3}z \). Then,
\begin{align*}
n^{1/3}\frac{u}{2+iw} &= n^{1/3}\frac{u}{1 + in^{-1/3}z} \\
&= un^{1/3}-i u z + O(n^{-1/3}).
\end{align*}
Together with equation \eqref{eq:Taylor-a-one-with-n}, this gives
\begin{align*}
e^{G_n(u,w)} &= e^{(1/2 - \log 2)n} e^{un^{1/3}} \exp\left\{-\frac{i}{3}z^3 - i u z\right\} \left(1 + o(1)\right).
\end{align*}
Therefore, combing all these estimates, we obtain

\begin{align*}
\wt H_n\left(1+\frac{u}{2}n^{-2/3},\frac{1}{4n}\right) 
&= \sqrt{\frac{n}{2\pi}} \left( \frac{\sqrt{e}}{2} \right)^n n^{-1/3} 
e^{u n^{1/3}} \int_{\Gamma^\ast_\leq} \exp\left(-\frac{i}{3}z^3 - i u z\right)\, \d z  \left(1 + o(1)\right)  .
\end{align*}
Finally, we can extend $\Gamma^\ast_\leq$ to $\RotatedAiryCurve$. To see this, note that for any $z\in \RotatedAiryCurve$, the argument of $z^3$ is  $-\frac\pi2$. In other words, $z^3=-i\abs{z}^3$ and so
		\begin{align*}
			\exp\set{-\frac i3 z^3} &= e^{-\abs{z}^3/3}\quad\forall\,z\in\RotatedAiryCurve,
		\end{align*}
		and therefore by standard estimates
		\begin{align*}
			\abs{\int_{\Gamma^\ast_\leq } \exp\set{ -\frac i3 z^3}\, dz - \int_\RotatedAiryCurve  \exp\set{-\frac i3 {z}^3}\, dz} &\leq C n^{-1/3} e^{-n^{1/2}/3},
		\end{align*}
		and hence we have proved Lemma \ref{lem:Hermite-Airy-asymptotics}
	\end{proof}

    \subsection{Exponentially fast decay \texorpdfstring{for $a>1$}{outside the interval}}
    \label{subsec:exponential-decay-Hermites}

    The asymptotics of Section \ref{subsec:three-scaling} reveal a critical phase transition in the Hermite polynomials $\fH_n(x)$ at the location $x=\pm 2 \sqrt{n}$ . For large $x$ values, $|x| > 2\sqrt{n}$, the growth of the Hermite polynomials is overwhelmed by the Gaussian density. Here we obtain an upper bound that holds for all $n$. 

    \begin{proposition}
    \label{prop:bound-uniform-H}
        For all $a>1$ and all $n\in\bN$,
    \begin{align*}
        \abs{\fH_n(2\sqrt n a)}& \leq \frac{n^{n/2}}{\sqrt{2(1-a^2+a\sqrt{a^2-1}})} \, e^{n(a^2-a\sqrt{a^2-1}-\frac12+\arccosh(a))} 
    \end{align*}
    \end{proposition}

\begin{remark}
    Note that the exponential growth rate in Proposition {prop:bound-uniform-H} matches that of the large$-x$ regime of Theorem \ref{thm:main-asymptotics}: if $a=\cosh(\gamma)$ then
			\begin{align*}
            e^{-2\gamma} & = \frac1{(a+\sqrt{a^2-1})^2}=(a-\sqrt{a^2-1})^2 = 2a^2-2a\sqrt{a^2-1}-1 
        \end{align*}
        and so 
        \begin{align*}
        e^{n (\gamma+\frac12 e^{-2\gamma})} & = e^{n(\arccosh(a) + a^2-a\sqrt{a^2-1}-\frac12 )}.
			\end{align*}
\end{remark}

\begin{proof}[Proof of Proposition \ref{prop:bound-uniform-H}]
The approach is similar to the Laplace asymptotics, but this time we must be careful with lower-order terms, since we are looking for an upper bound that holds for all $n\in\bN$. We begin \eqref{eq:Hn-integral-Fa} and deform the contour of integration from $\bR$ to $w_-^\ast + \bR$. Here, $w_-^\ast=i(a-\sqrt{a^2-1})$ is the critical point that lies on the contour $C_-$ we used for the Laplace asymptotics (in fact $C_-$ lies between $\bR$ and $w_-^\ast + \bR$); see Figure \ref{fig:exponential_contour} for an illustration.

\begin{figure}
    \centering
    \includegraphics[width=0.5\linewidth]{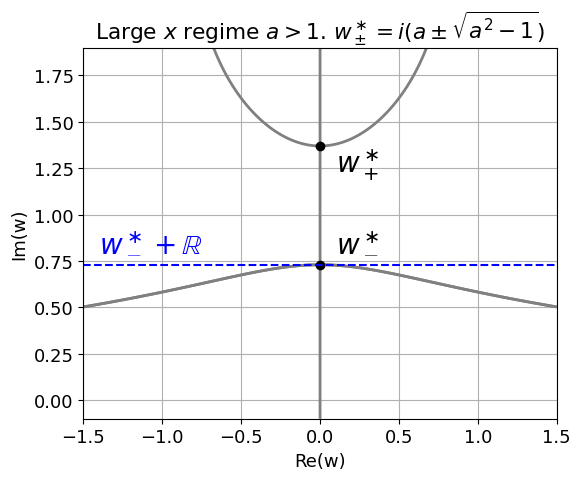}
    \caption[The contour used to prove the exponential bound for large $x$ values]{The contour used to prove the exponential bound for large $x$ values in Proposition \ref{prop:bound-uniform-H} }
    \label{fig:exponential_contour}
\end{figure}

For convenience we write $u_\ast=a-\sqrt{a^2-1}$ so that $w_-^\ast = iu_\ast$. We obtain
     \begin{align*}
			 \fH_n(2a\sqrt n)&=
           n^{n/2} \sqrt{\frac n{2\pi}}\, \int_{-\infty}^\infty \exp\set{n \ab{\log\rb{2a+i \rb{w+iu_\ast}}-\frac{(w+iu_\ast)^2}2}}\, \d w\\
           & =  n^{n/2} \sqrt{\frac n{2\pi}}\, \intop_{-\infty}^\infty \exp\set{n \ab{\log\rb{\frac1{u_\ast}+i w}-\frac{(w+iu_\ast)^2}2}}\, \d w
\intertext{where we have used the fact that $2a-u_\ast=a+\sqrt{a^2-1}=\frac1{u_\ast}$. Taking the absolute value inside the integral we obtain the inequality}
 \abs{\fH_n(2a\sqrt n)} & \leq \frac{n^{(n+1)/2}}{\sqrt{2\pi}} \intop_{-\infty}^\infty \exp\set{n\rb{\frac12\log\rb{\frac1{u_\ast^2}+w^2} -\frac{w^2-u_\ast^2}2 }}\, dw
        \end{align*}
        Writing $\Phi(w)=\frac12\log\rb{\frac1{u_\ast^2}+w^2} -\frac{w^2-u_\ast^2}2 $ and using the inequality $\log(1+x)\leq x$ for $x>0$:
        \begin{align}
            \Phi(w) = -\log u_\ast +\frac12 \log\rb{1+w^2u_\ast^2} + \frac{u_\ast^2}2 - \frac{w^2}2\leq \lambda_1(a) -\lambda_2(a)\frac{w^2}2,
        \end{align}
        where $\lambda_1(a) = -\log u_\ast + \frac {u^2_\ast} 2$ and $\lambda_2(a)=1-u_\ast^2$. 
    Crucially, $\lambda_2(a)>0$ since $u_\ast\in (0,1)
    $ for all $a>1$. More explicitly, $\lambda_1(a)=a^2-a\sqrt{a^2-1}-\frac12+\arccosh(a)$ and $\lambda_2(a)=2 (1-a^2+a\sqrt{a^2-1})$. Putting everything together,
        \begin{align*}
            \abs{\fH_n(2a\sqrt n)} & \leq
            \frac{n^{(n+1)/2}}{\sqrt{2\pi}} \, e^{n\lambda_1(a)} \intop_{-\infty}^\infty e^{-n\lambda_2(a) \frac{w^2}2}\, dw =  \frac{n^{n/2}}{\sqrt{\lambda_2(a)}} \, e^{n\lambda_1(a)} 
        \end{align*}
        as claimed.
        \end{proof}

\section{Applications to the oscillator wave functions \texorpdfstring{$\Psi_n$}{} and the Hermite kernel \texorpdfstring{$K_n$}{}}
\label{sec:applications-oscillator}

\subsection{Definitions \texorpdfstring{of $\Psi_n$ and $K_n$}{}}

One common way the Hermite polynomials appear in applications is through the so called ``oscillator wave functions'', which were named after their appearance as wave functions solution for the quantum harmonic oscillator (See Section \ref{subsec:arcsine}  for this connection.)

\begin{definition}\label{def:oscillator-wave-function}
Define the sequence of functions $\Psi_n:\mathbb{R} \to \mathbb{R}$  of normalized \emph{oscillator wave functions} by 
\begin{equation}
\Psi_n(x) := \frac{1}{\sqrt{n!}} \fH_n( x) \sqrt{ \frac{1}{\sqrt{2\pi}} e^{-\frac{1}{2} x^2 } } 
\end{equation}
\end{definition}
By including ``half'' of the Gaussian density and ``half'' the normalizing constant $n!$, it is seen from the orthogonality of the Hermite polynomials Proposition \ref{prop:orthog-1d} , that the $\Psi_n$ are ortho\emph{normal} in $L^2(\mathbb{R})$:
$$\intop_{-\infty}^\infty \Psi_n(x)\Psi_m(x)\d x = \begin{cases}1 &\text{ if } n=m \\ 0&\text{ otherwise} \end{cases}.$$
Hence the non-negative function $\Psi_n^2(x)$ is a probability density function. Adding together copies of the first $n$ density functions from the previous section gives us another object, this time whose total mass is $n$ in some sense. This combination appears naturally related to Dyson's Brownian motion with $n$ particles. See Section \ref{sec:Dyson} for more on the probabilistic interpretation of this object. 

\begin{definition}
\label{def:hermite-kernel}
    The \emph{Hermite kernel} of order $n$ is the function $K_n\colon\bR^2\longrightarrow\bR$ defined by
    \begin{align}
    \label{eq:Hermite_kernel}
        K_n(x,y):= \sum_{j=0}^{n-1}\Psi_j(x) \Psi_j(y) = \frac{e^{-x^2/4-y^2/4}}{\sqrt{2\pi}} \sum_{j=0}^{n-1} \frac1{j!} \fH_j(x) \fH_j(y) .
    \end{align}
\end{definition}

In this section, we prove various useful facts about $\Psi_n$ and $K_n$. Many of these results themselves have probabilistic applications and are motivated by their use in studying Dyson's Brownian motion, see Section \ref{sec:Dyson}.

\subsection{Exponentially fast decay to zero for \texorpdfstring{$|x| > 2\sqrt{n}$}{large values of x}}
\label{subsec:exponential-decay-Psi}

In Section \ref{subsec:exponential-decay-Hermites} we saw that the Hermite polynomials go to zero exponentially fast outside the interval $[-2\sqrt n, 2\sqrt n]$. From this, we can deduce the behaviour of the wave oscillator functions $\Psi_n$.



\begin{proposition}
\label{prop:bound-uniform-Psi}
    For all $a>1$ and all $n\in\bN$,
    \begin{align*}     
        \Psi_n(2\sqrt n a)^2&\leq  \frac{1}{4\pi\sqrt n (1-a^2+a\sqrt{a^2-1})}  \exp\set{2n\ab{\arccosh(a)-a\sqrt{a^2-1} }}
    \end{align*}
\end{proposition}

\begin{proof}
Recall from Proposition \ref{prop:bound-uniform-H} that
\begin{align*}
  \abs{\fH_n(2\sqrt n a)} \leq \frac{n^{n/2}}{\sqrt{2(1-a^2+a\sqrt{a^2-1}})} \, e^{n(a^2-a\sqrt{a^2-1}-\frac12+\arccosh(a))}  
\end{align*}
 So by the inequality version of Stirling's inequality, Remark \ref{rmk:Stirling-bound},
        \begin{align*}
            \Psi_n(2\sqrt n a)^2 
             &\leq \frac1{\sqrt {2\pi} n!} e^{-2na^2} \frac{n^n}{2 (1-a^2+a\sqrt{a^2-1})} e^{2n(a^2-a\sqrt{a^2-1}-\frac12+\arccosh(a))}\\
           & \leq\frac{1}{2\pi \sqrt n}  \frac{1}{2 (1-a^2+a\sqrt{a^2-1})}  e^{2n\ab{\arccosh(a)-a\sqrt{a^2-1} }}
        \end{align*}
        as claimed.
\end{proof}

\begin{remark}
    \label{rmk:bound-uniform-Psi}
    Later on, specifically in the proof of Proposition \ref{prop:decay-points-unspiked}, it will be useful to bound the rate of exponential decay in terms of a simpler function.
    To do so, note that   
    the function $g$ defined by $g(a) = a\sqrt{a^2-1}-\arccosh(a)$ satisfies $g(1)=0$ and also $g'(a)=2\sqrt{a^2-1}> 2(a-1)$ for $a>1$. Therefore $g(a)>(a-1)^2$ for all $a>1$. Moreover, the function $\lambda_2(a)=2(1-a^2+a\sqrt{a^2-1})$ is strictly increasing on $[1,\infty)$ and satisfies $\lambda_2(1)=0$ . Therefore we can get the following bound: fix $\ep>0$ and let 
    \begin{align}
        \label{eq:def-C-epsilon}
        C_\ep &= \frac1{4\pi (2\ep-\ep^2+(1+\ep)\sqrt{2\ep-\ep^2}}.
        \intertext{Then we have, for all $n\in\bN$ and all $x>1+\ep$,}
    \label{eq:bound-uniform-Psi-remark}
        \Psi_n\rb{2\sqrt n a}^2& \leq\frac{C_\ep }{\sqrt n}e^{2n(\arccosh(a)-a\sqrt{a^2-1})} \leq \frac{C_\ep }{\sqrt n} e^{-n(a-1)^2}.
\end{align}
\end{remark}

\subsection{Cramèr's bound for the global maximum}

The function $\Psi_n(x)$ goes to zero exponentially fast outside of $x = [-2\sqrt{n},2\sqrt{n}]$, and the asymptotics of Section \ref{subsec:three-scaling} suggest it should achieve its global maximum $\max_{x \in \mathbb{R}} |\Psi_n(x)|$ somewhere near the edge of this region $x=\pm 2\sqrt{n}$. However, since all of the asymptotics are proven pointwise, a different approach is needed in order to draw precise conclusions about the actual maximum. In \cite{Indritz1961}, a clever elementary analysis of the zeros of $\Psi^\prime_n$  reveals indeed that the maximum occurs at the closest zero of $\Psi_n^\prime$  to the edge $2\sqrt{n}$   and  finds a bound for the maximum $\max_{x \in \mathbb{R}} |\Psi_n(x)|$. This bound is sometimes called Cramèr's bound. We state this result here for later use.

\begin{proposition}
\textbf{[Cramèr's Bound]} \label{prop:Cramer} The normalized oscillator wave functions 
are bounded by \begin{equation}\label{eq:Cramer-bound} \max_{x\in\mathbb{R}} |\Psi_n(x)| \leq \frac{1}{(2\pi)^{1/4}} \end{equation}
\end{proposition}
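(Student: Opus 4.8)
The plan is to reduce the claim to the pointwise inequality $\frac{1}{n!}\fH_n(x)^2 e^{-x^2/2} \le 1$, which is exactly $|\Psi_n(x)| \le \frac{1}{(2\pi)^{1/4}}$ after unpacking Definition \ref{def:oscillator-wave-function}, and to attack it through the second-order ODE satisfied by $\Psi_n$. Since $\Psi_n(x)\to 0$ as $|x|\to\infty$ and $\Psi_n$ is smooth, the maximum of $\Psi_n^2$ is attained at a critical point; and since $\Psi_n(-x) = (-1)^n\Psi_n(x)$ it suffices to work on $x \ge 0$. First I would record that $\Psi_n$ solves the harmonic-oscillator equation
\[ \Psi_n''(x) + Q(x)\,\Psi_n(x) = 0, \qquad Q(x) := n + \half - \frac{x^2}{4}, \]
which follows by differentiating $\Psi_n = (2\pi)^{-1/4}(n!)^{-1/2}\fH_n(x)e^{-x^2/4}$ twice and using the Hermite differential equation $\fH_n'' - x\fH_n' + n\fH_n = 0$ (itself immediate from $\fH_n' = n\fH_{n-1}$ together with the recurrence of Proposition \ref{prop:recursive}).

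The engine of the proof is a Sonine-type energy function. On the classically allowed interval $\{Q>0\} = (-2\sqrt{n+\half},\, 2\sqrt{n+\half})$ set
\[ g(x) := \Psi_n(x)^2 + \frac{\Psi_n'(x)^2}{Q(x)}. \]
Differentiating and substituting $\Psi_n'' = -Q\Psi_n$ makes the $\Psi_n\Psi_n'$ terms cancel, leaving $g'(x) = -\frac{Q'(x)}{Q(x)^2}\Psi_n'(x)^2 = \frac{x\,\Psi_n'(x)^2}{2Q(x)^2} \ge 0$ for $x \ge 0$. Hence $g$ is nondecreasing in $|x|$. At any critical point the second term vanishes, so $g$ equals $\Psi_n^2$ there; monotonicity of $g$ therefore forces the local maxima of $\Psi_n^2$ to increase as $|x|$ grows, so the global maximum $M_n^2 = \max_x\Psi_n(x)^2$ is attained at the outermost critical point $x^*$. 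This $x^*$ lies strictly inside the turning point (i.e. $Q(x^*)>0$), because a local maximum with $\Psi_n(x^*)\ne 0$ needs $\Psi_n''(x^*) = -Q(x^*)\Psi_n(x^*)$ to have sign opposite to $\Psi_n(x^*)$.

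The main obstacle is the final, quantitative step: turning ``$M_n$ is attained at $x^*$'' into the \emph{sharp} numerical bound $M_n^2 \le (2\pi)^{-1/2}$. The natural companion is $h(x) := Q(x)g(x) = Q(x)\Psi_n(x)^2 + \Psi_n'(x)^2$, which satisfies $h'(x) = Q'(x)\Psi_n(x)^2 = -\frac{x}{2}\Psi_n(x)^2 \le 0$ for $x \ge 0$ and decays to $0$ at infinity, so $h \ge 0$ and is nonincreasing. Evaluating at the outermost maximum yields the exact identity $Q(x^*)\,M_n^2 = h(x^*) = \frac12\int_{x^*}^\infty x\,\Psi_n(x)^2\,\d x$. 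The delicate issue is that the crude comparison $h(x^*) \le h(0)$ loses too much — one checks it already fails to deliver the sharp constant by $n=3$ — so one must control the interplay between $g$ (increasing) and $h$ (decreasing) across the outermost hump and into the decaying tail beyond the turning point. This is precisely the careful elementary analysis of \cite{Indritz1961}, and it is the step I expect to be genuinely hard.

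I would anchor the argument at the base case $n=0$, where $\Psi_0(x) = (2\pi)^{-1/4}e^{-x^2/4}$ has its unique critical point at $x^*=0$ and attains equality $M_0^2 = (2\pi)^{-1/2}$; this both fixes the extremal constant and shows the bound is sharp, while for $n \ge 1$ the oscillation pushes the outermost maximum strictly below it. As a consistency check that the extremal case really sits at $n=0$, the edge asymptotics of Theorem \ref{thm:main-asymptotics} give $M_n \sim \big(\max_u \A(u)\big)\,n^{-1/12} \to 0$, comfortably under $\frac{1}{(2\pi)^{1/4}}$ for large $n$.
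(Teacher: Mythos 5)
Your setup is correct, and it is genuinely the machinery behind the cited result: the ODE $\Psi_n'' + Q\,\Psi_n = 0$ with $Q(x) = n + \tfrac12 - \tfrac{x^2}{4}$ follows from $\fH_n' = n\fH_{n-1}$ and the recurrence exactly as you say; the Sonine pair $g = \Psi_n^2 + \Psi_n'^2/Q$ (nondecreasing on $x\geq 0$) and $h = Qg$ (nonincreasing, nonnegative) is computed correctly; and the localization of the global maximum at the outermost critical point inside $\{Q>0\}$ is sound. But there is a genuine gap, which you yourself flag: the argument never produces the number $(2\pi)^{-1/4}$ for any $n\geq 1$. The $n=0$ computation fixes the constant only at $n=0$; the edge asymptotics cover only $n\to\infty$ and give no effective bound at finite $n$; and the sentence ``for $n \ge 1$ the oscillation pushes the outermost maximum strictly below it'' is precisely the proposition to be proved, asserted rather than derived. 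Fixed-$n$ Sonine monotonicity cannot by itself yield a uniform-in-$n$ constant: as you observe, $h(x^\ast)\leq h(0)$ is quantitatively useless (by the paper's own edge asymptotics the outermost critical point has $Q(x^\ast) = O(n^{1/3})$ while $h(0)$ grows like $\sqrt{n}$, so this route gives a bound of order $n^{1/6}$, which diverges). The missing ingredient is a mechanism that compares different $n$ --- for instance an induction on $n$ using the ladder identities $\Psi_n' + \tfrac{x}{2}\Psi_n = \sqrt{n}\,\Psi_{n-1}$ and $-\Psi_n' + \tfrac{x}{2}\Psi_n = \sqrt{n+1}\,\Psi_{n+1}$, which transfer a bound on $\Psi_{n-1}$ to one on $\Psi_n$ --- and that is exactly the ``careful elementary analysis'' you defer to \cite{Indritz1961}.

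For context, the paper does not attempt a proof at all: its proof of this proposition consists of the citation to \cite{Indritz1961} together with the normalization dictionary $\Psi_n(x) = 2^{-1/4}E_n(x/\sqrt{2})$. So your proposal contains strictly more (correct) mathematics than the paper's proof --- the ODE, the monotone pair $(g,h)$, the localization of the maximum, and the exact identity $Q(x^\ast)M_n^2 = \tfrac12\int_{x^\ast}^{\infty} x\,\Psi_n(x)^2\,\d x$ --- but it terminates at the same point, outsourcing the decisive inequality to the same reference. Judged as a standalone proof it is therefore incomplete; the cross-$n$ comparison is the piece you would still have to supply.
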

\begin{proof}
The proof is given in \cite{Indritz1961}. Note that the function called $E_n(x)$ there is related to $\Psi_n(x)$ by $\Psi_n(x) = \frac{1}{2^{1/4}} E_n(x/\sqrt{2})$ .
\end{proof}

\begin{remark}
    In fact, the bound can be improved to yield 
    \begin{align}
       c n^{-1/12}\leq \max_{x\in\bR} \abs{\Psi_n(x)}\leq C n^{-1/12}
    \end{align}
    for explicit constants $c,C>0$, see \cite{Krasikov}.
    Both \cite{Indritz1961} and \cite{Krasikov} use the fact that for a given $n\in\bN$, the supremum of $\Psi_n$ is after the last zero of $\fH_n$, near $2\sqrt n$, as can be seen in Figure \ref{fig:arcsine}.
\end{remark}

\subsection{Arcsine law limit for \texorpdfstring{$\Psi_n^2$ and the limit of}{} the quantum harmonic oscillator}\label{subsec:arcsine}
	
	\begin{figure}
	    \centering
	    \includegraphics[width=\linewidth]{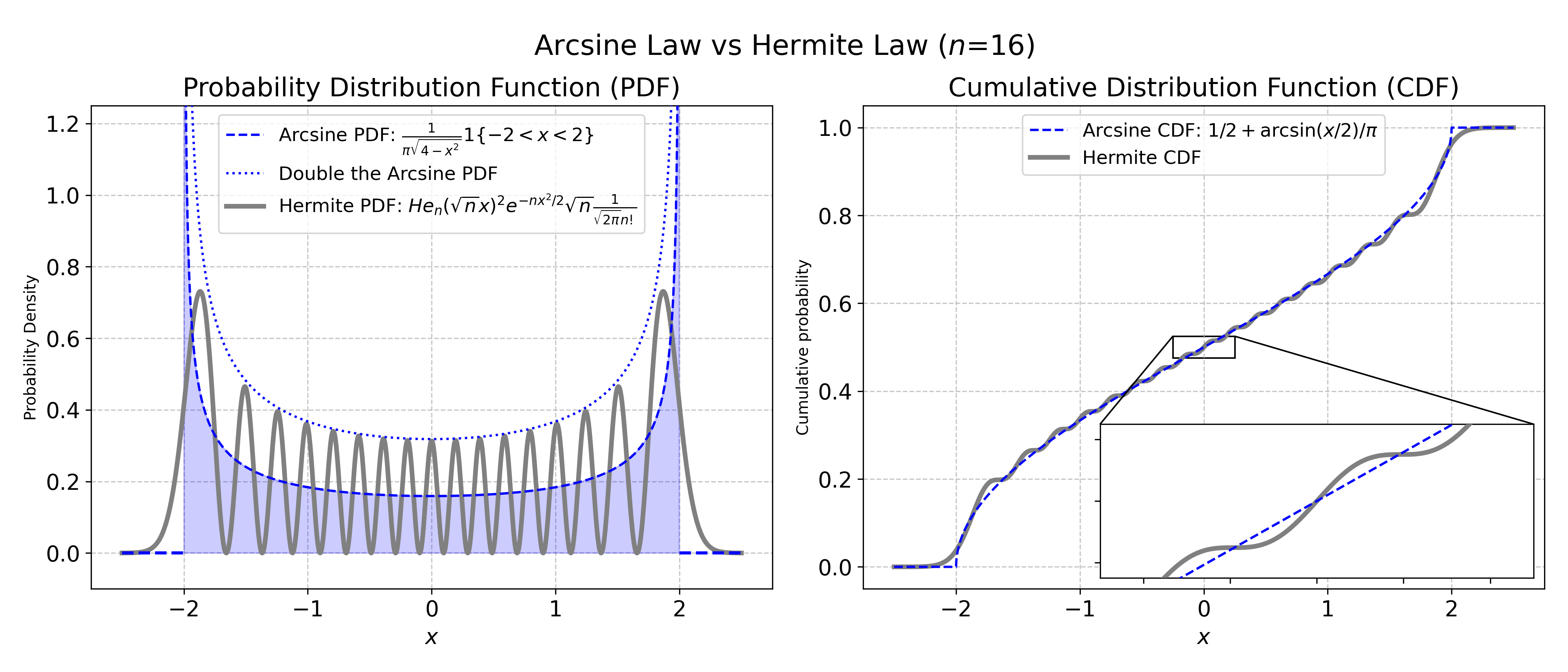}
        \vspace{-3em}
	    \caption[The squared oscillator wave function $\Psi^2_n$ converges weakly to the arcsine law]{By squaring $\fH_n(x)$ and multiplying by a Gaussian density, one gets a probability distribution $\Psi_n(x)^2$ that converges weakly to the arcsine law on $[-2,2]$ after rescaling by $\sqrt{n}$.  This law appears in relation to the wavefunction for the Quantum Harmonic oscillator.}
	    \label{fig:arcsine}
	\end{figure}
	The integral $\intop_{-\infty}^\infty \Psi_n^2(x) \d x = 1 $ means we can interpret the non-negative function $\Psi_n^2(x)$ as a probability density function. Indeed, this probability distribution appears in quantum mechanics: up to appropriately inserting physical constants it is the distribution of the position of a quantum harmonic oscillator, see Proposition \ref{prop:physics}.  The $\sqrt{n}$ scaling of Section \ref{subsec:three-scaling} suggests that this probability distribution needs to be rescaled by $\sqrt{n}$  to have a non-trivial limit as $n\to \infty$, so the correct object to study is the density function $\sqrt{n}\Psi_n(\sqrt{n}x)^2$ . Indeed, this is the case as can be seen in Figure  \ref{fig:arcsine} and the limit is the arcsine distribution, as we prove below.
    
\begin{proposition}
		\label{prop:arcsine}
The sequence of functions $\{\sqrt{n} \Psi_n(\sqrt{n}x)^2 \}_{n=1}^\infty$ converges weakly  as $n\to \infty$ to the arcsine-density function $\rho_{\text{arcsine}}$ on $[-2,2]$, 
		\begin{align*}
			\sqrt{n}\Psi_n(\sqrt{n}x)^2 \Rightarrow \rho_{\text{arcsine}}(x) := \begin{cases} \frac1{\pi \sqrt{4-x^2}}, & x\in [-2,2] \\ 0, & |x|>2\end{cases}.
		\end{align*}
See Figure \ref{fig:arcsine} for an illustration of this convergence.
	\end{proposition}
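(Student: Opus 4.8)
The plan is to establish weak convergence by testing against an arbitrary bounded continuous function $g$, splitting $\bR$ into a bulk region $I_\delta = [-2+\delta,2-\delta]$ and its complement. First I would insert the definition $\Psi_n(\sqrt n x)^2 = \frac{1}{n!}\fH_n(\sqrt n x)^2\,\frac1{\sqrt{2\pi}}e^{-nx^2/2}$ and write $x=2a$, so that $\sqrt n x = 2a\sqrt n$ falls into the oscillatory small-$x$ regime for $a\in(-1,1)$. Feeding in the asymptotic \eqref{eq:small-x-approx} of Theorem \ref{thm:main-asymptotics} together with Stirling's formula from Example \ref{ex:stirling}, the leading factors $n^n$, $e^{2n(a^2-1/2)}$, the $e^{-n}$ coming from $n!$, and $e^{-nx^2/2}=e^{-2na^2}$ all cancel exactly; after multiplying by $\sqrt n$ one is left with the pointwise asymptotic
\[
\sqrt n\,\Psi_n(\sqrt n x)^2 = \frac{2}{\pi\sqrt{4-x^2}}\,\cos^2\!\big(\theta_n(x/2)\big)\,(1+o(1)),
\]
valid for each fixed $x\in(-2,2)$, where $\theta_n$ is the phase of Remark \ref{rem:small-x}. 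Averaging the factor $\cos^2$ to its mean $\tfrac12$ is precisely what produces $\rho_{\text{arcsine}}(x)=\frac{1}{\pi\sqrt{4-x^2}}$.

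Fix $\delta>0$ and work on the bulk $I_\delta$, which corresponds to $a$ in a compact subset of $(-1,1)$. I would first note that the pointwise asymptotic above holds \emph{uniformly} on $I_\delta$: the Laplace approximation underlying Lemma \ref{lem:a-less-one} has error uniform on compact sets of $a$, since the critical-point data $F_a(w^\ast_-)$ and $F_a''(w^\ast_-)$ vary continuously and stay bounded away from the degenerate case $a=1$. Writing $\cos^2 = \tfrac12 + \tfrac12\cos(2\theta_n)$, the constant half integrates against $g$ to give exactly $\int_{I_\delta} g\,\rho_{\text{arcsine}}$, while the oscillatory half vanishes in the limit: by Remark \ref{rem:small-x} the phase obeys $\frac{\d}{\d a}\theta_n(a) = 2n\sqrt{1-a^2}+\frac{1}{2\sqrt{1-a^2}}$, which is bounded below by a constant multiple of $n$ on $I_\delta$, so a single integration by parts (non-stationary phase / Riemann--Lebesgue) gives
\[
\int_{I_\delta} g(x)\,\frac{1}{\pi\sqrt{4-x^2}}\,\cos\big(2\theta_n(x/2)\big)\,\d x \longrightarrow 0 .
\]
Thus $\int_{I_\delta} g\,\sqrt n\,\Psi_n(\sqrt n\,\cdot)^2 \to \int_{I_\delta} g\,\rho_{\text{arcsine}}$. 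The symmetry $\fH_n(-x)=(-1)^n\fH_n(x)$ makes the squared density even, so the $x>0$ form of the asymptotics suffices on all of $I_\delta$.

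The complement $I_\delta^{\,c}$ is controlled by conservation of mass rather than by any delicate edge analysis. Taking $g\equiv 1$ on $I_\delta$ in the previous step shows the bulk mass $\int_{I_\delta}\sqrt n\,\Psi_n(\sqrt n\,\cdot)^2 \to m_\delta := \int_{I_\delta}\rho_{\text{arcsine}}$. Since the $\Psi_n$ are normalized (Definition \ref{def:oscillator-wave-function}), $\int_{\bR}\sqrt n\,\Psi_n(\sqrt n x)^2\,\d x = 1$ for every $n$, so the mass on $I_\delta^{\,c}$ equals $1-(\text{bulk mass})\to 1-m_\delta$, and $m_\delta \uparrow 1$ as $\delta\downarrow 0$ because $\rho_{\text{arcsine}}$ is a probability density on $[-2,2]$. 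Hence $\big|\int_{I_\delta^{\,c}} g\,\sqrt n\,\Psi_n(\sqrt n\,\cdot)^2\big| \le \|g\|_\infty(1-m_\delta+o(1))$ and $\big|\int_{I_\delta^{\,c}} g\,\rho_{\text{arcsine}}\big| \le \|g\|_\infty(1-m_\delta)$. Assembling the bulk and complement estimates and letting $n\to\infty$ and then $\delta\downarrow 0$ gives $\int g\,\sqrt n\,\Psi_n(\sqrt n\,\cdot)^2 \to \int g\,\rho_{\text{arcsine}}$ for every bounded continuous $g$, which is the asserted weak convergence. (Alternatively the complement could be handled by Proposition \ref{prop:exponential-decay} past the edge and Cram\`er's bound Proposition \ref{prop:Cramer} near it, but the mass-conservation argument is cleaner.)

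The main obstacle is the middle step: turning a pointwise asymptotic whose leading coefficient oscillates like $\cos^2(\theta_n)$ into a genuine weak limit. This needs both the uniformity of the Laplace error across the bulk and the cancellation of the rapidly oscillating remainder, neither of which follows formally from the pointwise statements proved earlier; the work is a uniform version of Lemma \ref{lem:a-less-one} and the integration-by-parts estimate exploiting $\theta_n'=\Theta(n)$. By contrast the genuinely singular behaviour near $x=\pm2$, where the bulk asymptotics break down and the Airy regime takes over, never has to be examined directly --- it is absorbed into the harmless error $1-m_\delta$ by the total-mass-one constraint.
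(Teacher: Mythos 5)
Your proof is correct, and it takes a route that differs from the paper's in two substantive ways. The skeleton is shared: both proofs feed the small-$x$ asymptotics \eqref{eq:small-x-approx} and Stirling's formula into $\sqrt n\,\Psi_n(\sqrt n x)^2$ to get $\frac{1}{\pi\sqrt{4-x^2}}\cdot 2\cos^2(\theta_n)(1+o(1))$ and then must average $\cos^2$ to $\tfrac12$. Where you diverge: (i) for the oscillation, the paper invokes a dedicated result (Lemma \ref{lem:cos-weak-convergence}), proved in Appendix \ref{sec:proof-lem-cos-weak-convergence} by splitting the phase as $ng(x)-P(x)$ with $g(x)=x\sqrt{1-x^2}$ and running a genuine stationary-phase analysis around the interior critical points $x=\pm1/\sqrt2$ of $g$, giving $O(n^{-1/2})$ contributions there; you instead observe that the \emph{full} phase $\theta_n$ is monotone with $\theta_n'(a)=2n\sqrt{1-a^2}+\frac1{2\sqrt{1-a^2}}\geq c_\delta n$ on compact subsets of the bulk, so a single non-stationary-phase integration by parts kills the oscillatory term --- no critical points ever appear, which is a real simplification (the price is that this bound degenerates at $a=\pm1$, forcing your restriction to $I_\delta$); (ii) for the region outside the bulk, the paper uses the exponential decay of Proposition \ref{prop:exponential-decay} for $|x|>2$, whereas you use conservation of mass from orthonormality, $\int\sqrt n\,\Psi_n(\sqrt n x)^2\,\d x=1$, so the complement's mass is pinned at $1-m_\delta+o(1)$ and the edge region near $\pm2$ never has to be analyzed at all. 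Both arguments share the same genuine caveat, which you flag explicitly and the paper glosses over: the pointwise $(1+o(1))$ of Lemma \ref{lem:a-less-one} must be upgraded to uniformity on compact subsets of $(-2,2)$ before it can be integrated. One small unaddressed technicality on your side: the integration by parts differentiates the amplitude $g(x)/\sqrt{4-x^2}$, so you should first take $g$ smooth (compactly supported, say) and recover general bounded continuous $g$ by density --- legitimate here precisely because all measures involved are probability measures; the paper's appendix does the analogous reduction by working with $C^2$ test functions and a duality/density step.
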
	\begin{proof}
By Proposition \ref{prop:bound-uniform-Psi}, $\Psi_n(\sqrt{n}x) \to 0$ for $|x|>2$ exponentially fast. To see the convergence to the function $\frac1{\pi \sqrt{4-x^2}}$ for $x \in [-2,2]$, we use the approximation \eqref{eq:small-x-approx}  with $a = x/2$ to get 

\begin{align*}
\sqrt{n}\Psi_n(\sqrt{n}x)^2 & = \frac{\sqrt{n}}{\sqrt{2\pi}n!} n^{n}\,
				{\frac{{2}}{\sqrt{1-(x/2)^2}}}
				e^{n(2(x/2)^2-1/2)}\,
				\cos^2\rb{\theta_n(x)}e^{-\frac{1}{2}nx^2}\rb{1+o(1)} \end{align*}
where the phase angle is $\theta_n(x) = n\rb{x/2\sqrt{1-(x/2)^2} -\arccos(x/2)} -\frac 12\rb{\arccos(x/2)+\frac\pi2}$. In this case, the exponential exponents $\frac{1}{2}nx^2$ perfectly cancel, and after simplifying using the Stirling approximation $n! = \sqrt{2\pi n}\left(\frac{n}{e}\right)^n (1+o(1)) $, we are left with:
                
\begin{align}\label{eq:psi-pre-lemma}
\sqrt{n}\Psi_n(\sqrt{n}x)^2  & = \frac{1}{ \pi } \,
				{\frac{{1}}{\sqrt{4-x^2}}}
				\left[ 2\cos^2\rb{\theta_n(x)} \right] \rb{1+o(1)}. \end{align}
Now we note that  the oscillating function $\cos^2(\theta_n(x))\Rightarrow \frac{1}{2}$ weakly as the oscillations increase in frequency, which we state as Lemma \ref{lem:cos-weak-convergence} below. Morally $\sin^2(\theta_n(x))+\cos^2(\theta_n(x))=1$ so they should each converge to $\half$ as the frequency increases $\theta_n(x)=O(n)$, but there is a small wrinkle at the points where $\theta'_n(x)=0$ that needs to be dealt with.  See Figure \ref{fig:arcsine} for an illustration of how the oscillations weakly converge. Once this technical lemma establishes $2\cos^2(\theta_n(x))\Rightarrow 1$, the result follows from \eqref{eq:psi-pre-lemma}.
\end{proof}    

\begin{lemma}
			\label{lem:cos-weak-convergence}
			Let $p<2$. The sequence of functions $h_n$ defined by $h_n(x) = 2\cos^2(\theta_n(x))$ converges weakly to 1 in $L^p([-1,1])$ as $n\to\infty$.
		\end{lemma}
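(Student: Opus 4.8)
The plan is to exploit the algebraic identity $h_n(x)-1 = 2\cos^2(\theta_n(x))-1 = \cos(2\theta_n(x))$, so that proving $h_n\rightharpoonup 1$ is the same as proving that $\cos(2\theta_n)$ tends weakly to $0$. Since $\lvert\cos(2\theta_n)\rvert\le 1$ and $[-1,1]$ has finite measure, the family $\{h_n-1\}$ is bounded in $L^\infty$, hence in every $L^p$. A standard approximation argument then reduces the full weak convergence (for every $p<2$, and in fact weak-$*$ testing against all of $L^1$) to checking it against a dense class of test functions: given $g$ and a step function $s$ with $\lVert g-s\rVert$ small, H\"older's inequality together with the uniform bound $\lVert h_n-1\rVert_\infty\le 1$ controls the error uniformly in $n$. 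Since step functions are finite linear combinations of indicators $\mathbf{1}_{[c,d]}$, everything reduces to proving that, for each subinterval $[c,d]\subseteq[-1,1]$,
\begin{equation*}
\int_c^d \cos\bigl(2\theta_n(x)\bigr)\,\d x \longrightarrow 0 \qquad (n\to\infty).
\end{equation*}

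The second step is to estimate this oscillatory integral by non-stationary phase. From Remark \ref{rem:small-x} the phase has derivative $\theta_n'(x)=2n\sqrt{1-x^2}+\tfrac{1}{2\sqrt{1-x^2}}$, and the key observation — exactly what tames the ``wrinkle'' near the turning points $x=\pm1$ — is that by the AM--GM inequality
\begin{equation*}
\theta_n'(x)=2n\sqrt{1-x^2}+\frac{1}{2\sqrt{1-x^2}}\ge 2\sqrt{2n\sqrt{1-x^2}\cdot\tfrac{1}{2\sqrt{1-x^2}}}=2\sqrt n
\end{equation*}
for every $x\in(-1,1)$. Thus, although the leading term $2n\sqrt{1-x^2}$ of the frequency vanishes at the endpoints, the subleading arcsine term $\tfrac{1}{2\sqrt{1-x^2}}$ keeps $\theta_n'$ bounded below by $2\sqrt n$ everywhere. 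I would then integrate by parts, writing $\cos(2\theta_n)=\tfrac{1}{2\theta_n'}\tfrac{\d}{\d x}\sin(2\theta_n)$, to obtain
\begin{equation*}
\int_c^d\cos(2\theta_n)\,\d x=\left[\frac{\sin(2\theta_n)}{2\theta_n'}\right]_c^d-\int_c^d \sin(2\theta_n)\,\frac{\d}{\d x}\!\left(\frac{1}{2\theta_n'}\right)\d x .
\end{equation*}

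The boundary term is bounded by $\tfrac{1}{2\theta_n'(c)}+\tfrac{1}{2\theta_n'(d)}\le \tfrac{1}{2\sqrt n}$, and the remaining integral is at most the total variation of $x\mapsto \tfrac{1}{2\theta_n'(x)}$ on $[c,d]$. The last step is to show this total variation is $O(n^{-1/2})$: writing $s=\sqrt{1-x^2}$, the quantity $\tfrac{1}{2\theta_n'}=\bigl(4ns+1/s\bigr)^{-1}$ is, as a function of $x$, even and piecewise monotone (on $[0,1)$ it rises from a small value at $x=0$ to its maximum $\tfrac{1}{4\sqrt n}$ at the minimizer $x_0=\sqrt{1-1/(4n)}$ of $\theta_n'$, then falls to $0$ at $x=1$), so its total variation over all of $[-1,1]$ is at most $4\cdot\tfrac{1}{4\sqrt n}=n^{-1/2}$. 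Combining these, $\lvert\int_c^d\cos(2\theta_n)\,\d x\rvert= O(n^{-1/2})$ uniformly over subintervals, which proves the reduced statement and hence the lemma. The main obstacle — and the only genuinely delicate point — is precisely the behaviour near the turning points $x=\pm1$, where the naive frequency $2n\sqrt{1-x^2}$ degenerates; the uniform bound $\theta_n'\ge 2\sqrt n$ furnished by AM--GM is what resolves it, showing that no separate endpoint analysis is actually needed.
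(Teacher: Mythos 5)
Your proof is correct, and it takes a genuinely different route from the paper's. The paper's own proof (Appendix \ref{sec:proof-lem-cos-weak-convergence}) first reduces to $C^2$ test functions (dense in the dual of $L^p$), then uses the double-angle and addition formulae to split the phase into an $n$-dependent part $2ng(x)$ with $g(x)=x\sqrt{1-x^2}$ and an $n$-independent part $P$ absorbed into the test function; since $g'(x)=\tfrac{1-2x^2}{\sqrt{1-x^2}}$ vanishes at $x=\pm\tfrac{1}{\sqrt{2}}$, the paper must then run a genuine stationary-phase argument: integration by parts away from these interior critical points (contributing $O(1/n)$) plus localization and the rescaling $t=\sqrt{n}(x-x_j)$ near them (contributing $O(n^{-1/2})$). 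You instead keep the full phase $\theta_n$ and observe, via AM--GM, that $\theta_n'(x)=2n\sqrt{1-x^2}+\tfrac{1}{2\sqrt{1-x^2}}\ge 2\sqrt{n}$ never vanishes --- the arcsine part of the frequency exactly compensates the degeneration of the semicircle part at $x=\pm 1$ --- so a single integration by parts plus a total-variation bound on $1/(2\theta_n')$ gives a bound $O(n^{-1/2})$ uniform over subintervals, with no localization at all. Your reduction step (step functions dense in $L^1$, combined with $\lVert h_n-1\rVert_\infty\le 1$) is also more elementary than the $C^2$ reduction, and it proves the stronger weak-$*$ statement against all of $L^1$. The comparison is instructive: the interior stationary points in the paper's proof are an artifact of its phase splitting, which removes the $-n\arccos(x)$ term from the oscillating part (the appendix's phase $ng-P$ is not the same function as the main text's $\theta_n$), whereas your argument works directly with the $\theta_n$ the lemma actually refers to; conversely, the paper's method is the standard stationary-phase template and would survive for phases with genuine interior critical points, where your monotonicity argument would not apply. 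One point you should make explicit for full rigour: at $x=\pm 1$ the derivative $\theta_n'$ blows up, so on intervals touching the endpoints the integration by parts should be performed on $[c,d-\varepsilon]$ and the limit $\varepsilon\to 0$ taken; this is harmless precisely because $1/(2\theta_n')$ extends continuously by $0$ there and is monotone near the endpoints, so both the boundary term and the total-variation bound behave as you claim.
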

 
The proof of Lemma \ref{lem:cos-weak-convergence} is given in Appendix \ref{sec:proof-lem-cos-weak-convergence}.   
\begin{proposition}
\label{prop:physics}
        Consider the solution to the time independent Schroedinger equation for the quantum harmonic oscillator, namely: $$-\frac{\hbar^2}{2m}\frac{\d  \psi^2}{\d x^2} + \frac{1}{2} m \omega^2 x^2 \psi^2 = E \psi, $$
    where $\hbar$  is the reduced Planck constant, $m$ is the mass of the particle, $\omega$ is the angular frequency of the quadratic potential\footnote{$\omega$ can alternatively be written as $\omega = \sqrt{k/m}$ in terms of the force constant $k$  of the potential}, $E$  is the total energy, and $\psi \in L^2(\mathbb{R})$ is the wave function. Then the energy levels with non-zero solutions are quantized and obey $n = n(E,\omega,\hbar)=\frac{E}{\hbar \omega} - \frac{1}{2} \in \mathbb{N}$ and the normalized solution with this energy, $\psi_n$, can be written in terms of the constant $c=c(m,\omega,\hbar) := \sqrt{2 m \omega/\hbar}$ as:     $$ \psi_{n}(x;E,m,\omega,\hbar) = \sqrt{c} \Psi_n(c x) = \frac{1}{\sqrt{n!}} \fH_n( c x) \sqrt{ \frac{c}{\sqrt{2\pi}} e^{-\frac{1}{2} (cx)^2 } } $$
Moreover, thinking of $\psi_n$ as depending on the physical parameters $E,m,\omega,\hbar$ , consider the situation where $E,m,\omega$ are held fixed, and the parameter $\hbar \to 0$  in a sequence in such a way that $n=\frac{E}{\hbar \omega}-\half \to \infty$ is a sequence of positive integers tending to $\infty$. Then, in this limit we have weak convergence of the probability density $|\psi_n|^2$ to the arcsine law supported on the interval $x \in [-\sqrt{\frac{2E}{m\omega^2}},\sqrt{\frac{2E}{m\omega^2}}]$, namely:
$$|\psi_n(x;E,m,\omega,\hbar)|^2 \Rightarrow  \frac{1}{\pi\sqrt{\frac{2E}{m\omega^2}-x^2}}\text{ as } \hbar \to 0$$
    \end{proposition}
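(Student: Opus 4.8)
The plan is to treat the two assertions separately: first verify that $\psi_n(x)=\sqrt{c}\,\Psi_n(cx)$ is an eigenfunction with the stated eigenvalue (and that these exhaust the $L^2$ spectrum), and then deduce the $\hbar\to0$ limit directly from the arcsine limit of Proposition \ref{prop:arcsine}. For the eigenfunction part, I would first extract Hermite's differential equation from two facts already in the paper: $\tfrac{\d}{\d x}\fH_n(x)=n\fH_{n-1}(x)$ (Lemma \ref{lem:deriv-of-H}) and the recurrence $\fH_{n+1}(x)=x\fH_n(x)-n\fH_{n-1}(x)$ (Proposition \ref{prop:recursive}). Differentiating $\fH_n'=n\fH_{n-1}$ once more and eliminating $\fH_{n-2}$ via the recurrence yields $\fH_n''(x)-x\fH_n'(x)+n\fH_n(x)=0$. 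A short computation then shows the unnormalised $g(\xi)=\fH_n(\xi)e^{-\xi^2/4}$ satisfies $g''(\xi)+\rb{n+\half-\tfrac{\xi^2}4}g(\xi)=0$. Setting $\xi=cx$ and substituting $u(x)=g(cx)$ into the Schr\"odinger operator, the quartic-in-$c$ term produced by $u''$ cancels the harmonic potential $\half m\omega^2 x^2$ exactly when $c^2=2m\omega/\hbar$, and the remaining constant term forces $E=\tfrac{\hbar^2 c^2}{2m}\rb{n+\half}=\hbar\omega\rb{n+\half}$, i.e. $n=\tfrac{E}{\hbar\omega}-\half$. Normalisation is automatic: in $\int\abs{\psi_n}^2=\int c\,\Psi_n(cx)^2\,\d x=\int\Psi_n(\xi)^2\,\d\xi=1$ the substitution $\xi=cx$ invokes precisely the orthonormality recorded after Definition \ref{def:oscillator-wave-function}.

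To argue that no other energies admit square-integrable solutions, I would invoke the standard Sturm--Liouville / Frobenius argument: after the scaling the equation is Hermite's equation, whose series solution terminates into a polynomial (hence an $L^2$ eigenfunction after the $e^{-\xi^2/4}$ factor) precisely when $n\in\mathbb N$, and otherwise grows like $e^{+\xi^2/2}$ and fails to be square-integrable. This completeness/quantization step is the one place the argument leans on external ODE theory rather than on the Gaussian-integral machinery of the paper, so I would state it and cite a standard reference instead of reproving it.

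For the limit, I would write $\abs{\psi_n(x)}^2=c\,\Psi_n(cx)^2$ and test against an arbitrary bounded continuous $\phi$. Substituting $\xi=cx$ and then $\xi=\sqrt n\,y$ gives
\[
\intop_{-\infty}^\infty \phi(x)\,c\,\Psi_n(cx)^2\,\d x=\intop_{-\infty}^\infty \phi\!\rb{\tfrac{\sqrt n}{c}\,y}\,\sqrt n\,\Psi_n(\sqrt n\,y)^2\,\d y.
\]
Since $\hbar=\tfrac{E}{\omega(n+1/2)}$ we have $\tfrac{\sqrt n}{c}=\sqrt{\tfrac{n}{n+1/2}}\,\sqrt{\tfrac{E}{2m\omega^2}}\to\kappa:=\sqrt{\tfrac{E}{2m\omega^2}}$, so $\phi(\tfrac{\sqrt n}{c}y)\to\phi(\kappa y)$ uniformly on compacts. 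Proposition \ref{prop:arcsine} gives $\sqrt n\,\Psi_n(\sqrt n\,y)^2\Rightarrow\rho_{\text{arcsine}}$ on $[-2,2]$, so the right-hand side converges to $\int_{-2}^2\phi(\kappa y)\rho_{\text{arcsine}}(y)\,\d y$; the change of variables $x=\kappa y$ rewrites this as $\int\phi(x)\,\tfrac1{\pi\sqrt{4\kappa^2-x^2}}\,\d x$, and $4\kappa^2=\tfrac{2E}{m\omega^2}$ identifies the limit as the arcsine density on $[-\sqrt{2E/(m\omega^2)},\sqrt{2E/(m\omega^2)}]$, as claimed.

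The main obstacle is the last convergence, since the integrand carries an $n$-dependent test function $\phi(\tfrac{\sqrt n}{c}\,y)$ and Proposition \ref{prop:arcsine} cannot be quoted verbatim. The clean fix is a converging-together (Slutsky-type) argument: combine the uniform-on-compacts convergence $\phi(\tfrac{\sqrt n}{c}y)\to\phi(\kappa y)$ with the tightness of $\sqrt n\,\Psi_n(\sqrt n\,y)^2$, whose mass concentrates on $[-2,2]$ — made quantitative by Proposition \ref{prop:exponential-decay}, which forces exponentially small mass for $\abs{y}>2$. Splitting the integral into $\abs{y}\le M$ and $\abs{y}>M$ and using these two inputs lets one replace the moving test function by its limit with negligible error, after which Proposition \ref{prop:arcsine} applies on the fixed window.
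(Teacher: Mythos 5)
Your proposal is correct, and the second half (the $\hbar\to0$ limit) follows essentially the same route as the paper: both write $\abs{\psi_n(x)}^2$ as a rescaling of $\sqrt n\,\Psi_n(\sqrt n\,\cdot)^2$, note that $\sqrt n/c\to\sqrt{E/(2m\omega^2)}$ under the quantization $n=\tfrac{E}{\hbar\omega}-\tfrac12$, and combine Proposition \ref{prop:arcsine} with a Slutsky-type converging-together argument; you additionally make the Slutsky step quantitative via the tightness supplied by Proposition \ref{prop:exponential-decay}, where the paper simply invokes Slutsky's theorem. The first half is where you genuinely diverge. The paper does not verify the eigenfunction property at all: it cites Griffiths for the solved Schr\"odinger equation and its quantization, and then only translates between the physicist's polynomials $H_n$ and the probabilist's $\fH_n$ via $H_n(x)=2^{n/2}\fH_n(\sqrt2 x)$ and $cx=\sqrt2\xi$. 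You instead derive Hermite's equation $\fH_n''-x\fH_n'+n\fH_n=0$ from the paper's own Lemma \ref{lem:deriv-of-H} and Proposition \ref{prop:recursive}, push it through the factor $e^{-\xi^2/4}$, and check that the scaling $c^2=2m\omega/\hbar$ cancels the potential and forces $E=\hbar\omega(n+\tfrac12)$ --- a computation I verified and which is correct. This buys a proof that stays inside the paper's Gaussian-integral framework except for one point, which you correctly isolate: showing that the $\fH_n$-eigenfunctions exhaust the $L^2$ spectrum (i.e.\ that non-integer $n$ admits no square-integrable solution) still requires an external Sturm--Liouville/Frobenius argument, so both your proof and the paper's ultimately lean on a standard reference, yours for a strictly narrower claim. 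In short: same argument for the limit, a more self-contained and slightly stronger argument for the eigenfunction identification.
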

    
\begin{remark}
    Note also that the arcsine law limit is precisely the probability distribution of the spatial location of the classic harmonic oscillator with parameters $E,m,\omega$. In other words, for the classical harmonic oscillator equation $m \ddot{x}(t) + m\omega^2 x(t)=0$ with energy $E$, the solution $x(t)=\sqrt{\frac{2E}{m\omega^2}} \cos(\omega t)$ has this arcsine distribution for its location.  This follows by a simple trigonometric change of variables to see that for any test function $f(x)$ one has 
    $$\intop_0^{2\pi/\omega} f(x(t))\d t=\intop_{-\sqrt{2E/m\omega^2}}^{\sqrt{2E/m\omega^2}} f(x) \frac{1}{\pi\sqrt{\frac{2E}{m\omega^2}-x^2}}\d x .$$
In real life, the value of Planck's constant is fixed at $\hbar = 1.055\ldots \times  10^{-34} \operatorname{J s} \text{(joule seconds)}$ . Compared to human time/energy scales, this is extremely close to zero $\hbar \approx 0$, which explains why humans observe the classical physics of the  $\hbar \to 0$ limit in our day to day experiences. 
\end{remark}

\begin{proof}
The statement of the Schroedinger equation is exactly as in \cite{griffithsQM},  where the quantization for the energy is derived and it is shown that the solution is $\psi_n(x) = \left(\frac{m\omega}{\pi \hbar}\right)^{1/4} \frac{1}{\sqrt{2^n  n!}} H_n(\xi) e^{- \xi^2/2}$ where $H_n$ are the physicists Hermite polynomials ($H_1(x)=2x, H_2(x)=4x^2-2$, etc) and $\xi = \sqrt{ m\omega / \hbar } x$ is rescaled space. This is identical to our stated version by using the translation between physicists and probabilist $H_n(x)= 2^{n/2} \fH_n(\sqrt{2} x)$ and noting that $cx = \sqrt{2}\xi $  by its definition. (One can also directly verify using the change of variable Section \ref{sec:fourier} that this solves the stated Schroedinger PDE)

To prove the convergence to the arcsine law, we notice the following relationship between $|\psi_n|^2$ the functions  $g_n$ is the function from Proposition \ref{prop:arcsine} : 
$$ |\psi_n(x)|^2 = \sqrt{n} \Psi_n\Big( \sqrt{n} \big(\frac{c}{\sqrt{n}} x\big) \Big)^2 \frac{c}{\sqrt{n}} $$
Note also from the quantization $n = E/\hbar\omega -\half$  that the ratio $c/\sqrt{n}$ tends to a constant as $\hbar \to 0$
$$\frac{c}{\sqrt{n}} = \frac{\sqrt{2m\omega/\hbar}} {\sqrt{\frac{E}{\hbar \omega} - \half}} \to \sqrt{\frac{2m\omega^2}{E}}\text{ as }\hbar \to 0 $$
We now invoke Slutsky's theorem, which says that if $f_n(x) \Rightarrow f(x)$ converges weakly, and there are a sequence of constants $a_n$ so that $a_n \to a$ , then we have weak convergence ${a_n} f_n(a_nx) \Rightarrow a f(ax)$ (Note that Slutsky's theorem is usually stated w.r.t to the random variables rather than the densities, where the conclusion is that  $a^{-1}_n X_n \Rightarrow a^{-1} X$. ) In our case, we know that $\sqrt{n}\Psi_n(\sqrt{n}x)^2 \Rightarrow \frac{1}{\pi\sqrt{4-x^2}}$ by Proposition \ref{prop:arcsine}, and we know that $c/\sqrt{n} \to \sqrt{2m\omega^2/E}$  whence the result follows.
	\end{proof}

    \subsection{Semi-circle law limit for \texorpdfstring{$K_n$}{the Hermite kernel}}
    \label{subsec:semicircle-limit}
	\begin{figure}[!ht]
	    \centering
	    \includegraphics[width=\linewidth]{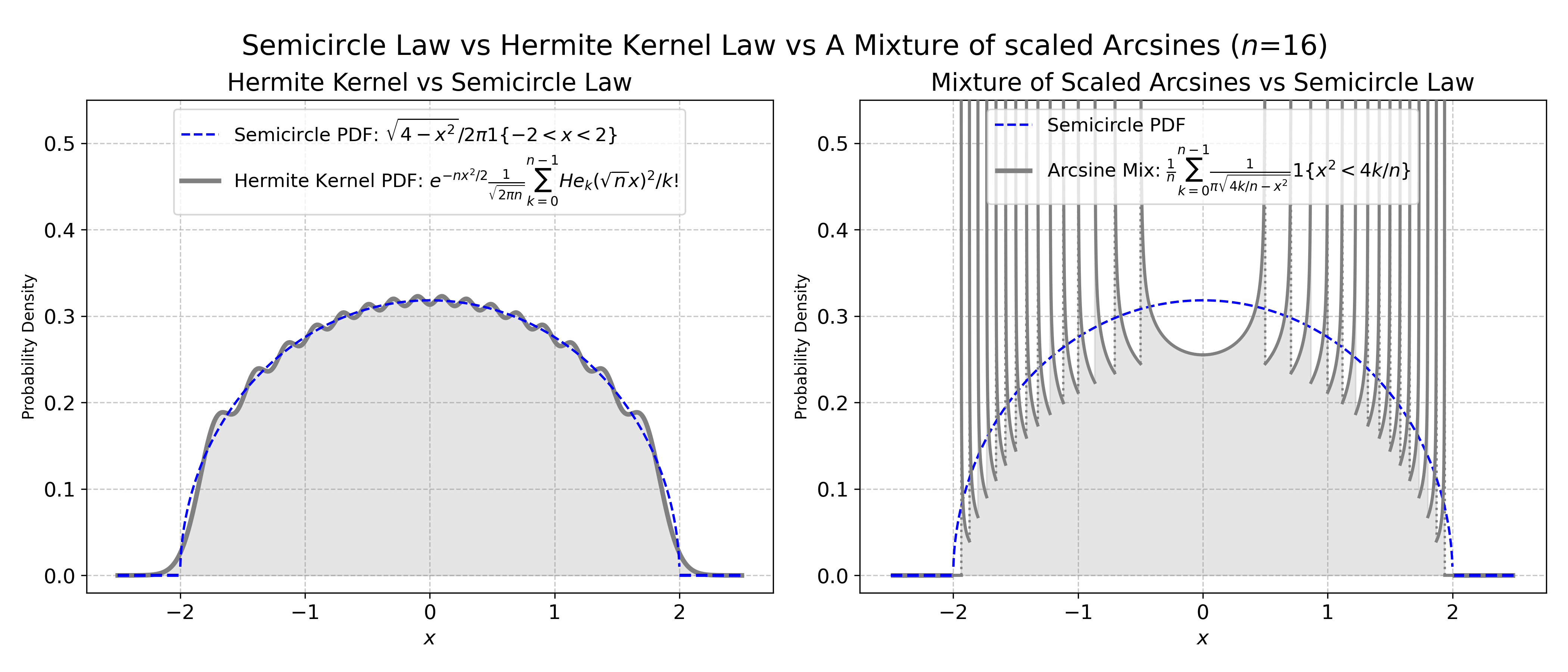}
        \vspace{-3em}
	    \caption[The diagonal of the Hermite kernel $K_n(x,x)$ converges to the semi-circle law]{The combination of Hermite polynomials also known as the diagonal Hermite kernel converges to the semi-circle law. This can be explained piggybacking on the arcsine convergence of the previous section: the same combination of arcsine laws also converges to the semi-circle law. }
	    \label{fig:semicircle}
	\end{figure}

Here, we are interested in a special evaluation of the diagonal, $ K_n(x,x)$, of the Hermite kernel $K_n(x,y)=\sum_{j=0}^n \Psi_j(x)\Psi_j(y)$ from Definition \ref{def:hermite-kernel}.  In Section \ref{sec:Dyson}, it is shown that this is closely related to the counting measure of the particles of Dyson's Brownian motion. In particular, one has that the total integral is $n$: 
$$ \intop_{-\infty}^\infty K_n(x,x) \d x = n$$
Therefore $\frac{1}{n}K_n(x,x)$ is a probability density. As with the arcsine law, to get a non-trivial limit, one must look at the scale $\sqrt{n}$, so the actual object we prove the limit of in this section is $\frac{1}{n} \sqrt{n} K_n(\sqrt{n} x, \sqrt{n} x) = \frac{1}{\sqrt{n}} K_n(\sqrt{n} x, \sqrt{n} x)$. Note that we can write this as:

$$\frac{1}{\sqrt{n}} K_n(\sqrt{n}x,\sqrt{n}x) = \frac{1}{\sqrt{n}}\sum_{j=0}^{n-1} \Psi_j(\sqrt{n}x)^2 =\frac{1}{n} \sum_{j=0}^{n-1} \left( \sqrt{j} \Psi_j\Big(\sqrt{j} \cdot \sqrt{\frac{n}{j}}x\Big)^2 \right) \sqrt{\frac{n}{j}} $$
By Proposition \ref{prop:arcsine}, the functions on the inside, $z \to \sqrt{j}\Psi_j(\sqrt{j}\cdot z))^2$,  are converging as $j \to \infty$ to the arcsine law $\rho_{arcsine}(z)=\frac{1}{\pi\sqrt{4-z^2}}1\{z^2<4\}$, so we might hope (and indeed will shortly prove) that the Hermite kernel to be closely approximated by the same mixture operation applied to arcsine laws, namely:
$$\frac{1}{n} \sum_{j=0}^{n-1} \left( \sqrt{j} \Psi_j\Big(\sqrt{j} \cdot \sqrt{\frac{n}{j}}x\Big)^2 \right) \sqrt{\frac{n}{j}} \stackrel{?}{\approx} \frac{1}{n} \sum_{j=0}^{n-1} \rho_{arcsine}\rb{\sqrt\frac{n}{j}x}\sqrt\frac{n}{j}.$$
See Figure  \ref{fig:semicircle} for an illustration of the resulting function.  This in turn reminds us of a Riemann sum integrating $u=\frac{j}{n}$ and we may hope (and indeed will shortly prove) that as $n\to \infty$ this converges to the following integral, which we can explicitly evaluate

\begin{equation} \label{eq:semi-circ-int}\intop_{0}^1 \rho_{arcsine}\left(\frac{x}{\sqrt{u}}\right)\frac{1}{\sqrt{u}}\d u = \intop_0^1 \frac{\mathbf{1}\{x^2 < 4u \}}{\pi\sqrt{4u-x^2}} \d u = \intop_{x^2/4}^1 \frac{1}{\pi\sqrt{4u-x^2}}\d u = \frac{1}{2\pi}\sqrt{4-x^2}. \end{equation}
This is precisely the semi-circle law! See again Figure \ref{fig:semicircle} for an illustration of this convergence. 

We can make the arguments here rigorous by precisely computing the moments of all of the distribution involved, and proving convergence of the moments. The moments are an ideal tool here because the scaling by $\sqrt{j/n}$  can easily be factored out.

\begin{lemma} \label{lem:moments}
Let $\rho_n(x)$ be a sequence of distributions, and for any fixed $p \in \mathbb{N}$, denote the $p$-th moment by $\mu_p\{\rho_n\}=\intop_{-\infty}^{\infty} x^p \rho_n(x) \d x$. Then the moments of a mixture are given by the mixture of the moments, namely
$$\mu_p\left\{ \frac{1}{n}\sum_{k=1}^n \rho_k\left(\sqrt\frac{n}{k}x\right) \sqrt\frac{n}{k}\right\} = \frac{1}{n}\sum_{k=1}^n \left(\sqrt\frac{k}{n}\right)^p \mu_p\{ \rho_k \}. $$
Moreover, in the case that the moments of $\rho_n$  converge, $\lim_{n\to\infty} \mu_p\{\rho_n\} = \mu_p\{\rho_\ast\} $, to some limiting distribution $\rho_\ast$,  we have the convergence of the moments of the mixture as follows:
$$\lim_{n\to\infty} \frac{1}{n}\sum_{k=1}^n \left(\sqrt\frac{k}{n}\right)^p \mu_p\{ \rho_k \} =  \lim_{n\to \infty}\frac{1}{n}\sum_{k=1}^n \left(\sqrt\frac{k}{n}\right)^p  \mu_p\{ \rho_\ast \}  =  \frac{1}{p/2+1} \mu_p\{\rho_*\}$$
Moreover, these are precisely equal to the moments of integrated new distribution $\intop_0^1 \rho_*(x/\sqrt{u}) /\sqrt{u} \d u$:
$$\frac{1}{p/2+1} \mu_p\{\rho_*\} = \mu_p \left\{ \intop_0^1 \rho_*(\frac{x}{\sqrt{u}}) \frac{1}{\sqrt{u}} \d u \right\} $$
\end{lemma}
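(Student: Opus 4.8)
The plan is to prove the three asserted identities in turn; the first and third reduce to a change of variables (with Fubini in the third), while the middle one carries the genuine analytic content. I would begin with the claim that the $p$-th moment of the mixture equals the mixture of the rescaled moments. By linearity of the integral defining $\mu_p$, it suffices to treat a single summand. For fixed $k$, substituting $y=\sqrt{n/k}\,x$ (so that $x=\sqrt{k/n}\,y$ and $\d x=\sqrt{k/n}\,\d y$) in $\int x^p\,\rho_k\!\big(\sqrt{n/k}\,x\big)\sqrt{n/k}\,\d x$ collapses the two Jacobian factors via $\sqrt{n/k}\cdot\sqrt{k/n}=1$ and pulls out $(\sqrt{k/n})^p$, leaving exactly $(\sqrt{k/n})^p\,\mu_p\{\rho_k\}$. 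Summing over $k$ and dividing by $n$ yields the first display; this step is purely formal.

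The key step is passing to the limit. I would view $\frac1n\sum_{k=1}^n(\sqrt{k/n})^p\mu_p\{\rho_k\}$ as a weighted average with non-negative weights $w_{n,k}=\frac1n (k/n)^{p/2}$. The total weight is a Riemann sum, $\sum_k w_{n,k}=\frac1n\sum_{k=1}^n (k/n)^{p/2}\to\int_0^1 u^{p/2}\,\d u=\frac{1}{p/2+1}$, each individual weight is $O(1/n)\to 0$, and the sequence $a_k:=\mu_p\{\rho_k\}$ converges to $\mu_p\{\rho_\ast\}$ by hypothesis (hence is bounded). Under precisely these conditions the Toeplitz/Silverman summation lemma gives $\sum_k w_{n,k}a_k\to\frac{1}{p/2+1}\,\mu_p\{\rho_\ast\}$, which simultaneously justifies replacing $\mu_p\{\rho_k\}$ by its limit (the first equality) and evaluates the resulting Riemann sum (the second). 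Concretely I would split the sum at a threshold $K$: the finitely many low-index terms contribute at most $CK/n\to 0$, while for $k>K$ the estimate $|a_k-\mu_p\{\rho_\ast\}|<\varepsilon$ bounds the remainder by $\varepsilon\cdot\sum_k w_{n,k}$, which is uniformly controlled. The \textbf{main obstacle} is exactly this interchange: confirming the uniform boundedness of $\mu_p\{\rho_k\}$ (immediate from convergence) and checking that the head of the sum is genuinely negligible, a point made easy here since the weights $(k/n)^{p/2}$ are in fact smallest at small $k$.

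Finally I would verify that $\frac{1}{p/2+1}\mu_p\{\rho_\ast\}$ is the $p$-th moment of $\sigma(x):=\int_0^1\rho_\ast(x/\sqrt u)\,u^{-1/2}\,\d u$. Writing $\mu_p\{\sigma\}=\int x^p\!\int_0^1\rho_\ast(x/\sqrt u)\,u^{-1/2}\,\d u\,\d x$ and applying Fubini, the inner $x$-integral under the substitution $y=x/\sqrt u$ produces $u^{(p+1)/2}\mu_p\{\rho_\ast\}$; multiplying by $u^{-1/2}$ and integrating $\int_0^1 u^{p/2}\,\d u=\frac{1}{p/2+1}$ reproduces the claimed value. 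The only care needed is the Fubini/Tonelli justification, which follows once $\rho_\ast$ has finite absolute moment $\int|y|^p\rho_\ast(y)\,\d y<\infty$, implicit in the convergence hypothesis. Chaining the three identities then gives the full statement.
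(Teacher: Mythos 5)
Your proposal is correct and takes essentially the same approach as the paper: the first and third identities are handled by the identical change of variables (with Fubini in the third), and the middle limit is proved by the same split-at-a-threshold argument, which you package under the name Toeplitz/Silverman but also spell out concretely in exactly the form the paper uses (head bounded by $O(K/n)$, tail by $\varepsilon$ times the bounded total weight). Your added remark that Fubini/Tonelli needs a finite absolute $p$-th moment of $\rho_\ast$ is a minor technical point the paper leaves implicit.
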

    \begin{proof}
   The first statement follows by the simple change of variables $y=cx$ to establish that for any distribution $\rho$ and any  constant $c$, one has: $$\mu_p\{ \rho(cx)c \} = \intop x^p \rho(cx)c \d x = \intop (y/c)^p \rho(y) \d y = \rb{\frac{1}{c}}^p \mu_p\{ \rho  \} $$  To see the convergence to $\frac{1}{p/2+1} \mu_p\{\rho_*\}$, let $\epsilon > 0$ be arbitrary and take $N_\epsilon$  so large so that $|\mu_k(\rho_n) - \mu(\rho)| < \epsilon$ whenever $k > N_\epsilon$ .  But then for $n > N_\epsilon$, the difference between the LHS and RHS is bounded by

\begin{align*} 
   \left| \frac{1}{n}\sum_{k=1}^n \left(\sqrt\frac{k}{n}\right)^p \Big( \mu_p\{ \rho_k \}  -  \mu_p\{ \rho_\ast \} \Big) \right| &\leq \frac{N_\epsilon}{n} \max_{k < N_\epsilon}\Big| \mu_p\{ \rho_k \}  -  \mu_p\{ \rho_\ast \} \Big|  + \frac{n-N_\epsilon}{n} \epsilon\\ 
   &\to \epsilon\text{ as }n\to \infty.
   \end{align*}
   Since this limit equals $\epsilon$, it is eventually less than $2\epsilon$ for $n$ large enough. Since $\epsilon$ is arbitrary, this establishes the limit. Now notice that $\lim_{n\to \infty}\frac{1}{n}\sum_{k=1}^n \left(\sqrt\frac{k}{n}\right)^p  = \intop_0^1 \sqrt{u}^p \d u$ is precisely the Riemmann sum limit for the integral, and evaluating the integral gives $\frac{1}{p/2+1}$ as stated.

   Finally, to evaluate the moments of  $\intop_0^1 \rho_*(\frac{x}{\sqrt{u}}) \frac{1}{\sqrt{u}} \d u $ , we evaluate the integral by doing a change of variable with $y=x/\sqrt{u}$ whose Jacobian is $\d x \d u = \sqrt{u} \d y \d u$, whence the integral factors $$ \int\hspace{-0.5em}\intop_0^1 \rho_*\left(\frac{x}{\sqrt{u}}\right) \frac{1}{\sqrt{u}} x^p \d u \d x =  \intop\hspace{-0.5em} \intop_0^1 \rho_*(y)  (\sqrt{u}y)^p \d u \d y =   \left(\int_0^1 \sqrt{u}^p \d u \right)\hspace{-0.2em}\left( \intop \rho_\ast(y) y^p \d y \right) = \frac{\mu_p\{ \rho_\ast\}}{\frac{p}{2}+1}.$$
    \end{proof}
   
 \begin{corollary}
 \label{cor:hermite-to-sc}
The Hermite kernel distribution $\frac{1}{\sqrt{n}} K_n(\sqrt{n} x,\sqrt{n} x) = \frac{\sqrt{n}}{\sqrt{2\pi}} \sum_{k=0}^{n-1} \frac{\fH_k(\sqrt{n} x)^2}{k!} e^{-\half nx^2}$ from \eqref{eq:Hermite_kernel} converges weakly as $n\to \infty$ to the semi-circle law $\frac{1}{2\pi}\sqrt{4-x^2}$ supported on $[-2,2]$.
\end{corollary}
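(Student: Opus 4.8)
The plan is to prove the convergence by the method of moments, using the scaffolding already assembled: Proposition \ref{prop:arcsine} (arcsine limit at a single index), Lemma \ref{lem:moments} (moments of a mixture), and the integral identity \eqref{eq:semi-circ-int}. First I would record the object as the mixture
$$\frac1{\sqrt n}K_n(\sqrt n x,\sqrt n x)=\frac1n\sum_{k=1}^{n}\rho_k\rb{\sqrt{\tfrac nk}\,x}\sqrt{\tfrac nk},\qquad \rho_k(x):=\sqrt k\,\Psi_k(\sqrt k x)^2,$$
exactly as in the display preceding the statement; the $k=0$ summand contributes a $p$-th moment of order $n^{-1-p/2}$ and is negligible, and summing to $n-1$ versus $n$ is an $O(1/n)$ boundary effect. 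By Proposition \ref{prop:arcsine} each $\rho_k$ converges weakly to $\rho_{\text{arcsine}}$, so we are precisely in the setting of Lemma \ref{lem:moments} with $\rho_\ast=\rho_{\text{arcsine}}$.

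The single hypothesis of Lemma \ref{lem:moments} not handed to us for free is convergence of the moments $\mu_p\{\rho_k\}\to\mu_p\{\rho_{\text{arcsine}}\}$, since Proposition \ref{prop:arcsine} supplies only weak convergence against bounded continuous test functions. Upgrading this to moment convergence is the crux of the argument. I would do it by establishing uniform integrability of $x^p$ against the family $\{\rho_k\}$: split $\int x^p\rho_k$ into a bulk part $|x|\le M$ and a tail part $|x|>M$ for a fixed $M>2$. On the bulk, $x^p$ may be replaced by a bounded continuous truncation, so the weak convergence of Proposition \ref{prop:arcsine} controls it (the endpoints $\pm M$ are continuity points of the arcsine law, which is supported on $[-2,2]$). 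For the tail, the exponential decay of Proposition \ref{prop:exponential-decay} gives $\rho_k(2a)=\sqrt k\,\Psi_k(2\sqrt k a)^2\ll \sqrt k\,e^{-2kf(a)}$ for $a>1$, with $f$ increasing and strictly positive, forcing the tail contribution to vanish uniformly in $k$. Since each $\rho_k$ is even, only even $p$ matter and all odd moments vanish, matching the symmetry of the semi-circle law. Alternatively one can bypass uniform integrability altogether by computing $\mu_{2m}\{\rho_k\}=k^{-m}\,\tfrac1{k!}\bE_{Y\sim\mathcal{N}(0,1)}[Y^{2m}\fH_k(Y)^2]$ directly via the Wick/Isserlis machinery of Section \ref{sec:combinatorics}, obtaining an explicit polynomial in $k$ of degree $m$ whose leading coefficient yields the arcsine moment $\binom{2m}{m}$ in the limit.

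With moment convergence of the $\rho_k$ in hand, Lemma \ref{lem:moments} at once gives that every moment of the mixture converges to $\frac1{p/2+1}\mu_p\{\rho_{\text{arcsine}}\}$, and the final clause of that lemma together with the explicit integral \eqref{eq:semi-circ-int} identifies this limit as the corresponding moment of the semi-circle density $\frac1{2\pi}\sqrt{4-x^2}$. For $p=2m$ this reads $\frac1{m+1}\binom{2m}{m}=C_m$, the Catalan number, which is exactly the $2m$-th semi-circle moment, providing a useful consistency check. Finally I would invoke the method of moments (the Fr\'echet--Shohat theorem): because the semi-circle law is compactly supported on $[-2,2]$ its moment problem is determinate, so convergence of all moments of the (genuinely probability) densities $\frac1{\sqrt n}K_n(\sqrt n x,\sqrt n x)$ to the semi-circle moments upgrades to the asserted weak convergence. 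I expect the moment-convergence step to be the main obstacle, as it is the only place where the pointwise asymptotics of Propositions \ref{prop:arcsine} and \ref{prop:exponential-decay} must be promoted to integrated control, and extracting a tail bound that is genuinely uniform in $k$ from the pointwise exponential decay requires some care.
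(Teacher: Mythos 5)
Your proof follows the paper's own argument essentially step for step: the same mixture decomposition of $\frac{1}{\sqrt n}K_n(\sqrt n x,\sqrt n x)$ into the densities $\rho_k(x)=\sqrt k\,\Psi_k(\sqrt k x)^2$, moment convergence of these to the arcsine moments via Propositions \ref{prop:arcsine} and \ref{prop:exponential-decay}, then Lemma \ref{lem:moments} together with the integral identity \eqref{eq:semi-circ-int} to identify the limiting moments as those of the semi-circle law, and finally moment determinacy of a compactly supported distribution to upgrade to weak convergence. If anything, you give more detail than the paper on the one step it asserts without proof (the upgrade from weak convergence to moment convergence via a bulk/tail uniform-integrability split), and your observation that obtaining a tail bound uniform in $k$ from the pointwise decay of Proposition \ref{prop:exponential-decay} is the delicate point is accurate.
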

\begin{proof}
By the convergence of $\sqrt{n}\Psi_n(\sqrt{n}x)^2 =  \frac{\sqrt{n}}{\sqrt{2\pi}n!}\fH_n(\sqrt{n}x)^2 e^{-\frac{1}{2}nx^2}$ to the arcsine law of Proposition \ref{prop:arcsine}, the compact support of the arcsine distribution on $[-2,2]$ and the exponential decay of Proposition \ref{prop:bound-uniform-Psi}, we conclude that the moments of $\sqrt{n}\Psi_n(\sqrt{n}x)^2$ converge to the moments of the arcsine law. 

Now, because the semi-circle law is the corresponding integral of the arcsine law from \ref{eq:semi-circ-int}, by application of Lemma \ref{lem:moments} we consequently see that the moments of $\sqrt{n}^{-1}K_n(\sqrt{n}x,\sqrt{n}x)$ converge to the moments of the semi-circle law. Since the semi-circle law is compactly supported on $[-2,2]$, it is the unique distribution with these moments, and therefore the moment convergence implies the convergence in distribution (See e.g. Section 3.3.5 of \cite{durrettPTE} on the moment problem and in particular Theorem 3.3.12)
\end{proof}
     \begin{remark}
The proof shows convergence in abstract terms without having to actually calculate the moments of the underlying distributions. However, it is amusing to notice that the moment sequences for the arcsine and the semicircle are well known combinatorial sequences. When supported on $[-2,2]$, the odd moments for both distributions are 0, while the even moments for $p=2m$ of the arcine law are $\mu_{2m}\left\{ \rho_{arcsine} \right\} = \binom{2m}{m}$, and for the semi-circle law are the Catalan numbers $\mu_{2m}\left\{ \rho_{semi-circle} \right\} = C_m = \frac{1}{m+1}\binom{2m}{m}$. The factor of $1/(m+1)$ going from the binomial coefficients $\binom{2m}{m}$ to the Catalan numbers is exactly what appears in Lemma \ref{lem:moments}. 
    \end{remark}

\begin{remark}
The connection between the semi-circle law and the arcsine law has been noted in many other contexts too. In \cite{HolcombVirag2019} the convergence is proven using random graphs. In \cite{Ledoux2004}, the convergence to the arcsine law is shown using differential equations. It is subsequently noted there that the mixture converges to $\sqrt{U} \xi_{arcsine}$ where $\xi_{arcsine}$  has the arcsine law, and $U$ is an independent uniform random variable. One can verify that $\sqrt{U}\xi_{arcsine}$  is distributed according to the semicircle law.
\end{remark}

\subsection{The Airy limit}
\label{subsec:Airy-limit}

Zooming in around the edge of the semicircle law at $2\sqrt{n}$ by setting $x=2\sqrt{n} + n^{-1/6}u$, the oscillator wave functions converge to the Airy function $Ai(u)$. This Airy limit is important because drives the behaviour of the \emph{top particle} in an ensemble of $n$  Dyson Brownian walkers, which is related to the Airy kernel defined in \eqref{eq:Airy-kernel} below. See Section \ref{sec:Dyson} for the details.

\begin{proposition} \label{prop:Psi-to-Airy}
Recall the oscillator wave function $\Psi_n(x)=\frac{1}{\sqrt{n!}}\fH_n(x)\sqrt{\frac{1}{\sqrt{2\pi}} e^{-\frac{1}{2} x^2}}$ from Definition \ref{def:oscillator-wave-function}. Let $u$ be fixed and set $x=2\sqrt{n}+n^{-1/6}u$. We have the convergence
$$\lim_{n\to\infty} n^{1/12} \Psi_n(2\sqrt{n} + n^{-1/6}u) = Ai(u)$$
and the derivative converges
$$\lim_{n\to\infty} n^{-1/12} \Psi^\prime_n(2\sqrt{n} + n^{-1/6}u) = Ai^\prime(u)$$
\end{proposition}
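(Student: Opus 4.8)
The first convergence is a direct substitution. Writing $\Psi_n(x) = \frac{1}{(2\pi)^{1/4}\sqrt{n!}}\fH_n(x)e^{-x^2/4}$ and setting $x = 2\sqrt n + n^{-1/6}u$, I would plug in the edge asymptotic $\fH_n(2\sqrt n + n^{-1/6}u) = \sqrt{2\pi}\,e^{n/2}n^{n/2}n^{1/6}e^{un^{1/3}}\A(u)(1+o(1))$ from the edge regime of Theorem \ref{thm:main-asymptotics} (equivalently Lemma \ref{lem:Hermite-Airy-asymptotics}), use Stirling's formula $\sqrt{n!} = (2\pi)^{1/4}n^{1/4}n^{n/2}e^{-n/2}(1+o(1))$ from Example \ref{ex:stirling}, and expand $\tfrac14 x^2 = n + n^{1/3}u + \tfrac14 n^{-1/3}u^2$ so that $e^{-x^2/4} = e^{-n}e^{-un^{1/3}}(1+o(1))$. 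A bookkeeping check then shows that all the factors $e^{n/2}$, $n^{n/2}$, the constants, and crucially the two factors $e^{\pm un^{1/3}}$ cancel, leaving $n^{1/12}\Psi_n(2\sqrt n + n^{-1/6}u) = \A(u)(1+o(1))$, which is the first claim.

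For the derivative it is convenient to pass to the rescaled variable. Set $\Phi_n(u) := \Psi_n(2\sqrt n + n^{-1/6}u)$, so that by the chain rule $\Psi_n'(2\sqrt n + n^{-1/6}u) = n^{1/6}\Phi_n'(u)$ and the claim becomes $n^{1/12}\Phi_n'(u)\to\A'(u)$. Using $\fH_n(2a\sqrt n) = 2^n n^{n/2}\wt H_n(a,\tfrac1{4n})$ with $a = 1 + \tfrac12 un^{-2/3}$, I would write $\Phi_n(u) = P_n\,e^{-\frac14 n^{-1/3}u^2}M_n(u)$, where $P_n$ is a $u$-independent prefactor satisfying $P_n\sqrt{2\pi}\,n^{1/6}e^{(1/2-\log 2)n} = n^{-1/12}(1+o(1))$ (again by Stirling), and
$$M_n(u) := e^{-un^{1/3}}\wt H_n\!\rb{1 + \tfrac12 n^{-2/3}u, \tfrac1{4n}}.$$
Differentiating the product, the $u$-derivative of the Gaussian factor $e^{-\frac14 n^{-1/3}u^2}$ produces an extra factor of order $n^{-1/3}$ and is therefore negligible, so the entire content of the derivative asymptotic is carried by $M_n'(u)$.

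The crux is to compute $M_n'(u)$. Recalling from the proof of Lemma \ref{lem:Hermite-Airy-asymptotics} that $\wt H_n(1+\tfrac12 n^{-2/3}u, \tfrac1{4n}) = \sqrt{\tfrac n{2\pi}}\int e^{G_n(u,w)}\,\d w$ with $G_n(u,w) = n\log(1+\tfrac12 un^{-2/3}+\tfrac i2 w) - \tfrac{n}{2}w^2$, I would differentiate under the integral sign and combine with the $-n^{1/3}$ from $e^{-un^{1/3}}$ to get the exact identity
$$M_n'(u) = e^{-un^{1/3}}\sqrt{\tfrac n{2\pi}}\int\rb{\partial_u G_n(u,w) - n^{1/3}}e^{G_n(u,w)}\,\d w.$$
The key point is that $\partial_u G_n = \tfrac12 n^{1/3}\big(1+\tfrac12 un^{-2/3}+\tfrac i2 w\big)^{-1}$, so after the change of variables $w = i + n^{-1/3}z$ used in the Lemma one finds $\partial_u G_n - n^{1/3} \to -iz$; the two $O(n^{1/3})$ contributions cancel and what remains is genuinely one order smaller. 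Repeating the steepest-descent analysis of Lemma \ref{lem:Hermite-Airy-asymptotics} verbatim (deform to the same contour $C_1$, discard the tail $\Gamma^\ast_>$ using $\re(-\tfrac i3 z^3)\le -c|z|^3$ on $\wedge$, and extend $\Gamma^\ast_\leq$ to $\wedge$), but now with the harmless extra polynomial factor $-iz$ in the integrand, yields
$$M_n'(u) = \sqrt{2\pi}\,n^{1/6}e^{(1/2-\log 2)n}\rb{\frac1{2\pi}\int_{\wedge}(-iz)e^{-\frac i3 z^3 - iuz}\,\d z}(1+o(1)).$$
Since $\frac1{2\pi}\int_\wedge(-iz)e^{-\frac i3 z^3 - iuz}\d z = \frac{\d}{\d u}\big[\frac1{2\pi}\int_\wedge e^{-\frac i3 z^3 - iuz}\d z\big] = \A'(u)$ by differentiating the contour representation of Definition \ref{def:airy}, this has exactly the same prefactor as $M_n(u)$ with $\A$ replaced by $\A'$. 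Plugging back gives $\Phi_n'(u) = P_n e^{-\frac14 n^{-1/3}u^2}M_n'(u)(1+o(1)) = n^{-1/12}\A'(u)(1+o(1))$, hence $n^{1/12}\Phi_n'(u)\to\A'(u)$ as required.

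The main obstacle is precisely this cancellation of the leading $O(n^{1/3})$ terms. One cannot simply differentiate the first-part asymptotic term-by-term: both $\frac{\d}{\d x}\fH_n$ and the derivative of the Gaussian are individually a factor $n^{1/3}$ larger than the answer, and any attempt via the three-term recurrence $\Psi_n' = \tfrac{\sqrt n}{2}\Psi_{n-1} - \tfrac{\sqrt{n+1}}{2}\Psi_{n+1}$ founders because the $o(1)$ relative errors in the pointwise Airy asymptotics for $\Psi_{n\pm1}$ get amplified by the $n^{1/3}$ prefactor. The fix, carried out above, is to perform the cancellation inside the integral \emph{before} passing to the limit, which reduces the derivative to the same Laplace integral as in Lemma \ref{lem:Hermite-Airy-asymptotics} with the benign extra factor $-iz$; the remaining estimates (justifying differentiation under the integral sign and checking that the tail and Taylor-error bounds of the Lemma still hold with the extra factor of $z$) are routine given the cubic decay on $\wedge$.
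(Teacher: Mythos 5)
Your proposal is correct and follows essentially the same route as the paper: the first limit by substituting the edge asymptotics of Theorem \ref{thm:main-asymptotics} together with Stirling's formula, and the derivative by differentiating under the integral sign and re-running the steepest-descent analysis of Lemma \ref{lem:Hermite-Airy-asymptotics} with the modified integrand. The only difference is one of completeness: the paper's proof merely asserts that ``all the steps of the asymptotics go through,'' whereas you carry out the key step explicitly, namely the cancellation of the two $O(n^{1/3})$ contributions leaving the factor $-iz$, which integrates over $\wedge$ to give $\A'(u)$.
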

\begin{proof}
The convergence to $Ai(u)$  follows by Stirling's formula $n! = \sqrt{2\pi n}\cdot n^n e^{-n}(1+o(1))$ and Proposition \ref{thm:main-asymptotics} for the edge limit $\fH_n\rb{2\sqrt n+n^{-1/6}u}  = \sqrt{2\pi} e^{n/2} n^{n/2} n^{1/6} e^{un^{1/3}} \A(u) \rb{1+o\rb{1} }$ after some serendipitous cancellations. The convergence of the derivative can be obtained by
noting that all the functions $\fH_n,\Psi_n,Ai$  that appear here are  integrals of smooth functions of the arguments, which is how the asymptotics of Section \ref{sec:asymptotics-hermite} were obtained. One can differentiate under the integral sign to realize the derivatives $\fH_n^\prime, \Psi_n^\prime, Ai^\prime$  as integrals with a slightly modified integrand. Then one can check that all the steps of the asymptotics go through with these modified integrands to get convergence of the derivatives.
\end{proof}

The convergence can be extended to the full Hermite kernel as follows.

\begin{proposition}
\label{prop:convergence-hermite-kernel}
Let $u,v\in\bR$ and set $x=2\sqrt{n}+n^{-1/6}u$ and $y=2\sqrt{n}+n^{-1/6}v$. Then \begin{equation}
 \lim_{n\to \infty} n^{-1/6} K_n(2\sqrt{n} + n^{-1/6} u, 2\sqrt{n} + n^{-1/6}v) = K_{\A}(u,v), 
 \end{equation}
 where $K_{\A}$ is the \emph{Airy kernel} defined by
 \begin{equation}
 \label{eq:Airy-kernel}
   K_{\A}(u,v)= \frac{Ai(u)Ai^\prime(v)-Ai^\prime(u)Ai(v)}{u-v} 
 \end{equation}
\end{proposition}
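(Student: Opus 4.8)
The plan is to reduce the $n$-fold sum defining $K_n$ to a two-term expression via the \textbf{Christoffel--Darboux formula}, and then feed in the edge asymptotics of Proposition \ref{prop:Psi-to-Airy}. The three-term recurrence of Proposition \ref{prop:recursive}, after dividing through by the normalizing constants, becomes $\sqrt{n+1}\,\Psi_{n+1}(x) = x\Psi_n(x) - \sqrt{n}\,\Psi_{n-1}(x)$, and the standard telescoping argument turns this into
\[
K_n(x,y) = \sum_{j=0}^{n-1}\Psi_j(x)\Psi_j(y) = \sqrt{n}\,\frac{\Psi_n(x)\Psi_{n-1}(y) - \Psi_{n-1}(x)\Psi_n(y)}{x-y}.
\]
This is the key structural collapse; proving it is elementary given the recurrence.

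The difficulty is that the Christoffel--Darboux kernel involves $\Psi_{n-1}$, whose natural edge is $2\sqrt{n-1}$ rather than $2\sqrt{n}$. Substituting the Airy asymptotics for $\Psi_{n-1}$ directly at the point $2\sqrt{n}+n^{-1/6}u$ is dangerous: to leading order $\Psi_{n-1}$ and $\Psi_n$ have the \emph{same} Airy behaviour, so the numerator would appear to cancel, and the genuine $O(1)$ content would hide in a subleading term. The clean way around this is the ladder relation, which follows from $\fH_n'=n\fH_{n-1}$ and the product rule applied to $\Psi_n = (n!)^{-1/2}\fH_n(x)\,(2\pi)^{-1/4}e^{-x^2/4}$:
\[
\sqrt{n}\,\Psi_{n-1}(x) = \Psi_n'(x) + \tfrac{x}{2}\,\Psi_n(x).
\]
Inserting this into the Christoffel--Darboux formula (in both the $x$ and $y$ slots) and simplifying collapses the $\tfrac{x}{2},\tfrac{y}{2}$ terms against the $x-y$ denominator, leaving
\[
K_n(x,y) = \frac{\Psi_n(x)\Psi_n'(y) - \Psi_n'(x)\Psi_n(y)}{x-y} - \tfrac{1}{2}\,\Psi_n(x)\Psi_n(y),
\]
an expression in $\Psi_n$ and $\Psi_n'$ alone, which is exactly what the edge asymptotics control.

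Now I would substitute $x=2\sqrt{n}+n^{-1/6}u$ and $y=2\sqrt{n}+n^{-1/6}v$, so that $x-y = n^{-1/6}(u-v)$. By Proposition \ref{prop:Psi-to-Airy} we have $n^{1/12}\Psi_n(\cdot)\to \A$ and $n^{-1/12}\Psi_n'(\cdot)\to \A'$ at the respective points $u$ and $v$; hence each product such as $\Psi_n(x)\Psi_n'(y)$ is $O(1)$ (the factors $n^{1/12}$ and $n^{-1/12}$ cancel) and converges to $\A(u)\A'(v)$. Dividing the numerator by $x-y$ produces the prefactor $n^{1/6}$, while the correction term $\tfrac12\Psi_n(x)\Psi_n(y) = O(n^{-1/6})$ is negligible after multiplying through by $n^{-1/6}$. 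This yields
\[
n^{-1/6}K_n(x,y) \longrightarrow \frac{\A(u)\A'(v)-\A'(u)\A(v)}{u-v} = K_{\A}(u,v)
\]
for fixed $u\neq v$. The only remaining issue is the diagonal $u=v$, where both sides have a removable singularity; there one either appeals to continuity of $K_{\A}$ or reruns the argument after a first-order Taylor expansion in $u-v$, using the Airy equation $\A''(u)=u\,\A(u)$ to identify $K_{\A}(u,u)=\A'(u)^2-u\,\A(u)^2$. Since $u,v$ are fixed throughout, the pointwise convergence of Proposition \ref{prop:Psi-to-Airy} suffices and no uniformity is needed.
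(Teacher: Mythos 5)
Your proposal is correct and follows essentially the same route as the paper: both collapse the sum via the Christoffel--Darboux formula, convert to the expression $K_n(x,y) = \frac{\Psi_n(x)\Psi_n'(y)-\Psi_n'(x)\Psi_n(y)}{x-y} - \frac{1}{2}\Psi_n(x)\Psi_n(y)$ using the relation $\fH_n' = n\fH_{n-1}$, and then apply the edge asymptotics of Proposition \ref{prop:Psi-to-Airy} with $n^{1/6}(x-y) = u-v$. The only differences are cosmetic: you re-derive Christoffel--Darboux from the recurrence at the level of the $\Psi_j$ rather than citing Proposition \ref{prop:Christofel-Darboux}, and you explicitly treat the removable singularity at $u=v$, which the paper leaves implicit.
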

\begin{proof}
Our starting point is the Christoffel-Darboux formula for the Hermite polynomials, Proposition \ref{prop:Christofel-Darboux} , which states that

$$\sum_{j=0}^{n-1} \frac{1}{j!} \fH_j(x)\fH_j(y) = \frac{\fH_n(x)\fH^\prime_n(y) - \fH_n(x)\fH^\prime_n(y)}{x-y}$$
Multiplying this by $\frac{1}{\sqrt{2\pi}}e^{-\frac{1}{4}(x^2-y^2)}$  on both sides gives an identity for $K_n(x,y) = \sum_{j=0}^{n-1}\Psi_j(x)\Psi_j(y) $. The RHS can be manipulated as follows using Lemma \ref{lem:deriv-of-H} which gives $\Psi_j^\prime(x) = \fH_n^\prime(x) \frac{1}{(2\pi)^{1/4}}e^{-\frac{1}{4}x^2} - \frac{1}{2} x \fH_n(x)  \frac{1}{(2\pi)^{1/4}}e^{-\frac{1}{4}x^2}$, and one obtains after some cancellation
$$K_n(x,y) = \frac{\Psi_n(x)\Psi^\prime_n(y) - \Psi^\prime_n(x)\Psi_n(y) }{x-y} - \frac{1}{2} \Psi_n(x) \Psi_n(y)$$
We now add in appropriate powers of $n$  as follows to realize a factor $n^{1/12}\Psi_n$ and $n^{-1/12} \Psi_n$ to set us up for the  convergence of this to $Ai$ as in Proposition \ref{prop:Psi-to-Airy}

$$n^{-1/6} K_n(x,y) = \frac{n^{1/12} \Psi_n(x) n^{-1/12}\Psi^\prime_n(y) - n^{-1/12}\Psi^\prime_n(x)n^{1/12}\Psi_n(y) }{n^{1/6}(x-y)} - \frac{1}{2 n^{1/3}} n^{1/12}\Psi_n(x) n^{1/12}\Psi_n(y)$$ 
and then plugging in $x=2\sqrt{n} + n^{-1/6}u,y=2\sqrt{n}+n^{-1/6}v$, we use the convergence of Proposition \ref{prop:Psi-to-Airy} to conclude the desired result, noting that $n^{1/6}(x-y) = u-v$  and that the second term $\to 0$  as $n\to \infty$.
\end{proof}

%

\section{Random matrix applications part I: Dyson Brownian motion} 
\label{sec:Dyson}

    In this section we introduce Dyson Brownian motion, which can be considered as the eigenvalue process of the matrix analogue of Brownian motion (Dyson Brownian motion) \cite{Dyson} or as Brownian motion in a Weyl chamber, that is the fundamental chamber of the Type A reflection group \cite{JonesOConnell}. We introduce determinantal point processes (DPP) and show that Dyson Brownian motion is a DPP with kernel given by the Hermite kernel from Definition \ref{def:hermite-kernel}. We can then appeal to the results from Sections \ref{subsec:semicircle-limit} and \ref{subsec:Airy-limit} to obtain convergence of the empirical distribution to the semicircle law and of the top particle to the Tracy-Widom distribution.

    \subsection{Two constructions of DBM}
		
	Informally, non-intersecting Brownian motion (NIBM) is the vector-valued process  $\vec X=(X_{1}(t),\ldots,X_{n}(t))$ consisting of $n$
	Brownian motions which are conditioned to not intersect for all time. Alternatively, $\vec X$ is a $n$-dimensional Brownian motion conditioned to stay in the fundamental chamber
	\begin{align*}
		C_\Sigma = \set{\vec x\in\bR^n\colon x_1>x_2>\ldots>x_n}
	\end{align*}
	of the reflection group $W(\Sigma)$ generated by the set $\Sigma$ consisting of the vectors $e_j-e_{j+1}$ for $j\in [n-1]$ (here $e_1,\ldots,e_n$ are the $n$ standard basis vectors of $\bR^n$). When started from $\vec X(0)=0$, the process $\vec X(t)$ represents the $n$ ordered eigenvalues of $\beta(t)$. Here, $\beta$ is the matrix-valued process defined by $\beta(t) = B(t)+\overline{B(t)}$, where the entries of $B$ are independent complex Brownian motions
	
	\paragraph{Killed processes and Doob $h$-transforms.}
	
	There are two ways to make the above rigorous. The first is to start a $n$-dimensional Brownian motion inside $C_\Sigma$ and to kill it\footnote{formally this is done by defining a cemetery state $\Delta$ and setting $B_t=\Delta$ for any $t>T$, where $T$ is the hitting time of $\partial C_\Sigma$ by $X$} as soon as it hits the boundary of $C_\Sigma$. The NIBM process can then be obtained from this killed process by conditioning it to survive forever, via the Doob $h$-transform via the harmonic function
	\begin{align*}
		V(x)=\sum_{\alpha\in\Psi^+} \alpha\cdot x = \prod_{i<j}\rb{x_i-x_j},
	\end{align*}
	where $\Phi^+=\set{e_i-e_j\colon i<j}$ is the positive root system of the reflection group $W(\Sigma)$. Note that the function $V$ is often called the \emph{Vandermonde determinant}. 
    By the general theory of $h$-transforms (see for example \cite{Biane1994}), the NIBM process is the diffusion
	\begin{align}
		\label{eq:SDE-Dyson}
		d X_j(t) =  d B_j(t) + \sum_{\ell \ne j} \frac{dt}{X_j(t)- X_\ell(t)}
	\end{align}
	
	\begin{remark}
		It was shown by C\'epa--L\'epingle \cite{CepaLepingle} that the SDE \eqref{eq:SDE-Dyson} has a unique solution and that $X_1(t)<X_2(t)<\ldots<X_n(t)$ almost surely for all $t>0$. 
		On the other hand, the \emph{Hermitian Brownian motion} is obtained by choosing $2n^2$ independent Brownian motions $B_{j\ell}, \beta_{j\ell}$ ($1\leq j,\ell\leq n$) and then defining the $\bC^{n\times n}$-valued process $H=(H_{j\ell})_{j,\ell=1}^n$ by
		\begin{align*}
			H_{j\ell}(t) & = \begin{cases}
				\frac1{\sqrt2} \rb{B_{j\ell}(t) + i \beta_{j\ell}(t)}\quad & \text{if } j<\ell\\
				B_{jj}(t) & \text{if }j=\ell\\
				\frac1{\sqrt2} \rb{B_{j\ell}(t) - i \beta_{j\ell}(t)} & \text{if } j>\ell.
			\end{cases}
		\end{align*}
		As the name suggests, the matrix $H(t)$ is Hermitian for any $t>0$ and so its eigenvalues $X_1(t),...,X_n(t)$ are real. If we re-order them to be increasing, then \cite{Dyson} they satisfy \eqref{eq:SDE-Dyson} with $\lambda_j(0)=0$.
        \begin{remark}
        \label{rmk:Brownian-scaling}
            Since the eigenvalues respect scalar multiplication of the matrix, this  construction shows that $X$ satisfies \emph{Brownian scaling}: the process $(\frac1{\sqrt\sigma}X(\sigma t))_{t\geq 0}$ is equal in distribution to $X$.
        \end{remark}
		
		Thus, the NIBM process started at zero is the same as the eigenvalue process of Hermitian Brownian motion, and we will from now on refer to either as \emph{Dyson Brownian motion.}
	\end{remark}
	
	\paragraph{Brownian watermelons.}
	We will now consider a second way of rigorously defining Dyson Brownian motion, via watermelons. Given a family of $n$ independent
	Brownian motions $\vec{Z}(t)=\left(Z_{1}(t),\ldots,Z_{n}(t)\right)$
	started from some initial condition $Z_{1}(0)>\ldots>Z_{n}(0)$, define
	the non-crossing event: 
	\[
	NI_{t}=\left\{ Z(s) \in C_\Sigma\quad \forall\,0<s<t\right\} 
	\]
	We want to define the NIBM process to be $Z$ started at 0 and conditioned on $NI_\infty$. However, from this initial condition, the non-intersecting probability
	is $0$.
	
	To get around
	this, we will first define the \emph{$\ep$-of-room Brownian watermelon}
	which starts and ends from $\vec{W}^{\ep}(0)=\vec{0^{\ep}}$
	and $\vec{W}^{\ep}(0)=\vec{0^{\ep}}$, where $\vec{0}^{\ep}=((n-1)\ep,(n-2)\ep,\ldots,\ep,0)$.
	We will then define $\vec{W}$ as the $\ep\to0$
	limit of $\vec{W}^{\ep}$.
	
	\begin{definition}
		A \emph{Brownian watermelon} on the time interval $[0,t^{\ast}]$ consists
		of $n$ non-intersecting Brownian bridges $\vec{W}(t)=\left(W_{1}(t),\ldots,W_{n}(t)\right)$ for $t\in[0,t^{\ast}]$ .
	\end{definition}
	
	The process is called a watermelon because it looks like the stripes of the
	eponymous fruit, see Figure \ref{fig:Watermelon}.
	    \begin{figure}[!ht]
        \centering
        \includegraphics[width=0.5\linewidth]{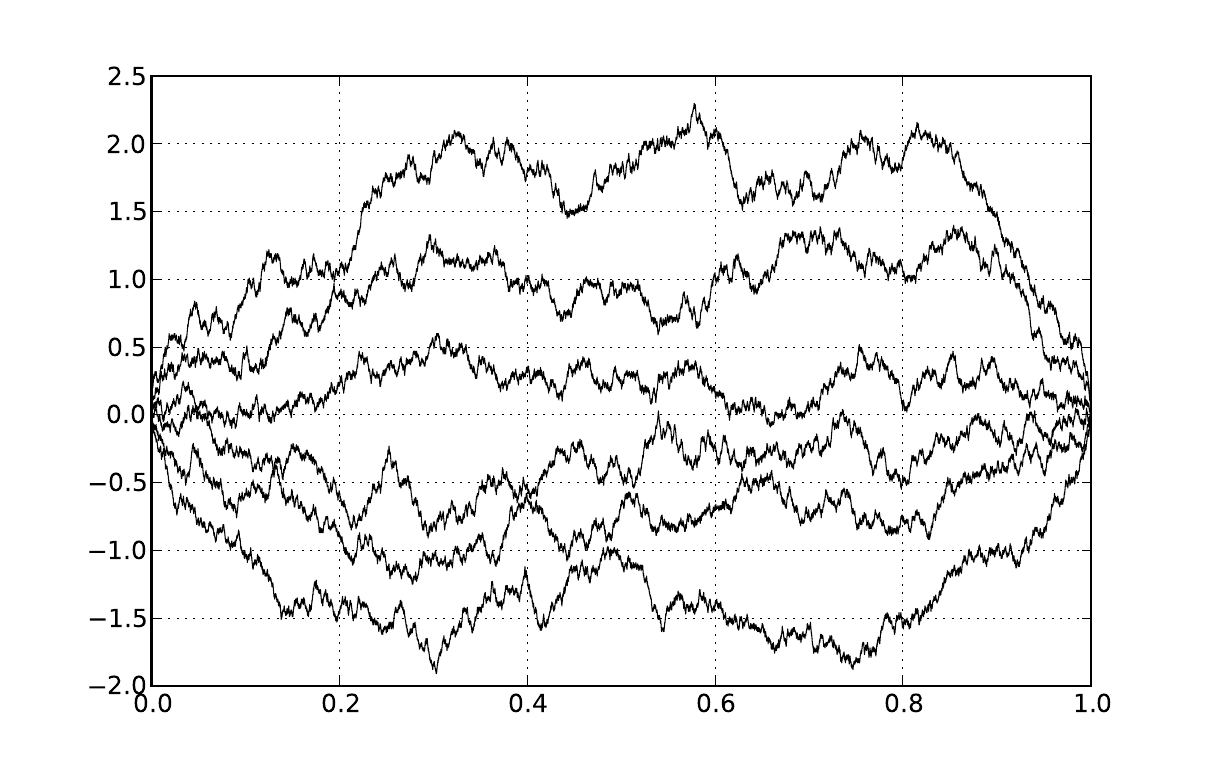}
        \caption{A Brownian watermelon with $n=6$ non-intersecting Brownian bridges.}
        \label{fig:Watermelon}
    \end{figure}

		The proof of the following proposition is a straightforward calculation using the Karlin--McGregor formula. See Appendix \ref{subsec:proof-Watermelon-dist} for details.

	\begin{proposition}
		\label{prop:Watermelon-dist} There exists a unique watermelon process $\uind{\vec{W}}{t^\ast}$ on the interval $[0,t^\ast]$. Moreover the distribution $\uind{\vec{W}}{t^\ast}(t)$ at $t\in[0,t^{\ast}]$ satisfies
		\begin{align}
			\label{eq:prob-dist-watermelon}
			\bP\left(\uind{\vec{W}}{t^\ast}(t)\in d\vec{x}\right)= t^{-n^2/2} c_{n,t,t^{\ast}}V(x_{1},\ldots,x_{n})^{2}\prod_{i=1}^{n}\exp\left(-\frac{x_j^{2}}{2t}\right)\exp\left(-\frac{x_j^{2}}{2(t^{\ast}-t)}\right)    d\vec{x}
		\end{align}
		where 
		\begin{align*}
			c_{n,t,t^{\ast}}=\frac{1}{\sqrt{2\pi}^{n}}\left(\frac{t^{\ast}}{t^{\ast}-t}\right)^{n^{2}/2}\frac{1}{V(0,1,\ldots,n-1)}    
		\end{align*}
	\end{proposition}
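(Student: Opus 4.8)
The plan is to realize the watermelon as an $\ep\to 0$ limit of $n$ non-intersecting Brownian bridges and to compute its time-$t$ density explicitly via the Karlin--McGregor formula. Recall that for $n$ independent one-dimensional Brownian motions, the probability density of travelling from ordered points $\vec a = ((n-1)\ep,\ldots,\ep,0)$ to ordered points $\vec x$ over a time interval of length $s$ \emph{without any two paths ever touching} equals the determinant $\det\ab{p_s(a_i,x_j)}_{i,j=1}^n$, where $p_s(a,x)=\tfrac1{\sqrt{2\pi s}}e^{-(a-x)^2/(2s)}$ is the heat kernel. Splitting the bridge at the intermediate time $t$ by the Markov property, and using the symmetry $p_s(x,a)=p_s(a,x)$ for the second leg, the density of $\uind{\vec W}{\ep}(t)$ is the normalised product of a forward determinant over $[0,t]$ and a backward determinant over $[t,t^\ast]$,
\[
\bP\rb{\uind{\vec W}{\ep}(t)\in \d\vec x}=\frac{\det\ab{p_t(a_i,x_j)}\,\det\ab{p_{t^\ast-t}(a_i,x_j)}}{\det\ab{p_{t^\ast}(a_i,a_j)}}\,\d\vec x.
\]

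The heart of the proof is the coalescing limit $\ep\to 0$, in which all starting and ending points collapse to the origin. Taylor-expanding each entry as $p_t(a_i,x_j)=\sum_k \tfrac{a_i^k}{k!}\,\partial_a^k p_t(0,x_j)$ and applying the Cauchy--Binet/multilinearity argument, only the first $n$ Taylor coefficients survive at leading order, so
\[
\det\ab{p_t(a_i,x_j)}=\frac{V(\vec a)}{\prod_{k=0}^{n-1}k!}\det\ab{\partial_a^k p_t(0,x_j)}_{k,j}\rb{1+o(1)},
\]
the matrix $\ab{a_i^k/k!}_{i,k}$ contributing the Vandermonde $V(\vec a)$. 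The derivative formula for the Hermite polynomials identifies $\partial_a^k p_t(0,x)$ with $t^{-k/2}\fH_k(x/\sqrt t)$ times $\tfrac1{\sqrt{2\pi t}}e^{-x^2/(2t)}$, so that, because the $\fH_k$ are monic, $\det\ab{\partial_a^k p_t(0,x_j)}_{k,j}$ is an explicit power of $t$ times $V(\vec x)\prod_j e^{-x_j^2/(2t)}$. The denominator is a \emph{double} confluent expansion in both indices, giving $\det\ab{p_{t^\ast}(a_i,a_j)}=\bigl(V(\vec a)/\prod_k k!\bigr)^2\det\ab{\partial_a^k\partial_b^\ell p_{t^\ast}(0,0)}_{k,\ell}\,(1+o(1))$.

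The crucial cancellation is now visible: the factors $V(\vec a)^2$ and $\prod_k k!$ arising from the confluent normalisation appear in numerator and denominator alike and cancel, leaving a finite $\ep\to0$ limit proportional to $V(\vec x)^2\prod_j \exp\rb{-\tfrac{x_j^2}{2t}-\tfrac{x_j^2}{2(t^\ast-t)}}$. Collecting the $t$- and $t^\ast$-dependent constants produced by the three confluent expansions---in particular the superfactorial $V(0,1,\ldots,n-1)=\prod_{k=1}^{n-1}k!$ that surfaces as $\prod_k k!$---recovers exactly the prefactor $t^{-n^2/2}c_{n,t,t^\ast}$ of \eqref{eq:prob-dist-watermelon}. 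Existence and uniqueness then follow because each $\uind{\vec W}{\ep}$ is a genuine conditioned bridge and the explicit densities above converge, for every $t$, to a consistent family of finite-dimensional laws, which pins down a unique process. The main obstacle is the confluent limit itself: one must justify that the omitted higher Taylor terms are genuinely lower order, uniformly enough to survive division by the vanishing $V(\vec a)^2$, and track the powers of $t$ carefully enough to produce the stated constant rather than merely its scaling.
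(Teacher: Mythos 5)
Your overall route is the same as the paper's: both proofs realize the watermelon as an $\ep\to 0$ limit of non-intersecting bridges from $\vec{0}^\ep$ back to $\vec{0}^\ep$, split the bridge at time $t$ by the Markov property into a ratio of Karlin--McGregor determinants, and then take a coalescing limit. Where you genuinely diverge is in how that limit is executed. The paper never Taylor-expands a determinant: after factoring the Gaussian weights out of $\det\ab{\vp_t(x_i-\ep(j-1))}$, the remaining matrix has entries $\exp(x_i(j-1)\ep/t)=\rb{e^{x_i\ep/t}}^{j-1}$, i.e.\ it is \emph{exactly} a Vandermonde matrix in the variables $e^{x_i\ep/t}$, so the leading behaviour $(\ep/t)^{\binom{n}{2}}V(\vec{x})$ falls out of a first-order expansion of each factor $e^{x_i\ep/t}-e^{x_j\ep/t}$. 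In particular the denominator needs no separate theory: the same lemma applied with $\vec{x}=\vec{0}^\ep$ produces $\ep^{\binom{n}{2}}V(0,1,\ldots,n-1)$, which is where the superfactorial enters, and the powers of $\ep$ cancel by inspection. Your confluent-expansion route is workable and has the charm of making the Hermite polynomials themselves appear (via $\partial_a^k p_t(0,x)=\tfrac{1}{\sqrt{2\pi t}}\,t^{-k/2}\fH_k(x/\sqrt{t})\,e^{-x^2/(2t)}$), but it is analytically heavier, and the uniform control of the discarded Taylor terms against the vanishing $V(\vec{a})^2$ --- which you flag as the main obstacle --- is precisely what the paper's exact algebraic identity makes unnecessary.

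There is, however, a concrete gap in your constant-tracking. In your own bookkeeping the factors $\rb{V(\vec{a})/\prod_k k!}^2$ appear identically in numerator and denominator and cancel, so they contribute \emph{nothing} to the final constant; your claim that the superfactorial ``surfaces as $\prod_k k!$'' from the confluent normalisations therefore cannot be right. The factor $1/V(0,1,\ldots,n-1)$ in $c_{n,t,t^\ast}$ must come from the one object you never evaluate: the double-confluent denominator determinant $\det\ab{\partial_a^k\partial_b^\ell p_{t^\ast}(0,0)}_{k,\ell=0}^{n-1}$. Evaluating it is a genuine computation, not bookkeeping: one finds $\partial_a^k\partial_b^\ell p_{t^\ast}(0,0)=(2\pi t^\ast)^{-1/2}(-1)^k (t^\ast)^{-(k+\ell)/2}\fH_{k+\ell}(0)$, so what is needed is the Hankel determinant $\det\ab{\fH_{k+\ell}(0)}_{k,\ell=0}^{n-1}=(-1)^{\binom{n}{2}}\prod_{k=0}^{n-1}k!$, which can be obtained, fittingly, from the paper's own formula $\fH_m(0)=\bE\ab{(\imath Z)^m}=\imath^m\,\bE\ab{Z^m}$ together with the classical Gaussian-moment Hankel determinant $\det\ab{\bE[Z^{k+\ell}]}_{k,\ell}=\prod_{k=0}^{n-1}k!$. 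Without this step you have neither the value of $c_{n,t,t^\ast}$ nor even the guarantee that the leading-order denominator is nonzero, which the whole ratio argument requires. Once it is supplied, your remaining powers do combine correctly --- using $\binom{n}{2}+\tfrac n2=\tfrac{n^2}2$ one recovers exactly $t^{-n^2/2}c_{n,t,t^\ast}$ --- so the gap is localized, but it is real.
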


	Thus, we can define non-intersecting Brownian motions $\vec{X}(\cdot)$
	conditioned not to intersect for all time as follows: 
	\[
	\vec{X}(\cdot)=\lim_{t^{\ast}\to\infty}\uind{\vec{W}}{t^\ast}(\cdot)
	\]
	and obtain the 
	following formula:

	\begin{proposition}
		\label{prop:density-NIBM}
		The density function of non-intersecting Brownian motions is given by
		\begin{align}
			\label{eq:density-NIBM}
			\bP\left(\vec{X}(t)\in d\vec{x}\right)=\frac{1}{\prod_{k=1}^{n-1}k!}V\left(\frac{x_{1}}{\sqrt{t}},\ldots,\frac{x_{n}}{\sqrt{t}}\right)^{2}\prod_{i=1}^{n}\vp_{t}(x_{i})\, dx
		\end{align}
        where we recall that $\vp_t$ is the density of a centred Gaussian random variable with variance $t$.
	\end{proposition}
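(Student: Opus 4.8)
The plan is to read off the NIBM density by passing to the $t^\ast\to\infty$ limit in the watermelon marginal of Proposition \ref{prop:Watermelon-dist}, exploiting the definition $\vec X(\cdot)=\lim_{t^\ast\to\infty}\uind{\vec W}{t^\ast}(\cdot)$. First I would isolate the two quantities in \eqref{eq:prob-dist-watermelon} that depend on $t^\ast$: the factor $\rb{\tfrac{t^\ast}{t^\ast-t}}^{n^2/2}$ hidden inside $c_{n,t,t^\ast}$, and the Gaussian factors $\exp\rb{-\tfrac{x_j^2}{2(t^\ast-t)}}$. Since $t^\ast-t\to\infty$, both tend to $1$ pointwise in $\vec x$, so the watermelon density converges pointwise to
\[
\frac{1}{\sqrt{2\pi}^{\,n}\,V(0,1,\ldots,n-1)}\, t^{-n^2/2}\,V(x_1,\ldots,x_n)^2\prod_{i=1}^n\exp\rb{-\frac{x_i^2}{2t}}.
\]

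The analytic heart of the argument is to upgrade this pointwise convergence of densities to convergence in distribution of the time-$t$ marginal, and to confirm that the limiting function is itself a probability density. I would do this by dominated convergence: for $t^\ast\geq 2t$ one has $\tfrac{t^\ast}{t^\ast-t}\leq 2$ and $\exp\rb{-\tfrac{x_j^2}{2(t^\ast-t)}}\leq 1$, so the watermelon densities are uniformly bounded by the integrable function $C\,V(\vec x)^2\prod_i e^{-x_i^2/(2t)}$, a fixed polynomial times a Gaussian. Dominated convergence then yields the pointwise limit above and, upon integrating, shows its total mass equals $\lim_{t^\ast\to\infty}\int(\text{density})=1$; hence the limit is a genuine density, and, combined with the weak convergence $\uind{\vec W}{t^\ast}(t)\Rightarrow\vec X(t)$ built into the definition of $\vec X$, it must be the density of $\vec X(t)$. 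This interchange of ``limit of processes'' with ``pointwise limit of densities'' is the step I expect to require the most care, and the dominating bound is precisely what makes it rigorous while supplying the normalization for free.

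It then remains to match this expression with the claimed formula, which is pure algebra. Using that $V=\prod_{i<j}(x_i-x_j)$ is homogeneous of degree $\binom n2$, I would write $V(x_1,\ldots,x_n)^2=t^{\binom n2}\,V\rb{\tfrac{x_1}{\sqrt t},\ldots,\tfrac{x_n}{\sqrt t}}^2$, and using $\vp_t(x)=\tfrac1{\sqrt{2\pi t}}e^{-x^2/(2t)}$ I would write $\prod_i e^{-x_i^2/(2t)}=(2\pi t)^{n/2}\prod_i\vp_t(x_i)$. Substituting these, the powers of $2\pi$ cancel and the powers of $t$ combine as $-\tfrac{n^2}2+\binom n2+\tfrac n2=0$, leaving exactly $\tfrac{1}{V(0,1,\ldots,n-1)}\,V\rb{\tfrac{x_1}{\sqrt t},\ldots,\tfrac{x_n}{\sqrt t}}^2\prod_i\vp_t(x_i)$.

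Finally I would identify the normalizing constant through the superfactorial identity. Since the arguments $0,1,\ldots,n-1$ are increasing, grouping the Vandermonde factors by their gap $d=j-i$ gives $\abs{V(0,1,\ldots,n-1)}=\prod_{0\le i<j\le n-1}(j-i)=\prod_{d=1}^{n-1}d^{\,n-d}$, because there are $n-d$ pairs at each gap $d$; regrouping the other way, $\prod_{k=1}^{n-1}k!=\prod_{m=1}^{n-1}m^{\,n-m}$ gives the same product, so $\abs{V(0,1,\ldots,n-1)}=\prod_{k=1}^{n-1}k!$. As the density is manifestly nonnegative (it carries $V^2$), the overall sign must be chosen so that the constant is positive, yielding the stated prefactor $\tfrac1{\prod_{k=1}^{n-1}k!}$ and completing the proof.
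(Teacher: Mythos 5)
Your proposal is correct and follows essentially the same route as the paper: take the $t^\ast\to\infty$ limit in the watermelon marginal of Proposition \ref{prop:Watermelon-dist}, apply the Vandermonde homogeneity (Lemma \ref{lem:Vandermonde}) with $c=1/\sqrt{t}$ to absorb the powers of $t$, and evaluate $V(0,1,\ldots,n-1)=\prod_{k=1}^{n-1}k!$. The only additions are welcome rigor the paper leaves implicit, namely the dominated-convergence/Scheff\'e argument upgrading pointwise convergence of densities to convergence of the marginals, and the explicit gap-counting proof of the superfactorial identity.
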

	The proof of Proposition \ref{prop:density-NIBM} is straightforward once we note the following property of the Vandermonde determinant.
	
	\begin{lemma}
		\label{lem:Vandermonde}
		For any $c\in\bR$ and $x\in\bR^n$,
		\begin{align*}
			V\rb{c x_1,\ldots,c x_n}= c^{\binom{n}{2}} V\rb{x_1,\ldots,x_n}
		\end{align*}
	\end{lemma}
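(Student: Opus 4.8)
The plan is to prove this directly from the explicit product formula for the Vandermonde determinant already recorded in the text, namely $V(x_1,\ldots,x_n)=\prod_{i<j}(x_i-x_j)$, where the product ranges over all unordered pairs $\{i,j\}$ with $1\leq i<j\leq n$. The entire argument is a substitution followed by factoring a scalar out of each binomial factor, so there is essentially nothing to grind through; the only quantitative input is a count of the number of factors.

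Concretely, I would substitute $cx_i$ for $x_i$ in every slot and observe that each factor transforms as $cx_i-cx_j=c(x_i-x_j)$. This gives
$$V\rb{cx_1,\ldots,cx_n}=\prod_{i<j}\rb{cx_i-cx_j}=\prod_{i<j} c\,(x_i-x_j).$$
Since the product runs over all pairs $i<j$ drawn from $\{1,\ldots,n\}$, there are exactly $\binom{n}{2}$ such factors, and pulling one factor of $c$ out of each yields
$$V\rb{cx_1,\ldots,cx_n}=c^{\binom{n}{2}}\prod_{i<j}(x_i-x_j)=c^{\binom{n}{2}}V\rb{x_1,\ldots,x_n},$$
as claimed.

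There is no genuine obstacle here: the only point requiring any thought is the tally of factors, and that is immediate from the fact that the number of two-element subsets of an $n$-element set is $\binom{n}{2}$. (If one preferred to avoid the product formula entirely, an alternative route would be to view $V$ as the determinant of the matrix with entries $x_i^{\,j-1}$ and scale rows or columns, reading off $c^{0+1+\cdots+(n-1)}=c^{\binom{n}{2}}$ as the accumulated scalar; but the product-formula approach is the shortest and most transparent.)
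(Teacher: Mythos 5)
Your proof is correct and is essentially identical to the paper's own argument: both substitute into the product formula $V(x_1,\ldots,x_n)=\prod_{i<j}(x_i-x_j)$, factor $c$ out of each of the $\binom{n}{2}$ binomial factors, and conclude. The alternative row-scaling remark you mention is a fine extra observation but is not needed, and the paper does not use it either.
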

	
	\begin{proof}
		We can write 
		\begin{align*}
				V\rb{c x_1,\ldots,c x_n}= \prod_{\ell<j} \rb{c x_\ell-cx_j} =\rb{\prod_{\ell<j} c} V\rb{x_1,\ldots, x_n}
		\end{align*}
		and the bracketed product on the right-hand side has $\binom n2$ terms.
	\end{proof}

	\begin{proof}[Proof of Proposition \ref{prop:density-NIBM}]
		As $t^\ast\to \infty$ we have 
		\begin{align*}
			c_{n,t,t^\ast}&\to \frac1{(2\pi)^{n/2}} \frac1{\prod_{k=1}^{n-1}k!}
			\intertext{Moreover, by Lemma \ref{lem:Vandermonde} applied to $c=\frac1{\sqrt{t}}$,}
			t^{-n^2/2} V(x_1,\ldots,x_n)^2    & = t^{n^2-\binom n2} V \rb{\frac{x_1}{\sqrt {t}},\ldots,\frac{x_n}{\sqrt {t}} } = t^{-n/2} V \rb{\frac{x_1}{\sqrt {t}},\ldots,\frac{x_n}{\sqrt {t}} } 
		\end{align*}
Putting these two equations together completes the proof by observing that $\phi(x_j)=\frac1{\sqrt{2\pi t}} e^{-x_j^2/2}$.
	\end{proof}

	\paragraph{(Determinantal) point processes}
	
	\begin{definition}
		A \textbf{simple point process} (SPP) is a collection of random points:
		$S\subset\bR$ such that $S$ has no accumulation points
		and there are never two points at the same location in $\bR$.
	\end{definition}
	
	In this paper, we only consider point processes where the total number of particles is non-random, i.e. $\abs{S}=n$ almost surely for some $n\in\bN$.
	
	\begin{definition}
		We say that the SPP $S$
		has a \emph{$k$-point correlation function} $\rho_{k}:\bR^{k}\to\bR^{+}$ if we have, for every collection of $k$ disjoint sets $A_{1},\ldots,A_{k}\subset\bR$,
		\[
		\bE\left[\prod_{i=1}^{k}\mu_{S}(A_{i})\right]=\intop_{A_{1}}\intop_{A_{2}}\ldots\intop_{A_{k}}\rho_{k}(x_{1},\ldots,x_{k})\d x_{1}\ldots\d x_{k}
		\]
	\end{definition}

	\begin{definition}
		A SPP $S$ is said to be \emph{determinantal} if its correlation kernels $\rho_k$ exist and there is a kernel $K\colon\bR\times\bR\longrightarrow\bR$ such that
		\begin{align}
			\rho_k\rb{x_1\ldots,x_k} & = \det\ab{K\rb{x_j,x_\ell}}_{j,\ell=1}^k
		\end{align}
        for all $k\in\bN$ and $x_1,\ldots,x_k\in\bR$.
	\end{definition}
	In this case we say that $S$ is a \emph{determinantal point process (DPP)} with kernel $K$.
	DPPs make it easier to compute correlations because we have the algebra on our side. This is good news for us, because the NIBM process is determinantal. Its kernel is given in terms of the Hermite kernel, recall Definition \ref{def:hermite-kernel}:
\begin{align*}
K_n(x,y) = \sum_{j=1}^n \Psi_j(x)\Psi_j(y)
\end{align*}
where the functions $\Psi$ defined at the beginning of Section \ref{sec:applications-oscillator} are given by
\begin{align*}
    \Psi_j(x) = \frac{1}{\sqrt{j!}} \fH_j( x) \sqrt{ \frac{1}{\sqrt{2\pi}} e^{-\frac{1}{2} x^2 } } .
\end{align*}

\begin{proposition}
		\label{prop:NIBM-DPP}
		The $n$ points of the non-intersecting Brownian motions form a DPP with kernel $\dysonkernel$ where
		\begin{align}
		    \notag
            \dysonkernel_{n,t}(x,y) = \frac1{\sqrt{t}} K_n\rb{\frac x{\sqrt{t}},\frac{y}{\sqrt t}}
		\end{align}
        
        \begin{remark}
            Recall from the introduction that the Hermite polynomials with arbitrary variance can be written in one dimension as $\wt H_n(x,t) = t^{n/2}  \fH_n\rb{\frac x{\sqrt{t}}}$. So the $\dysonkernel$ can be equivalently written
            \begin{align*}
                \dysonkernel_{n,t}(x,y)=\sqrt{\frac1{2\pi t} e^{-x^2/(2t)-y^2/(2t)}}\sum_{j=0}^{n-1} \frac{1}{t^j j!}\wt H_j(x,t) \wt H_j(y,t).
            \end{align*}

            $$ \frac{1}{\sqrt{t}} \sum_{j=0}^n \Psi_j(x/\sqrt{t})\Psi_j(y\sqrt{t})$$
                  \end{remark}
            

	\end{proposition}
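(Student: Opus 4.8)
The plan is to recognise the NIBM density from Proposition \ref{prop:density-NIBM} as a Hermite orthogonal polynomial ensemble and then run the standard correlation-function computation for such ensembles, whose two ingredients (the Vandermonde-to-determinant rewriting and the projection property of $K_n$) are already in hand. First I would reduce to the case $t=1$. By Brownian scaling (Remark \ref{rmk:Brownian-scaling}) the configuration at time $t$ is $\sqrt t$ times the configuration at time $1$, and rescaling a point configuration $S\mapsto \sqrt t\, S$ turns a determinantal kernel $K(x,y)$ into $t^{-1/2}K(x/\sqrt t, y/\sqrt t)$: the $k$-point correlation functions are densities, so they acquire a Jacobian factor $t^{-k/2}$, which distributes as one factor $t^{-1/2}$ into each of the $k$ rows of the determinant $\det[K(x_i,x_j)]$. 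Hence it suffices to prove that at $t=1$ the points form a DPP with kernel $K_n$, after which the claimed formula for $\dysonkernel_{n,t}$ follows immediately.

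For $t=1$, the ordered density from Proposition \ref{prop:density-NIBM} is proportional to $V(\vec x)^2\prod_i \vp_1(x_i)$ on the chamber $C_\Sigma$. Since the probabilist's Hermite polynomials $\fH_j$ are monic of degree $j$, elementary column operations give $V(\vec x)=\det[x_i^{j-1}]_{i,j=1}^n=\det[\fH_{j-1}(x_i)]_{i,j=1}^n$. Distributing one factor $\sqrt{\vp_1(x_i)}$ into each row of each determinant and recalling from Definition \ref{def:oscillator-wave-function} that $\Psi_j(x)=\frac{1}{\sqrt{j!}}\fH_j(x)\sqrt{\vp_1(x)}$, I obtain
\begin{align*}
V(\vec x)^2\prod_i \vp_1(x_i)=\Big(\det\big[\fH_{j-1}(x_i)\sqrt{\vp_1(x_i)}\big]\Big)^2=\Big(\prod_{k=0}^{n-1}k!\Big)\Big(\det[\Psi_{j-1}(x_i)]\Big)^2.
\end{align*}
The factor $\prod_{k=0}^{n-1}k!=\prod_{k=1}^{n-1}k!$ cancels the normalising constant in Proposition \ref{prop:density-NIBM} exactly, so the ordered density is simply $(\det[\Psi_{j-1}(x_i)])^2$ on $C_\Sigma$. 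Writing $A=[\Psi_{j-1}(x_i)]$ and using $\det(A)^2=\det(AA^T)$ together with $(AA^T)_{i,i'}=\sum_{l=0}^{n-1}\Psi_l(x_i)\Psi_l(x_{i'})=K_n(x_i,x_{i'})$, the corresponding symmetrised probability density on all of $\bR^n$ is
\begin{align*}
p(x_1,\ldots,x_n)=\frac1{n!}\det[K_n(x_i,x_j)]_{i,j=1}^n.
\end{align*}

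Finally I would compute the correlation functions $\rho_k(x_1,\ldots,x_k)=\frac{n!}{(n-k)!}\int p\,dx_{k+1}\cdots dx_n$ and show they equal $\det[K_n(x_i,x_j)]_{i,j=1}^k$, which is precisely the DPP property. The two facts that drive this are that $K_n$ is the kernel of a rank-$n$ orthogonal projection: orthonormality of the $\Psi_j$ (Definition \ref{def:oscillator-wave-function}) makes it reproducing, $\int K_n(x,z)K_n(z,y)\,dz=K_n(x,y)$, and gives trace $\int K_n(x,x)\,dx=n$. These two properties yield the integrating-out identity $\int \det[K_n(x_i,x_j)]_{i,j=1}^m\,dx_m=(n-m+1)\det[K_n(x_i,x_j)]_{i,j=1}^{m-1}$, proved by Laplace-expanding the determinant along the last row and column and applying the reproducing and trace relations term by term. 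Iterating this identity for $m=n,n-1,\ldots,k+1$ produces exactly the factor $(n-k)!$, which combines with the $\frac{n!}{(n-k)!}$ and the $\frac1{n!}$ to leave $\rho_k=\det[K_n(x_i,x_j)]_{i,j=1}^k$. The main obstacle is this last step: establishing the integrating-out lemma cleanly and tracking the combinatorial constants so that the $(n-m+1)$ factors from successive integrations telescope to match $\frac{n!}{(n-k)!}$; every other step is bookkeeping resting on orthonormality and monicity already proved earlier.
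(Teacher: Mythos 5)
Your proposal is correct, but it takes a genuinely different route from the paper after the shared opening move. Both proofs begin the same way: rewrite the Vandermonde factors of the density in Proposition \ref{prop:density-NIBM} using monic Hermite polynomials and absorb the Gaussian weights and normalizing constants to express the joint density as $\rb{\det\ab{\Psi_{j-1}(x_i)}}^2$ (the paper does this directly at general $t$ with $\wt H_j(\cdot,t)$, whereas you first reduce to $t=1$ by Brownian scaling, Remark \ref{rmk:Brownian-scaling} --- a clean reduction, and your bookkeeping of how the kernel transforms under $S\mapsto\sqrt t\,S$ is right). The divergence is in how the determinantal structure is then extracted. The paper treats the ensemble as a \emph{biorthogonal} ensemble and invokes the general Theorem \ref{thm:bi-orthog-thm} (Theorem 11.4.1 of \cite{Borodin-HbRMT}), so the only remaining work is to show the Gram matrix is the identity, which follows from the duality/orthogonality of $H_{\vec n}$ and $\wt H_{\vec m}$ (Theorem \ref{thm:og}). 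You instead exploit the fact that orthonormality of the $\Psi_j$ (Proposition \ref{prop:orthog-1d}) makes $K_n$ a rank-$n$ projection kernel, and you derive the correlation functions from scratch via the classical Gaudin--Mehta integrating-out lemma, $\int \det\ab{K_n(x_i,x_j)}_{i,j=1}^m \d x_m=(n-m+1)\det\ab{K_n(x_i,x_j)}_{i,j=1}^{m-1}$, whose telescoping constants you track correctly ($1\cdot 2\cdots(n-k)=(n-k)!$ cancels against $\tfrac{n!}{(n-k)!}\cdot\tfrac1{n!}$). Your route is self-contained and more elementary --- no external black box beyond the projection property --- at the cost of proving the integration lemma; the paper's route is shorter and, more importantly, generalizes to genuinely biorthogonal situations where the Gram matrix is \emph{not} the identity (e.g.\ spiked ensembles), which is precisely where your projection-kernel argument would break down, since a non-symmetric or non-reproducing kernel does not satisfy the integrating-out identity in that form.
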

	
	In order to prove Proposition \ref{prop:NIBM-DPP}, we appeal to Theorem 11.4.1 in \cite{Borodin-HbRMT}, which says informally that bi-orthogonal ensembles are determinantal point processes.
	
	\begin{definition}
		A \emph{bi-orthogonal ensemble} is a point process on $\bR$ whose $n$-point correlation function is given by
	\begin{align}
	\rho_{n}(x_{1},\ld,x_{N})=\det_{i,j=1}^{n}\left[\psi_{i}(x_{j})\right]\cdot\det_{i,j=1}^{n}\left[\ph_{i}(x_{j})\right]\label{eq:biorthog_ensemble}
\end{align}
	\end{definition}

	\begin{theorem}[Theorem 11.4.1 in \cite{Borodin-HbRMT}]
		\label{thm:bi-orthog-thm}
		Any biorthogonal ensemble is a DPP with kernel
		\begin{align*}
			K(x,y) & = \sum_{j,\ell=1}^N [G^{-t}]_{j,\ell} \psi_j(x)\phi_\ell(y),
			\intertext{where $G$ is the \emph{Gram matrix} of $\phi$ and $\psi$, defined by}
			G_{j\ell} &= \int_\bR \phi_j(x)\psi_\ell(x)\, dx
		\end{align*}
		and $G^{-t}$ denotes the inverse of the transpose of $G$.
%
%
%
	\end{theorem}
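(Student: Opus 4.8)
The plan is to follow the standard two-step route for turning a biorthogonal ensemble into a determinantal point process. First I would rewrite the $n$-point correlation function $\rho_n = \det[\psi_i(x_j)]\det[\phi_i(x_j)]$ as a single determinant $\det[K(x_i,x_j)]_{i,j=1}^n$ built from the kernel $K$, and then integrate out the variables $x_n, x_{n-1},\ldots,x_{k+1}$ one at a time to obtain each lower correlation function $\rho_k$ as the $k\times k$ determinant $\det[K(x_i,x_j)]_{i,j=1}^k$. The only structural facts about $K$ that the integration step needs are the \emph{reproducing property} $\int_\bR K(x,y)K(y,z)\,\d y = K(x,z)$ and the \emph{trace normalization} $\int_\bR K(x,x)\,\d x = n$, so a substantial part of the work is checking these two identities directly from the Gram matrix $G$.

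For the first step I would use multiplicativity of the determinant in the form $\det(A)\det(B) = \det(A^{t}B)$. Writing $P_{ai} = \psi_a(x_i)$ and $Q_{bj} = \phi_b(x_j)$ and inserting $G^{-1}$ between them, one checks that $(P^{t}G^{-1}Q)_{ij} = \sum_{a,b}[G^{-1}]_{ab}\psi_a(x_i)\phi_b(x_j) = K(x_i,x_j)$, so that $\det[K(x_i,x_j)]_{i,j=1}^n = \det(G^{-1})\,\det[\psi_i(x_j)]\det[\phi_i(x_j)]$. Comparing with the Andréief (Gram) identity $\int_{\bR^n}\det[\psi_i(x_j)]\det[\phi_i(x_j)]\,\d x_1\cdots\d x_n = n!\,\det G$, the prefactor $\det(G^{-1})$ is exactly the reciprocal of the normalization, so the symmetric probability density of the ensemble equals $\tfrac1{n!}\det[K(x_i,x_j)]_{i,j=1}^n$ and hence $\rho_n = \det[K(x_i,x_j)]_{i,j=1}^n$. (The precise placement of the transpose, so that this matches the stated $G^{-t}$ under the convention $G_{j\ell}=\int\phi_j\psi_\ell$, is just bookkeeping of which index of $G$ is summed against $\psi$ versus $\phi$; one should also note that $K$ need not be symmetric, which is harmless.)

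The heart of the argument is the integrating-out lemma: if a kernel $K$ satisfies $\int_\bR K(x,x)\,\d x = c$ and $\int_\bR K(x,y)K(y,z)\,\d y = K(x,z)$, then for every $m\geq 1$,
\[
\int_\bR \det[K(x_i,x_j)]_{i,j=1}^m \, \d x_m = (c-m+1)\,\det[K(x_i,x_j)]_{i,j=1}^{m-1}.
\]
I would prove this from the Leibniz expansion $\det[K(x_i,x_j)]_{i,j=1}^m = \sum_{\sigma\in S_m}\sgn(\sigma)\prod_i K(x_i,x_{\sigma(i)})$: integrating over $x_m$ and splitting the permutations according to whether $\sigma$ fixes $m$ (producing the trace factor $c$) or carries $m$ inside a longer cycle (where the reproducing property collapses the two factors $K(\cdot,x_m)K(x_m,\cdot)$ adjacent to $x_m$ into a single $K(\cdot,\cdot)$), the cycle bookkeeping yields the coefficient $c-m+1$. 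Before applying it I must verify its two hypotheses for our $K$: using $\int_\bR\phi_\ell(y)\psi_p(y)\,\d y = G_{\ell p}$, the reproducing computation reduces to the matrix identity $G^{-1}G\,G^{-1}=G^{-1}$, and the trace computation reduces to $\operatorname{tr}(G^{-1}G)=\operatorname{tr}(I)=n$, so $c=n$; both are immediate.

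Finally, iterating the lemma from $m=n$ down to $m=k+1$ produces the product of factors $(c-n+1)(c-n+2)\cdots(c-k) = 1\cdot 2\cdots(n-k) = (n-k)!$, giving $\int_{\bR^{n-k}}\det[K]_{i,j=1}^n\,\d x_{k+1}\cdots\d x_n = (n-k)!\,\det[K]_{i,j=1}^k$. Combined with the definition $\rho_k = \tfrac{n!}{(n-k)!}\int_{\bR^{n-k}} p\,\d x_{k+1}\cdots\d x_n$ and $p=\tfrac1{n!}\det[K]_n$, this collapses to $\rho_k(x_1,\ldots,x_k) = \det[K(x_i,x_j)]_{i,j=1}^k$, which is precisely the determinantal structure. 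I expect the main obstacle to be the combinatorial bookkeeping in the integrating-out lemma — tracking the cycle containing the index $m$ through the Leibniz expansion and confirming that exactly the factor $c-m+1$ survives — since everything else (the determinant multiplicativity, the Gram/Andréief normalization, and the verification of the reproducing and trace identities) reduces to routine linear algebra.
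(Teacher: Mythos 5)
Your proposal is correct, but there is nothing in the paper to compare it against: the paper gives no proof of this statement, quoting it directly as Theorem 11.4.1 of \cite{Borodin-HbRMT}. What you have written is essentially the standard argument from that literature (and from Mehta's book): write $K(x_i,x_j)=(P^tG^{-1}Q)_{ij}$ so that $\det[K(x_i,x_j)]_{i,j=1}^n=\det(G^{-1})\det[\psi_i(x_j)]\det[\phi_i(x_j)]$, normalize via the Andr\'eief identity, verify the reproducing property $\int_\bR K(x,y)K(y,z)\,\d y=K(x,z)$ and the trace identity $\int_\bR K(x,x)\,\d x=n$, and then iterate the Gaudin-type integrating-out lemma. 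Your cycle bookkeeping in that lemma is exactly right: permutations fixing $m$ contribute the factor $c$, while each $\sigma'\in S_{m-1}$ arises from precisely $m-1$ permutations not fixing $m$, each with a sign flip, giving $c-m+1$. One small merit of your route worth noting: the paper's definition \eqref{eq:biorthog_ensemble} states $\rho_n$ as a bare product of determinants with no normalizing constant, and your Andr\'eief step shows the $\det(G^{-1})$ prefactor in the kernel absorbs the normalization exactly, so the conclusion holds whichever way the constant is placed.

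The one point you wave off as ``bookkeeping'' deserves to be made explicit, because the theorem as stated in the paper is internally inconsistent about it. With the paper's Gram convention $G_{j\ell}=\int_\bR\phi_j(x)\psi_\ell(x)\,\d x$, the inner integral in the reproducing computation couples $\phi_\ell$ with $\psi_p$ and produces the coefficient matrix $BGB$ for the kernel's coefficient matrix $B$; hence $BGB=B$ forces $B=G^{-1}$, \emph{not} $B=G^{-t}$. The stated $G^{-t}$ is correct only under the opposite convention $G_{j\ell}=\int_\bR\psi_j(x)\phi_\ell(x)\,\d x$ (which is the convention in \cite{Borodin-HbRMT}, where the roles of $\psi$ and $\phi$ in the kernel are also swapped). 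For non-symmetric $G$, the kernel with $G^{-t}$ under the paper's convention fails both the reproducing property (one gets $G^{-t}GG^{-t}\neq G^{-t}$) and the trace normalization (one gets $\operatorname{tr}\bigl((G^t)^{-1}G\bigr)\neq n$ in general). So your silent use of $G^{-1}GG^{-1}=G^{-1}$ and $\operatorname{tr}(G^{-1}G)=n$ is not merely notational: it is the correct and consistent choice, and your proof establishes the correct version of the theorem. In the paper's only application, Proposition \ref{prop:NIBM-DPP}, the Gram matrix is the identity, so $G^{-1}=G^{-t}$ and nothing downstream is affected.
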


	\begin{proof}[Proof of Proposition \ref{prop:NIBM-DPP}]
		We will re-write \eqref{eq:prob-dist-watermelon} in the form of \eqref{eq:biorthog_ensemble} with $\phi_n =\psi_n= \Psi_n$.
		First, the Vandermonde determinant can be written
		\begin{align*}
			V(x_{1},\ldots,x_{n})=\det\left[\begin{array}{ccccc}
				1 & x_{1} & x_{1}^{2} & \ldots & x_{1}^{n-1}\\
				1 & x_{2} & x_{2}^{2} & \ldots & x_{2}^{n-1}\\
				\vdots & \vdots & \vdots &  & \vdots\\
				1 & x_{n} & x_{n}^{2} & \ldots & x_{n}^{n-1}
			\end{array}\right]
			=\det\left[\begin{array}{ccccc}
				1 & P_{1}(x_{1}) & P_{2}(x_{1}) & \ldots & P_{n-1}(x_{1})\\
				1 & P_{1}(x_{2}) & P_{2}(x_{2}) & \ldots & P_{n-1}(x_{2})\\
				\vdots & \vdots & \vdots &  & \vdots\\
				1 & P_{1}(x_{n}) & P_{2}(x_{n}) & \ldots & P_{n-1}(x_{n})
			\end{array}\right]
		\end{align*}
		where $P_{i}(x)$ is any monic polynomial of degree $i$. The second equality holds because taking linear combinations of the columns does not effect the value of the
		determinant. Letting $P_j(x) = \wt H_j(x,t)$ and factoring $\sqrt{\vp_{t}(x_{i})}$ into the $i$-th row of the
		matrix, and factoring in $\sqrt{\frac{1}{t^{j}j!}}$ into the $j$-th
		column of the matrix, we obtain
		\begin{align}
        \label{eq:density-DBM}
			\rho_{n}(x_{1},\ldots,x_{n})
			& = \rb{\det_{i,j=1}^{n}\left[\frac{\wt H_{i-1}(x_{j},t)\sqrt{\vp_{t}(x_{j})}}{\sqrt{t^{i}i!}}\right]}^{2} = \rb{\det_{i,j=1}^{n} \ab{\Psi_j\rb{\frac x{\sqrt t}}}}^2.
		\end{align}
        where we have again used the relation $\wt H_n(x,t) = t^{n/2}  \fH_n\rb{\frac x{\sqrt{t}}}$.
                In particular, the DBM process is a biorthogonal ensemble where the roles of the $\phi_j$ and the $\psi_j$ are both played by the functions $\Psi_j$ from the quantum harmonic oscillator.
		It remains to compute the Gram matrix. But 
        \begin{align}
            \notag
        G_{j\ell} =\frac{1}{\sqrt{t^{j+\ell} j!\ell!}}\bE\ab{\wt H_j(X,t)\wt H_\ell(X,t)}= \frac{t^\ell}{\sqrt{t^{j+\ell} j!\ell!}}\bE\ab{\wt H_j(X,t)H_\ell(X,t)}=  \delta_{j\ell}
        \end{align} by the comment after Definition \ref{def:dual-H} and then
        Theorem \ref{thm:og}. So $G^{-T}$ is the identity.  Therefore we obtain the rescaled Hermite kernel as claimed.

	\end{proof}

    \subsection{Bulk convergence to the semicircle law}
	The global behaviour of the Dyson Brownian motion process can be obtained by studying the \emph{empirical measure} of the process, defined by 
    \begin{align}
        \label{eq:empirical-DMB}
        \empmeas{n}{t} &= \frac1n \sum_{j=1}^n \delta_{X_j(t)}.
    \end{align}
	In the same way that we saw in Section \ref{sec:applications-oscillator} that the Hermite polynomial $\fH_n$ has a non-trivial behaviour on the interval $[-2\sqrt n,2\sqrt n]$, we will see that the measure $\mu_{n,t}$ tends to concentrate on the interval $[-2\sqrt{nt},2\sqrt{nt}]$. In other words, for fixed time, the eigenvalues will spread out further and further as $n\to\infty$. 
    There are two ways of obtaining a compactly supported limiting measure: first, in the random matrix theory literature, one often rescales $X$ by $\frac1{\sqrt{n}}$. Alternatively, one can let time run slower, at speed $\frac1n$, which will be our approach. Of course, by Brownian scaling, these two approaches are equivalent.
    \begin{proposition}
	    \label{prop:semicircle-DBM}
        Let $t=\frac1n$. The sequence of  random measures $\empmeas{n}{\frac1n}$ converges weakly in expectation, as $n\to\infty$, to the semicircle law from Section \ref{subsec:semicircle-limit}. That is,
        \begin{align*}
            \lim_{n\to\infty} \bE\ab{\int_\bR f(x)\empmeas{n}{\frac1n}(dx)} =\lim_{n\to\infty} \frac1n\sum_{j=1}^n\bE\ab{ f\rb{X_j\rb{\frac1n}}}= \int_\bR f(x)\rho_{\text{semicircle}}(dx)
        \end{align*}
        for any bounded continuous function $f\colon\bR\longrightarrow \bR$.
	\end{proposition}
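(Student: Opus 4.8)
The plan is to reduce the entire statement to the rescaled Hermite kernel and then quote Corollary \ref{cor:hermite-to-sc} essentially verbatim. The first step is to observe that, because the empirical measure is just the normalized counting measure of a DPP, its expectation against a test function is governed entirely by the one-point correlation function. For a DPP with kernel $K$ the one-point function is the diagonal $\rho_1(x)=K(x,x)$, so that for any bounded continuous $f$,
$$\bE\ab{\int_\bR f(x)\, \empmeas{n}{t}(dx)} = \frac1n\, \bE\ab{\sum_{j=1}^n f(X_j(t))} = \frac1n \int_\bR f(x)\, \dysonkernel_{n,t}(x,x)\, dx,$$
where I use Proposition \ref{prop:NIBM-DPP} to identify the kernel of the process. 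The middle equality is the standard fact that $\bE[\int f\, dN]=\int f\,\rho_1$ for the counting measure $N=\sum_j \delta_{X_j(t)}$, which follows from the $k=1$ case of the correlation-function definition by approximating $f$ with simple functions.

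Next I would substitute $t=\frac1n$ into the formula $\dysonkernel_{n,t}(x,x)=\frac1{\sqrt t}K_n\rb{\frac x{\sqrt t},\frac x{\sqrt t}}$ from Proposition \ref{prop:NIBM-DPP}. Since $\frac1{\sqrt t}=\sqrt n$ and $\frac x{\sqrt t}=\sqrt n\,x$, this gives $\dysonkernel_{n,1/n}(x,x)=\sqrt n\, K_n(\sqrt n x,\sqrt n x)$, and hence
$$\bE\ab{\int_\bR f(x)\, \empmeas{n}{\frac1n}(dx)} = \int_\bR f(x)\, \frac1{\sqrt n} K_n(\sqrt n x,\sqrt n x)\, dx.$$
The integrand density $\frac1{\sqrt n}K_n(\sqrt n x,\sqrt n x)$ is exactly the object shown to converge weakly to the semicircle law in Corollary \ref{cor:hermite-to-sc}. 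It is a probability density (a change of variables $y=\sqrt n x$ gives total mass $\frac1n\int \dysonkernel_{n,1/n}(x,x)\,dx=\frac1n\cdot n=1$), so for bounded continuous $f$ the weak convergence yields precisely $\int_\bR f(x)\,\frac1{\sqrt n}K_n(\sqrt n x,\sqrt n x)\,dx \to \int_\bR f(x)\,\rho_{\text{semicircle}}(dx)$, completing the proof.

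The genuinely substantive input is Corollary \ref{cor:hermite-to-sc}, which is already established; the rest is bookkeeping connecting the DPP one-point function to the empirical measure. The only place requiring care is the passage from the moment convergence in which Corollary \ref{cor:hermite-to-sc} is phrased to convergence against a general bounded continuous $f$. Because the limiting semicircle law is compactly supported on $[-2,2]$, and because Proposition \ref{prop:exponential-decay} forces the tails of $\frac1{\sqrt n}K_n(\sqrt n x,\sqrt n x)$ outside $[-2,2]$ to be exponentially small and hence uniformly integrable, moment convergence upgrades to genuine weak convergence and the bounded-continuous test function causes no difficulty. I expect this tightness check to be the only real obstacle, and it is a mild one.
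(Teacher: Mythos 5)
Your proposal is correct and follows essentially the same route as the paper: identify the expected empirical measure's density with the DPP one-point function $\frac1n\dysonkernel_{n,t}(x,x)$ via Proposition \ref{prop:NIBM-DPP}, rescale at $t=\frac1n$ to obtain $\frac1{\sqrt n}K_n(\sqrt n x,\sqrt n x)$, and invoke Corollary \ref{cor:hermite-to-sc}. The only difference is your closing worry about upgrading moment convergence to weak convergence; that step is already handled inside the proof of Corollary \ref{cor:hermite-to-sc} (via compact support and the moment problem), so the corollary as stated delivers exactly the bounded-continuous test function convergence you need.
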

\begin{proof}
    Since DBM is a determinantal point process kernel $\dysonkernel_{n,t}$ (as in Proposition \ref{prop:NIBM-DPP}), 
    \begin{align}
        \notag
        \bE \ab{\int_\bR f(x)\empmeas nt (d x)} & =
        \frac1n \int f(x)\rho_{1,t}(x)\, dx =  \frac1n \int f(x)\dysonkernel_{n,t}(x,x)\, dx.
    \end{align}
    In other words, the expected empirical measure, $\bE \ab{\empmeas nt}$, is absolutely continuous with respect to Lebesgue measure with density $\frac1n \dysonkernel_{n,t}(x,x)$. Evaluating this at $t=\frac1n$ using  Proposition \ref{prop:NIBM-DPP}, we obtain 
    \begin{align*}
        \frac1n\dysonkernel_{n,1/n}(x,x)=\frac1{\sqrt{n}} K_n(x\sqrt n,x\sqrt n),
    \end{align*}
    which converges to the semicircle law by Corollary \ref{cor:hermite-to-sc}.
    
    \end{proof}

	\subsection{Fluctuations of the top particle}

        Having seen that the empirical measure of the DBM process run up to time $\frac1n$ converges to the semicircle law, whose support is $[-2,2]$, one can ask whether the top particle converges to the value $2$. We now show that it does and compute the fluctuations around this value in the process. Figure \ref{fig:DBM-unspiked} illustrates this. See also \cite{AGZ} and references therein.  
	We will see that the limiting fluctuations are of order $n^{2/3}$ and are distributed according to the \textit{Tracy--Widom GUE distribution}, which is closely related to the Airy function that already appeared in \ref{subsec:Airy-limit}.

\begin{figure}
    \centering
    \includegraphics[width=0.75\linewidth]{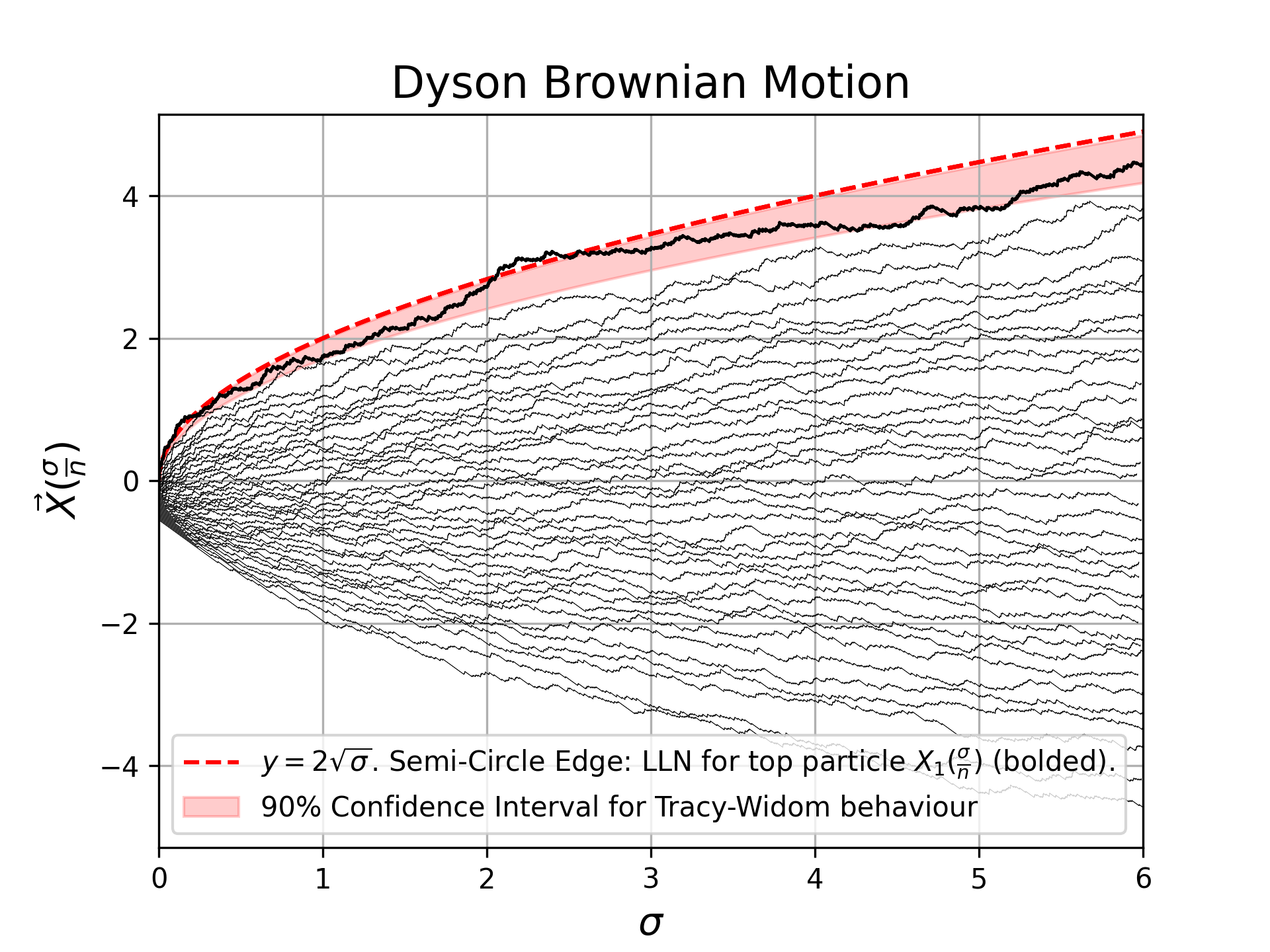}
    \caption[A sample of Dyson Brownian motion with limit theorems for the top particle]{A sample of the Dyson Brownian motion process when $n=36$. The trajectory of the top particle, $X_1(\sigma/n)$ is bolded. The LLN behaviour at $y=2\sqrt{\sigma}$ is illustrated, and a 90\% confidence interval using the result of Proposition \ref{prop:DBM-Airy}, along with the quantiles for the Tracy-Widom distribution $F_2(-3.19)=0.05$, $F_2(-0.23)=0.95$ computed in \cite{Chiani}.}
    \label{fig:DBM-unspiked}
\end{figure}
    
For our calculations, it will be helpful to appeal to theory of Fredholm determinants. We start by  giving a condensed introduction to this theory.

\paragraph{Fredholm determinants}

	One advantage of DPPs is that we can write observables in terms of determinants or Fredholm determinants. For background see for example \cite{McKean}. A \emph{kernel} on a a subset $D\subseteq \bR^d$ is a map $K\colon D\times D\longrightarrow \bR$. Under suitable integrability conditions, every kernel induces a linear map $T\colon L^2(D)\longrightarrow L^2(D)$ defined by $T(f)(x) = \int_D K(x,y)\, f(y)\, dy$. Note that every (suitable) kernel determines a unique operator,but that an operator may have more than one kernel that defines it. For a kernel $K$, we will write $T_K$ for the operator that it gives rise to.

	\begin{definition}
		Let $K\colon D\times D\longrightarrow\bR$ be a kernel. The \emph{Fredholm determinant} of the operator $T_K$ is defined by
		\begin{align}
			\label{eq:def-FD}
			\det(I+T_K) = \sum_{k=0}^\infty \frac{(-1)^k}{k!}\int_{D^k} \det\rb{K\rb{x_i,x_j}}\,\d x_1\ldots\d x_k.
		\end{align}
		provided the right hand side converges.
	\end{definition}
	
\begin{remark}
	Given a DPP with kernel $K$ and number of points $N$, we can always define the Fredholm determinant $\det (I+T_K)$, because the series in \eqref{eq:def-FD} terminates after a finite number ($N$) of terms.
\end{remark}

	\begin{proposition}
		\label{prop:prob_top_particle}
		Let $S$ be a point process that has
		a top particle, and let $X_{1}$ be the location of this top particle.
		Then:
		\begin{align}
			\p\left(X_{1}(t)<a\right)=\det \rb{1-P_{(a,\infty)}KP_{(a,\infty)}}= 
			\sum_{k=0}^{n}\frac{(-1)^{k}}{k!}\intop_{a}^{\infty}\ldots\intop_{a}^{\infty}\rh_{k}(x_{1},\ld x_{k})\d x_{1}\ldots\d x_{k}\label{eq:P_top}
		\end{align}

	\end{proposition}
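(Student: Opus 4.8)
The plan is to reduce the statement to a \emph{hole probability} --- the probability that the process places no point in the region $(a,\infty)$ --- expand that probability in terms of the correlation functions $\rho_k$, and finally recognize the resulting series as the Fredholm determinant of $K$ restricted to $(a,\infty)$. The first and easiest step is the observation that, since $X_1(t)$ is by definition the \emph{top} particle, the event $\{X_1(t) < a\}$ coincides exactly with the event that $\mu_S\big((a,\infty)\big) = 0$, i.e. that no point of the process lies in $A := (a,\infty)$. Thus it suffices to compute $\p(N_A = 0)$, where $N_A := \mu_S(A)$ is the (almost surely finite, bounded by $n$) number of points in $A$.

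Next I would relate the correlation functions to the factorial moments of $N_A$. The definition of $\rho_k$ given above is stated only for $k$ pairwise \emph{disjoint} sets, so the key lemma to establish is
$$\mathbb{E}\big[ N_A (N_A-1)\cdots(N_A - k+1)\big] = \int_{A^k}\rho_k(x_1,\ldots,x_k)\,\d x_1\cdots \d x_k.$$
I would prove this by partitioning $A$ into disjoint cells $B_1,\ldots,B_m$ and writing the descending factorial $N_A(N_A-1)\cdots(N_A-k+1)$ as the sum over ordered $k$-tuples of \emph{distinct} points of the process in $A$. Since the SPP has no two points at the same location and no accumulation points, as the mesh of the partition tends to zero each point occupies its own cell, so this descending factorial equals $\lim \sum_{(j_1,\ldots,j_k)\text{ distinct}} N(B_{j_1})\cdots N(B_{j_k})$; taking expectations and applying the disjoint-set definition of $\rho_k$ cell-by-cell, the off-diagonal cells exhaust $A^k$ up to its Lebesgue-null diagonal, yielding the identity.

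With the factorial moments in hand, I would run an inclusion--exclusion (generating-function) argument. Using the pointwise identity $\mathbf{1}\{N_A = 0\} = \sum_{k=0}^{N_A}\binom{N_A}{k}(-1)^k$ (which is simply $(1-1)^{N_A}$ with the convention $0^0 = 1$), taking expectations, and interchanging expectation with the finite sum --- legitimate because $N_A \le n$ almost surely --- I obtain
$$\p(N_A = 0) = \sum_{k=0}^{n}\frac{(-1)^k}{k!}\,\mathbb{E}\big[ N_A (N_A-1)\cdots(N_A-k+1)\big] = \sum_{k=0}^{n}\frac{(-1)^k}{k!}\int_{A^k}\rho_k(x_1,\ldots,x_k)\,\d x_1\cdots \d x_k,$$
which is precisely the middle expression in \eqref{eq:P_top}. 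The sum truncates at $k=n$ because the descending factorial vanishes identically once $k > n$.

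Finally, I would substitute the determinantal structure $\rho_k(x_1,\ldots,x_k) = \det\big[K(x_i,x_j)\big]_{i,j=1}^k$ into this series. Comparing with the definition \eqref{eq:def-FD} of the Fredholm determinant applied on the domain $D = (a,\infty)$ with the restricted kernel $P_{(a,\infty)}KP_{(a,\infty)}$, the series is exactly $\det\big(1 - P_{(a,\infty)} K P_{(a,\infty)}\big)$, closing the chain of equalities. I expect the main obstacle to be the factorial-moment lemma of the second paragraph: because the definition supplied covers only disjoint sets, care is needed in the partitioning/limiting step to discard the diagonal contributions and to justify that the distinct-cell sums converge to the integral over $A^k$. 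Everything else is either a definitional identification or a finite algebraic manipulation that is rigorous precisely because the number of points is almost surely finite.
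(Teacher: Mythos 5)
Your proof is correct and is essentially the paper's own argument: the paper obtains \eqref{eq:P_top} by setting $\eta=-\one_{[a,\infty)}$ in Proposition \ref{prop:expectation-determinant}, whose proof (given in the appendix) expands $\prod_{x\in S}\left(1+\eta(x)\right)$ over $k$-point subsets of $S$ and uses the identity $\bE[\Xi_k(U)]=\int_U\rho_k(x)\,\d x$ — which, for this choice of $\eta$, is precisely your inclusion--exclusion over $\binom{N_A}{k}$ together with your factorial-moment lemma. The only difference is organizational: you prove the indicator-function case directly (and justify the factorial-moment identity by partition refinement, where the paper invokes disjoint product sets plus the monotone class theorem) rather than routing through the general bounded-test-function statement.
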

Note that $P_{(a,\infty)}KP_{(a,\infty)}$ denotes the operator with kernel $(x,y)\longmapsto 1_{(a,\infty)}(x)K(x,y)1_{(a,\infty)}(y)$.
	Proposition \ref{prop:prob_top_particle} follows from setting $\eta=-\one_{[a,\infty)}$ in the following, more general result that can be found in \cite{Borodin-HbRMT}, see (11.2.4). Note that if $S$ has a fixed number of points then it always has a top particle.

	\begin{proposition}
		\label{prop:expectation-determinant}
		Let $S$ be a DPP with $n$ points and let $\eta:\bR\longrightarrow\bR$ be any bounded test function. Then
		\[
		\bE\left[\prod_{x\in S}\left(1+\eta(x)\right)\right]=\det(I+T_{K\eta})
		\]
		where $T_{K\eta}$ is the operator with kernel $(x,y)\longmapsto \sqrt{\eta(x)} K(x,y)\sqrt{\eta(y)} $
	\end{proposition}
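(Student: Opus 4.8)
The plan is to expand the random product $\prod_{x\in S}(1+\eta(x))$ into a sum over finite sub-configurations and to recognise each resulting term, after taking expectations, as an integral against the correlation functions $\rho_k$. Since $S$ consists of finitely many points almost surely, we have the elementary algebraic identity
$$\prod_{x\in S}\big(1+\eta(x)\big) = \sum_{T\subseteq S}\ \prod_{x\in T}\eta(x) = \sum_{k=0}^{|S|}\ \sum_{\{x_1,\ldots,x_k\}\subseteq S}\ \prod_{i=1}^k \eta(x_i),$$
where the inner sum runs over \emph{unordered} $k$-element subsets of the configuration and the $k=0$ term is the empty product $1$. Rewriting each unordered sum as an ordered sum over \emph{distinct} points introduces a factor $1/k!$,
$$\sum_{\{x_1,\ldots,x_k\}\subseteq S}\ \prod_{i=1}^k\eta(x_i) = \frac{1}{k!}\sum_{\substack{x_1,\ldots,x_k\in S\\ \text{distinct}}}\ \prod_{i=1}^k\eta(x_i).$$

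Next I would take expectations term by term. The key input is the factorial-moment (Campbell-type) formula relating the correlation functions to expectations of sums over distinct tuples: for any symmetric integrable $g\colon\bR^k\to\bR$,
$$\bE\left[\sum_{\substack{x_1,\ldots,x_k\in S\\ \text{distinct}}} g(x_1,\ldots,x_k)\right] = \int_{\bR^k} g(x_1,\ldots,x_k)\,\rho_k(x_1,\ldots,x_k)\,\d x_1\cdots\d x_k.$$
Applying this with $g=\prod_i\eta(x_i)$ yields
$$\bE\left[\prod_{x\in S}\big(1+\eta(x)\big)\right] = \sum_{k=0}^{n}\frac{1}{k!}\int_{\bR^k}\ \prod_{i=1}^k\eta(x_i)\,\rho_k(x_1,\ldots,x_k)\,\d x_1\cdots\d x_k,$$
where the series terminates at $k=n$ because $S$ has exactly $n$ points, so $\rho_k\equiv 0$ for $k>n$; in particular no convergence question arises.

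Finally I would insert the determinantal structure $\rho_k(x_1,\ldots,x_k)=\det[K(x_i,x_j)]_{i,j=1}^k$ and absorb the weights into the determinant. Factoring $\sqrt{\eta(x_i)}$ out of the $i$-th row and $\sqrt{\eta(x_j)}$ out of the $j$-th column (a purely algebraic row/column scaling, valid for any scalars and hence even when $\eta$ is negative, since the two square-root factors at each index multiply to $\eta$) gives
$$\prod_{i=1}^k\eta(x_i)\,\det\big[K(x_i,x_j)\big]_{i,j=1}^k = \det\Big[\sqrt{\eta(x_i)}\,K(x_i,x_j)\,\sqrt{\eta(x_j)}\Big]_{i,j=1}^k,$$
which is exactly the determinant of the kernel of $T_{K\eta}$ at $(x_1,\ldots,x_k)$. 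Substituting this back, the sum becomes precisely the defining series expansion of the Fredholm determinant $\det(I+T_{K\eta})$, completing the proof. (As a sanity check, taking $\eta=-\one_{[a,\infty)}$ recovers the $(-1)^k$ and the restricted integration domain appearing in Proposition \ref{prop:prob_top_particle}.)

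The step I expect to be the main obstacle is justifying the factorial-moment formula from the definition given in the text, which only specifies $\rho_k$ through $\bE\big[\prod_i\mu_S(A_i)\big]$ for \emph{disjoint} sets $A_1,\ldots,A_k$. Passing from this to expectations of $\sum g(x_1,\ldots,x_k)$ over distinct tuples for general $g$ requires a standard approximation argument: one first verifies the identity for $g=\prod_i\one_{A_i}$ with the $A_i$ disjoint (where distinctness is automatic and the statement is essentially the definition), extends to indicators of arbitrary boxes by a partition/inclusion--exclusion argument that discards the diagonal coincidences, and then to general bounded $g$ by approximation by simple functions together with dominated convergence, which is legitimate here because the total number of points is the fixed constant $n$. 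Everything else is bookkeeping.
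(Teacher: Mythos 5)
Your proof is correct and follows essentially the same route as the paper's: expand the product over subsets of the configuration, convert the expectation of the sum over distinct $k$-tuples into an integral against $\rho_k$, insert the determinantal formula for $\rho_k$, and absorb the $\sqrt{\eta}$ factors into the rows and columns of the determinant to recognise the Fredholm series. The technical point you flag at the end---passing from the disjoint-set definition of the correlation functions to the factorial-moment formula---is precisely the step the paper dispatches via its random measures $\Xi_k$ together with the monotone class theorem, so your approximation argument is a valid (and somewhat more explicit) substitute for that step rather than a different approach.
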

	
\paragraph{Proof of convergence}

We now prove convergence of the top particle to the \emph{Tracy--Widom distribution} whose cumulative distribution function is given by a Fredholm determinant:
    \begin{align}
        \label{eq:TW-GUE}
        F_2(s) & = \det\rb{ I - P_{(s,\infty)} K_{\mathrm{Ai}} P_{(s,\infty)}},
    \end{align}
    where $K_{Ai}(\xi,\eta)=\frac{\A(\xi)\,\A'(\eta) - \A'(\xi)\,\A(\eta)}{\xi - \eta}$ is the Airy kernel defined in Proposition \ref{prop:convergence-hermite-kernel}.

\begin{proposition}
	    \label{prop:DBM-Airy}
	Let $F_2$ be the cumulative distribution function of the Tracy--Widom GUE distribution and $s\in\bR$. Let $\sigma>0$, then
    \begin{align*}
    \lim_{n\to\infty} 
\mathbb{P}\!\left(  n^{2/3} \sigma^{-1/2} \rb{X_1\rb{\frac{\sigma}n  } - 2\sqrt{\sigma}}\le s \right) = F_2(s).
    \end{align*}
    \end{proposition}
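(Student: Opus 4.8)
The plan is to express the distribution of the top particle as a Fredholm determinant via Proposition \ref{prop:prob_top_particle}, rescale the kernel at the edge, and then pass to the limit using the edge convergence of the Hermite kernel (Proposition \ref{prop:convergence-hermite-kernel}), whose limit is precisely the Airy kernel $K_{\A}$ defining $F_2$ in \eqref{eq:TW-GUE}.

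First I would use Brownian scaling (Remark \ref{rmk:Brownian-scaling}) to eliminate $\sigma$. Since $(\sigma^{-1/2}X(\sigma t))_{t\ge 0}\eqdist X$, evaluating at $t=1/n$ gives $X_1(\sigma/n)\eqdist \sqrt{\sigma}\,X_1(1/n)$, so that
\[
n^{2/3}\sigma^{-1/2}\left(X_1(\sigma/n)-2\sqrt{\sigma}\right)\eqdist n^{2/3}\left(X_1(1/n)-2\right),
\]
and it suffices to prove $\mathbb{P}\!\left(X_1(1/n)\le 2+sn^{-2/3}\right)\to F_2(s)$. With $t=1/n$ the kernel of Proposition \ref{prop:NIBM-DPP} becomes $\dysonkernel_{n,1/n}(x,y)=\sqrt{n}\,K_n(\sqrt{n}x,\sqrt{n}y)$. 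Writing $a=2+sn^{-2/3}$ and expanding the Fredholm determinant of Proposition \ref{prop:prob_top_particle} (which terminates at $k=n$), I would change variables $x_i=2+n^{-2/3}u_i$, so that $\d x_i=n^{-2/3}\d u_i$, the region $x_i>a$ becomes $u_i>s$, and $\sqrt{n}x_i=2\sqrt{n}+n^{-1/6}u_i$. The powers of $n$ combine as $n^{k/2}(n^{-2/3})^k=n^{-k/6}$, one factor $n^{-1/6}$ passing into each row of the $k\times k$ determinant, so the $k$-th term becomes
\[
\frac{(-1)^k}{k!}\int_s^\infty\!\!\cdots\!\int_s^\infty \det\left[\widehat K_n(u_i,u_j)\right]_{i,j=1}^k\,\d u_1\cdots\d u_k,\qquad \widehat K_n(u,v):=n^{-1/6}K_n\!\left(2\sqrt{n}+n^{-1/6}u,\,2\sqrt{n}+n^{-1/6}v\right).
\]

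By Proposition \ref{prop:convergence-hermite-kernel}, $\widehat K_n(u,v)\to K_{\A}(u,v)$ pointwise, so each integrand converges to $\det[K_{\A}(u_i,u_j)]$; granting the interchange of limits, the series converges to $\sum_k \frac{(-1)^k}{k!}\int_s^\infty\!\cdots\!\int_s^\infty\det[K_{\A}(u_i,u_j)]=\det(I-P_{(s,\infty)}K_{\A}P_{(s,\infty)})=F_2(s)$, as desired.

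The hard part will be justifying the interchange of the limit with both the integrals and the sum over $k$, whose upper index $n$ grows. I would handle this by dominated convergence. Since $K_n$ is the reproducing (projection) kernel $\sum_{j<n}\Psi_j\Psi_j$, each $\widehat K_n$ is positive semidefinite, so Hadamard's inequality gives $0\le \det[\widehat K_n(u_i,u_j)]\le \prod_i \widehat K_n(u_i,u_i)$, whence the $k$-th term is bounded by $\frac{1}{k!}\left(\int_s^\infty \widehat K_n(u,u)\,\d u\right)^k$. It therefore suffices to produce a uniform-in-$n$ integrable bound on the diagonal $\widehat K_n(u,u)=n^{-1/6}\sum_{j<n}\Psi_j(2\sqrt{n}+n^{-1/6}u)^2$ for $u>s$: for $u$ in compacts this follows from Cramér's uniform bound (Proposition \ref{prop:Cramer}), while for large $u$ every argument $2\sqrt{n}+n^{-1/6}u$ lies past the edge $2\sqrt{j}$ for all $j<n$, so the exponential decay of Proposition \ref{prop:exponential-decay} forces decay in $u$. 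Stitching these two regimes into a single dominating function with $\sup_n\int_s^\infty \widehat K_n(u,u)\,\d u<\infty$ is the technical heart of the argument, since the pointwise asymptotics alone are insufficient; once it is in hand, dominated convergence yields both the termwise limits and the summability needed to conclude $\mathbb{P}(X_1(1/n)\le a)\to F_2(s)$.
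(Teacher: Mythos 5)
Your proposal is correct and follows essentially the same route as the paper: reduce to $\sigma=1$ by Brownian scaling (Remark \ref{rmk:Brownian-scaling}), write the probability as a Fredholm determinant via Proposition \ref{prop:prob_top_particle}, rescale at the edge to obtain exactly the kernel $\rescdysonkernel_n(u,v)=n^{-1/6}K_n\rb{2\sqrt n+n^{-1/6}u,2\sqrt n+n^{-1/6}v}$, and invoke the pointwise convergence of Proposition \ref{prop:convergence-hermite-kernel} to the Airy kernel. The only divergence is in how the limit interchange is finished, and there both treatments stop short: the paper upgrades to uniform convergence on compacts and defers to \cite{AGZ}, while you sketch a dominated-convergence argument (Hadamard's inequality for the positive semidefinite projection kernel, Cram\`er's bound on compacts, and a tail bound from exponential decay) but correctly flag that Proposition \ref{prop:exponential-decay}, being a pointwise-in-$a$ statement, does not by itself supply the uniform-in-$n$ integrable dominating function, so that step remains the unfinished technical heart in your version just as it does in the paper's.
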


    \begin{proof}
Recall that the DBM process $X$ is determinantal with kernel $\dysonkernel_{n,\cdot}$. So, by Proposition \ref{prop:prob_top_particle} and a change of variables, the left hand side can be written as the Fredholm determinant $\det\rb{1-\chi_s \rescdysonkernel \chi_s}$ where 
       \begin{align}
       \label{eq:rescdysonkernel}
           \rescdysonkernel_n (u,v) &  =
            n^{-2/3}\sigma^{-1/2} \dysonkernel_{n,\sigma/n} \rb{2\sqrt \sigma+\frac u{n^{2/3}\sigma^{1/2}},2\sqrt \sigma+\frac v{n^{2/3}\sigma^{1/2}}}\\
            & = n^{-2/3}\sigma^{-1/2} \frac{\sqrt{n}}\sigma K_n\rb{2\sqrt{n} + n^{-1/6}\frac u\sigma,2\sqrt{n} + n^{-1/6}\frac v\sigma}\\
            & = n^{-1/6} \frac1\sigma K_n\rb{2\sqrt{n} + n^{-1/6}\frac u\sigma,2\sqrt{n} + n^{-1/6}\frac v\sigma}
       \end{align}
       which converges pointwise to  $\frac1\sigma K_{\A}(u\sigma,v\sigma)$ as $n\to\infty$ by Proposition \ref{prop:convergence-hermite-kernel}. To conclude the convergence of the Fredholm determinant of $\rescdysonkernel_n$ to that of $K_{\A}$ (which is $F_2$), one needs two additional arguments: first, apply the unitary transformation $U_\sigma f(x) = \sigma^{-1/2} f(\sigma x)$, which doesn't affect the Fredholm determinant. Secondly, the pointwise convergence of Proposition \ref{prop:convergence-hermite-kernel} needs to be upgraded, for example to uniform convergence on compact subsets of $\bR\times \bR$. This can be done, for example, by carefully examining the proof of the asymptotics and noting that the bounds hold uniformly on compact sets. See \cite{AGZ} for a rigorous treatment of this point.
    \end{proof}

Due to the exponential decay of the Hermite polynomials (Proposition \ref{prop:bound-uniform-H}) and the the wave oscillator functions $\Psi_n$ (Proposition \ref{prop:bound-uniform-Psi}), the probability of finding a particle to the right of the edge at $2\sqrt\sigma$ goes to zero quite sharply.  More precisely, denote by $\npart(A)$ the number of particles of the DBM process with $n$ particles at time $\frac\sigma n$ in an interval $A$.

\begin{proposition}
\label{prop:decay-points-unspiked}
    Let $\ep>0$ and recall the constant $C_\ep$ defined in \eqref{eq:def-C-epsilon}. For every $n\in\bN$,
    \begin{align}
    \label{eq:bound-exp-spiked}
        \bE\set{\npart ([2\sqrt\sigma +\ep,\infty))\geq 1} \leq \frac{C_\ep\sqrt {8\sigma n}}{\ep } e^{-\frac{\ep^2 n}{4\sigma}}
    \end{align}

A key ingredient in the proof is the following tail bound for Gaussian integrals known as the Mills inequality \cite{MillsRatio1941}. 
For completeness, we give a proof in the appendix, see Section \ref{sec:proof-Mills}.

\begin{lemma}
    \label{lem:mills}
    For any $A,B,c>0$,
    \begin{align}
		\label{eq:Mills-general}
		\frac{1}{2B(A-c)+\frac1{A-c}}\, e^{-B(A-c)^2} \leq \int_A^\infty e^{-B(x-c)^2} \d x &  \leq \frac1{2B(A-c)} \, e^{-B(A-c)^2}
	\end{align}
\end{lemma}

\

    \begin{proof}[Proof of Proposition \ref{prop:decay-points-unspiked}]
        Because the DBM process is determinantal and making the change of variables $x  = 2\sqrt\sigma y$,
 \begin{align}
 \notag
     \bE\ab{\npart (\rb{2\sqrt\sigma+\ep,\infty)}}
     & = 2\sqrt n\int_{1+\hat\ep}^\infty K_n\rb{2y\sqrt n, 2y\sqrt n}  d y\\
    \notag
    & = 2\sqrt n \int_{1+\hat\ep}^\infty \Psi_0\rb{2\sqrt n y}^2\, dy+  2\sqrt n\sum_{j=1}^{n-1}\int_{1+\hat\ep}^\infty \Psi_j\rb{2\sqrt j \rb{\sqrt {\frac n j} y}}^2\, dy\\
   \label{eq:two-terms}
    & \leq \sqrt {\frac{2n}{\pi} } \int_{1+\ep}^\infty e^{-2ny^2}\, dy+ 2 C_\ep \sum_{j=1}^{n-1} \sqrt {\frac nj}\int_{1+\ep/2\sqrt\sigma}^\infty \exp\set{-j\rb{\sqrt{\frac nj}y-1}^2}\, dy
 \end{align}
 where $\hat\ep=\ep/2\sqrt\sigma$ and we have used Remark \ref{rmk:bound-uniform-Psi}.
 By the Mills inequality (Lemma \ref{lem:mills}), we can bound the first term in \eqref{eq:two-terms} as follows:
\begin{align}
    \label{eq:bound-first-term}
    \sqrt {\frac{2n}{\pi} } \int_{1+\ep}^\infty e^{-2ny^2}\, dy 
    \leq  \sqrt {\frac{1}{2\pi} } \frac1{2\sqrt n(1+\hat \ep)} e^{-2n(1+\hat\ep)^2}.
\end{align}
  Turning to the second term of \eqref{eq:two-terms}, 
 \begin{align*}
 \int_{1+\ep/2\sqrt\sigma}^\infty \exp\set{-j\rb{\sqrt{\frac nj}y-1}^2}\, dy
 & =  \int_{1+\ep/2\sqrt\sigma}^\infty \exp\set{-\, \rb{ \sqrt{2n} y - \sqrt {2j}}^2/2}\, dy\\
 & =  \frac1{2\sqrt n}\int_{\sqrt {2n} (1+\hat \epsilon)-\sqrt {2j})}^\infty \exp\set{-z^2/2}\, dz \\ &
 \leq \frac1{\sqrt {2n}} \frac1{\sqrt{2n}(1+\hat\epsilon)-\sqrt {2j})} e^{-[\sqrt{n}(1+\hat\epsilon)-\sqrt {j})]^2}.
 \end{align*}
 Now this last term is increasing in $j$, so we can bound it above by setting $j=n$: 
 \begin{align*}
 \int_{1+\ep/2\sqrt\sigma}^\infty \exp\set{-j\rb{\sqrt{\frac nj}y-1}^2}\, dy
 & \leq \frac1{\sqrt{2n}} \frac1{\sqrt{n}\hat\epsilon} \, e^{-n\hat\epsilon^2} =\frac1{\sqrt{2}n\hat\epsilon}  \, e^{-n\hat\epsilon^2}.
 \end{align*}
 Summing over the $n$ terms, and combining with \eqref{eq:bound-first-term}, we get
\begin{align}
    \label{eq:bound-integrated-Hermite-kernel}
    \sqrt{\frac n\sigma} \intop_{2\sqrt\sigma+\ep}^\infty K_n
    \rb{\frac{x\sqrt n}{\sqrt\sigma}, \frac{x\sqrt n}{\sqrt\sigma}} \, dx & \leq 2 C_\ep \sqrt n\cdot n\  \frac1{\sqrt{2}n\hat\epsilon}  \, e^{-n\hat\epsilon^2}  = \frac{C_\ep\sqrt {2n}}{\hat\ep } e^{-\hat\ep^2 n},
\end{align}
which completes the proof.
 \end{proof}
\end{proposition}

\begin{remark}
	By Chebychev's inequality it follows immediately from \eqref{eq:bound-exp-spiked} that
	\begin{align}
		\label{eq:bound-prob-spiked}
		 \bP\set{\npart ([2\sqrt\sigma +\ep,\infty))\geq 1} \leq \frac{C_\ep\sqrt {8\sigma n}}{\ep } e^{-\frac{\ep^2 n}{4\sigma}}.
	\end{align}
\end{remark}


    \section{Random matrix applications part II: Spiked Dyson Brownian Motion}
    \label{sec:spiked}

In the previous section, we introduced Dyson Brownian motion as the eigenvalues of the GUE process or as non-intersecting Brownian motions.  We now consider a spiked version of this as follows: in the non-intersecting BM picture, start one walker at $c$ instead of 0. Equivalently, perturb the  GUE process by adding a rank-one matrix of the form $c v_1v_1^T$. Here, $v_1$ is any unit vector (and by rotation invariance of the GUE it doesn't matter which one).

    As before, we will scale $t$ to the order of $\frac1n$ to obtain non-trivial asymptotics. On that order, up to time $\frac {c^2}n$, we see a different behaviour of the top particle. After that time, the lower walkers `catch up' to the top particle and we again get the Tracy--Widom limit. See Figure \ref{fig:spiked-DBM} for an illustration.

    \begin{theorem}
    \label{thm:spiked-limit}
        Set $t=\frac{\sigma}n$. Then, as $n\to\infty$, the distribution of the top particle of the spiked DBM process behaves according to the value of $\sigma$ as follows:
        \begin{enumerate}
            \item If $\sigma>c^2$, then as in the unspiked case, the top particle converges to $2\sqrt\sigma$. The fluctuations around $2\sqrt\sigma$ are also of order $n^{-2/3}$ and follow the Tracy--Widom GUE distribution $F_2$:
                \begin{align}\label{eq:convergence-spiked-subcritical}
    \lim_{n\to\infty} 
\mathbb{P}\!\left(  n^{2/3} \sigma^{-1/2} \rb{X_1\rb{\frac{\sigma}n  } - 2\sqrt{\sigma}}\le s \right) = F_2(s).
    \end{align}
            \item If $\sigma<c^2$ then the top particle converges weakly  to $c+\frac\sigma c$ with Gaussian fluctuations:
            \begin{align*}
                n^{1/2} \rb{X_1\rb{\frac\sigma n}- \rb{c+\frac\sigma c}}\Longrightarrow \mathcal{N}\rb{0,\frac{\sigma}{c^2}\rb{c^2-\sigma}}
            \end{align*}
            
        \end{enumerate}
    \end{theorem}

    \begin{figure}
        \centering
        \includegraphics[width=0.75\linewidth]{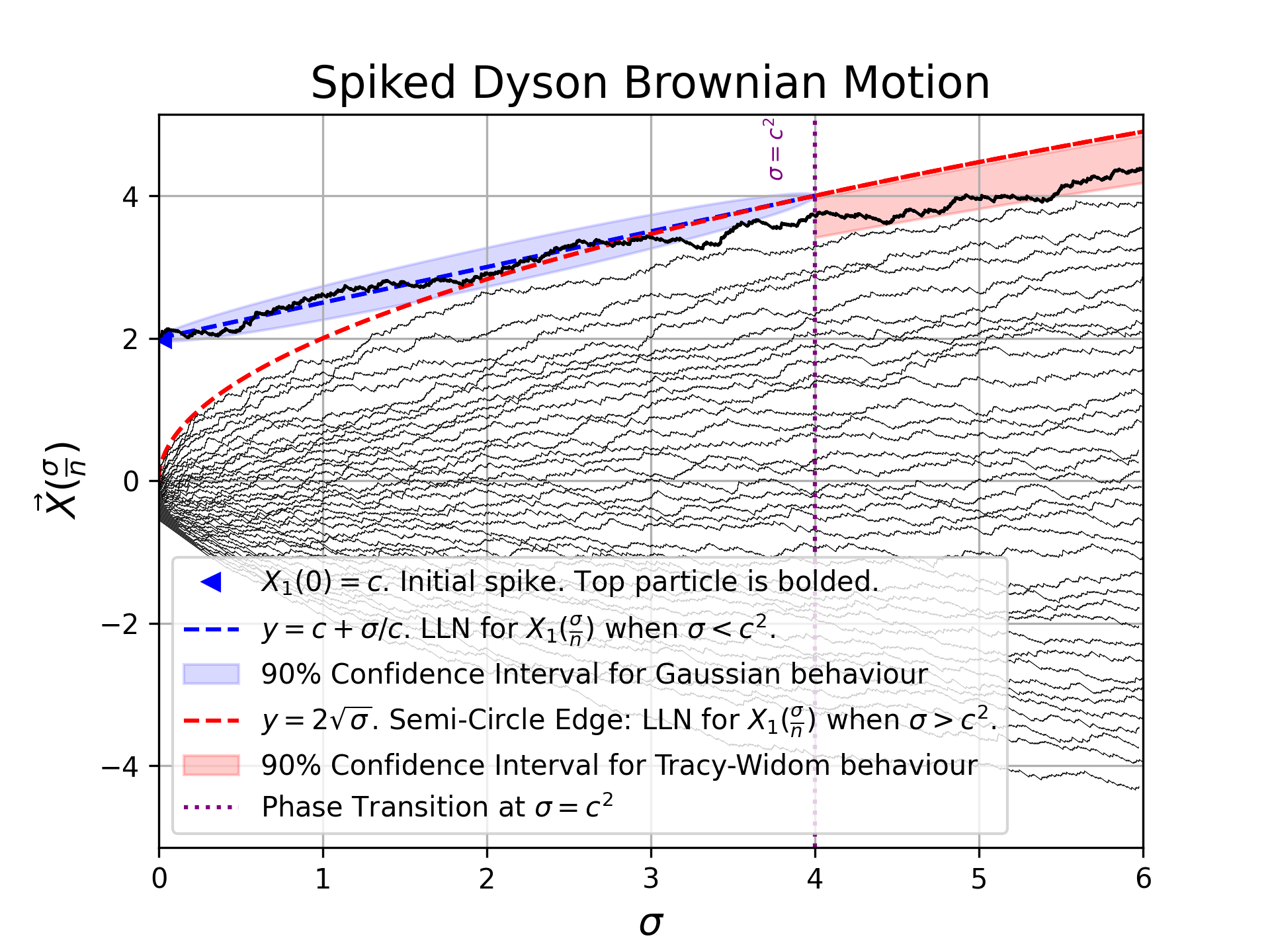}
        \caption[A sample of the Spiked Dyson Brownian motion process]{A sample of the Spiked Dyson Brownian motion process when $n=36$ and intitial spike strength $c=2$. The trajectory of the top particle, $X_1(\sigma/n)$ is bolded. The two regimes, depending on if $\sigma < c^2$ or $\sigma > c^2$, given in Theorem \ref{thm:spiked-limit} are illustrated. Note that the convergence of Theorem applies as $n \to \infty$; the approximation is already good at $n=36$ far away from the phase transition point, but may not be good a approximation near the phase transition.  }
        \label{fig:spiked-DBM}
    \end{figure}
    
\begin{remark}
As can be seen in  Figure \ref{fig:spiked-DBM}, the LLN curves of Theorem \ref{thm:spiked-limit} have an interesting tangency property. The LLN line $y=c+\sigma/c$ from the $\sigma < c$ phase is exactly tangent to the parabola LLN $y=2\sqrt{\sigma}$ from the $\sigma > c$ phase. This tangency happens exactly at the point $\sigma=c^2$ where the phase transition happens. Based on energy minimization considerations, one might intuit that this must be the case for the LLN curves: imagine a rubber band that is stretched from the original point $X_1(0)=c$ to some large final point on the curve $y=2\sqrt{\sigma}$. (A rubber band and Brownian motion both minimize a quadratic ``energy'', so the analogy is apt.) By the laws of physics, the rubber band must be tangent at the point it meets the curve (otherwise the tension in the rubber band would pull it somewhere else). This heuristic argument to explain the tangency was originally suggested by B{\'a}lint Vir{\'a}g 
\end{remark}

    As for the Dyson Brownian motion process above we can show that the spiked Dyson BM is a biorthogonal ensemble and then appeal to Theorem \ref{thm:bi-orthog-thm}. The computations can be found in Appendix \ref{app:proof-spiked-DPP}. 
    
    \begin{proposition}
    \label{prop:spiked-DPP}
        The spiked DBM process is a DPP with kernel $\spikedkernel$ where 
        \begin{align*}
            \spikedkernel_{n,t}(x,y) & = \dysonkernel_{n,t}(x,y)+
            \frac{\wt{H}_n(y,t)}{c^{n}} \sqrt{\vp_t(y)}\ab{ \frac{\phi_t(x-c)}{\sqrt{\vp_t(x)}} - \sum_{j=0}^{n-1}\rb{\frac ct}^j \frac1{j!} \wt H_j(x,t)\sqrt{\vp_t(x)}}
        \end{align*}


    \end{proposition}

    \begin{remark}
    Note that by using Proposition \ref{prop:generating-function}, we may write an alternate form for $K_{N}^{c}(x,y)$:
\begin{align}
    \label{eq:spiked-DPP-alt}
 \conjspikedkernel_{n,t}(x,y)=\dysonkernel_{n,t}(x,y)+\sqrt{\phi_t(y)} \frac{\wt H_n(y,t)}{c^{n}} 
\sum_{k=n}^{\infty} \sqrt{\phi_{t}(x)} \wt H_k(t,x)
 \left(\frac{c}{t}\right)^{k}\frac{1}{k!}
\end{align}.
\end{remark}

Recall that $t=\frac\sigma n$ for some $\sigma>0$. We will consider separately the cases $c<\sqrt\sigma$ and $c>\sqrt\sigma$.

\subsection{Analysis of the kernel \texorpdfstring{when $\sigma>c^2$}{for long time}}

Here we prove the first part of Theorem \ref{thm:spiked-limit}. Following the discussion preceding
\eqref{eq:rescdysonkernel}, the probability on the left hand side of \eqref{eq:convergence-spiked-subcritical} can be written as the Fredholm determinant of the kernel
\begin{align}
\label{eq:spiked-rescaled}
            n^{-2/3}\sigma^{-1/2} \conjspikedkernel_{n,\sigma/n} \rb{2\sqrt \sigma+\frac u{n^{2/3}\sigma^{1/2}},2\sqrt \sigma+\frac v{n^{2/3}\sigma^{1/2}}}.
\end{align}
We establish a uniform bound on the difference between the spiked and DBM kernels. From this bound it follows directly that the limit in \eqref{eq:spiked-rescaled} is dominated by the DBM kernel, which concludes the proof of the first part of Theorem \ref{thm:spiked-limit}.

\begin{proposition}
\label{prop:bound-subcritical}
       For $\sigma>c^2$,
\begin{align*}
   \sup_{x\in\bR} \sup_{y\in\bR} \abs{\conjspikedkernel_{n,t}(x,y)-\dysonkernel_{n,t}(x,y)} \leq \frac{\sqrt{e}}{(2\pi)^{3/4}(\sqrt\sigma-c)} n^{1/2}.
\end{align*}
\end{proposition}

\begin{proof}
Using \eqref{eq:spiked-DPP-alt} and the fact that $\phi_t(x)=\frac1{\sqrt t} \phi_1(x/\sqrt{t})$,
    \begin{align*}
    \conjspikedkernel_{n,t}(x,y) 
 & = \dysonkernel_{n,t}(x,y)+ \frac{t^{(n-1)/2}}{c^n}  \sqrt{n!} \Psi_n\rb{\frac y{\sqrt t}}  
\sum_{k=n}^{\infty} \frac1{\sqrt{k!}} t^{k/2} 
\Psi_k\rb{\frac x{\sqrt t}}
 \left(\frac{c}{t}\right)^{k} \\
 & = \dysonkernel_{n,t}(x,y)+ t^{-1/2}  \Psi_n\rb{\frac y{\sqrt t}}  
\sum_{k=n}^{\infty} 
\sqrt{\frac{n!}{k!}} t^{(n-k)/2} 
\Psi_k\rb{\frac x{\sqrt t}}c^{k-n}
\end{align*}
Set now $t=\frac\sigma n$. 
We  apply Proposition \ref{prop:Cramer} as well as Remark \ref{rmk:Stirling-bound} to get
\begin{align*}
        \abs{\conjspikedkernel_{n,t}(x,y)-\dysonkernel_{n,t}(x,y)} & \leq \sqrt{\frac{e}{\sqrt{2\pi}}}\cdot \frac1{\sqrt{2\pi}} \sqrt{\frac n\sigma}  
\sum_{k=n}^{\infty} 
e^{(k-n)/2}
\sqrt{\frac{\sqrt{n} n^n}{\sqrt{k} k^k}} \rb{\frac n\sigma}^{(k-n)/2} 
 c^{k-n}\\
 & = \frac{\sqrt{e}}{(2\pi)^{3/4}}\sqrt{\frac n\sigma}   
\sum_{k=n}^{\infty}   e^{(k-n)/2}
\sqrt{\frac{\sqrt{n} }{\sqrt{k} }}  \rb{\frac{n} k}^{k/2} 
 \rb{\frac c{\sqrt\sigma}}^{k-n}\\
 & =\frac{\sqrt{e}}{(2\pi)^{3/4}}
 \sqrt{\frac n\sigma}   
\sum_{\ell=0}^{\infty}   e^{\ell/2}
 \rb{\frac{n} {n+\ell}}^{(n+\ell+1/2)/2} 
 \rb{\frac c{\sqrt\sigma}}^{\ell}\\
 & \leq \frac{\sqrt{e}}{(2\pi)^{3/4}}
 \sqrt{\frac n\sigma}   
\sum_{\ell=0}^{\infty} \rb{\frac c{\sqrt\sigma}}^{\ell} =\frac{\sqrt{e}}{(2\pi)^{3/4}}
 \sqrt{n} 
 \frac1{\sqrt\sigma-c}
\end{align*}
as claimed.
For the last inequality, we have used the  inequality $(1-\ell/(n+\ell))^{(n+\ell+1/2)/2}\leq e^{-\ell/2}$.
\end{proof}



\subsection{Analysis of the kernel  \texorpdfstring{when $\sigma<c^2$}{for short time}}

We now turn to the second part of Theorem \ref{thm:spiked-limit}, which governs the `early' regime where $t=\frac\sigma n$ for $\sigma<c^2$.
The key ingredient is the following proposition, which constrains the location of the top particle to an area of size proportional to $n^{-1/2}$ around $\gamma$.


\begin{proposition}
\label{prop:particles-spiked}
    Let $\gamma=c+\frac\sigma c>2\sqrt\sigma$ and suppose that $\sigma<c^2$. For any interval $A$ let $\npartspiked(A)$ be the number of particles of the $n$-particle spiked DBM process at time $\frac\sigma n$  in A. Then the following three statements hold for any $\ep\in (0,\frac{\gamma-2\sqrt\sigma}4)$:
    \begin{enumerate}
        \item The probability of finding a particle to the right of $\gamma+\ep$ decays exponentially: there exists $C>0$ such that for all $n\in \bN$
        \begin{align*}
            \bP\set{\npartspiked(\rb{\gamma+\ep,\infty)}\geq 1}\leq Ce^{-(\ep^2 \wedge \delta/(4\sigma)) n}
        \end{align*}
        where $\delta = \frac{\gamma-2\sqrt\sigma}2>0$.
        \item $\bE\ab{\npartspiked(\gamma-\ep,\infty)}\to 1$ as $n\to\infty$.
        \item $\bE(\npartspiked(\gamma+un^{-1/2},\gamma+vn^{-1/2}))\to F_\Phi(\frac{c^2}{\sigma(c^2-\sigma)}\,v) - 
        F_\Phi(\frac{c^2}{\sigma(c^2-\sigma)}\,u)$,
        where $F_\Phi$ is the cumulative distribution function of a standard Gaussian.
    \end{enumerate}
\end{proposition}

\begin{proof}
Note that we have $\frac\gamma{2\sqrt\sigma}=\frac{(c^2+\sigma)/2}{c\sqrt\sigma}$, which is the arithmetic mean divided by the geometric mean of the quantities $\sigma$ and $c^2$. Therefore $\gamma>2\sqrt\sigma$ by the AM-GM inequality.

We further note that for any $u\in\bR$,
\begin{align}
\label{eq:sum-npartspiked}
    \bE\ab{\npartspiked(u,\infty)} & = 
    \quad \bE\ab{\npart(\rb{u,\infty)}} + 
    c^{-n} \int_{u}^\infty \wt H_n\rb{x,\frac\sigma n} \phi_t(x-c) \, dx  \\
    \notag
    & \quad - \frac1{c^{n}} \sum_{j=0}^{n-1} \int_{u}^\infty \rb{\frac{cn}\sigma}^j \frac1{j!} \wt H_n\rb{x,\frac\sigma n} \wt H_j\rb{x,\frac\sigma n} \phi_t(x)\, \d x
\end{align}
We will estimate each of the three summands in \eqref{eq:sum-npartspiked} in turn. For the first and the third term we will set $u=\gamma-\ep$ and show that they still go to zero. For the middle term, we will see that the integral concentrates exactly around $u=\gamma$, and therefore the integral from $\gamma+\ep$ on will be negligible.

We first appeal to Proposition \ref{prop:decay-points-unspiked}. By the assumption on $\epsilon$ we have 
$\gamma-\ep > 2\sqrt\sigma+\delta $ where $\delta = \frac{\gamma-2\sqrt\sigma}2>0$. Therefore we can bound the first term by $\frac {C_\delta}\delta \sqrt{8\sigma n} e^{-\delta^2n/4\sigma}$ .
To bound the second term,
\begin{align*}
    c^{-n} \int_{\gamma+\ep}^\infty \wt H_n\rb{x,\frac\sigma n} \phi_t(x-c) \, dx & = c^{-n} \sqrt{\frac n{2\pi\sigma}} \rb{\frac\sigma n}^{n/2} \int_{\gamma+\ep}^\infty  \fH_n\rb{\frac{x\sqrt n}{\sqrt\sigma}} e^{-n(x-c)^2/(2\sigma)}\, \d x\\
    & = 2\sqrt{\frac n{2\pi}} \rb{\frac{\sqrt\sigma}c}^n n^{n/2} \int_{(\gamma+\ep)/2{\sqrt\sigma}}^\infty \fH_n\rb{2\sqrt n a} e^{-n(2\sqrt\sigma a -c)^2/2\sigma}\, \d a\\
    & \leq \sqrt{\frac n\pi} \rb{\frac{\sqrt\sigma}c}^n \int_{(\gamma+\ep)/2{\sqrt\sigma}}^\infty \frac1{\sqrt{1-a^2+a\sqrt{a^2-1}}}\, e^{nR(a)}\, \d a,
\end{align*}
where $R(a)=-a^2-a\sqrt{a-1}+\arccosh(a)+\frac{2ac}{\sqrt\sigma}-\frac12(1+\frac{c^2}\sigma)$ and we have used Proposition \ref{prop:bound-uniform-H}. Note that the first term in the integrand is uniformly bounded above by a constant $\wt C$ that only depends on $\gamma$. Moreover $R'(a)=-2a-2\sqrt{a^2-1}+\frac{2c}{\sqrt\sigma}$ which has a unique zero at $a=\frac{\gamma}{2\sqrt\sigma}>1$ and $R''(a)=-2-\frac{2a}{\sqrt{a^2-1}}<-2$ for $a>1$. Therefore,
\begin{align*}
     c^{-n} \int_{\gamma+\ep}^\infty \wt H_n\rb{x,\frac\sigma n} \phi_t(x-c) \, dx &\leq \wt C \sqrt{\frac n\pi}\rb{\frac{\sqrt\sigma}c}^n \int_{(\gamma+\ep)/2{\sqrt\sigma}}^\infty e^{-n(a-\gamma/2\sqrt\sigma)^2}\, \d a\\
     & \leq \frac{\wt C}\ep \rb{\frac{\sqrt\sigma}c}^n \sqrt{\frac n\pi} e^{-n\ep^2}
\end{align*}
where  we have applied the Mills inequality (Lemma \ref{lem:mills}).

Finally we estimate the third term for $u=\gamma-\ep$. This goes very similarly to the first term (see Section \ref{sec:proof-third-term} in the Appendix) and we obtain
\begin{align}
\label{eq:third-term-spiked}
    \sum_{j=0}^{n-1}  c^{j-n} \rb{\frac n\sigma}^{(j-n)/2} \sqrt{\frac n{\sigma}} &  \int_{\gamma-\ep}^\infty \abs{\Psi_n\rb{\frac {x\sqrt n}{\sqrt\sigma}}} \abs{\Psi_j\rb{\frac {x\sqrt n}{\sqrt\sigma}}}\, dx \leq C e^{-n(1+\frac{\delta}{2\sqrt {\sigma}})/2}
\end{align}

Taking these points together we obtain $\bE\ab{\npartspiked(\gamma+\epsilon)}\leq Ce^{-(\ep^2 \wedge \delta/(4\sigma)) n}$. The same bound for the probability that $\npartspiked(\gamma+\epsilon)$ is no bigger than 1 now follows from Chebychev's inequality and we have proved the first part of the proposition.

     To prove the second part, recall that we have given exponential bounds on the first and the third summand of \eqref{eq:sum-npartspiked} even when $u=\gamma-\ep$. In other words,
     \begin{align*}
         \bP\rb{\npartspiked[\gamma-\ep,\infty)} & = \int_{\gamma-\ep}^\infty \frac{\wt H_n(x,{\sigma/n})}{c^n} \vp_{\sigma/n} (x-c) \, dx\  \rb{1+o(1)}
     \end{align*}
    In order to analyse the asymptotics of the latter expression, we will apply Laplace asymptotics, Proposition \ref{prop:laplace}.  To do so, let us re-write the left-hand side:
\begin{align}
   I_n:= c^{-n}\intop_{\gamma-\ep}^\infty \wt {H}\rb{x,\frac\sigma n} \vp_{\sigma/n}(x-c)\, dx & = \rb{\frac{\sqrt\sigma}{c}}^{n} n^{-n/2} \intop_{\gamma-\ep}^\infty \fH_n \rb{x\sqrt{\frac n\sigma}}\vp_{\sigma/n}(x-c)\, dx.
\end{align}
Take now $a=\cosh(\th)>1$. From Theorem \ref{thm:main-asymptotics} we have $n^{-n/2} \fH_n(2a\sqrt n)=e^{n S(a)} \kappa(a) \rb{1+o(1/n)}$ where
\begin{align*}
    S(a) & = 
    a^2 - a\sqrt{a^2-1}+\arccosh(a) \quad \quad \text{and}\quad\quad
      \kappa(\th)  = 
   \frac{e^{\theta/2}}{\sqrt{2\sinh(\th)}}.
\end{align*}
Thus,
\begin{align}
    I_n & = \rb{\frac{\sqrt\sigma}{c}}^{n}\frac {\sqrt n}{\sqrt{2\pi
    \sigma}} \intop_{\gamma-\ep}^\infty  \exp\set{n \ab{S\rb{\frac{x}{2\sqrt\sigma} }-\frac{(x-c)^2}{2\sigma}}}\kappa\rb{\frac{x}{2\sqrt\sigma}}\, dx \ \rb{1+o(1)}.
\end{align}
In order to apply Proposition \ref{prop:laplace} we define $f(x) = S\rb{\frac{x}{2\sqrt\sigma} }-\frac{(x-c)^2}{2\sigma}$. A direct calculation (see Appendix \ref{subsec:proof-critical-point}) shows that on $[\gamma-\ep,\infty)$
\begin{align}
\label{eq:spiked-critical-point}
f'(x) =0 \ \Leftrightarrow\ x=\gamma.
\end{align}
Thus by Proposition \ref{prop:laplace},
\begin{align*}
    I_n 
    & = \rb{\frac{\sqrt\sigma}c}^n \frac {1}{\sqrt{
    \sigma}} e^{n f(\gamma)}  \,\kappa\rb{ \frac{\gamma}{2\sqrt\sigma}}\sqrt{\frac{1}{\abs{f''(\gamma)}}}\, (1+o(1))
\end{align*}
A direct computation yields $f''(x)=\frac1{4\sigma} S''(\frac x{2\sqrt\sigma})-\frac1\sigma$. After some algebra, we obtain $S''(\frac\gamma{2\sigma})=-\frac{4\sigma}{c^2-\sigma}$ and therefore $f''(\gamma)=-\frac{c^2}{\sigma(c^2-\sigma)}<0$.

 Next, we compute the $\kappa$ term. Write $a=\frac{\gamma}{2\sqrt\sigma}$ and note that $a = \cosh(\ln(\lambda))$ where $\lambda=\frac{c}{\sqrt\sigma}>1$, so that $\th := \ln(\lambda)=\ln\rb{\frac{c}{\sqrt\sigma}}$. This implies $e^{\th/2}=\sqrt{\frac{c}{\sqrt{\sigma}}}$ and $\sinh(\th)=\frac12\rb{\frac c{\sqrt\sigma}-\frac{\sqrt\sigma}c}=\frac{c^2-\sigma}{2c\sqrt\sigma}$.
 Plugging this into the formula for $\kappa$ we obtain
 \begin{align*}
     \kappa\rb{ \frac{\gamma}{2\sqrt\sigma}} = \frac{\sqrt{c/\sqrt\sigma}}{\sqrt{2\frac{c^2-\sigma}{2c\sqrt\sigma}}} = \frac c{\sqrt{c^2-\sigma}}.
 \end{align*}
 Finally we need to compute $f\rb{\gamma}$: 
 \begin{align}
     f(\gamma) &= S\rb{a }-\frac{(\gamma-c)^2}{2\sigma}
     \\
     &=\th+\frac12 e^{-2\th\ast}-\frac{(\gamma-c)^2}{2\sigma}\\
     & = \ln\rb{\lambda}+\frac1{2\lambda^2}-\frac{(\gamma-c)^2}{2\sigma}\\
     & =\ln\rb{\frac c{\sqrt{\sigma}}}+\frac\sigma{2c^2}-\frac{(\sigma/c)^2}{2\sigma} =\ln\rb{\frac c{\sqrt{\sigma}}},
 \end{align}
 so $e^{nf(\gamma)}$ and $\rb{\frac{\sqrt\sigma}c}^n$ cancel out exactly. Putting everything together,
 \begin{align*}
     I_n & =  \frac {1}{\sqrt{
    \sigma}}  \,
    \frac c{\sqrt{c^2-\sigma}}
    \sqrt{\frac{1}{\frac{c^2}{\sigma(c^2-\sigma)}}} \rb{1+o(1)}
     =  \frac {1}{\sqrt{
    \sigma}}  \,
   \frac c{\sqrt{c^2-\sigma}}
    \sqrt{\frac{{\sigma(c^2-\sigma)}}{c^2}}\rb{1+o(1)}
    = \rb{1+o(1)}
 \end{align*}
 as claimed.

To prove the third point of the proposition, we use the discussion around \eqref{eq:zoomed_in_laplace_approx_eq},  which zooms in around the critical point $\gamma$. From this, we see that not only the law of large numbers result that  $\bP\rb{\npartspiked(\gamma-\ep,\infty]=1} \to 1$ but also the central limit theorem type result that the expectation $\bE\rb{\npartspiked(\gamma+\frac{u}{\sqrt{n}},\gamma+\frac{v}{\sqrt{n}}]} $ converges to $F_\Phi(\abs{f''(\gamma)}v)-F_\Phi(\abs{f''(\gamma)}u)$. where $F_\Phi$
is the cumulative distribution function for a standard Gaussian random variable. Note that multiplying by $\abs{f''(\gamma)}$ inside $F_\Phi$
corresponds to changing the variance from 1 to $\frac1{\abs{f''(\gamma)}}$
\end{proof}

\begin{remark}
    The above argument can be repeated after replacing the indicator $1_{[\gamma-\ep,\infty)}$ by a general bounded continuous function in order to prove weak convergence of $c^{-n}\wt{H}\rb{x,\frac{\sigma}n} \vp_{\sigma/n}(x-c)$ to $\delta_\gamma$.
\end{remark}

\section{Fourier identities, Gaussian integration-by-parts, and Edgeworth expansions }\label{sec:fourier}
		
In this section we derive some alternate integral formulas for the Hermite polynomials related to Fourier transforms that are proven starting from the Gaussian integral \eqref{eq:formula} by a simple change-of-variables. In particular Proposition \ref{prop:IBP-not-integrated} gives another integral formula for the Hermite polynomials $H_{\vec {n}}$ and we establish a Gaussian integration by parts formula, Theorem \ref{thm:int-by-parts}.

In 1 dimension, these formulas establish the well known facts that the Fourier transform of the product $\fH_n(x)e^{-\frac{1}{2}x^2}$ is  $(\imath u)^n e^{-\frac{1}{2}u^2}$ (up to constants depending on how one exactly defines the Fourier transform).

Since multiplication by $\imath u$ in Fourier space corresponds to differentiation, a corollary to this is that $\fH_n(x)e^{-\frac{1}{2}x^2} = \left( \frac{\d}{\d x} \right)^n \left( e^{-\frac{1}{2}x^2} \right)$, thereby recovering the classical derivative definition of the Hermite polynomials. This  allows one to do integration-by-parts $n$ times when integrating against a Gaussian variable to establish the identity $\mathbb{E}[ \left( (\frac{\d}{\d x})^n f \right) (Z)] = \mathbb{E}[\fH_n(Z)f(Z)]$ for any smooth enough function $f$.    

Lastly, we show how these Fourier type identities explain why the Hermite polynomials naturally appear in the Edgeworth expansion for sums of random variables, which are lower order corrections to the central limit theorem. See Section \ref{sec:1d-edgeworth} for the classic scalar Edgeworth expansion and Section \ref{sec:multi-d-edgeworth} for its multidimensional generalization.

Note that although all the formula really are related to Fourier transforms, we state and prove everything as straight integrals involving $e^{\imath xu}$ instead of defining $\cF$. This avoids the awkward situation where different authors have different normalizing constants/factors in their definition of the Fourier transform that makes it harder to translate between works. 

\subsection{Fourier formula for the Hermite polynomials}

	We begin by recalling some basic facts about the centred multidimensional Gaussian distribution with covariance $V$, whose density we denote by $\vp_V$:
	\begin{align}
		\label{eq:density-gaussian}
		\vp_V(x) := \sqrt{\frac{\det(V)}{(2\pi)^d}}\ \exp\set{-\frac12\, x^T V^{-1}x}.
	\end{align}
	We first recall an integral formula for $\vp_V$ based on the Fourier transform. While the results are standard, we give the proofs for convenience. 	\begin{lemma}
		\label{lem:formulaCF}
		For any $x\in\bR^d$ we have
		\begin{align}
			\vp_V(x) = \frac1{(2\pi)^d}\, \int_{\bR^d} \exp\set{i\,x\cdot z-\frac12\,z^TV z}\, dz
		\end{align}
	\end{lemma}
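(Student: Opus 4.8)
The plan is to read the right-hand side as the (conjugate) Fourier transform of a Gaussian and to evaluate it directly by completing the square in the exponent, after first diagonalising so that the multivariate integral factors into one-dimensional pieces. Since $V$ is a symmetric positive-definite matrix, I would write $V = O\Lambda O^T$ with $O$ orthogonal and $\Lambda = \operatorname{diag}(\lambda_1,\ldots,\lambda_d)$ having strictly positive entries. The orthogonal substitution $z = Ow$ (Jacobian $1$) sends $z^T V z$ to $\sum_j \lambda_j w_j^2$ and $x\cdot z$ to $y\cdot w$ with $y := O^T x$, so the integrand splits as a product and the whole integral becomes $\prod_{j=1}^d \intop_{\bR}\exp\set{iy_j w_j - \half\lambda_j w_j^2}\,\d w_j$.

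Everything then reduces to the single-variable identity
\begin{equation}
\frac1{2\pi}\intop_{\bR}\exp\set{iyw - \half\lambda w^2}\,\d w = \frac1{\sqrt{2\pi\lambda}}\exp\set{-\frac{y^2}{2\lambda}},\qquad \lambda>0.
\end{equation}
I would prove this by completing the square, $-\half\lambda w^2 + iyw = -\half\lambda\rb{w - \tfrac{iy}{\lambda}}^2 - \frac{y^2}{2\lambda}$, pulling the constant factor $\exp\set{-y^2/(2\lambda)}$ out of the integral, and shifting the contour of integration from $\bR$ to the horizontal line $\bR + \tfrac{iy}{\lambda}$; the residual integral is the elementary real Gaussian $\intop_\bR \exp\set{-\half\lambda s^2}\,\d s = \sqrt{2\pi/\lambda}$.

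The one genuinely nontrivial step, and the main obstacle, is justifying this shift of the contour into the complex plane. I would do it with Cauchy's theorem applied to the entire function $w\mapsto\exp\set{-\half\lambda w^2}$ on the rectangle with corners $\pm R$ and $\pm R + \tfrac{iy}{\lambda}$: the closed-contour integral vanishes, and the two vertical edges tend to $0$ as $R\to\infty$ because $\abs{\exp\set{-\half\lambda(\pm R + it)^2}} = \exp\set{-\half\lambda(R^2 - t^2)}$ decays uniformly for $t$ in the bounded interval between $0$ and $y/\lambda$. This is the only place where analyticity (rather than bare real integration) is used, and having diagonalised first keeps the shift one-dimensional and the estimate clean.

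Finally I would reassemble. Multiplying the $d$ one-dimensional evaluations and carrying along the prefactor $(2\pi)^{-d} = \prod_j (2\pi)^{-1}$ produces $\prod_j \frac1{\sqrt{2\pi\lambda_j}}\exp\set{-y_j^2/(2\lambda_j)}$. Using $\prod_j\lambda_j = \det V$ and $\sum_j y_j^2/\lambda_j = (O^T x)^T\Lambda^{-1}(O^T x) = x^T V^{-1}x$, this repackages into the Gaussian density $\vp_V(x)$ of \eqref{eq:density-gaussian}, completing the proof. One can equivalently skip the diagonalisation and complete the square directly in the vector exponent, writing $-\half z^T V z + ix\cdot z = -\half(z - iV^{-1}x)^T V(z - iV^{-1}x) - \half x^T V^{-1}x$ and shifting each coordinate contour; diagonalising simply makes the contour estimate a product of scalar estimates.
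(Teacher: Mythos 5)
Your proof is correct, but it takes a genuinely different route from the paper. The paper's proof is a two-line argument: it quotes the standard fact that the Fourier transform (characteristic function) of the Gaussian density $\vp_{V^{-1}}$ is $x\longmapsto\exp\set{-\frac12 x^TVx}$, writes that expectation out as an integral, and rescales by the normalizing constant. You instead prove that underlying fact from first principles: orthogonal diagonalisation $V=O\Lambda O^T$ to factor the integral into one-dimensional pieces, completing the square in each scalar integral, and justifying the shift of contour to $\bR + iy/\lambda$ via Cauchy's theorem on a rectangle with decaying vertical edges. All the steps check out — the Jacobian of $z=Ow$ is $1$, $\prod_j\lambda_j=\det V$, and $\sum_j y_j^2/\lambda_j = x^TV^{-1}x$ — so what you have written is in effect a self-contained proof of the characteristic-function formula that the paper treats as known; your version buys rigor and self-containedness at the cost of length, while the paper's buys brevity by citation. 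Your closing remark that one can skip diagonalisation and complete the square directly in vector form is exactly the content of the paper's Lemma \ref{lem:shift-density}, which it proves separately and uses for Proposition \ref{prop:gaussian-shift-mean}, so that variant is also consonant with the paper's toolkit. One small point of bookkeeping: the quantity you obtain, $\frac1{\sqrt{(2\pi)^d\det V}}\exp\set{-\half x^TV^{-1}x}$, is the correctly normalized $\cN(0,V)$ density; the prefactor $\sqrt{\det(V)/(2\pi)^d}$ appearing in the paper's display \eqref{eq:density-gaussian} is off by a factor of $\det V$ (it should involve $\det(V^{-1})$), so your computation actually lands on the true statement rather than the literal typo.
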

	\begin{proof}
		The Fourier transform of $\vp_{V^{-1}}$ is $x\longmapsto\exp\set{-\frac12 x^TVx}$ and therefore
		\begin{align*}
			\exp\set{-\frac12\,x^T V x}=\bE_{Z\sim N(0,V^{-1})} \ab{e^{i x\cdot Z}}
			= \frac1{\sqrt{(2\pi)^d \det(V)}}\, \int_{\bR^d} \exp\set{ix\cdot z - \frac12 z^T V z}\,dz
		\end{align*}
		Multiplying both sides by $\sqrt{\frac{\det(V)}{(2\pi)^d}}$ yields the result.
	\end{proof}
	
		\begin{lemma}
		\label{lem:shift-density}
		For any $a,z\in\bC^d$,
		\begin{align}
			\label{eq:shift-density}
			a\cdot z-\frac12 z^T Vz =  \frac12
			a^T V^{-1} a - \frac12(z-V^{-1}a)^T V (z-V^{-1}a)
		\end{align}
	\end{lemma}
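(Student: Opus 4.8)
The plan is to prove this by completing the square: I would expand the quadratic form on the right-hand side and check that it collapses to the left. Before doing so, it is worth observing that both sides are polynomials in the coordinates of $a$ and $z$ (with $V$ a fixed invertible symmetric matrix), so the identity over $\bC^d$ follows at once from the real case by analytic continuation. Crucially, the pairing appearing here is the bilinear transpose form $z^T V z$, not a Hermitian form, so there are no complex conjugates to track and the manipulation is purely formal linear algebra.

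First I would expand
\begin{align*}
(z - V^{-1}a)^T V (z - V^{-1}a) = z^T V z - z^T V V^{-1} a - (V^{-1}a)^T V z + (V^{-1}a)^T V V^{-1} a.
\end{align*}
The simplifications all come from $V V^{-1} = I$ together with the symmetry of $V$ (hence of $V^{-1}$). The first cross term is $z^T a = a\cdot z$; the second is $a^T (V^{-1})^T V z = a^T V^{-1} V z = a^T z = a\cdot z$, after using $(V^{-1})^T = V^{-1}$ and that a $1\times 1$ scalar equals its transpose; and the final term is $a^T (V^{-1})^T V V^{-1} a = a^T V^{-1} a$. Hence the whole expression equals $z^T V z - 2\,a\cdot z + a^T V^{-1} a$. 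Substituting this back into the right-hand side, the two copies of $\half a^T V^{-1} a$ cancel and what remains is exactly $a\cdot z - \half z^T V z$, as claimed.

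The only point that requires any care — and it is minor — is the symmetry of $V$, which is what makes the two off-diagonal cross terms equal so that they combine into $-2\,a\cdot z$; without symmetry one would instead be left with $-(a\cdot z + a^T V^{-1} V^T z)$ and the identity would fail. Since $V$ is a covariance matrix it is symmetric, so this holds automatically. I therefore expect no genuine obstacle: the result is elementary, and the bulk of the work is simply keeping the transposes straight while exploiting $V = V^T$.
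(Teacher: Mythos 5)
Your proposal is correct and matches the paper's own argument: the paper also proves the lemma by expanding the quadratic form $(z-V^{-1}a)^T V (z-V^{-1}a)$ and rearranging, describing it as verification "by direct computation." You simply spell out the expansion and the role of $V=V^T$ explicitly, which the paper leaves implicit.
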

	\begin{proof}
		This follows by rearranging the following identity
		\begin{align}
			\label{eq:Va-written-out}
			-\frac12 (z-V^{-1}a)^T V (z-V^{-1}a) & = a\cdot z - \frac12 z^T Vz - \frac12 a^T V^{-1} a
		\end{align}
		which can be verified by direct computation. 
	\end{proof}
    From this lemma, the following proposition follows directly by taking $\Sigma=V^{-1}$.
	\begin{proposition}
	    \label{prop:gaussian-shift-mean}
        For any $a, z\in\bC^d$ and a covariance matrix $\Sigma$,
        \begin{align*}
             \frac{\vp_\Sigma(z-\Sigma a)}{\vp_\Sigma (z)} & = e^{a\cdot z - \frac12 a^T \Sigma a}.
        \end{align*}
	\end{proposition}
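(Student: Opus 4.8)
The plan is to reduce the claim to Lemma \ref{lem:shift-density} by directly computing the ratio of the two Gaussian densities and then performing a single matrix substitution. Since the statement asserts the result "follows directly," the work is purely algebraic and requires no analytic input.

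First I would write out both densities using the definition \eqref{eq:density-gaussian}, namely $\vp_\Sigma(x) = \sqrt{\det(\Sigma)/(2\pi)^d}\,\exp\set{-\half x^T \Sigma^{-1} x}$. Forming the ratio $\vp_\Sigma(z-\Sigma a)/\vp_\Sigma(z)$, the normalizing prefactors $\sqrt{\det(\Sigma)/(2\pi)^d}$ are identical in the numerator and denominator and therefore cancel exactly, since it is the same matrix $\Sigma$ in both. What remains is a single exponential,
$$\frac{\vp_\Sigma(z-\Sigma a)}{\vp_\Sigma(z)} = \exp\set{-\half (z-\Sigma a)^T \Sigma^{-1} (z-\Sigma a) + \half z^T \Sigma^{-1} z}.$$

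Next I would identify this exponent using Lemma \ref{lem:shift-density}. Applying that identity with the substitution $V = \Sigma^{-1}$, so that $V^{-1} = \Sigma$, yields
$$a\cdot z - \half z^T \Sigma^{-1} z = \half a^T \Sigma a - \half (z - \Sigma a)^T \Sigma^{-1} (z - \Sigma a),$$
and rearranging this isolates precisely the exponent found above:
$$-\half (z - \Sigma a)^T \Sigma^{-1} (z - \Sigma a) + \half z^T \Sigma^{-1} z = a\cdot z - \half a^T \Sigma a.$$
Substituting this back into the exponential completes the argument.

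There is no genuine obstacle here; the only points requiring care are confirming that the determinant/$(2\pi)^d$ prefactors really do cancel (they do, as $\Sigma$ is unchanged between numerator and denominator) and correctly tracking which matrix, $\Sigma$ or $\Sigma^{-1}$, appears in each quadratic form when invoking the lemma with $V=\Sigma^{-1}$. Note also that the computation remains valid for complex $a,z\in\bC^d$ because Lemma \ref{lem:shift-density} is an algebraic identity over $\bC^d$, so no positivity, integrability, or convergence considerations enter.
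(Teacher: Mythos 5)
Your proof is correct and is essentially the paper's own argument: the paper proves this proposition simply by invoking Lemma \ref{lem:shift-density} with the substitution $\Sigma = V^{-1}$, which is exactly the reduction you carry out (your version just spells out the cancellation of the normalizing prefactors and the rearrangement of the exponent explicitly).
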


    The following result corresponds to equation (6.5) in \cite{Sobel-tech-1963}.
	\begin{proposition}
		\label{prop:IBP-not-integrated}
		For $n\in\bN^d$ and $x\in\bR^d$ we have
		\begin{align}\label{eq:H_is_iu_to_the_n}
			H_{\vec n}(\vec{x};V) & = \frac1{(2\pi)^d} \frac{1}{\vp_V(\vec{x})} \int_{\bR^d} (\imath \vec{u})^{\vec{n}}\, \exp\set{-\frac12 \vec{u}^T V \vec{u}-\imath \,\vec{x}\cdot \vec{u}} \, d\,\vec{u}.
		\end{align}
	where $\vec{u}^{\vec{n}} := \prod_{j=1}^d u_j^{n_j}$ is shorthand for the product of powers.
    \begin{proof}
			We use \eqref{eq:shift-density} applied to $a=ix$ to get, recalling the notation $\vec x^{\vec n} := \prod_a x_a^{n_a}$,
			\begin{align*}
				H_{\vec{n}}(\vec{x};V) & = \frac{\det(V^{-1})}{(2\pi)^{d/2}}\, \int_{\bR^d} \rb{V^{-1}\vec x - \imath \vec z}^{\vec {n}} \exp\set{-\frac12 \vec z^T V \vec z}\, \d \vec z\\
				& = \frac{\det(V^{-1})}{(2\pi)^{d/2}}\, \int_{\bR^d} \rb{\imath \vec u}^{\vec {n}} \exp\set{-\frac12 ( \vec u-\imath V^{-1}\vec x)^T V (\vec u-\imath V^{-1}\vec x)}\, \d \vec u\\
				& = \frac{\det(V^{-1})}{(2\pi)^{d/2}}\, \int_{\bR^d} \rb{\imath \vec u}^{\vec {n}} \exp\set{-\frac12 \imath \vec x\cdot \vec u -\frac12 \vec u^TV \vec u + \frac12\vec u^T V^{-1} \vec u}\, \d \vec u.
			\end{align*}
			The last term in the exponential multiplied by the prefactor $\det(V^{-1})$ is exactly $\frac{(2\pi)^{-d/2}}{\vp_V(x)}$, which completes the proof.
		\end{proof}
		This identity can also be stated in terms of derivatives. We define the differential operator $\partial_{\vec{n}}$ of partial derivatives as follows: let $\vec{n}\in\bN^d$ and $f\colon\bR^d\longrightarrow\bR$. Then,
	\begin{align}
		\label{eq:defDerivative}
		\partial_{\vec {n}} f(x) & := \left(\frac{\partial^{n_1}}{\partial x_1^{n_1}} \ldots\frac{\partial^{n_d}}{\partial x_d^{n_d}}\right) f(x).
	\end{align}
    \begin{corollary}
        Let $\vec n \in\bN^d$ and let $n = n_1+\ldots+n_d$. The polynomials $H_{\vec{n}}(\vec{x};V)$  defined as in Definition \ref{def:dual-H} satisfy
    $$H_{\vec{n}}(\vec{x};V) \vp_V(\vec{x}) = (-1)^{n} \partial_{\vec {n}}  \varphi_V(\vec{x}) $$
    \end{corollary}
    \begin{proof}
The identity follows from the result of Proposition \ref{prop:IBP-not-integrated} by noting that the Fourier transform converts differentiation $\frac{\d}{\d x_j}$ into multiplication by $\imath u_j$ in Fourier space. More precisely, by integration by parts to move the $n$ derivatives over, one sees that for suitable test functions $f$ we have the identity
$$\intop_{\mathbb{R}^d} e^{\imath \vec{u} \cdot \vec{x}} \partial_{\vec {n}}  f(\vec{x}) \d \vec u = (-1)^n \intop_{\mathbb{R}^d} (\imath \vec{u})^{\vec{n}} e^{\imath \vec{u} \cdot \vec{x}} f(\vec{x}) \d \vec u ,$$
where we recall the notation $\vec x^{\vec n} := \prod_a x_a^{n_a}$. Along with the identity from Lemma \ref{lem:formulaCF} for the  Gaussian density $\vp_V(\vec x)$, we see that the integral in the RHS of \eqref{eq:H_is_iu_to_the_n}  corresponds exactly to the derivatives of $\vp_V(\vec{x})$, establishing the result. 
    \end{proof}

    \end{proposition}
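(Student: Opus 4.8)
The plan is to read off both sides of the claimed identity as Fourier-type integrals and then compare integrands. I would begin by multiplying Proposition \ref{prop:IBP-not-integrated} through by $\vp_V(\vec x)$ to obtain the integral representation
\[
H_{\vec n}(\vec x;V)\,\vp_V(\vec x) = \frac1{(2\pi)^d}\int_{\bR^d} (\imath \vec u)^{\vec n}\exp\set{-\half\, \vec u^T V\vec u - \imath\,\vec x\cdot \vec u}\,\d\vec u,
\]
which is precisely the object that should equal $(-1)^n\partial_{\vec n}\vp_V(\vec x)$.

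Next I would apply $\partial_{\vec n}$ directly to the Fourier representation of the Gaussian density from Lemma \ref{lem:formulaCF}. Since the integrand $\exp\set{-\half\, \vec z^T V\vec z}$ is even in $\vec z$ and the domain is all of $\bR^d$, the substitution $\vec z \mapsto -\vec z$ lets me flip the phase from $+\imath\,\vec x\cdot\vec z$ to $-\imath\,\vec x\cdot\vec z$ without changing the value, giving
\[
\vp_V(\vec x)=\frac1{(2\pi)^d}\int_{\bR^d}\exp\set{-\imath\,\vec x\cdot\vec z-\half\, \vec z^T V\vec z}\,\d\vec z.
\]
Differentiating under the integral sign $n_j$ times in each variable $x_j$ brings down one factor of $-\imath z_j$ per derivative, so $\partial_{\vec n}$ contributes $\prod_j(-\imath z_j)^{n_j}=(-1)^n(\imath\vec z)^{\vec n}$. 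Pulling the scalar $(-1)^n$ out of the integral and renaming $\vec z$ to $\vec u$ reproduces exactly the integral displayed above, establishing $\partial_{\vec n}\vp_V(\vec x)=(-1)^n H_{\vec n}(\vec x;V)\vp_V(\vec x)$; rearranging yields the corollary.

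The only point requiring genuine care is the justification of differentiation under the integral sign. This is routine here because the integrand decays like a Gaussian in $\vec z$: after bringing down any finite number of polynomial factors $z_j$, the product of a polynomial with $\exp\set{-\half\, \vec z^T V\vec z}$ remains absolutely integrable and is dominated, locally uniformly in $\vec x$, by an integrable function independent of $\vec x$, so dominated convergence permits moving all the derivatives inside. Equivalently, one can package the whole argument as the single statement that the Fourier transform converts $\partial_{\vec n}$ into multiplication by $(\imath\vec u)^{\vec n}$, which is exactly the integration-by-parts identity recorded in the corollary's hypotheses. I expect no obstacle beyond this standard interchange of differentiation and integration, together with careful bookkeeping of the sign conventions relating the two lemmas.
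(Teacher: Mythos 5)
There is a genuine gap: your argument establishes only the implication ``Proposition \ref{prop:IBP-not-integrated} $\Rightarrow$ Corollary,'' not the Proposition itself. Your very first step --- multiplying \eqref{eq:H_is_iu_to_the_n} through by $\vp_V(\vec x)$ --- assumes the identity that is the main claim. In this paper $H_{\vec n}(\vec x;V)$ is \emph{defined} (Definition \ref{def:dual-H}) as the Gaussian expectation
\begin{align*}
H_{\vec n}(\vec x;V) = \bE_{\vec Z\sim \cN(0,V^{-1})}\ab{\prod_{a=1}^d \rb{(V^{-1}\vec x)_a + \imath Z_a}^{n_a}},
\end{align*}
so the substance of the Proposition is the passage from this expectation over $\cN(0,V^{-1})$ to the Fourier-type integral $\frac1{(2\pi)^d\,\vp_V(\vec x)}\int_{\bR^d} (\imath\vec u)^{\vec n}\exp\set{-\half \vec u^TV\vec u - \imath\,\vec x\cdot\vec u}\d\vec u$. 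The paper carries out this passage by completing the square with the imaginary shift $a=\imath\vec x$ in Lemma \ref{lem:shift-density} (equivalently, a complex shift of the integration variable, justified by the Gaussian decay of the integrand), after which the $\vec u$-independent term produced by the shift factors out of the integral and yields the $1/\vp_V(\vec x)$ prefactor. Nothing in your proposal plays this role, and it cannot be recovered from what you do prove: you show correctly that the Fourier integral equals $(-1)^n\partial_{\vec n}\vp_V(\vec x)$, but to conclude the Proposition from that you would need the derivative identity $H_{\vec n}(\vec x;V)\,\vp_V(\vec x) = (-1)^n\partial_{\vec n}\vp_V(\vec x)$ --- which is exactly the Corollary you derived \emph{from} the Proposition. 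As it stands, the logic is circular.

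The part you do prove is correct and worth keeping: applying $\partial_{\vec n}$ to the representation of Lemma \ref{lem:formulaCF} (after the harmless substitution $\vec z\mapsto -\vec z$) brings down $(-1)^n(\imath\vec z)^{\vec n}$, and the interchange of derivative and integral is legitimately justified by dominated convergence and Gaussian decay. This is essentially the paper's proof of the Corollary, run in the cleaner direction (differentiation under the integral sign rather than integration by parts, so no boundary terms need discussion). To repair the whole argument, you must prepend a proof of \eqref{eq:H_is_iu_to_the_n} itself: starting from Definition \ref{def:dual-H}, write the expectation as an integral against the $\cN(0,V^{-1})$ density and perform the complex completion of the square of Lemma \ref{lem:shift-density} with $a=\imath\vec x$; only then does your Fourier computation deliver the Corollary.
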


    \subsection{Gaussian integration by parts}
    
	\begin{theorem}
		\label{thm:int-by-parts}
		Recall the notation for derivatives $\partial_{\vec {n}} f(x) $ from \eqref{eq:defDerivative}. Let $\vec n \in\bN^d$ and $X\sim\cN(0,V)$. For a suitable test function $f\colon\bR^d\longrightarrow \bR$,
		\begin{align}
			\label{eq:int-by-parts}
			\bE_{X\sim\cN(0,V)} \ab{\partial_{\vec{n}} f(X) } = \bE_{X\sim\cN(0,V)}\ab{H_{\vec{n}}(X,V) f(X)}.
		\end{align}
	\end{theorem}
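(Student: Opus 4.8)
The plan is to reduce the statement to the preceding Corollary, which asserts $H_{\vec{n}}(\vec{x};V)\,\vp_V(\vec{x}) = (-1)^{n}\,\partial_{\vec{n}}\vp_V(\vec{x})$ with $n = n_1+\ldots+n_d$, by integrating by parts $n$ times against the Gaussian density. First I would write the left-hand side of \eqref{eq:int-by-parts} explicitly as an integral against $\vp_V$,
\begin{align*}
\bE_{X\sim\cN(0,V)}\ab{\partial_{\vec{n}} f(X)} = \intop_{\bR^d} \partial_{\vec{n}} f(\vec{x})\,\vp_V(\vec{x})\,\d\vec{x},
\end{align*}
so that the statement becomes a purely analytic identity about this integral.

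Next I would integrate by parts in each coordinate $x_j$ a total of $n_j$ times, moving every derivative off of $f$ and onto $\vp_V$. Each single integration by parts contributes a factor of $-1$, so after all $n$ rounds the accumulated sign is $(-1)^n$, yielding
\begin{align*}
\intop_{\bR^d} \partial_{\vec{n}} f(\vec{x})\,\vp_V(\vec{x})\,\d\vec{x} = (-1)^n \intop_{\bR^d} f(\vec{x})\,\partial_{\vec{n}}\vp_V(\vec{x})\,\d\vec{x}.
\end{align*}
Finally, substituting the Corollary to replace $(-1)^n\,\partial_{\vec{n}}\vp_V(\vec{x})$ by $H_{\vec{n}}(\vec{x};V)\,\vp_V(\vec{x})$ gives
\begin{align*}
(-1)^n \intop_{\bR^d} f(\vec{x})\,\partial_{\vec{n}}\vp_V(\vec{x})\,\d\vec{x} = \intop_{\bR^d} f(\vec{x})\,H_{\vec{n}}(\vec{x};V)\,\vp_V(\vec{x})\,\d\vec{x} = \bE_{X\sim\cN(0,V)}\ab{H_{\vec{n}}(X,V) f(X)},
\end{align*}
which is exactly the claimed identity.

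The only nontrivial point — and what the phrase \emph{suitable test function} is there to handle — is the vanishing of the boundary terms at each stage of the iterated integration by parts. These vanish because every partial derivative of $\vp_V$ is a polynomial times $\vp_V$, and hence decays like a Gaussian at infinity; this exponential decay dominates the polynomial growth one is willing to permit for $f$. Concretely, it suffices to assume $f \in C^n(\bR^d)$ with $f$ and all its partial derivatives of order at most $n$ growing at most polynomially as $\abs{\vec{x}}\to\infty$, in which case every boundary contribution is the limit of (polynomial)$\,\times\,\vp_V$ and tends to zero. I expect this verification of the regularity and decay hypotheses to be the main obstacle, but it is entirely routine once the class of admissible $f$ is fixed; the algebraic heart of the result is already supplied by the Corollary.
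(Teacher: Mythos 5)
Your proof is correct, but it takes a genuinely different route from the paper's. You integrate by parts in physical space, directly against the Gaussian density $\vp_V$, and then invoke the Rodrigues-type Corollary $H_{\vec{n}}(\vec{x};V)\,\vp_V(\vec{x}) = (-1)^n \partial_{\vec{n}}\vp_V(\vec{x})$ to identify what remains. The paper instead works in Fourier space: it first expands $\vp_V$ via Lemma \ref{lem:formulaCF} as $\frac{1}{(2\pi)^d}\int e^{\imath x\cdot z - \frac12 z^T V z}\,\d z$, integrates by parts in $x$ against the oscillatory factor $e^{\imath x\cdot z}$ (so each derivative becomes a factor of $-\imath z$ rather than a derivative of the Gaussian), changes variables $z \mapsto -z$, and then recognizes the inner $z$-integral as $H_{\vec{n}}(\vec{x};V)\,\vp_V(\vec{x})$ directly from Proposition \ref{prop:IBP-not-integrated}; the Corollary you cite is never touched. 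The two arguments are cousins, since that Corollary is itself deduced from Proposition \ref{prop:IBP-not-integrated}, but yours is the more classical one and, in one respect, the more robust: in your version all the decay needed to kill boundary terms comes from $\vp_V$ and its derivatives, so polynomially growing $f$ (for instance $f$ itself a polynomial, which is the case needed in applications such as orthogonality) is admissible exactly as you argue. In the paper's version the $x$-dependence of the integrand sits only in $e^{\imath x\cdot z}$, which does not decay in $x$, so the interchange of integrals and the integration by parts there implicitly require $f$ and its derivatives to be integrable, or else a further approximation argument — a more restrictive reading of ``suitable test function.'' What the paper's route buys in exchange is uniformity of method: the same Fourier manipulation yields Proposition \ref{prop:IBP-not-integrated}, its Corollary, and this theorem, without ever differentiating the Gaussian density by hand.
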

	\begin{proof}
		First, by Lemma \ref{lem:formulaCF} 
		then by repeated integration by parts, and finally by the change of variables $z\longmapsto -z$,
		\begin{align*}
			\bE \ab{\partial_{\vec{n}} f(X) } & = \frac1{(2\pi)^d}\int_{\bR^d} \int_{\bR^d} \rb{\partial_{\vec{n}} f}(x)\, \exp\set{ix\cdot z-\frac12 z^TVz}\, dx\, dz\\
			& = \frac{1}{(2\pi)^d} \int_{\bR^d} \int_{\bR^d} f(x) (-iz)^{\vec{n}}\, \exp\set{ix\cdot z-\frac12 z^TVz}\, dx\, dz\\
			& =  \int_{\bR^d} f(x) \ab{\frac{1}{(2\pi)^d}\int_{\bR^d}  (iz)^{\vec{n}}\, \exp\set{-ix\cdot z-\frac12 z^TVz}\, dz}\,dx \\
			& =  \int_{\bR^d} f(x) H_{\vec{n}}(x,V)\,\vp_V(x) \,dx,
		\end{align*}
		using Proposition \ref{prop:IBP-not-integrated}. This last expression is exactly the right-hand side of \eqref{eq:int-by-parts}.
	\end{proof}

\subsection{Cumulants and joint cumulants}

One common application of the Hermite polynomials is in computing higher order corrections to the central limit theorem. These expansion are given in terms of the cumulants of the random variables in question.

\begin{definition}
Let $X$ be a $\mathbb{R}$-valued random variable.
    The \emph{cumulants} $\{\kappa_j\}_{j=1}^\infty$ of $X$ are the coefficients of the characteristic function $\phi_X(u)$ of $X$,
    $$\phi_X(u) := \mathbb{E}\ab{e^{\imath u X}} = \exp\rb{\frac{\kappa_1}{1!}u + \frac{\kappa_2}{2!}u^2  + \frac{\kappa_3}{3!}u^3 + \ldots} $$
    or equivalently:
    $$\kappa_k := j! \ab{u^j} \ln\rb{ \phi_X(u) } = \left. \rb{\frac{\d}{\d u}}^j \ln \rb{\phi_X(u) } \right|_{u=0}$$ For vector valued random variables in $\mathbb{R}^d$, the cumulants are indexed by $d$  indices $\kappa_{k_1,k_2,\ldots,k_d}$  so that 
     $$\phi_{\vec{X}}\rb{\vec{u}} = \bE\ab{e^{\imath \vec{u} \cdot \vec{X} }}
     = \exp\left( \sum_{j_1,j_2,\ldots,j_d} \frac{\kappa_{j_1,j_2,\ldots,j_d}}{j_1!j_2!\cdots j_d!} u_1^{j_1} u_2^{j_2} \cdots {u_d}^{j_d}\right).
    $$
\end{definition}

\begin{proposition}
The cumulants can also be written as combinations of the moments of the underlying random variables. They key formula is the joint cumulants for a collection of $m$ random variables $\kappa(X_1,\ldots,X_m)\in \mathbb{R}$ given by summing over all partitions $\mathfrak{P}(\{1,\ldots,m\} $ as follows

\[
\kappa(X_1,\dots,X_n)
:=\sum_{\pi \in \mathfrak{P}(\{1,\ldots,n\})}
 (|\pi|-1)!\,(-1)^{|\pi|-1}
 \prod_{B\in\pi}\mathbb{E}\!\left[\prod_{i\in B}X_i\right].
\]

where the set of all partitions $\mathfrak{P}(\{1,\ldots,m\})$ is all the ways to divide this set into disjoint blocks

\[
\mathfrak{P}(\{1,\ldots,m\})
:= \left\{
\pi = \{B_1,\ldots,B_k\} \ \middle|\ 
\begin{array}{l}
B_1,\ldots,B_k \subseteq \{1,\ldots,m\},\ B_j \neq \varnothing, \\
B_i \cap B_j = \varnothing \ \text{for}\ i \neq j, \\
\bigcup_{j=1}^k B_j = \{1,\ldots,m\}
\end{array}
\right\}.
\]
Then, the cumulants for an $\mathbb{R}$-valued random variable are simply $\kappa_j = \kappa(\underbrace{X,X,\ldots,X}_{j\text{ times}})$  and for multi-dimensional random variables $\vec{X} = (X_1,\ldots,X_d)\in \mathbb{R}^d$, we have $$\kappa_{j_1,\ldots,j_d} = \kappa(\underbrace{X_1,\ldots,X_1}_{j_1 \text{ times}},\ldots,\underbrace{X_d,\ldots,X_d}_{j_d \text{ times}} )$$
\end{proposition}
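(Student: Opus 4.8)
The plan is to prove the moment--cumulant formula by treating the joint cumulant generating function as a formal power series and expanding the logarithm directly; this produces the partition sum with exactly the weights $(-1)^{|\pi|-1}(|\pi|-1)!$ without having to set up the Möbius function of the partition lattice separately. First I would reduce everything to one clean object. Introduce auxiliary formal variables $t_1,\ldots,t_m$ and set $M(t_1,\ldots,t_m) := \mathbb{E}[\exp(t_1 X_1 + \cdots + t_m X_m)]$, with $\Lambda := \log M$. The characteristic function of the original definition is recovered by the substitution $\vec t \mapsto \imath \vec t$, which only relabels coefficients and does not affect the combinatorial identity, so I may work with $M$ throughout. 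Under this setup the mixed moment $\mathbb{E}[\prod_{i\in B} X_i]$ is the coefficient of the squarefree monomial $\prod_{i\in B} t_i$ in $M$, while the joint cumulant $\kappa(X_1,\ldots,X_m)$ is precisely the coefficient of $t_1 t_2 \cdots t_m$ in $\Lambda$, i.e. $\partial_{t_1}\cdots\partial_{t_m}\Lambda|_{\vec t = 0}$.

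Next I would extract that multilinear coefficient of $\Lambda = \log M$. Since $M-1$ has zero constant term, the expansion $\log M = \sum_{\ell \geq 1} \frac{(-1)^{\ell-1}}{\ell}(M-1)^{\ell}$ is a legitimate formal power series identity. The crux is computing the coefficient of $t_1 \cdots t_m$ in $(M-1)^{\ell}$: the $m$ distinct indices must be distributed among the $\ell$ factors, each factor receiving a \emph{nonempty} set of indices (because $M-1$ has no constant term), and each such factor contributes the block moment $\mathbb{E}[\prod_{i\in B} X_i]$. A given unordered set partition $\pi$ with $|\pi|=\ell$ blocks arises from exactly $\ell!$ orderings of its blocks across the factors, so the multilinear coefficient of $(M-1)^{\ell}$ equals $\ell!\sum_{\pi:\,|\pi|=\ell}\prod_{B\in\pi}\mathbb{E}[\prod_{i\in B}X_i]$. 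Summing over $\ell = |\pi|$ and using $\frac{(-1)^{\ell-1}}{\ell}\cdot \ell! = (-1)^{\ell-1}(\ell-1)!$ yields exactly the claimed formula.

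Finally I would verify the specializations that connect this joint object back to the generating-function definition of $\kappa_j$. Setting all $X_i$ equal to a single $X$ collapses $\mathbb{E}[\exp(\sum_k t_k X)] = \psi(t_1 + \cdots + t_m)$ with $\psi(s) = \log\mathbb{E}[e^{sX}]$, so the multilinear coefficient $\partial_{t_1}\cdots\partial_{t_m}\Lambda|_{0}$ becomes $\psi^{(m)}(0)$, which is $\kappa_m$ by the definition of the cumulants as the Taylor coefficients of $\log\phi_X$. The multidimensional statement $\kappa_{j_1,\ldots,j_d} = \kappa(X_1,\ldots,X_1,\ldots,X_d,\ldots,X_d)$ follows identically by grouping the formal variables $t_k$ according to which coordinate they feed. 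As a sanity check, for $m=2$ the formula returns $\mathbb{E}[X^2]-\mathbb{E}[X]^2$, the variance, matching $\kappa_2$.

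The main obstacle I anticipate is the bookkeeping in the multilinear coefficient of $(M-1)^{\ell}$: one must argue carefully that each of the $\ell$ factors receives a nonempty block of indices, and that the passage from ordered assignments to unordered set partitions contributes precisely the factor $\ell!$. This is exactly where the weight $(-1)^{|\pi|-1}(|\pi|-1)!$ is born, and it is the one place where a stray factorial or sign is easy to misplace; everything else is routine formal power series manipulation. (One could alternatively phrase this step as Möbius inversion on the lattice of set partitions, where the weight is the Möbius function value $\mu(\pi,\hat 1) = (-1)^{|\pi|-1}(|\pi|-1)!$, but the direct expansion above keeps the argument self-contained.)
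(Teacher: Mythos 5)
The paper never proves this proposition: it is asserted as a known fact and followed immediately by Example \ref{ex:4-cumulant}, so there is no in-paper argument to compare yours against --- your proposal supplies a proof the paper omits. Your argument is correct, and it is the standard one (equivalent, as you note, to M\"obius inversion on the partition lattice). The three load-bearing steps all check out: the coefficient of a squarefree monomial $\prod_{i\in B}t_i$ in $M$ is the block moment $\mathbb{E}\!\left[\prod_{i\in B}X_i\right]$, because the $1/k!$ from the exponential series cancels the $k!$ orderings of the distinct indices; the multilinear coefficient of $(M-1)^{\ell}$ is a sum over \emph{ordered} partitions into $\ell$ nonempty blocks (empty blocks contribute nothing since $M-1$ has no constant term), and each unordered $\pi$ with $|\pi|=\ell$ is hit exactly $\ell!$ times because its blocks are distinct sets; and $\frac{(-1)^{\ell-1}}{\ell}\cdot \ell! = (-1)^{\ell-1}(\ell-1)!$ yields precisely the claimed weight. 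The formal-power-series framing also neatly sidesteps any convergence issues with the MGF, since the identity being proved is a polynomial identity in the moments.

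One claim deserves tightening: the assertion that the substitution $\vec t \mapsto \imath\,\vec t$ ``only relabels coefficients and does not affect the combinatorial identity.'' Taken literally, if one defines the joint cumulant as $\partial_{t_1}\cdots\partial_{t_m}\log\phi_{\vec X}(\vec t)\big|_{\vec t=0}$ using the characteristic function, one obtains $\imath^{\,m}$ times the partition sum, not the partition sum itself; the clean statement is $\kappa(X_1,\ldots,X_m)=(-\imath)^{m}\,\partial_{t_1}\cdots\partial_{t_m}\log\phi_{\vec X}(\vec t)\big|_{\vec t=0}$, or equivalently one works with the log-MGF exactly as you do. In fairness, this wrinkle originates in the paper's own definition, which as displayed omits the powers of $\imath$ inside the exponential and would literally force $\kappa_2=-\sigma^2$, contradicting both this proposition and the convention used in Theorem \ref{thm:1d_edgeworth} and Example \ref{ex:4-cumulant}. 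Your MGF convention is the one under which all of these are simultaneously consistent, but you should say that explicitly (one sentence) rather than asserting that the two normalizations agree coefficient-by-coefficient.
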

\begin{example}\label{ex:4-cumulant}
Suppose we have four mean zero random variables $A,B,C,D \in \mathbb{R}$. Since the random variables are mean zero, the only moments that are non-zere in the joint cumulant formula are the ones where the four variables are divided either into one block of 4 or else two blocks of 2. Hence we have:
$$\kappa(A,B,C,D) = \mathbb{E}[ABCD] - \mathbb{E}[AB]\mathbb{E}[CD] - \mathbb{E}[AC]\mathbb{E}[BD] - \mathbb{E}[AD]\mathbb{E}[BC] $$
From this it follows that the 4th cumulant for a mean zero $\mathbb{R}$ random variable is given in terms of the 4th and 2nd moments, $\mu_p = \mathbb{E}[X^p]$, 
$$\kappa_4 = \mu_4 - 3\mu_2^2$$
For a 2D random variable $\vec{X}=(X_1,X_2)$  whose compoents are means zero with joint moments $\mu_{j_1,j_2}=\mathbb{E}[X_1^{j_1} X_2^{j_2}]$  we have that the cumulants of order 4, $\kappa_{j_1,j_2}$  for which $j_1+j_2 = 4$ are
\begin{align}
\kappa_{4,0} =\kappa(X_1,X_1,X_1,X_1) &= \mu_{4,0} - 3\mu_{2,0}^2 \\
\kappa_{3,1} = \kappa(X_1,X_1,X_1,X_2) &= \mu_{1,3} - 3\mu_{1,1}^2 \\
\kappa_{2,2} = \kappa(X_1,X_1,X_2,X_2) &= \mu_{2,2} - \mu_{0,2}\mu_{2,0} -  2\mu_{1,1}^2 
 \end{align}
 and analgouously for $\kappa_{1,3},\kappa_{0,4}$.
\end{example}

\subsection{Edgeworth expansion in one dimension}\label{sec:1d-edgeworth}

\begin{theorem}\label{thm:1d_edgeworth}

Let  $X_1,X_2,\ldots$  be an i.i.d. sequence of  $\mathbb{R}$-valued random variables which are mean zero $\mathbb{E}[X]=0$ and have variance $\si^2 = \mathbb{E}[ X^2]$.  Let $S_n = \frac{1}{\sqrt{n}} \sum_{j=1}^n X_j $  be the normalized sum of $n$ such random variables. Let $k \geq 3$ be the first value for which the higher cumulant $\kappa_{k}$ is non-zero and finite. Suppose also that the characteristic function $\phi_X$ has the property that $\intop_{-\infty}^\infty|\phi_X(u)|^v \d u < \infty$ for some power $v \geq 1$.   Then the leading order correction to the central limit theorem is of size $\frac{1}{\sqrt{n}^{k-2}}$ and given by the Hermite polynomial $\fH_{k}(x)$  as follows:

$$ \rho_{S_n}(x) =  \frac{1}{\sqrt{2\pi \sigma^2}}e^{-\frac{x^2}{2\sigma^2}} + \frac{1}{\sqrt{n}^{k-2}} \frac{\kappa_{k}}{k! \sigma^{k}} \fH_k\left(\frac{x}{\sigma} \right)\frac{1}{\sqrt{2\pi \sigma^2}}e^{-\frac{x^2}{2\sigma^2}}  + O\left( \frac{1}{\sqrt{n}^{k-1}} \right)$$

\end{theorem}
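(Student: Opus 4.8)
The plan is to prove this via the classical Fourier-inversion (Edgeworth) argument, whose one genuinely nontrivial input — the inverse Fourier transform of $(\imath u)^k e^{-u^2/2}$ — is supplied by the scalar ($d=1$, $V=1$) case of Proposition~\ref{prop:IBP-not-integrated} and its corollary, which give $\varphi_1^{(k)}(y) = (-1)^k\fH_k(y)\varphi_1(y)$ where $\varphi_1$ is the standard Gaussian density. First I would use the integrability hypothesis: since $|\phi_X|\le 1$ and $\int|\phi_X(u)|^v\,\d u<\infty$, for $n\ge v$ the characteristic function $\phi_{S_n}(u)=\phi_X(u/\sqrt n)^n$ is integrable, so $S_n$ has a bounded continuous density recovered by inversion,
$$\rho_{S_n}(x) = \frac{1}{2\pi}\int_{-\infty}^{\infty} e^{-\imath u x}\,\phi_X\!\rb{\frac{u}{\sqrt n}}^{n}\,\d u.$$

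Next I would expand the log-characteristic function in cumulants. Writing $\log\phi_X(t)=\sum_{j\ge 1}\frac{\kappa_j}{j!}(\imath t)^j$ and using $\kappa_1=0$, $\kappa_2=\sigma^2$, and $\kappa_3=\cdots=\kappa_{k-1}=0$ by hypothesis, the order-$j$ cumulant contributes a factor $n^{1-j/2}$, so that
$$n\log\phi_X\!\rb{\frac{u}{\sqrt n}} = -\frac{\sigma^2 u^2}{2} + \frac{1}{\sqrt n^{\,k-2}}\,\frac{\kappa_k}{k!}(\imath u)^k + O\!\rb{\frac{1}{\sqrt n^{\,k-1}}}.$$
Exponentiating and Taylor-expanding (the correction is small because $n^{1-k/2}\to 0$) yields
$$\phi_{S_n}(u) = e^{-\sigma^2 u^2/2}\ab{\,1 + \frac{1}{\sqrt n^{\,k-2}}\,\frac{\kappa_k}{k!}(\imath u)^k\,} + (\text{error}),$$
where the squared correction $(n^{1-k/2})^2=n^{2-k}$ is of order at most $n^{-(k-1)/2}$ for $k\ge 3$ and is folded into the error.

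I would then invert the two explicit terms. The Gaussian factor $e^{-\sigma^2 u^2/2}$ inverts to $\varphi_{\sigma^2}(x)=\frac{1}{\sqrt{2\pi\sigma^2}}e^{-x^2/2\sigma^2}$. For the correction, multiplication by $(\imath u)^k$ in Fourier space corresponds to $(-1)^k\frac{\d^k}{\d x^k}$ after inversion; combining the rescaling $\varphi_{\sigma^2}(x)=\sigma^{-1}\varphi_1(x/\sigma)$ with the corollary identity $\varphi_1^{(k)}(y)=(-1)^k\fH_k(y)\varphi_1(y)$ gives
$$\frac{1}{2\pi}\int_{-\infty}^{\infty}(\imath u)^k e^{-\sigma^2 u^2/2}e^{-\imath u x}\,\d u = \frac{1}{\sigma^k}\,\fH_k\!\rb{\frac{x}{\sigma}}\varphi_{\sigma^2}(x).$$
Multiplying by the prefactor reproduces exactly the claimed correction term $\frac{1}{\sqrt n^{\,k-2}}\frac{\kappa_k}{k!\,\sigma^k}\fH_k(x/\sigma)\varphi_{\sigma^2}(x)$, so the first two terms of the statement fall out mechanically.

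The hard part will be the uniform error control, and essentially all the work lies here. Following the Esseen--Cram\'er smoothing method, I would split the inversion integral into a central region $|u|\le \delta\sqrt n$ and a tail $|u|>\delta\sqrt n$. On the central region one substitutes the cumulant Taylor expansion with an explicit remainder, dominates it by $e^{-c u^2}$, and integrates to obtain an $O(n^{-(k-1)/2})$ contribution uniformly in $x$. On the tail, the integrability hypothesis together with $\sup_{|t|\ge\delta}|\phi_X(t)|<1$ (Cram\'er's condition, a consequence of $\int|\phi_X|^v<\infty$) forces $\int_{|u|>\delta\sqrt n}\abs{\phi_X(u/\sqrt n)}^n\,\d u$ to decay faster than any power of $1/\sqrt n$. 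Tracking the remainder of $\log\phi_X$ to the correct order and making the central-region bound uniform in $x$ — rather than merely pointwise — is the delicate step; once it is in place, the expansion follows by assembling the inverted leading terms with the controlled remainder.
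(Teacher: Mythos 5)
Your proposal is correct and follows essentially the same route as the paper's proof: expand the cumulant generating function of $\phi_X$, use $\phi_{S_n}(u)=\phi_X(u/\sqrt n)^n$ and the Taylor expansion of the exponential to isolate the $(\imath u)^k$ correction, invert term-by-term via Proposition~\ref{prop:IBP-not-integrated} to produce $\fH_k(x/\sigma)$, and control the remainder using the integrability hypothesis on $\phi_X$. The only difference is cosmetic: where the paper defers the uniform error analysis to Feller (Chapter XVI.2), you spell out the same classical Esseen smoothing argument (central region $|u|\le\delta\sqrt n$ plus Cram\'er-condition tail bound) that that reference carries out.
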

	\begin{proof}
    The proof is given by analysing the characteristic function for $S_n$ and using the inversion formula to get the density. First notice that since the first non-zero cumulant of $X$ is $\kappa_k$ , we have the characteristic function
    $$\phi_X(u)=\exp\left(-\frac{1}{2}\sigma^2 u^2 + \frac{\kappa_k}{k!} (\imath u)^k + \epsilon(u) \right) $$
where $\epsilon(u)=O(u^{k+1})$  as $u \to 0$ represents all the higher order terms beyond $u^k$ in the characteristic function. Notice that since $S_n$ is a sum of iid copies of $X$, the characteristic function for $S_n$  can be written in terms of the charactersitic function for $X$, namely  $\phi_{S_n}(u) = \phi_{X}(\frac{u}{\sqrt{n}})^n $ , so we have:
    \begin{align} \phi_{S_n}(u) &= \exp\left( -n \frac{1}{2}\sigma^2 (\frac{u}{\sqrt{n}})^2 + \frac{\kappa_k}{k!} n (\frac{u}{\sqrt{n}}) ^k + n \epsilon(\frac{u}{\sqrt{n}}) \right) \\ 
    &= \exp\left( - \frac{1}{2}\sigma^2 u^2 \right) \exp\left( \frac{1}{\sqrt{n}^{k-2}} \frac{\kappa_k}{k!}  (\imath u)^k  + n \epsilon\big(\frac{u}{\sqrt{n}}\big) \right)
    \end{align}
We now use the approximation $\exp(x)=1+x+O(x^2)$ as $x\to 0$ to approximate the exponential. To make our bounds precise, let $u$ range in some compact set. Since the error term $\epsilon(u)=O(u^{k+1})$ as $u \to 0$, we see that $n \epsilon(u/\sqrt{n}) = O(\frac{1}{\sqrt{n}^{k-1}})$ for $u$ in a compact set, so we get the approximation
\begin{equation}\label{eq:phi-approx} 
\phi_{S_n}(u) = \exp(-\half \sigma^2 u^2) + \frac{1}{\sqrt{n}^{k-2}}\frac{\kappa_k}{k!}(\imath u)^k \exp(-\half \sigma^2 u^2) + O\Big(\frac{1}{\sqrt{n}^{k-1}}\Big)
\end{equation}
    Finally, by the technical assumption that $\abs{\phi}^v$ is integrable, we can use the characteristic/Fourier inversion formula for the density $\rho_S(x) = \frac{1}{2\pi} \intop_{-\infty}^\infty e^{-\imath u x} \phi_{S_n}(u) \d u$ to recover the density from this (see \cite{feller1971probability2} Chapter XV.3 for a detailed proof of this inversion formula). By virtue of the formula for the Hermite polynomials, Proposition \ref{prop:IBP-not-integrated}, we can exactly integrate the terms of the approximation for $\phi_S$ in \eqref{eq:phi-approx}, as follows:
\begin{align} \frac{1}{2\pi} \intop_{-\infty}^\infty e^{-\imath u x} \left\{ \exp(-\half \sigma^2 u^2) \right\} \d u &= \frac{1}{\sqrt{2\pi\sigma^2}} e^{-\half \frac{x^2}{\sigma^2}} \\
\frac{1}{2\pi} \intop_{-\infty}^\infty e^{-\imath u x} \left\{ (\imath u)^k\exp(-\half \sigma^2 u^2) \right\} \d u &= \frac{1}{\sqrt{2\pi\sigma^2}} \frac{1}{\sigma^k} \fH_k\left(\frac{x}{\sigma}\right) e^{-\half \frac{x^2}{\sigma^2}} \\
\end{align}
    To complete the proof, it remains only to argue that the $O(\frac{1}{\sqrt{n}^{k-1}})$ error term in the approximation for $\phi_S$ in \eqref{eq:phi-approx} (which holds uniformly for $u$ in a compact set), translates to a $O(\frac{1}{\sqrt{n}^{k-1}})$ error in the density $\rho_S$ after integrating with the inversion formula. This error analysis is argued by using the techincal assumption that $\intop_{-\infty}^\infty |\phi_{S_n}(u)|^v \d u < \infty $ which allows one to approximate the inversion formula integral to arbitarty accuracey using $u$ in a compact set; we refer to \cite{feller1971probability2}  Chapter XVI.2 for the detailed verification of this error analysis.
    \end{proof}

\begin{remark}
Instead of obtaining just the leading order correction, one can obtain an entire series expansion in powers of $n$ known as the \emph{Edgeworth expansion}. This is achieved by simplifying using more terms in the approximation to the characteristic function (i.e. more cumulants) and also more terms in in the exponential approximation. For example, let us take $k=3$ so that the leading order correction is $\frac{\kappa_3}{3! \sqrt{n}} \fH_3(\frac{x}{\sigma})$.  By expanding up to order $u^4$ and using the higher order correction $\exp(x)=1+x+\frac{1}{2}x^2+O(x^3)$ , one will find that the next order correction is $\frac{1}{n}\left( \frac{\kappa_4}{4!\sigma^4} \fH_4(\frac{x}{\sigma}) + \frac{1}{2}\big(\frac{\kappa_3}{3!\sigma^3}\big)^2\fH_6(\frac{x}{\sigma}) \right)$ where the $\fH_6$ comes from the square term in the exponential which gives an $\frac{1}{2}(\frac{1}{3!}(\imath z)^3)^2$ in the series expansion for $\phi_S$. We refer to \cite{feller1971probability2} Chapter XVI for a detailed exposition.
\end{remark}

\begin{example}
The quantity $S_n = \frac{1}{\sqrt{n}} \sum_{j=1}^n W_i \sqrt{2} \operatorname{ReLU}\left( G_i \right)$ where $G_i,W_i$ are iid Gaussians and $\text{ReLU}(x)=\max\{x,0\}$ arises in the theory of neural networks, where $n $ is the number of neurons in a single layer of the network. See the workshop article \cite{nica2024improving} for more details on how this random variable is related to neural networks. $S_n$  is a sum of the underlying random variable $X = W\sqrt{2}\operatorname{ReLU}(G)$, which has mean zero, unit variance, $\mathbb{E}[S_n^2] = 1$ and third cumulant equal to zero. The first non-zero higher cumulant is when $k=4$, which has $\kappa_4 = \mathbb{E}[W^4 \cdot \sqrt{2}^4 \cdot \operatorname{ReLU}(G)^4] - 3 = 3\cdot 4\cdot \frac{1}{2} \cdot 3 - 3 = 15$. Hence Theorem \ref{thm:1d_edgeworth} gives the approximation 
\begin{equation}\label{eq:1d_edgeworth}\rho_{S_n}(x)= \frac{1}{\sqrt{2\pi}}e^{-\frac{x^2}{2}} + \frac{1}{n} \frac{15}{24} \fH_k\left( x \right)\frac{1}{\sqrt{2\pi}}e^{-\frac{x^2}{2}}  + O\left( \frac{1}{n^{3/2}} \right)\end{equation}
While this correction of size $1/n$ is small in a single layer, typical 
neural networks are comprised of stacking together many layers. Therefore, small errors in the approximation accumulate as the depth of the network grows, meaning that the Gaussian approximation will fail for deep neural networks of sufficient depth. The correction provided here is a theoretical foothold into a better understanding of these deep neural networks.

\begin{figure}[!ht]
    \centering
    \includegraphics[width=0.95\linewidth]{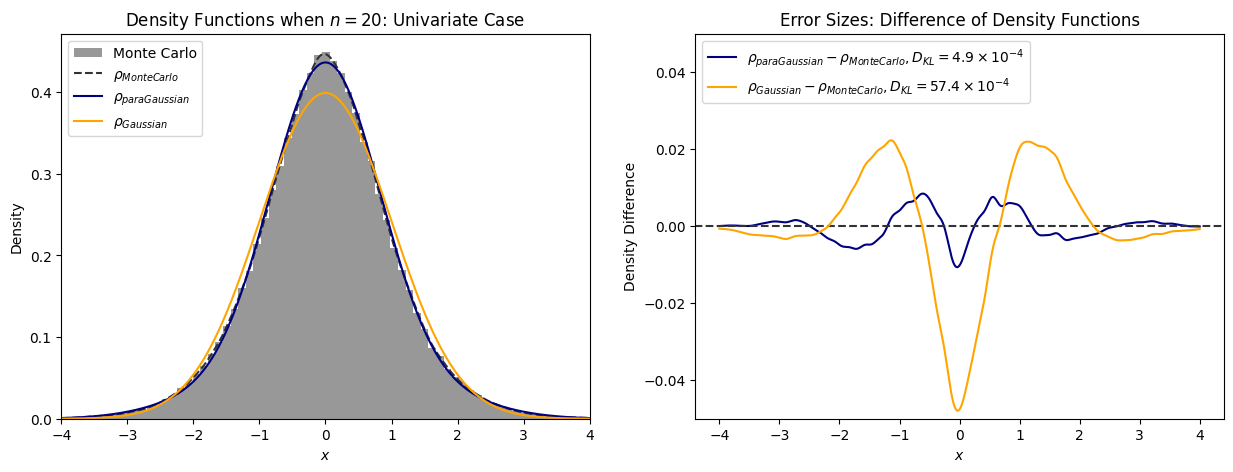}
    \caption[Example of an Edgeworth expansion correction to the Central Limit Theorem]{Example of the 1D neural network distribution when $n=20$. We compare the pure CLT Gaussian approximation $\rho_{Guassian}$ and the approximation from \eqref{eq:1d_edgeworth} which labelled here as $\rho_{\text{para-Gaussian}}$, the "para-Gaussian" distribution, to mean it is a near-Gaussian approximation that includes the effect of the $1/n$ sized 4th cumulant and $\fH_4$. We also compare both to Monte Carlo samples of the random variable. The left plot shows the raw distributions, while the right plot is the difference between the theoretical approximations and the Monte Carlo estimate for the density $\rho_{\text{MonteCarlo}}$. This example is also discussed in the workshop article \cite{nica2024improving}.  }
    \label{fig:1d-edgeworth}
\end{figure}
\end{example}

\subsection{Multi-dimensional Edgeworth expansion}\label{sec:multi-d-edgeworth}
The Edgeworth expansion technique works in multiple dimensions for random vectors using the multi-variable Hermite polynomials $\wt H(\vec{x},V)$. This technique does not seem to be widely cited in the literature; we only find an unpublished technical note \cite{Sobel-tech-1963} with the statment of this result. We believe a possible impediment to this application was dealing with the multi-variable Hermite polynomials, which are easier to work with by use of our methods in this article. In particular, the combinatorial method of Section \ref{sec:combinatorics}  allows one to quickly calculate the Hermite polynomials even when the covariance structure is complicated.

\begin{theorem}

Let  $\vec{X}_1,\vec{X}_2,\ldots$  be an i.i.d. sequence of  $\mathbb{R}^d$-valued random vectors with zero mean
and $d\times d$ co-variance matrix $\Sigma$, i.e. $\Sigma_{ab} = \mathbb{E}[ X_a X_b]$. Let 
$$\vec{S}_n = \frac{1}{\sqrt{n}} \sum_{j=1}^n \vec{X}_j $$
be the normalized sum of the first $n$ random vectors.  Define $k \geq 3$ to be the first value for which there is a non-zero cumulant of order $k$, i.e. there $\exists k_1,k_2,\ldots,k_d$ with $k_1+\ldots+k_d= k$ so that $\kappa_{k_1,k_2,\ldots,k_d}$ is non-zero.  Suppose also the characteristic function $\phi_X$ of $X_1$  is in $L^v(\mathbb{R}^d)$ for some $v \geq 1$.  Then the leading order correction to the central limit theorem is of size $\frac{1}{\sqrt{n}^{k-2}}$ and given by the multivariable Hermite polynomial of order $k$:

\begin{align*} \rho_{\vec{S}_n}(\vec{x}) =   \frac{1}{\sqrt{n}^{k-2}} \sum_{\substack{k_1,\ldots,k_d\\ k_1+k_2+\ldots+k_d = k}} \frac{\kappa_{k_1, k_2,\ldots,k_d}}{k_1!k_2!\cdots k_d!} \wt H_{k_1,\ldots,k_p}(\vec{x},\Sigma) \frac{1}{\sqrt{2\pi \det{\Sigma}}}e^{-\vec{x} \Sigma^{-1} \vec{x}}  \\ 
+\frac{1}{\sqrt{2\pi \det{\Sigma}}}e^{-\vec{x} \Sigma^{-1} \vec{x}} + O\left( \frac{1}{\sqrt{n}^{k-1}} \right)\end{align*}

\end{theorem}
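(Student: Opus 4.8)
The plan is to mirror the one-dimensional argument of Theorem \ref{thm:1d_edgeworth} line by line, replacing scalars by vectors, the single cumulant $\kappa_k$ by the collection $\{\kappa_{k_1,\ld,k_d}\}_{k_1+\ld+k_d=k}$, and the scalar Hermite polynomial by the multivariable one, with Proposition \ref{prop:IBP-not-integrated} doing the term-by-term Fourier inversion. First I would record the cumulant generating function of $\vec X$. Since the first non-vanishing cumulant beyond the quadratic (covariance) order is of order $k$, I write
$$\phi_{\vec X}(\vec u) = \exp\set{-\half \vec u^T \Si \vec u + \sum_{k_1+\ld+k_d=k} \frac{\kappa_{k_1,\ld,k_d}}{k_1!\cdots k_d!}(\imath u_1)^{k_1}\cdots(\imath u_d)^{k_d} + \ep(\vec u)},$$
where $\ep(\vec u) = O(\abs{\vec u}^{k+1})$ collects all higher-order contributions to $\ln\phi_{\vec X}$ and the quadratic term records $\Si_{ab} = \kappa_{\vec e_a + \vec e_b}$.

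Second, I would exploit the i.i.d.\ structure through $\phi_{\vec S_n}(\vec u) = \phi_{\vec X}(\vec u/\sqrt n)^n$. Substituting and collecting powers of $n$: the quadratic term is scale invariant and stays $-\half \vec u^T \Si \vec u$; each order-$k$ monomial picks up a factor $n\cdot n^{-k/2} = \sqrt n^{-(k-2)}$; and the remainder becomes $n\,\ep(\vec u/\sqrt n) = O\rb{\sqrt n^{-(k-1)}}$ uniformly for $\vec u$ in any fixed compact set. Factoring out the Gaussian and linearizing the remaining exponential with $e^x = 1 + x + O(x^2)$ gives
$$\phi_{\vec S_n}(\vec u) = e^{-\half \vec u^T \Si \vec u} + \frac{1}{\sqrt n^{k-2}}\rb{\sum_{k_1+\ld+k_d=k}\frac{\kappa_{k_1,\ld,k_d}}{k_1!\cdots k_d!}\prod_{a=1}^d(\imath u_a)^{k_a}}e^{-\half \vec u^T\Si\vec u} + O\rb{\frac1{\sqrt n^{k-1}}}.$$

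Third, I would apply the $d$-dimensional inversion $\rho_{\vec S_n}(\vec x) = (2\pi)^{-d}\int_{\bR^d} e^{-\imath \vec u\cdot\vec x}\phi_{\vec S_n}(\vec u)\,d\vec u$. The leading Gaussian term inverts to the density $\vp_\Si(\vec x)$ by Lemma \ref{lem:formulaCF}. Each order-$k$ term is inverted exactly by Proposition \ref{prop:IBP-not-integrated}, which gives precisely $(2\pi)^{-d}\int (\imath \vec u)^{\vec k} e^{-\half\vec u^T\Si\vec u} e^{-\imath \vec x\cdot\vec u}\,d\vec u = H_{\vec k}(\vec x;\Si)\,\vp_\Si(\vec x)$. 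Here I would flag the one genuinely delicate bookkeeping point: the polynomial produced by Proposition \ref{prop:IBP-not-integrated} is the \emph{dual} $H_{\vec k}$, exactly matching the one-dimensional identity $\frac1{\si^k}\fH_k(x/\si) = H_k(x,\si^2)$ used in Theorem \ref{thm:1d_edgeworth}; the statement is to be read with $H_{\vec k}$ in place of $\wt H_{\vec k}$ (the two coincide when $\Si = I$). Assembling the three inverted pieces reproduces the claimed expansion.

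The error control is the real work and the main obstacle, just as in the scalar case. I would invoke the hypothesis $\phi_X \in L^v(\bR^d)$ to justify two things: (i) that the $O(\sqrt n^{-(k-1)})$ bound on $\phi_{\vec S_n}$, valid on compacts, survives integration against the bounded inversion kernel, and (ii) that the contribution of $\vec u$ outside a large compact set is negligible. For (ii), after the change of variables $\vec w = \vec u/\sqrt n$ one bounds $\abs{\phi_{\vec X}(\vec w)}^n \le \abs{\phi_{\vec X}(\vec w)}^{v}\,\sup_{\abs{\vec w}>\delta}\abs{\phi_{\vec X}(\vec w)}^{n-v}$, where the supremum is $<1$ and decays geometrically since a characteristic function with an integrable power is bounded away from $1$ off a neighbourhood of the origin and vanishes at infinity. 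This is the multivariate analogue of Feller's inversion and smoothing estimates \cite{feller1971probability2}; the extra dimensions change nothing conceptually, but the rigorous statement requires the multivariate inversion bounds rather than the scalar ones, so I would either cite a multidimensional inversion theorem or carry out the tail estimate directly as above.
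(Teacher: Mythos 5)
Your proposal is correct and takes essentially the same approach as the paper: the paper's entire proof is a one-sentence deferral to the one-dimensional argument of Theorem \ref{thm:1d_edgeworth} (cumulant expansion of the characteristic function, then term-by-term inversion via Proposition \ref{prop:IBP-not-integrated}), which is exactly what you have carried out, including the Feller-style tail estimate that the paper leaves implicit. Your bookkeeping flag is also a genuine catch: Proposition \ref{prop:IBP-not-integrated} inverts $(\imath\vec{u})^{\vec{k}}e^{-\frac12\vec{u}^T\Sigma\vec{u}}$ to $H_{\vec{k}}(\vec{x};\Sigma)\,\vp_\Sigma(\vec{x})$, matching the one-dimensional factor $\sigma^{-k}\fH_k(x/\sigma)=H_k(x,\sigma^2)$ in Theorem \ref{thm:1d_edgeworth}, so the $\wt H_{\vec{k}}$ appearing in the theorem's display should indeed be read as the dual polynomial $H_{\vec{k}}$ (the two coincide only when $\Sigma=I$).
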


The proof is analogous to the one-dimensional proof of Theorem \ref{thm:1d_edgeworth}, using the expansion of the characteristic polynomial as cumulants and then appealing to Proposition \ref{prop:IBP-not-integrated}

\begin{example}\label{ex:2d-edgeworth}
Let $\theta \in (0,\pi)$ be given and consider the two random variables, which are written in terms of collection of iid random $\mathcal{N}(0,1)$ Gaussian random variables, $W_i,G_i,Z_i$. 
\begin{align}
A_n &= \frac{1}{\sqrt{n}} \sum_{j=1}^n W_i \sqrt{2} \operatorname{ReLU}\Big( G_i \Big) \\
B_n &= \frac{1}{\sqrt{n}} \sum_{j=1}^n W_i \sqrt{2} \operatorname{ReLU}\Big( \cos(\theta) G_i + \sin(\theta) Z_i \Big) 
\end{align}
where $\text{ReLU}(x)=\max\{x,0\}$. The vector $(A_n,B_n) \in \mathbb{R}^2$  is the joint distribution of 2 outputs from a single layer of a neural network with $n$ neurons when the inputs are correlated with correlation $\cos(\theta)$. Understanding how neural networks behave for correlated inputs is important because of the depth degeneracy problem whereby the correlation tends to $1$ as the depth of the network increases, see \cite{DepthDegeneracy}.

The joint distribution of $(A_n,B_n)$  can be written using the multi-variable Edgeworth expansion in terms of the functions $\bar{J}_{j,k}(\theta)$  which describe the expected powers of correlated ReLU-Gaussians, namely
$$\bar{J}_{j,k}(\theta) := \mathbb{E}_{G,Z \sim \mathcal{N}(0,1)}\Big[ \big( \sqrt{2}\text{ReLU}(G)\big)^j \big(\sqrt{2} \text{ReLU}(\cos(\theta) G + \sin(\theta) Z) \big)^k \Big],$$
Explicit formulas for the $J$  functions have been calculated in \cite{DepthDegeneracy}. The relevant formulas needed for the Edgeworth expansion are
\begin{align}
\bar{J}_{1,1}(\theta) &= 2 \frac{\sin\theta + (\pi-\theta)\cos\theta}{2\pi} \\
\bar{J}_{1,3}(\theta) &= 4 \frac{2(\cos\theta+1) + \sin^2\theta\cos\theta}{2\pi} \\
\bar{J}_{2,2}(\theta) &= 4 \frac{(\pi-\theta)(2\cos^2\theta+1)+3\sin\theta\cos\theta}{2\pi} 
\end{align}
From this, and the 4th order cumulant formulas from Example \ref{ex:4-cumulant}, one calculates that the covariance matrix and cumulants as:

\begin{align}
\Sigma &= \begin{bmatrix}
1 & \bar{J}_{1,1}(\theta) \\
\bar{J}_{1,1}(\theta) & 1
\end{bmatrix} \\
\kappa_{4,0} = \kappa_{0,4} &= 15 \\
\kappa_{3,1} = \kappa_{1,3} & =  \bar{J}_{1,3}(\theta) - 3\bar{J}_{1,1}(\theta)^2 \\
\kappa_{2,2} &= \bar{J}_{2,2}(\theta) - 1 - \bar{J}_{1,1}(\theta)^2
\end{align}
and the mixed Hermite polynomials can be computed as in Example \ref{ex:4-hermite} to yield
\begin{align}
\wt H_{4,0}(x_1,x_2,\Sigma) & = x_1^4 - 6x_1^2 + 3 \\
\wt H_{3,1}(x_1,x_2,\Sigma) &= x_1^3 x_2 - 3 x_1 x_2 - 3\bar{J}_{1,1}(\theta)x_1^2 + 3\bar{J}_{1,1}(\theta) \\
\wt H_{2,2}(x_1,x_2,\Sigma) &= x_1^2 x_2^2 - 4\bar{J}_{1,1}(\theta) x_1 x_2 - x^2_1 -  x^2_2 + 1 + 2\bar{J}_{1,1}(\theta), 
\end{align}
and anagously for $\wt H_{0,4}, \wt H_{1,3}$ by symmetry. Finally, the correction to the Gaussian density depends on the sum of the 5 terms

\begin{equation}\label{eq:paraGaussian} \rho_{(A_n,B_n)}(x_1,x_2) \approx \Big(1 + \frac{1}{n} \sum_{j=0}^4\frac{\kappa_{j,4-j}}{j! (4-j)!}\wt{H}_{j,4-j}(x_1,x_2;V)\Big)  \frac{1}{\sqrt{2\pi \det{\Sigma}}}e^{-\vec{x} \Sigma^{-1} \vec{x}}\end{equation}
This density approximation is compared to the ordinary Gaussian approximation and to Monte Carlo simulations in Figure \ref{fig:two} 

\end{example}
	\begin{figure}[!ht]
\includegraphics[width=\textwidth]{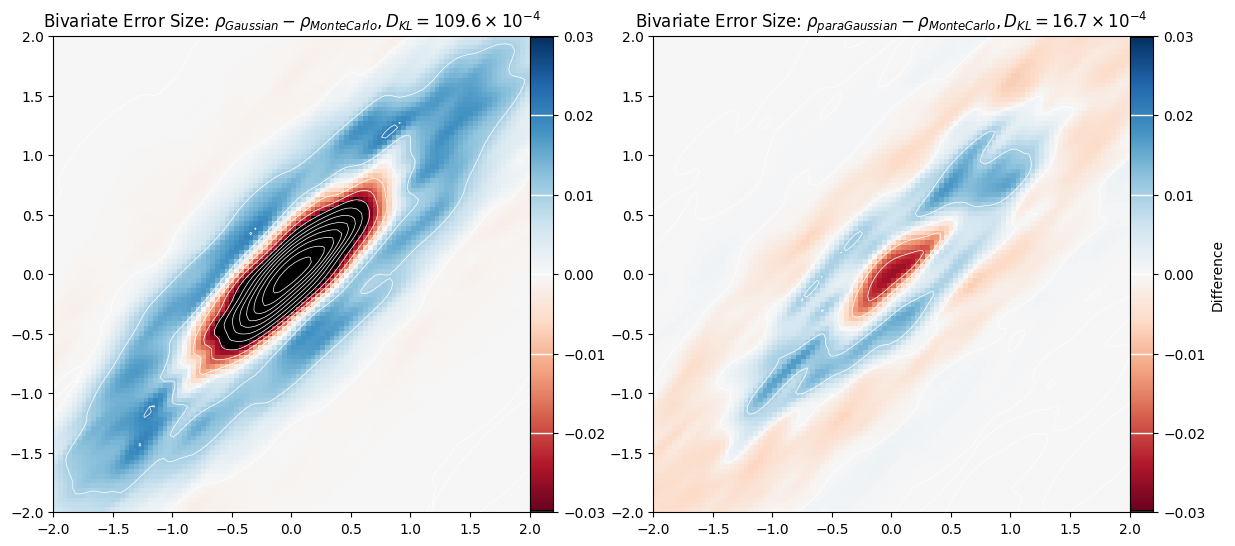}
\caption[Example of multivariable Edgeworth expansion]{ Comparing the error sizes of the pure Gaussian debsity approximation and the ``para-Gaussian'' density approximation, which includes Hermite polynomials of order 4 in  \eqref{eq:paraGaussian} from Example \ref{ex:2d-edgeworth}. For this experiment,  $\theta = 0.55$, $n=20$, and $\rho_{MonteCarlo}$ is estimated from $2^{18}$ Monte Carlo samples by using Kernel Density Estimation. As a single number encapsulation of how accurate the approximation is, we also compute the  KL divergence $D_{KL}=D_{KL}(\rho_{Approx},\rho_{MonteCarlo})$ from the approximation to the Monte Carlo estimate. This example is discussed in more detail in the workshop article \cite{nica2024improving}.  }
\label{fig:two}
\end{figure}

\section{More applications}\label{sec:misc}

\subsection{Exponential generating function}


\begin{proposition}
        \label{prop:generating-function}
        Let $x\in\bR$ and $t>0$. Recall the notation $\vp_t(x) = \frac{1}{\sqrt{2\pi t}}e^{-\frac{1}{2t} x^2}$. For any $\alpha\in\bR$,
        \begin{align*}
            \sum_{n=0}^\infty \wt H_n(x,t) \frac{\alpha^n}{n!} & = \exp\set{\alpha x - \frac{\alpha^2t}{2}} = \frac{\vp_t(x-t \alpha)}{\vp_t(x)}.
            \intertext{In particular, setting $t=1$, one has}
            \sum_{n=0}^\infty \fH_n(x)\frac{\alpha^n}{n!} &= e^{\alpha x - \half \alpha^2 } \\
        \end{align*}
    \end{proposition}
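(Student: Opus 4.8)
The plan is to exploit the probabilistic definition $\wt H_n(x,t) = \bE_{B_t \sim \cN(0,t)}\ab{(x + \imath B_t)^n}$ directly, which turns the generating function into the expectation of an exponential. First I would write
$$\sum_{n=0}^\infty \wt H_n(x,t)\frac{\alpha^n}{n!} = \sum_{n=0}^\infty \frac{\alpha^n}{n!}\, \bE\ab{(x+\imath B_t)^n},$$
and then interchange the sum and the expectation to obtain
$$\bE\ab{\sum_{n=0}^\infty \frac{\rb{\alpha(x+\imath B_t)}^n}{n!}} = \bE\ab{e^{\alpha(x+\imath B_t)}} = e^{\alpha x}\,\bE\ab{e^{\imath \alpha B_t}}.$$
The remaining factor is just the characteristic function of a centred Gaussian of variance $t$ evaluated at $\alpha$; using the standard identity $\bE\ab{e^{sB_t}} = e^{\frac12 s^2 t}$ with $s = \imath\alpha$ gives $\bE\ab{e^{\imath\alpha B_t}} = e^{-\frac12\alpha^2 t}$, yielding $e^{\alpha x - \frac12\alpha^2 t}$ as claimed. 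Note this is the exact same mechanism already used in Proposition \ref{prop:superlarge} to evaluate $\bE[e^{\beta Z}]$.

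The one step requiring care is the interchange of summation and expectation, which I would justify by Fubini/Tonelli. The absolute value of the summand is $\frac{|\alpha|^n}{n!}|x+\imath B_t|^n$, and summing these gives $e^{|\alpha|\,|x + \imath B_t|} \le e^{|\alpha||x|}e^{|\alpha||B_t|}$, since $|x+\imath B_t| = \sqrt{x^2 + B_t^2} \le |x| + |B_t|$. Because a Gaussian has finite exponential moments of all orders, $\bE\ab{e^{|\alpha||B_t|}} < \infty$, so the total sum of expected absolute values is finite and the interchange is legitimate for every $\alpha \in \bR$.

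Finally, for the second equality $e^{\alpha x - \frac12\alpha^2 t} = \frac{\vp_t(x - t\alpha)}{\vp_t(x)}$, I would either invoke Proposition \ref{prop:gaussian-shift-mean} directly in the one-dimensional case (with $\Sigma = t$, $z = x$, $a = \alpha$), or simply verify it by hand: expanding $-\frac{(x-t\alpha)^2}{2t} + \frac{x^2}{2t}$ collapses to $\alpha x - \frac12\alpha^2 t$. The specialization to $t = 1$ is immediate since $\wt H_n(x,1) = \fH_n(x)$. I expect no genuine obstacle here; the only real subtlety is the convergence justification for the term-by-term interchange, and even that is routine thanks to the exponential moments of the Gaussian.
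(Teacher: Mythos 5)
Your proof is correct and follows essentially the same route as the paper's: interchange the sum with the Gaussian expectation, recognize the exponential series so that the characteristic function $\bE\ab{e^{\imath\alpha B_t}} = e^{-\frac12\alpha^2 t}$ appears, invoke Proposition \ref{prop:gaussian-shift-mean} for the density-ratio form, and set $t=1$ for the classical case. The only difference is that you explicitly justify the sum--expectation interchange via Fubini/Tonelli using $\abs{x+\imath B_t}\le \abs{x}+\abs{B_t}$ and finite exponential moments, a detail the paper passes over silently; this is a welcome addition, not a deviation.
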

    \begin{proof}
        For the first line we first use the Gaussian integral formula \eqref{eq:formula} and then change the order of the expectation and sum, and noticing an exponetial sum $e^c = \sum_{k=0}^\infty \frac{c^k}{k!} $ appears. Using also $\mathbb{E}[e^{\imath c Z}] = e^{-\half c^2}$ where $Z \sim \mathcal{N}(0,1)$ a standard Gaussian random variable, we compute
        \begin{align*}
            \sum_{n=0}^\infty \wt H_n(x,t) \frac{\alpha^n}{n!} & = \sum_{n=0}^\infty \bE\ab{
            \frac{\rb{\alpha\rb{x+i\sqrt{t}Z}}^n}{n!}} = \bE\ab{\exp\set{\alpha x + i\alpha\sqrt{t}Z}}
             =\exp\set{\alpha x - \frac{\alpha^2t}{2}}
        \end{align*}
        as claimed. The second equality follows by Proposition \ref{prop:gaussian-shift-mean}.
    The result for $\fH_n(x)$ follows by setting $t=1$ since $\fH_n(x)=\wt H(x,1)$.
    \end{proof}

    \begin{remark}
    By the exponential generating function, we can recover $\wt H_n(x,t)$ as the $n$-th derivative
$$\wt H_n(x,t)  = \left.\frac{\partial^n}{\partial \alpha^n}\right|_{\alpha = 0}  e^{\alpha x - \half \alpha^2 t} $$
This identity can can then give an alternate proof of the orthogonality of the polynomials $\{ \wt H_n(x,t) \}_{n=1}^\infty$ w.r.t to the weight $\mathcal{N}(0,t)$ by pulling the derivatives out and using $\mathbb{E}[e^{ (\alpha + \beta) X}] = e^{\half (\alpha + \beta)^2 t}$ when $X \sim \mathcal{N}(0,t)$ to compute

\begin{align*}
\mathbb{E}_{X \sim \mathcal{N}(0,t)}\left[\wt H_n(X, t) \cdot \wt H_m(X, t)\right] 
&= \left. \frac{\partial^n}{\partial \alpha^n} \right|_{\alpha=0} 
  \left. \frac{\partial^m}{\partial \beta^m} \right|_{\beta=0} 
  \, \mathbb{E} \left[ 
    e^{\alpha X - \half \alpha^2 t} 
    e^{\beta X - \half \beta^2 t} \right] \\
    &= \left. \frac{\partial^n}{\partial \alpha^n} \right|_{\alpha=0} 
  \left. \frac{\partial^m}{\partial \beta^m} \right|_{\beta=0} 
  \,  
    e^{ \alpha \beta t } \\
    &= \left. \frac{\partial^n}{\partial \alpha^n} \right|_{\alpha=0} 
  (\alpha t)^m 
  = \begin{cases} n! t^n &\text{ if }n=m \\ 0&\text{ if } n\neq m\end{cases}
\end{align*}

    \end{remark}

	\subsection{Brownian martingales from the Hermite polynomials}
	\label{subsec:martingale}

    If $B_t$ is a standard Brownian motion, then one can create new polynomial martingales from $B_t$ by plugging into the Hermite polynomial $\wt  H_n(B_t,t)$. The first few are
    
    $$\wt H_1(B_t,t) = B_t, \quad \wt H_2(B_t,t) = B^2_t - t, \quad \wt H_3(B_t,t) = B^3_t - 3 B_t t, \quad \wt H_4(B_t,t) = B^4_t - 6B_t^2 t + 3t^2 $$

 In combination with tools like the Doob optional stopping theorem, these martingales are useful for calculating properties of Brownian motion like expected hitting times.
 
 One can intuitively see these are martingales by starting with the exponential martingale $M_t(\alpha) = e^{\alpha B_t} /\mathbb{E}[e^{\alpha B_t}] = e^{\alpha B_t - \half \alpha^2 t}$, and noticing this is precisely the exponential generating function of the Hermite polynomials $e^{x \alpha - \half \alpha^2 t} = \sum_{n=0}^\infty \frac{ \alpha^n}{n!} \wt H_n(x,t)$, from Proposition \ref{prop:generating-function}. Therefore, $\wt H_n(B_t,t)$ is also a martingale, because can be realized as a derivative of the martingale $M_t(\alpha)$ as $\alpha \to 0$

$$\wt H_n(B_t,t) =  \left.\frac{\partial^n}{\partial \alpha^n} \right|_{\alpha = 0}M_t(\alpha)  = \left.\frac{\partial^n}{\partial \alpha^n}\right|_{\alpha = 0}  e^{\alpha B_t - \half \alpha^2 t} $$

This derivative formula shows that $\wt H_n(B_t,t)$ is a limit of linear combinations of $M_t(\alpha)$, and since $M_t(\alpha)$ is a martingale, $\wt H_n(B_t,t)$ should be too. 

However, checking the techinical details to make sure the derivative works (which is an $\alpha \to 0$ limit) can be annoying. The Gaussian integral formula provides an alternative method in terms of complex Brownian motion $B_t + \imath X_t$ for two independent (real) Brownian motions $B_t$ and $X_t$.
    
	\begin{proposition}
		\label{prop:Martingale}
		Fix $n\in\bN$ and let $X_t$ be a one-dimensional Brownian motion. The collection of random variables $\mg n t = \wt H_n(B_t,t)$ defines a martingale $t\longmapsto \mg n t$.
	\end{proposition}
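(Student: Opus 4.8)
The plan is to realize $\mg{n}{t} = \wt H_n(B_t,t)$ as the conditional expectation of a genuinely complex martingale, exactly as the Gaussian integral formula suggests. Let $X_t$ be the auxiliary Brownian motion from the statement, taken independent of $B_t$, and form the \emph{complex} Brownian motion $W_t := B_t + \imath X_t$. The first step is the slick observation that the (un-conjugated) quadratic variation of $W$ vanishes: since $B$ and $X$ are independent, each with bracket $\d t$,
$$\d\langle W\rangle_t = \d\langle B\rangle_t + 2\imath\,\d\langle B,X\rangle_t - \d\langle X\rangle_t = \d t + 0 - \d t = 0.$$
Applying It\^o's formula to the entire function $w\mapsto w^n$ therefore annihilates the second-order term, leaving a driftless increment
$$\d\rb{W_t^n} = n\,W_t^{n-1}\,\d W_t = n\,W_t^{n-1}\rb{\d B_t + \imath\,\d X_t}.$$
Separating real and imaginary parts exhibits $\re(W_t^n)$ and $\im(W_t^n)$ as real It\^o integrals against $B$ and $X$ with no drift, hence local martingales for the joint filtration $\cF_t := \sigma(B_s,X_s : s\le t)$. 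Because $B_t,X_t$ are Gaussian, the integrands are polynomials of degree $n-1$ whose $L^2$ norms grow only polynomially in $t$, so $\bE\int_0^t \abs{n W_s^{n-1}}^2\,\d s < \infty$ and standard moment bounds upgrade $W_t^n$ to a true (complex) $\cF_t$-martingale.

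Next I would link $W_t^n$ back to $\mg{n}{t}$ by conditioning on $B$ alone. Writing $\cF^B_t := \sigma(B_s : s\le t)$, the independence of $X$ from $B$ together with $X_t\sim\cN(0,t)$ gives, straight from the Gaussian integral definition of $\wt H_n$,
$$\bE\ab{W_t^n \mid \cF^B_t} = \bE_{X_t\sim\cN(0,t)}\ab{(B_t+\imath X_t)^n \mid B_t} = \wt H_n(B_t,t).$$
Thus $\mg{n}{t}$ is exactly the $\cF^B_t$-projection of the $\cF_t$-martingale $W_t^n$.

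Finally, the martingale property of $\mg{n}{t}$ for its own filtration follows by chaining the tower rule across the two filtrations. For $s<t$, using $\cF^B_s\subset\cF^B_t$ and the identity above gives $\bE\ab{\mg{n}{t}\mid\cF^B_s} = \bE\ab{W_t^n\mid\cF^B_s}$; then, since $\cF^B_s\subset\cF_s$ and $W^n$ is an $\cF$-martingale, $\bE\ab{W_t^n\mid\cF^B_s} = \bE\ab{W_s^n\mid\cF^B_s}$; and the conditioning identity at time $s$ finishes the chain:
$$\bE\ab{\mg{n}{t}\mid\cF^B_s} = \bE\ab{W_t^n\mid\cF^B_s} = \bE\ab{W_s^n\mid\cF^B_s} = \wt H_n(B_s,s) = \mg{n}{s}.$$
The main obstacle, and the only place demanding genuine care, is keeping the two filtrations straight: $W_t^n$ is a martingale for the \emph{large} filtration $\cF$, whereas $\mg{n}{t}$ is adapted only to the \emph{smaller} $\cF^B$, and it is precisely the tower property combined with the conditioning identity that bridges them. (As a sanity check one can bypass $W$ altogether: $\wt H_n(x,t)$ solves the backward heat equation $\partial_t \wt H_n + \half\partial_{xx}\wt H_n = 0$, so It\^o's formula yields $\d\wt H_n(B_t,t) = \partial_x \wt H_n(B_t,t)\,\d B_t = n\,\wt H_{n-1}(B_t,t)\,\d B_t$, again a driftless and hence martingale increment.)
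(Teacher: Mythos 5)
Your proof is correct and follows essentially the same route as the paper: both arguments realize $\mg{n}{t}=\wt H_n(B_t,t)$ as the conditional expectation $\bE\ab{(B_t+\imath X_t)^n \mid \cF^B_t}$ of the complex martingale $(B_t+\imath X_t)^n$ and conclude by the tower property. The only difference is that the paper cites a black-box theorem (holomorphic functions of conformal martingales are martingales, Theorem 7.18 of Le Gall) for the martingale property of $W_t^n$, whereas you prove that step directly via It\^o's formula and the vanishing bracket $\d\langle W\rangle_t=0$, and you make explicit the two-filtration tower argument that the paper compresses into ``and so is $\mg{n}{t}$.''
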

	
	\begin{proof}
		Let $\cF_\ast$ be the filtration generated by the Brownian motion $B$. By \eqref{eq:HExpectation} we can write
		\begin{align*}
			\mg n t & = \bE\ab{\rb{B_t + i X_t}^n\vert\cF_t}=\bE\ab{Z_t^n\vert\cF_t},
		\end{align*}
		where $X$ and $B$ are independent Brownian motions, so that $Z_t=B_t+i X_t$ is a complex Brownian motion. By Theorem 7.18 of \cite{LeGall}, holomorphic functions of conformal martingales are  martingales. Therefore $Z_t^n$ is a martingale and so is $\mg n t$.
	\end{proof}
	
	\begin{remark}
		In terms of $H_n$, Proposition \ref{prop:Martingale} states that $t\longmapsto t^n H_n(B_t,t)$ is a martingale for each $n\in \bN$.
	\end{remark}
\begin{remark}
Since $\fH_n(B_t,t)$ is a Brownian martingale, by Ito's lemma one has $\left( \partial_t + \frac{1}{2}\partial_{xx} \right) \fH_n(x,t) = 0$, which is a non-trivial identity but can be alternatively proven by the recurrence relationship Lemma \ref{lem:recurrence-with-integration-by-parts}. Since $\fH_n(x,0)=x^n$, this PDE shows that $\fH_n(x,t)$ can be written as solutions of the backwards heat equation $$ \fH_n(x,t) = e^{-t \frac{1}{2} \partial_{xx}} x^n $$
where $g(x,t) = e^{-t \frac{1}{2}\partial_{xx}} f(x)$  is the solution to the PDE $\partial_t g = - \frac{1}{2}\partial_{xx} g(x,t)$ with initial data $g(x,0)=f(x)$. In this case, one can also solve the PDE by doing a formal Taylor Series expansion for $\exp$ as only finitely many terms are non-zero against the polynomials $x^n$. For example, when $n=4$ one has:
$$\fH_n(x,t) = \left(1 - t\frac{1}{2}\partial_{xx} + \frac{1}{2!} t^2 (\frac{1}{2}\partial_{xx})^2 + \ldots \right) x^4 = x^4 - 6tx^2 + 3t^2$$
This is yet another way to derive the sum formula $\fH_n(x,t) =\sum_{k=0}^{\lfloor{n/2}\rfloor} \frac{n!}{2^k \cdot k! \cdot (n - 2k)!}(-1)^k t^k x^{n-2k}$, which is equivalent to the combinatorial formula from Section \ref{sec:combinatorics} . 
\end{remark}



\subsection{Identities proven with Gaussian integration-by-parts}

There are many identites satisfied by the Hermite polynomials. Many of them have novel proofs using the Gaussian integral and tricks like the Gaussian integration by parts $\mathbb{E}_{Z\sim\mathcal{N}(0,1)}[Zg(Z)]=\mathbb{E}_{Z\sim\mathcal{N}(0,1)}[g'(Z)]$  for any smooth function $g$. This is the $n=1$ case of the more general Hermite Gaussian integration by parts formula Theorem \ref{thm:int-by-parts} since $H_1(x)=x$. This identity is sometimes filed under the name ``Stein's lemma'' even though Stein's lemma is the reverse implication that concludes that a variable $Z$ that satisfies this $\forall g$ must be Gaussian. 

\begin{lemma}\label{lem:recurrence-with-integration-by-parts}
    The Hermite polynomials satisfy the recurrence:
    $$\fH_{n+1}(x)= x\fH_{n}(x) - n\fH_{n-1}(x)$$
\end{lemma}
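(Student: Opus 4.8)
The plan is to work directly from the Gaussian integral formula of Definition \ref{def:Gaussian_def}, namely $\fH_{n+1}(x) = \mathbb{E}_{Z\sim\mathcal{N}(0,1)}[(x+\imath Z)^{n+1}]$, and to peel off a single factor from the product. First I would write $(x+\imath Z)^{n+1} = x(x+\imath Z)^n + \imath Z(x+\imath Z)^n$ and apply linearity of expectation. The first term is immediately $x\,\mathbb{E}[(x+\imath Z)^n] = x\fH_n(x)$, so the entire content of the recurrence is hidden in the second term $\imath\,\mathbb{E}[Z(x+\imath Z)^n]$.

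The key step is to evaluate this second term using the Gaussian integration-by-parts identity $\mathbb{E}[Zg(Z)] = \mathbb{E}[g'(Z)]$ advertised at the start of this subsection. I would set $g(z) = (x+\imath z)^n$, so that $g'(z) = \imath n(x+\imath z)^{n-1}$, whence $\mathbb{E}[Z(x+\imath Z)^n] = \mathbb{E}[g'(Z)] = \imath n\,\mathbb{E}[(x+\imath Z)^{n-1}] = \imath n\,\fH_{n-1}(x)$. Multiplying by the outer factor of $\imath$ gives $\imath\cdot \imath n\,\fH_{n-1}(x) = -n\fH_{n-1}(x)$, using $\imath^2 = -1$. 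Adding the two contributions yields $\fH_{n+1}(x) = x\fH_n(x) - n\fH_{n-1}(x)$ exactly as claimed.

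The only subtlety I would need to address is that $g$ here is complex-valued, whereas the integration-by-parts identity is most naturally stated for real-valued smooth functions. This is a genuinely minor obstacle: since $\mathbb{E}[Zg(Z)] = \mathbb{E}[g'(Z)]$ is $\mathbb{C}$-linear in $g$, I would simply note that it extends to complex-valued $g$ by applying it separately to the real and imaginary parts of $(x+\imath z)^n$ (each a real polynomial in $z$, with $x$ treated as a fixed real parameter), and then recombining. I would also remark that the same argument applied to $\wt H_n(x,t) = \mathbb{E}[(x+\imath\sqrt{t}Z)^n]$ produces the time-dependent recurrence $\wt H_{n+1}(x,t) = x\wt H_n(x,t) - nt\,\wt H_{n-1}(x,t)$, the only change being a factor of $t$ coming from $g'(z) = \imath\sqrt{t}\,n(x+\imath\sqrt{t}z)^{n-1}$ together with the extra $\sqrt{t}$ in the leading factor, giving $(\imath\sqrt{t})^2 = -t$. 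This provides an integral-formula proof complementary to the combinatorial one in Proposition \ref{prop:recursive}.
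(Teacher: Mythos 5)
Your proof is correct and follows exactly the paper's own argument: split off one factor as $(x+\imath Z)^{n+1} = x(x+\imath Z)^n + \imath Z(x+\imath Z)^n$, then apply the Gaussian integration-by-parts identity $\mathbb{E}[Zg(Z)]=\mathbb{E}[g'(Z)]$ to get $\mathbb{E}[Z(x+\imath Z)^n]=\imath n\,\fH_{n-1}(x)$, so the second term becomes $-n\fH_{n-1}(x)$. Your added remarks on extending integration by parts to complex-valued $g$ and on the $\wt H_n(x,t)$ variant are sound but go beyond what the paper records.
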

\begin{proof}
Write $\fH_{n+1}(x) = \mathbb{E}[(x+\imath Z)^{n+1}] = x\mathbb{E}[(x+\imath Z)^n] + \imath \mathbb{E}[Z(x+\imath Z)^{n}]$ but by the Gaussian integration by parts formula, we take the derivative to find $\mathbb{E}[Z(x+\imath Z)^{n}] = \imath n \mathbb{E}[(x+\imath Z)^{n-1}]$, and the result follows.

\end{proof}
\begin{lemma}\label{lem:deriv-of-H}
The derivative of the Hermite polynomial $\fH_n$ is related to the previous Hermite polynomial $\fH_{n-1}$: $$\fH_n^\prime(x) = n \fH_{n-1}(x)$$
\end{lemma}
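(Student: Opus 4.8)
The plan is to differentiate the Gaussian integral formula \eqref{eq:formula} directly under the integral sign, which is precisely the approach flagged in the introduction. Starting from
$$\fH_n(x) = \int_{-\infty}^\infty (x+\imath z)^n \frac{1}{\sqrt{2\pi}}e^{-\frac{1}{2}z^2}\,\d z,$$
I would differentiate both sides with respect to $x$. Since $\frac{\d}{\d x}(x+\imath z)^n = n(x+\imath z)^{n-1}$, passing the derivative inside gives
$$\fH_n'(x) = \int_{-\infty}^\infty n(x+\imath z)^{n-1} \frac{1}{\sqrt{2\pi}}e^{-\frac{1}{2}z^2}\,\d z = n\,\mathbb{E}_{Z\sim\mathcal{N}(0,1)}\!\left[(x+\imath Z)^{n-1}\right] = n\,\fH_{n-1}(x),$$
where the final equality is just the Gaussian integral formula \eqref{eq:formula} applied at index $n-1$. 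This is the whole argument in outline.

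The only step requiring real justification is the interchange of $\frac{\d}{\d x}$ and the integral, i.e.\ verifying the hypotheses of the Leibniz differentiation-under-the-integral-sign rule. Here I would fix $x$ and restrict attention to a bounded neighbourhood $x \in [x_0 - 1, x_0 + 1]$; on this set the partial derivative of the integrand is bounded in absolute value by
$$\left| n(x+\imath z)^{n-1} \frac{1}{\sqrt{2\pi}}e^{-\frac{1}{2}z^2} \right| \le n\,(|x_0|+1+|z|)^{n-1}\frac{1}{\sqrt{2\pi}}e^{-\frac{1}{2}z^2},$$
which is a polynomial in $|z|$ times the Gaussian density and hence integrable over $\bR$ and independent of $x$. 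Because this dominating function has finite integral, the standard Leibniz criterion licenses differentiating under the integral sign, and the computation above goes through rigorously.

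I do not expect any genuine obstacle: the integrand is entire in $x$ with Gaussian decay in $z$, so the regularity conditions are automatic. The one point worth stating cleanly is the uniform domination, since that is what makes the interchange legitimate; everything else is a one-line application of \eqref{eq:formula} at the shifted index. An alternative route would be purely algebraic, differentiating the combinatorial sum formula \eqref{eq:H_sum} term by term, but the integral-formula argument is shorter and is the natural showcase for the paper's ethos, so I would present that as the main proof.
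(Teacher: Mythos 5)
Your proposal is correct and follows exactly the paper's own proof: differentiate the Gaussian integral formula \eqref{eq:formula} under the integral sign and recognize $n\,\mathbb{E}[(x+\imath Z)^{n-1}] = n\,\fH_{n-1}(x)$. The only difference is that you spell out the dominated-convergence justification for the interchange, which the paper leaves implicit.
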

\begin{proof}
This follows by differentiating under the integral in the Gaussian integral formula
$$\fH_n^\prime(x)=\frac{d}{dx}\mathbb{E}[(x+\imath Z)^n]=\mathbb{E}[\frac{d}{dx}(x+\imath Z)^n]=n\mathbb{E}[(x+\imath Z)^{n-1}]=n\fH_{n-1}(x)$$
\end{proof}

\begin{proposition} \label{prop:Christofel-Darboux}
	\textbf{Christoffel-Darboux formula}	For any $x\in\bR$
		\begin{align}
		\sum_{k=0}^{n-1}\frac{1}{k!}\fH_{k}(x)\fH_{k}(y) &=\frac{1}{(n-1)!}\frac{\fH_{n}(x)\fH_{n-1}(y)-\fH_{n-1}(x)\fH_{n}(y)}{x-y} \\
        &=\frac{1}{n!}\frac{\fH_{n}(x)\fH^\prime_{n}(y)-\fH^\prime_{n}(x)\fH_{n}(y)}{x-y}
		\end{align}
\end{proposition}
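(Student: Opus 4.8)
The plan is to run the classical Christoffel--Darboux telescoping argument, whose only ingredient is the three-term recurrence $\fH_{k+1}(z) = z\fH_k(z) - k\fH_{k-1}(z)$ established in Proposition \ref{prop:recursive} (equivalently Lemma \ref{lem:recurrence-with-integration-by-parts}). The second equality is then a one-line consequence of the derivative identity $\fH_k'(z) = k\fH_{k-1}(z)$ from Lemma \ref{lem:deriv-of-H}, so the real content is the first equality; throughout I assume $x\ne y$.

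First I would introduce the bilinear ``Wronskian'' quantity $W_k := \fH_k(x)\fH_{k-1}(y) - \fH_{k-1}(x)\fH_k(y)$. Writing the recurrence at index $k$ once with argument $x$ and once with argument $y$, multiplying the $x$-version by $\fH_k(y)$ and the $y$-version by $\fH_k(x)$, and subtracting, the terms $x\fH_k(x)\fH_k(y)$ and $y\fH_k(y)\fH_k(x)$ combine into $(x-y)\fH_k(x)\fH_k(y)$, while the $\fH_{k-1}$ terms reassemble (up to sign) into $W_{k+1}$ and $W_k$. The result is the single-step identity
$$W_{k+1} = (x-y)\,\fH_k(x)\fH_k(y) + k\,W_k.$$

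Dividing this identity by $k!$ turns it into a genuine telescope: setting $T_k := W_k/(k-1)!$, it reads $T_{k+1} - T_k = (x-y)\,\fH_k(x)\fH_k(y)/k!$. Summing from $k=1$ to $k=n-1$ collapses the left-hand side to $T_n - T_1$, and I would compute the base term directly from $\fH_0 = 1$ and $\fH_1(z) = z$, which gives $T_1 = W_1 = x-y$. Absorbing this $T_1$ as exactly the missing $k=0$ summand $(x-y)\fH_0(x)\fH_0(y)/0!$, and then dividing through by $x-y$, yields
$$\sum_{k=0}^{n-1}\frac{1}{k!}\fH_k(x)\fH_k(y) = \frac{1}{(n-1)!}\,\frac{\fH_n(x)\fH_{n-1}(y) - \fH_{n-1}(x)\fH_n(y)}{x-y},$$
which is the first equality. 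Finally, substituting $\fH_{n-1}(x) = \fH_n'(x)/n$ and $\fH_{n-1}(y) = \fH_n'(y)/n$ from Lemma \ref{lem:deriv-of-H} and using $\tfrac{1}{(n-1)!}\cdot\tfrac1n = \tfrac1{n!}$ produces the second equality.

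There is no serious obstacle here; the argument is entirely driven by the recurrence. The only points requiring care are purely bookkeeping: orienting the signs so that the subtracted recurrences reassemble into $W_{k+1}$ and $W_k$ correctly, and matching the leftover base term $T_1$ with the $k=0$ summand so that the index of summation starts at $0$ rather than $1$.
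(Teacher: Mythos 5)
Your proof is correct: the one-step identity $W_{k+1} = (x-y)\,\fH_k(x)\fH_k(y) + k\,W_k$ follows from the recurrence exactly as you say, the division by $k!$ telescopes cleanly, the base case $T_1 = x-y$ matches the $k=0$ summand, and the second equality is immediate from Lemma \ref{lem:deriv-of-H}. However, this is a genuinely different route from the one the paper takes. The paper explicitly acknowledges your argument as available --- it notes that ``a simple proof by induction using the recursiveness formula Proposition \ref{prop:recursive}'' is ``a standard method of proof'' --- and then deliberately avoids it, in keeping with the article's ethos of deriving everything from the Gaussian integral formula. Instead, the paper writes $\fH_n(x)\fH_n(y) = \mathbb{E}\bigl[(x+\imath L)^n(y+\imath R)^n\bigr]$ for independent standard Gaussians $L,R$, inserts the factor $\frac{x-y}{x-y}$, splits $x(x+\imath L)^n = (x+\imath L)^{n+1} - \imath L(x+\imath L)^n$ (and similarly in $y$), and then applies Gaussian integration by parts $\mathbb{E}[Lg(L)]=\mathbb{E}[g'(L)]$ to the terms carrying $\imath L$ and $\imath R$; this produces the same one-step telescoping identity you derived, but without ever invoking the three-term recurrence. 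So both proofs are telescoping arguments that differ in how the single step is generated: yours is shorter and leans on Proposition \ref{prop:recursive} (itself proved combinatorially, or via Lemma \ref{lem:recurrence-with-integration-by-parts}), while the paper's manufactures the step directly from the expectation representation, which is precisely the technique the article is showcasing.
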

	\begin{proof}
		The second equality follows from the first by Lemma \ref{lem:deriv-of-H}.  To prove the first equality, it is possible to do a simple proof by induction induction using the recursiveness formula Proposition \ref{prop:recursive}, $\fH_{n+1}(x)=x\fH_n(x)-n\fH_{n-1}(x)$, which is a standard method of proof. However, we will instead give here a proof using the Gaussian integral formula $\fH_n(x)=\mathbb{E}_{\mathcal{N}(0,1)}[(x+\imath Z)^n]$ and the Gaussian integration by parts formula. We use the letters $L,R\sim\mathcal{N}(0,1)$  so that $\fH_n(x) = \mathbb{E}[(x+\imath L)^n]$ and $\fH_n(y)=\mathbb{E}[(y+\imath R)^n]$  are distinguished. Now consider the telescoping-type manipulation: 
        \begin{align*}
\fH_n(x)\fH_n(y) &= \mathbb{E}\bigl[ (x+\imath L)^n (y+\imath R)^n \bigr]\\ 
  &= \frac{1}{x-y} \mathbb{E}\bigl[  x(x+\imath L)^n (y+\imath R)^n - (x+\imath L)^n y(y+\imath R)^n \bigr] \\
  &= \frac{1}{x-y} \mathbb{E}\bigl[ (x+\imath L)^{n+1} (y+\imath R)^n  - (x+\imath L)^n (y+\imath R)^{n+1} \\ &\quad -\imath L (x+\imath L)^n (y+\imath R)^n + \imath R (x+\imath L)^n (y+\imath R)^n \bigr]
\end{align*}
We now apply the Gaussian integration by parts to the variable $L$ to see that $\mathbb{E}[L (x+\imath L)^n] = \imath n \mathbb{E}[(x+\imath L)^{n-1}]$  and analogously for $R$. We can hence replace this part of the expression and recognize the appearance of $\fH_{n+1},\fH_n,\fH_{n-1}$  appearing when we take the expectation
\begin{align}
\fH_n(x)\fH_n(y)
  = \frac{1}{x-y} \mathbb{E}&\bigl[ (x+\imath L)^{n+1} (y+\imath R)^n  - (x+\imath L)^n (y+\imath R)^{n+1} \\ & + n (x+\imath L)^{n-1} (y+\imath R)^n - n (x+\imath L)^n (y+\imath R)^{n-1} \bigr] \\
  = \frac{1}{x-y} &\bigl( \fH_{n+1}(x)\fH_n(y) - \fH_{n}(x)\fH_{n+1}(y) \\
  &+n \fH_{n-1}(x)\fH_n(y) -n \fH_{n}(x)\fH_{n-1}(y) 
\end{align}
Finally, dividing through by $n!$  we obtain
$$\frac{1}{n!} \fH_n(x)\fH_n(y) = \frac{\fH_{n+1}(x)\fH_n(y) - \fH_n(x)\fH_{n+1}(y)}{n!(x-y)} - \frac{\fH_{n}(x)\fH_{n-1}(y) - \fH_{n-1}(x)\fH_{n}(y)}{(n-1)!(x-y)}$$
  and the result follows by summing over $n$ and recognizing the telescoping sum.      
	\end{proof}

\subsection{The expected characteristic function of the GUE random matrix}

    An application of the combinatorial formula is to provide a direct proof that the expected characteristic function of the GUE is precisely the Hermite polynomial. The traditional proof of this fact is by induction using the recursive formula for determinants and for Hermite polynomials.
    
\begin{proposition}
Fix $n \in \mathbb{N}$ and let $M$ be the $n\times n$ GUE matrix with independent $\mathcal{N}(0,1)$ entries on the diagonal, independent complex entries of the form $\mathcal{N}(0,\half) + \imath \mathcal{N}(0,\half)$ above the diagonal and set to be Hermitian $M_{ij} = \overline{M}_{ji}$ below the diagonal. Then the expected characteristic function of $M$ is the $n$-th Hermite polynomial $\fH_n$ :
\begin{align}
\mathbb{E}\Big[ \det(x\Id_n - M) \Big] &= \fH_{n}(x)
\end{align}
If the entries are scaled so that the variance of each entry is $t$, then we get the Hermite polynomial $\wt H_n(x,t)$:
\begin{align}
\mathbb{E}\Big[ \det(x\Id_n - \sqrt{t} M) \Big] &= \wt H_{n}(x,t)
\end{align}
\end{proposition}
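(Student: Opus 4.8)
The plan is to expand the determinant by the Leibniz formula and evaluate the expectation over $M$ using the Wick/Isserlis theorem \eqref{eq:wick}, then recognize the resulting combinatorial sum as the formula \eqref{eq:H_sum} for $\fH_n$. First I would record the covariance structure of the GUE entries. Writing $M_{ij} = \overline{M}_{ji}$ and using the stated variances, one checks that
\[
\mathbb{E}[M_{ij} M_{kl}] = \delta_{il}\delta_{jk}
\]
for all $i,j,k,l$. The key points are that an off-diagonal entry $M_{ij} = \mathcal{N}(0,\tfrac12) + \imath\,\mathcal{N}(0,\tfrac12)$ has $\mathbb{E}[M_{ij}^2] = 0$ (the real and imaginary parts contribute $+\tfrac12$ and $-\tfrac12$) while $\mathbb{E}[M_{ij}M_{ji}] = \mathbb{E}[|M_{ij}|^2] = 1$, and that a diagonal entry has $\mathbb{E}[M_{ii}^2] = 1$ and is uncorrelated with every other entry. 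Thus the only nonzero contraction of two entries is between $M_{ij}$ and $M_{ji}$, each contributing a factor of $1$.

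Next I would expand
\[
\det(x\Id_n - M) = \sum_{\sigma \in S_n} \operatorname{sgn}(\sigma) \prod_{i=1}^n \bigl(x\,\delta_{i,\sigma(i)} - M_{i,\sigma(i)}\bigr),
\]
and in each product choose, for a subset $U \subseteq \{1,\dots,n\}$, the $-M_{i,\sigma(i)}$ term when $i \in U$ and the $x\,\delta_{i,\sigma(i)}$ term otherwise. The $x$-choice forces $\sigma(i)=i$ for $i \notin U$, and the product contributes $\operatorname{sgn}(\sigma)\,x^{n-|U|}(-1)^{|U|}\prod_{i\in U} M_{i,\sigma(i)}$. Applying Wick/Isserlis to $\mathbb{E}\bigl[\prod_{i\in U} M_{i,\sigma(i)}\bigr]$ and using the contraction rule above, a pairing $\pi$ of $U$ survives precisely when $\sigma$ swaps the two indices of every pair, i.e. when $\sigma|_U$ is the fixed-point-free involution determined by $\pi$ (in particular a diagonal factor $M_{ii}$ has no partner and forces vanishing). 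Hence the nonzero contributions are indexed bijectively by pairs $(U,\pi)$ with $\pi \in \mathcal{P}(U)$, the permutation being the identity off $U$ and the involution $\pi$ on $U$.

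The last step is the sign count. For such $\sigma$ we have $\operatorname{sgn}(\sigma) = (-1)^{|\pi|}$ (a product of $|\pi|$ transpositions) and $(-1)^{|U|} = (-1)^{2|\pi|} = 1$, while the Wick value is $1$, so each $(U,\pi)$ contributes exactly $(-1)^{|\pi|}\,x^{\,n-|U|}$. Summing gives
\[
\mathbb{E}\bigl[\det(x\Id_n - M)\bigr] = \sum_{U \subseteq \{1,\dots,n\}}\ \sum_{\pi \in \mathcal{P}(U)} (-1)^{|\pi|}\, x^{\,n-|U|},
\]
which is exactly the combinatorial sum \eqref{eq:H_sum} for $\fH_n(x)$, each pair carrying the weight $-1 = \mathbb{E}[(\imath Z)(\imath Z)]$. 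For the rescaled matrix one repeats the argument with $\sqrt t\,M$: every contraction now yields a factor $t$, so the $(U,\pi)$ weight becomes $(-t)^{|\pi|} x^{\,n-|U|}$, reproducing the formula for $\wt H_n(x,t)$.

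I expect the main obstacle to be the bookkeeping in the bijection and sign step: one must argue carefully that for each $\sigma$ the surviving subset $U$ is forced to be its support, that only involution-type permutations contribute, and that $\operatorname{sgn}(\sigma)$, the $(-1)^{|U|}$ from the $-M$'s, and the Wick values conspire to leave precisely the Hermite weight $(-1)^{|\pi|}$. The covariance computation, while elementary, is the essential structural input that makes the contractions match the $-1$ edge weights of the Hermite combinatorics.
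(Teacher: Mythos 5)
Your proof is correct and follows essentially the same route as the paper: expand the determinant over permutations, show that only permutations built from fixed points and transpositions survive the expectation, and match the resulting signed sum $(-t)^{\#\text{pairs}}x^{\#\text{fixed points}}$ with the combinatorial formula \eqref{eq:H_sum} for $\wt H_n(x,t)$. The only cosmetic difference is that you invoke Wick's theorem with the contraction rule $\mathbb{E}[M_{ij}M_{kl}]=\delta_{il}\delta_{jk}$ to isolate the surviving terms, while the paper argues cycle-by-cycle (independence across disjoint cycles, vanishing of cycles of length $\geq 3$) — both bookkeepings identify exactly the same set of contributing permutations with the same weights.
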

\begin{proof}
We prove the result for $\wt H_n$, the result for $\fH_n$ follow by setting $t=1$. The proof follows by observing the determinant can be expanded into a combinatorial sum that exactly coincides with the interpretation from By the permutation expansion for the determinant, we have:
\begin{align}
\mathbb{E}\Big[ \det(x\Id_n - \sqrt{t} M) = \sum_{\sigma \in S_n} \sgn (\sigma) \mathbb{E} \Big[ \prod_{j=1}^n \Big( x \mathbf{1}\{j = \sigma(j)\} - \sqrt{t} M_{j \sigma(j)}  \Big) \Big],
\end{align}
where $S_n$ is the symmetric group of permutations. We now claim that the only permutations $\sigma \in S_n$ that have a non-zero contribution to the sum are precisely the permutations $\sigma$ whose cycle decomposition consists of only 1-cycles (fixed points) and 2-cycles (disjoint transpositions). Firstly, notice that the expectation splits as a product over the cycles of $\sigma$ by independence because disjoint cycles cannot contain repeated indices so have independent components. Now notice the that the the expected value of any cycle length 3 or greater is zero: $\mathbb{E}[M_{j_1 j_2} M_{j_2 j_3} M_{j_3 j_1}] = 0$ since all of these entries are independent and mean zero. 2-cycles of $\sigma$ have a non-zero contribution by the Hermitian property of the matrix $\mathbb{E}[\sqrt{t} M_{j_1 j_2} \cdot \sqrt{t}M_{j_2 j_1}]=\mathbb{E}[t|M_{j_1 j_2}|^2]=t$. 1-cycles (aka fixed points of $\sigma$) have a contribution through the identity matrix term even though the matrix entry is mean 0, $\mathbb{E}\left[ x \mathbf{1}\{j = \sigma(j)\} - \sqrt{t} M_{j \sigma(j)}  \right] = x$ when $\sigma(j)=j$.  Hence we arrive at:
\begin{align}
\mathbb{E}\Big[ \det(x\Id_n - \sqrt{t} M)\Big] = \sum_{\substack{\sigma \in S_n\\ \sigma \text{ has no cycles }\geq 3}} \sgn (\sigma) t^{|\{\text{2-cycles of }\sigma\}|} x^{|\{ \text{fixed points of }\sigma\}|},
\end{align}
Since $\sigma$ has no cylces of length $\geq 3$, we see that $\sgn(\sigma)$ is simply $(-1)^{|\{ \text{2-cycles of }\sigma\}|}$. Hence, factoring the $-1$ in with $t$, we get
\end{proof}
\begin{align}
\mathbb{E}\Big[ \det(x\Id_n - \sqrt{t} M) \Big] = \sum_{\substack{\sigma \in S_n\\ \sigma \text{ has no cycles }\geq 3}} (-t)^{|\{\text{2-cycles of }\sigma\}|} x^{|\{ \text{fixed points of }\sigma\}|},
\end{align}
Notice that such a permutation $\sigma$ can be interpreted as a choice of variables and pairing for the Hermite polynomial $\wt H_n$ as in \eqref{eq:H_sum}; every fixed point of $\sigma$ is a choice of the variable $x$ and every 2-cycle $(a, b)$ is a pairing of two $\imath \sqrt{t} Z$ variables at positions $a$ and $b$ to give a contribution of $\mathbb{E}[(\imath \sqrt{t} Z)(\imath \sqrt{t}Z)]=-t$ from the Wick/Isserlis theorem. With this point of view, we now recognize this as precisely the combinatorial formula for $\wt H_n(x,t)$ as desired. 

	\appendix

	\section{Proof of Lemma \ref{lem:cos-weak-convergence}}
	\label{sec:proof-lem-cos-weak-convergence}
	
	For the proof it will be helpful to define some functions. Let
	\begin{align*}
		g(x)=x\sqrt{1-x^2}\quad\text{ and }\quad P(x) = \frac{1}{2}\left(\arccos(x)+\frac\pi2\right).
	\end{align*}
	Then we have $G_n(x) = g(x) - \frac1n P(x)$. We further define $H_n(x)=\cos^2(n\,G_n(x)\bigr)$ so that $h_n(x) = 2H_n(C_n x)$.
	
	Since $C_n=\sqrt{1+\frac1{2n}}\to 1$, a change of variables argument confirms that weak convergence of $h_n$ to 1 is equivalent to weak convergence of $H_n$ to $\frac12$. 
	We do this in two steps:
	\begin{enumerate}[label=Step \arabic*., leftmargin=*, align=left]
		\item Prove that 
		\begin{align}
			\label{eq:to-be-shown-appendix}
			\int_{-1}^1 \rb{H_n(x)-\frac12}\phi(x) dx\longrightarrow 0
		\end{align} as $n\to\infty$ for every test function $\phi\in C^2([-1,1])$.
		\item Use density of $C^2$ in the dual of $L^p$ to conclude weak convergence.
	\end{enumerate}

	\paragraph{\textbf{Step 1.}} By the double angle and addition formulae,
	\begin{align*}
		H_n (x)-\frac12 = \frac12 \cos(2n G_n(x)) =  \frac12 \cos(2n g(x))\cos(2P(x)) - \frac12\sin(2ng(x))\sin(2P(x)).
	\end{align*}
	Since $\cos\circ P,\sin\circ P\in C^2([-1,1])$ and the latter space is closed under multiplication, \eqref{eq:to-be-shown-appendix} follows from the following lemma.
	
	\begin{lemma}
		\label{lem:convergence-exp-g}
		For any $\phi\in C^2([-1,1])$,
		\begin{align}
			\label{eq:reformulated-appendix}
			\int_{-1}^1 e^{2in g(x)}\,\phi(x)dx \xrightarrow[n\to\infty]{} 0.
		\end{align}
	\end{lemma}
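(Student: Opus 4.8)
The plan is to treat $\int_{-1}^1 e^{2in g(x)}\phi(x)\,dx$ as an oscillatory integral with phase $g(x)=x\sqrt{1-x^2}$ and to show it vanishes by an elementary stationary-phase argument built on integration by parts, without invoking any heavy machinery. The first step is to locate the stationary points of the phase. A direct computation gives $g'(x)=\frac{1-2x^2}{\sqrt{1-x^2}}$, so $g'$ vanishes inside $(-1,1)$ only at the two points $x_\pm=\pm\frac1{\sqrt2}$, while near the endpoints $g'(x)\to\mp\infty$. These two stationary points are the sole obstruction to fast decay; everywhere else the rapid oscillation of $e^{2in g}$ produces cancellation.

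Next I would fix a small $\delta>0$ and split $[-1,1]=E_\delta\cup I_\delta$, where $E_\delta$ is the union of the two $\delta$-neighbourhoods of $x_\pm$ and $I_\delta$ is the remainder. On $E_\delta$ I use only the trivial bound $\bigl|\int_{E_\delta}e^{2in g}\phi\bigr|\le\|\phi\|_\infty\,|E_\delta|\le 4\delta\|\phi\|_\infty$, which is independent of $n$ and small with $\delta$. The main work is on $I_\delta$, where $g'$ is bounded away from $0$ (its infimum, of order $\delta$, is attained near the boundary of the excised balls, and $|g'|\to\infty$ as $x\to\pm1$). There I integrate by parts, writing $e^{2in g}=\frac1{2in\,g'}\frac{d}{dx}e^{2in g}$, to get
\[
\int_{I_\delta}e^{2in g(x)}\phi(x)\,dx=\frac1{2in}\Bigl[\frac{\phi}{g'}e^{2in g}\Bigr]_{\partial I_\delta}-\frac1{2in}\int_{I_\delta}e^{2in g(x)}\frac{d}{dx}\!\Bigl(\frac{\phi(x)}{g'(x)}\Bigr)\,dx.
\]
The boundary terms at $\pm1$ vanish because $\phi/g'\to0$ there, and the boundary terms at $x_\pm\pm\delta$ are $O(1/\delta)$ since $|g'|\gtrsim\delta$ there. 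For the remaining integral, $\frac{d}{dx}(\phi/g')=\frac{\phi'}{g'}-\frac{\phi\,g''}{(g')^2}$; using $g''(x)=x(1-x^2)^{-3/2}(2x^2-3)$ one checks this is absolutely integrable on $I_\delta$, so that $\bigl|\int_{I_\delta}\bigr|=O\!\bigl(\frac1{n\delta}\bigr)$.

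Combining the two pieces yields $\bigl|\int_{-1}^1 e^{2in g}\phi\bigr|\le 4\delta\|\phi\|_\infty+\frac{C}{n\delta}$; letting $n\to\infty$ and then $\delta\to0$ (or simply taking $\delta=n^{-1/2}$) gives the claim, recovering the familiar $n^{-1/2}$ stationary-phase rate as a bonus. Note that only $\phi$ and $\phi'$ enter, so $C^1$ regularity already suffices, and the hypothesis $\phi\in C^2([-1,1])$ is more than enough. The only delicate point—and the one I expect to require the most care—is the integration by parts near the endpoints $\pm1$, where both $g'$ and $g''$ blow up: the redeeming facts are that the singular quantity $\frac{\phi\,g''}{(g')^2}$ is only of order $(1-x^2)^{-1/2}$ near $\pm1$ and hence integrable, while the boundary quantity $\phi/g'$ actually vanishes there. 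Everything else is routine bookkeeping in the two parameters $\delta$ and $n$.
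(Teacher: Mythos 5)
Your proof is correct. It shares the paper's skeleton --- excise $\delta$-neighbourhoods of the two stationary points $\pm 1/\sqrt{2}$, integrate by parts on the complement where $g'$ is bounded below, and use that $\phi g''/(g')^2 = O\bigl((1-x^2)^{-1/2}\bigr)$ stays integrable despite the blow-up of $g'$ and $g''$ at $\pm 1$ --- but it treats the stationary neighbourhoods in a genuinely different way, and your way is the one that actually closes. The paper Taylor-expands $g$ at each critical point, rescales $t=\sqrt{n}\,(x-x_j)$, and asserts the rescaled integral $A_j$ is bounded uniformly in $n$ merely because the exponent is purely imaginary; as written, that step drops the Jacobian factor $\sqrt{n}$ (the modulus-one bound only gives $|A_j|\le 2\delta\sqrt{n}\,\|\phi\|_\infty$), so obtaining the claimed $O(n^{-1/2})$ contribution from each neighbourhood would require a genuine Fresnel/van der Corput estimate that also controls the cubic remainder. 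You sidestep this entirely: the trivial bound $4\delta\|\phi\|_\infty$ on the excised set, together with the explicit $O\bigl(1/(n\delta)\bigr)$ bound on the complement, concludes by sending $n\to\infty$ and then $\delta\to 0$, or by choosing $\delta=n^{-1/2}$, which recovers the $n^{-1/2}$ rate the paper was aiming for. Your route is more elementary, needs only $\phi\in C^1$ (the paper's integration-by-parts display, which drags in $\phi''$, is garbled on this point), and its only cost is tracking the $\delta$-dependence of the constants, which you do. The one step deserving a careful sentence in a final write-up is the one you flag: integration by parts is legitimate up to the endpoints $\pm 1$ because $\phi/g'$ extends continuously by $0$ there and has integrable derivative, hence is absolutely continuous on each closed component of $I_\delta$.
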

	
	\paragraph{\textbf{Step 2.}} The space $C^2([-1,1])$ is dense in the dual $L^p([-1,1])^\ast \cong L^q([-1,1])$ of $L^p([-1,1])$. Therefore the fact that \eqref{eq:to-be-shown-appendix} holds for all $\phi \in C^2([-1,1])$ implies weak convergence of $H_n$ to $\frac12$.

	It remains to show Lemma \ref{lem:convergence-exp-g}.
	


	\begin{proof}[Proof of Lemma \ref{lem:convergence-exp-g}]
		Since $g'(x)=\frac{1-2x^2}{\sqrt{1-x^2}}$, the function $g$ has critical points at $x_j=\pm \frac1{\sqrt2}$. Note that 
		\begin{align*}
			g''(x)=\frac{x(2x^2-3)}{(1-x^2)^{3/2}}
		\end{align*}
		stays bounded near $x_1$ and $x_2$ (only blows up at $\pm 1$).
		Fix now some $\delta\in (0, 1-\frac1{\sqrt2})$ and define
		\begin{align}
			U_j = [x_j-\delta,x_j+\delta] \quad(j\in\set{1,2}),\quad\quad U = U_1\cup U_2
		\end{align}
		and let $I_1$, $I_2$ and $I_3$ be the three connected components of $[-1,1]\setminus U$. Then there exists $C>0$ such that $g'(x)\geq C$ for $x\in I:=I_1\cup I_2\cup I_3$. We will bound the integrals over the $I$ and the $U$ intervals separately.
		
		\paragraph{Integral over the $I$ intervals.} Let $I_j= [a_j,b_j]$ (for example $a_1=-1$ and $b_1 = x_1-\delta$). Then for each $j\in\set{1,2,3}$,
		\begin{align*}
			\int_{I_j} e^{i2ng(x)}\,\phi(x) dx
			&=\Bigl[\frac{\phi}{i2ng'}\,e^{i2ng}\Bigr]_{a_j}^{b_j}
			-\int_{I_j}e^{i2nG_n}
			\,\frac{d}{dx}\!\Bigl(\frac{\phi}{i2ng'}\Bigr)\,dx\\
			& =\frac1n \set{ \ab{\frac{\phi}{i2G_n'}\,e^{2ing}}_{-1}^1
				-\frac1{2i}\int_{I_j}e^{i2ng}
				\rb{\frac{\phi''(x)g(x) - \phi'(x)g'(x)}{g'(x)^2}} \,dx},
		\end{align*}
		which is $O(1/n)$ since $G'$ is bounded below outside of $U_\delta$
		
		\paragraph{Integral over the $U$ intervals}. Since $g'(x_j)=0$ we have
		the Taylor expansion 
		\begin{align*}
			g(x)&=g(x_{j})
			+\tfrac12g''(x_j)(x-x_{j})^2+R(x),
			\quad |R(x)|\le C|x-x_{j}|^3.
		\end{align*}
		With the change of variable $t=\sqrt{n}\,(x-x_{j})$, we get
		\begin{align*}
			\int_{U_j} e^{2ing(x)}\phi(x) dx & = e^{2in g(x_j)} \frac1{\sqrt n} \underbrace{\int_{-\delta\sqrt n}^{\delta\sqrt n} e^{i g''(x_j)t^2+2inR(t)}\phi\rb{x_j+\frac t{\sqrt n}}\, dt}_{A_j}
		\end{align*}
		It remains to show that the integral $A_j$ on the right hand side is bounded. To see this, note that the terms inside the exponential are purely imaginary, so that
		\begin{align*}
			\abs{A_j} & \leq \int_{-\delta\sqrt{n}}^{\delta\sqrt n} \abs{\phi\rb{x_j+\frac t{\sqrt n}}}\, dt = \int_{I_j}\abs{\phi(y)}\, dy\leq \norm{\phi}_1.
		\end{align*}
		In summary, we have partitioned the interval into the five sub-intervals $I_1,U_1,I_2,U_2,I_3$ and shown that the integral of $e^{2ing(x)}\phi(x)$ over each of the elements of the partition is $O\rb{n^{1/2}}$. This completes the proof.
		
	\end{proof}
	

\section{Proof of the Mills inequality for the Gaussian integral}
\label{sec:proof-Mills}

    We first observe that 
    \begin{align*}
        \intop_A^\infty e^{-B(x-c)^2} \d x & = \frac1{\sqrt{2B}} \intop_{\sqrt {2B}(A-c)}^\infty e^{-t^2/2}\, \d t
    \end{align*}
    which follows by making the change of variables $t=\sqrt {2B} (x-c)$, so $\d x= \frac {\d t}{\sqrt B}$.
    Therefore it suffices to prove the case when $A=\lambda$, $B=\frac12$ and $c=0$, namely,
    $$ \frac{1}{\lambda + \frac{1}{\lambda}}e^{-\frac{1}{2}\lambda^2} \leq \intop_\lambda^\infty e^{-\frac{1}{2}x^2} \d x \leq \frac{1}{\lambda} e^{-\frac{1}{2}\lambda^2},$$
which is the form in which the Mills inequality is usually stated.

The upper bound is found by using the estimate $t/\lambda>1$ in the range of integration:
    \begin{align*}
        \intop_{\lambda}^{\infty}e^{-t^{2}/2}dy	\leq	\intop_{\lambda}^{\infty}\frac{t}{\lambda}e^{-t^{2}/2}dy
	=	\frac{1}{\lambda}\left[-e^{-t^{2}/2}\right]_{\lambda}^{\infty}
	=	\frac{1}{\lambda}e^{-\lambda^{2}/2}
    \end{align*}

    To prove the lower bound, we use a remarkable anti-derivative \cite{McKean2012}. First note that 
    ${t^{4}+2t^{2}-1}<{t^{4}+2t^{2}+1}$ for any $y\in\bR$. Therefore
    \begin{align*}
        \intop_{\lambda}^{\infty}e^{-t^{2}/2}dy	\geq	\intop_{\lambda}^{\infty}e^{-t^{2}/2}\left(\frac{t^{4}+2t^{2}-1}{t^{4}+2t^{2}+1}\right)dy.
        \end{align*}
        Next comes the remarkable anti-derivative: a direct calculation shows that
        \begin{align*}
            \frac{\d}{\d t}\rb{-\frac{t}{t^{2}+1}e^{-t^{2}/2}} & = \frac{t^{4}+2t^{2}-1}{t^{4}+2t^{2}+1}\,e^{-t^{2}/2}.
        \end{align*}
        It follows that
    \begin{align*}
	\intop_{\lambda}^{\infty}e^{-t^{2}/2}dy \geq	\left[-\frac{t}{t^{2}+1}e^{-t^{2}/2}\right]_{\lambda}^{\infty}
	=	\frac{\lambda}{\lambda^{2}+1}e^{-\lambda^{2}/2}=\frac{1}{\lambda + \frac{1}{\lambda}}e^{-\lambda^{2}/2}.
    \end{align*}


	\section{Extra material for the Dyson Brownian motion construction}

	\subsection{Proof of Proposition \ref{prop:Watermelon-dist}}
	\label{subsec:proof-Watermelon-dist}
	
	In order to prove Proposition \ref{prop:Watermelon-dist}, we will use the following lemma.
	
	\begin{lemma}
		\label{lem:B_NI_from_0_ep}Let $\vec{B}(\cdot)$ be a Brownian motion.
		For any fixed $\vec{x}$ and $t$, we have in the limit $\ep\to0$:
		\[
		\p\left(\left\{ \vec{B}(t)=\vec{x}\right\} \cap NI_{t}\given{\vec{B}(0)=\vec{0}^{\ep}}\right)=\left(\frac{\ep}{t}\right)^{\binom{n}{2}}V(x_{1},\ldots,x_{n})\prod_{i=1}^{n}\vp_{t}(x_{i})+o(\ep^{\binom{n}{2}})
		\]
	\end{lemma}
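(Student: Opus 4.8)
The plan is to invoke the Karlin--McGregor formula to turn the non-intersecting transition probability into a determinant of one-dimensional Brownian transition densities, and then to read off the leading order in $\ep$. Since $\vec{B}$ is a vector of $n$ independent Brownian motions started from $\vec{0}^\ep = ((n-1)\ep,\ldots,\ep,0)$, its $i$-th coordinate starts at $y_i := (n-i)\ep$, and these are strictly decreasing, $y_1 > \cdots > y_n$. For ordered endpoints $x_1 > \cdots > x_n$, Karlin--McGregor gives the density
\begin{align*}
\p\rb{\set{\vec{B}(t) = \vec x} \cap NI_t \given \vec{B}(0) = \vec{0}^\ep} = \det\ab{\vp_t(x_j - y_i)}_{i,j=1}^n,
\end{align*}
which is automatically nonnegative in this chamber.

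Next I would factor the Gaussian structure out of the determinant. Writing $\vp_t(x_j - y_i) = \vp_t(x_j)\, e^{x_j y_i/t}\, e^{-y_i^2/(2t)}$ and pulling $\vp_t(x_j)$ out of column $j$ and $e^{-y_i^2/(2t)}$ out of row $i$ gives
\begin{align*}
\det\ab{\vp_t(x_j - y_i)} = \rb{\prod_{j=1}^n \vp_t(x_j)} \rb{\prod_{i=1}^n e^{-y_i^2/(2t)}} \det\ab{e^{x_j y_i/t}}_{i,j}.
\end{align*}
Since $y_i = (n-i)\ep$, setting $q_j := e^{x_j \ep/t}$ turns the last factor into $\det[q_j^{\,n-i}]_{i,j}$, a Vandermonde-type determinant: reversing the row order (contributing a sign $(-1)^{\binom n2}$) identifies it with $\det[q_j^{\,i-1}]_{i,j} = \prod_{j<k}(q_k - q_j)$.

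The final step is the $\ep \to 0$ expansion. Taylor expanding each factor, $q_k - q_j = e^{x_k\ep/t} - e^{x_j\ep/t} = \tfrac\ep t (x_k - x_j) + O(\ep^2)$, yields
\begin{align*}
\prod_{j<k}(q_k - q_j) = \rb{\frac\ep t}^{\binom n2} \prod_{j<k}(x_k - x_j)\,\rb{1 + O(\ep)},
\end{align*}
and since $\prod_{j<k}(x_k - x_j) = (-1)^{\binom n2} V(x_1,\ldots,x_n)$, the two factors $(-1)^{\binom n2}$ cancel. Combined with $\prod_i e^{-y_i^2/(2t)} = 1 + O(\ep^2)$, this reproduces the claimed leading term $\rb{\tfrac\ep t}^{\binom n2} V(\vec x)\prod_i \vp_t(x_i)$ with relative error $O(\ep)$, i.e. absolute error $O(\ep^{\binom n2 + 1}) = o(\ep^{\binom n2})$.

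The calculation is essentially routine once Karlin--McGregor is in hand; I expect the only real care to be needed in the bookkeeping of the two $(-1)^{\binom n2}$ signs (row reversal versus the paper's convention $V(x) = \prod_{i<j}(x_i - x_j)$) so as to confirm they cancel and leave a positive density, and in checking that the subleading contributions — from both the Vandermonde expansion and the quadratic factors $e^{-y_i^2/(2t)}$ — are genuinely $o(\ep^{\binom n2})$ rather than merely comparable to the leading order.
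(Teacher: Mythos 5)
Your proof is correct and follows essentially the same route as the paper's: Karlin--McGregor, factoring the Gaussian weights out of the determinant, recognizing a Vandermonde determinant in the variables $e^{x_j\ep/t}$, and Taylor expanding to leading order in $\ep$. The only substantive difference is that you order the starting points decreasingly and track the two cancelling factors of $(-1)^{\binom{n}{2}}$ explicitly (row reversal versus the convention $V(x)=\prod_{i<j}(x_i-x_j)$), whereas the paper indexes the starting points by $\ep(j-1)$ and glosses over this sign bookkeeping; your version is the more careful one.
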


	The proof of Lemma \ref{lem:B_NI_from_0_ep} relies on the Karlin--Macgregor formula \cite{KarlinMcGregor}:
	
	\begin{theorem}
		Suppose $Z_{1}(\cdot),\ldots,Z_{n}(\cdot)$ are i.i.d. random processes
		on a state space $\Si$ with transition probabilities:
		\[
		\ph_{t}(x,\d y) = \p\left(Z(t)=y\vert{Z(0)=x}\right)
		\]
		
		Let $\vec{x}=\left(x_{1},\ldots,x_{n}\right)$ be a vector of starting
		points and $\vec{y}=\left(y_{1},\ldots,y_{n}\right)$ be vector of
		end points. Suppose the random process has the following property:
		\[
		\forall\text{non-identity permutations }\si\in S_{n},\text{ }\text{there are no non-intersecting walks from }\vec{x}\text{ to }\left(y_{\si(1)},\ldots,y_{\si(n)}\right)
		\]
		More precisely, we require $\forall\si\in S_{n}$, $\si\neq Id$
		that:
		\[
		\p\left(\left\{ \vec{Z}(t)=\left(y_{\si(1)},y_{\si(2)},\ldots,y_{\si(n)})\right)\right\} \cap NI_{t}\given{\vec{Z}(0)=\vec{x}}\right)=0
		\]
		Then for any $\vec{x}\in\bZ^{n}$ and $\vec{y}\in\bZ^{n}$, we have:
		\[
		\p\left(\left\{ \vec{Z}(t)=\vec{y}\right\} \cap NI_{t}\given{\vec{Z}(0)=\vec{x}}\right)=\det\left[\ph_{t}(x_{i},y_{j})\right]_{i,j=1}^{n}
		\]
	\end{theorem}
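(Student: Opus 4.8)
The plan is to establish the Karlin--McGregor formula by the continuous, probabilistic analogue of the Lindström--Gessel--Viennot argument: a sign-reversing involution on systems of paths. First I would expand the determinant using the Leibniz formula,
\[
\det\left[\ph_{t}(x_{i},y_{j})\right]_{i,j=1}^{n}=\sum_{\si\in S_{n}}\sgn(\si)\prod_{i=1}^{n}\ph_{t}(x_{i},y_{\si(i)}),
\]
and interpret each summand probabilistically. By independence of the $n$ processes, the product $\prod_{i}\ph_{t}(x_{i},y_{\si(i)})$ is the (un-normalized) weight of the event that a system of independent paths $(Z_{1},\ldots,Z_{n})$ started from $\vec{x}$ satisfies $Z_{i}(t)=y_{\si(i)}$ for every $i$, with \emph{no} constraint on intersections. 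Thus the right-hand side is a signed sum over all path systems, where a system whose endpoints realize the matching $\si$ is counted with weight $\sgn(\si)$.

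Next I would split the path systems into those that are non-intersecting on $[0,t]$ and those that intersect at some time. For a non-intersecting system, the hypothesis of the theorem forces the endpoint matching to be the identity: if $\si\neq\mathrm{Id}$ there are no non-intersecting walks from $\vec{x}$ to $(y_{\si(1)},\ldots,y_{\si(n)})$. Hence the non-intersecting systems contribute exactly $+\,\p(\{\vec{Z}(t)=\vec{y}\}\cap NI_{t}\mid\vec{Z}(0)=\vec{x})$, which is precisely the quantity we want. It remains to show that all the intersecting systems cancel in the signed sum.

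To cancel them I would construct an involution $T$ on the intersecting systems as follows. Given such a system, let $\tau$ be the first time that two coordinates coincide, and break ties by selecting the colliding pair $(a,b)$ with lexicographically smallest indices. Define $T$ by swapping the two paths $Z_{a}$ and $Z_{b}$ after time $\tau$, leaving everything before $\tau$ unchanged. Since the pre-$\tau$ portions are untouched, the first collision time and the colliding pair are preserved, so $T\circ T=\mathrm{Id}$. Because the processes are i.i.d., the strong Markov property applied at the stopping time $\tau$ shows that the post-$\tau$ continuations of the two colliding particles, which share a common position, are exchangeable, so $T$ preserves the weight of the path system. Finally, $T$ composes the endpoint matching with the transposition $(a\,b)$, flipping $\sgn(\si)$. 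Thus the intersecting systems are paired off with equal weight and opposite sign and cancel, leaving only the non-intersecting contribution and yielding the claimed determinant.

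The main obstacle is making this involution rigorous for continuous paths rather than discrete walks: one must check that $\tau$ is a genuine measurable stopping time, that the tie-breaking produces a measurable selection of the colliding pair, and---most importantly---that the tail-swap is measure-preserving. This last point is exactly where the i.i.d. assumption and the strong Markov property at $\tau$ are indispensable. In the Brownian setting relevant here the state space is one-dimensional, so ``collision'' is simply the hitting time of one of the diagonal hyperplanes $\{z_{a}=z_{b}\}$, and these verifications are standard; this is why the theorem is quoted from \cite{KarlinMcGregor} rather than reproved here.
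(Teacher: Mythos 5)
Your proposal cannot be compared line-by-line with the paper's argument for a simple reason: the paper does not prove this theorem at all. It is quoted verbatim from \cite{KarlinMcGregor} and used as a black box in the proof of Lemma \ref{lem:B_NI_from_0_ep}, which in turn feeds into Proposition \ref{prop:Watermelon-dist}. So what you have supplied is a proof where the paper supplies none, and it is the standard one: the Lindstr\"om--Gessel--Viennot / Karlin--McGregor sign-reversing involution. Your outline is correct. The three pillars are all deployed properly: the Leibniz expansion together with independence of the $n$ processes identifies $\det\left[\ph_{t}(x_{i},y_{j})\right]$ as a signed sum over unconstrained path systems; the hypothesis of the theorem is exactly what forces the non-intersecting systems to contribute only through $\si=\mathrm{Id}$, with weight $+\p\left(\{\vec{Z}(t)=\vec{y}\}\cap NI_{t}\given\vec{Z}(0)=\vec{x}\right)$; and the tail-swap at the first collision time $\tau$ is weight-preserving, sign-reversing, and an involution because the pre-$\tau$ data (hence $\tau$ itself and the selected colliding pair) is untouched. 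The caveats you flag at the end are the genuine ones, and I would add one more to the list: for a continuous state space the pointwise quantities $\p(Z(t)=y)$ vanish, so the whole identity must be read in terms of transition densities integrated against test sets (or disjoint intervals around the $y_j$), which is also how the paper's own loose notation $\ph_t(x,\d y)$ should be interpreted, and is the form in which Lemma \ref{lem:B_NI_from_0_ep} actually invokes the theorem. With that reading, and with the strong Markov property justifying the exchangeability of the two post-collision continuations, your argument is a faithful reconstruction of the cited proof.
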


	\begin{proof}[Proof of Lemma \ref{lem:B_NI_from_0_ep}]
		By the Karlin-Macgregor theorem, the probability is given by
		\begin{align*}
			\p\left(\left\{ \vec{X}(t)=\vec{x}\right\} \cap NI_{t}\given{\vec{X}(0)=\vec{0}^{\ep}}\right) & =\det_{i,j=1}^{n}\left[\vp_{t}\left(x_{i}-\ep(j-1)\right)\right]
		\end{align*}
		
		Now expand:
		\[
		\vp_{t}\left(x_{i}-\ep(j-1)\right)=\frac{1}{\sqrt{2\pi t}}\exp\left(-\frac{(x_{i}-(j-1)\ep)^{2}}{2t}\right)=\frac{1}{\sqrt{2\pi t}}\exp\left(-\frac{x_{i}^{2}}{2t}\right)\exp\left(-\frac{(j-1)^{2}\ep^{2}}{2t}\right)\exp\left(\frac{x_{i}(j-1)\ep}{t}\right)
		\]
		
		By the rules for determinants when every row/column is multiplied
		by a constant factor, we have:
		\begin{align*}
			\p\left(\left\{ \vec{X}(t)=\vec{x}\right\} \cap NI_{t}\given{\vec{X}(0)=\vec{0}^{\ep}}\right) & =\det_{i,j=1}^{n}\left[\vp_{t}\left(x_{i}-\ep(j-1)\right)\right]\\
			& =\left(\frac{1}{\sqrt{2\pi t}}\right)^{n}\prod_{i=1}^{n}\exp\left(-\frac{x_{i}^{2}}{2t}\right)\prod_{j=1}^{n}\exp\left(-\frac{(j-1)^{2}\ep^{2}}{2t}\right)\det\left[\exp\left(\frac{x_{i}(j-1)\ep}{t}\right)\right]
		\end{align*}
		
		Since $\lim_{\ep\to0}\exp\left(-\frac{(j-1)^{2}\ep^{2}}{2t}\right)=1$,
		this term will be negligible in the limit. It remains to expand out
		the remaining determinant. By serendipity, this determinant is exactly
		a Vandermonde determinant!
		\begin{align*}
			\det_{i,j=1}^{n}\left[\exp\left(\frac{x_{i}(j-1)\ep}{t}\right)\right] & =\det_{i,j=1}^{n}\left[\exp\left(\frac{x_{i}\ep}{t}\right)^{j-1}\right]\\
			& =V\left(\exp\left(\frac{x_{1}\ep}{t}\right),\ldots,\exp\left(\frac{x_{n}\ep}{t}\right)\right)\\
			& =\prod_{i<j}\left(\exp\left(\frac{x_{i}\ep}{t}\right)-\exp\left(\frac{x_{j}\ep}{t}\right)\right)\\
			& =\prod_{i<j}\left(\frac{\ep}{t}x_{i}-\frac{\ep}{t}x_{j}+O\left(\frac{\ep^{2}}{t^{2}}\right)\right)\\
			& =\left(\frac{\ep}{t}\right)^{\binom{n}{2}}V(x_{1},\ldots,x_{n})+o(\ep^{\binom{n}{2}})
		\end{align*}
		from which the result follows.
	\end{proof}
	
	
	\begin{proof}[Proof of Proposition \ref{prop:Watermelon-dist}] 
		By decomposing the interval $[0,t^\ast]$ into $[0,t]$ and $[t,t^\ast]$ and using stationarity of Brownian motion, we obtain
		\begin{align*}
			\p\left(\vec{W}(t)=\vec{x}\right) & =\lim_{\ep\to0}\p\left(\vec{W}^{\ep}(t)=\vec{x}\right)\\
			& =\lim_{\ep\to0}\frac{\p\left(\left\{ \vec{B}(t)=\vec{x}\right\} \cap NI_{t}\given{\vec{B}(0)=\vec{0}^{\ep}}\right)\p\left(\left\{ \vec{B}(t)=\vec{x}\right\} \cap NI_{t^{\ast}-t}=\vec{x}\given{\vec{B}(0)=\vec{0}^{\ep}}\right)}{\p\left(\left\{ \vec{B}(t^{\ast})=\vec{0}^{\ep}\right\} \cap NI_{t^{\ast}}\given{\vec{B}(0)=\vec{0}^{\ep}}\right)}
		\end{align*}
		We now apply Lemma \ref{lem:B_NI_from_0_ep} to each factor and obtain
		\[
		\p\left(\vec{W}(t)=\vec{x}\right)=\lim_{\ep\to0}\frac{\left(\frac{\ep}{t}\right)^{\binom{n}{2}}V(x_{1},\ldots,x_{n})\prod_{i=1}^{n}\vp_{t}(x_{i})\cdot\left(\frac{\ep}{t^{\ast}-t}\right)^{\binom{n}{2}}V(x_{1},\ldots,x_{n})\prod_{i=1}^{n}\vp_{t^{\ast}-t}(x_{i})}{\left(\frac{\ep}{t^{\ast}}\right)^{\binom{n}{2}}V(0,\ep\ldots,\ep(n-1))\prod_{i=1}^{n}\vp_{t^{\ast}}(\vec{0}^{\ep})}
		\]
		Since $V(0,\ep\ldots,\ep(n-1))=\ep^{\binom{n}{2}}V(0,1,\ldots,n-1)=\ep^{\binom{n}{2}}\prod_{k=0}^{n-1}k!$
		the powers of $\ep$ in numerator and denominator cancel, which allows us to take the $\ep\to0$ limit.
	\end{proof}

	\begin{remark}
		In Proposition \ref{prop:Watermelon-dist} we applied Lemma \ref{lem:B_NI_from_0_ep}
		with $\vec{x}=\vec{0}^{\ep}$, but the Lemma is only proven for fixed
		$\vec{x}$ (not $\vec{x}$ depending on $\ep$). It is a simple exercise
		to adapt the proof of the lemma to compute the asymptotic form of
		$\p\left(\left\{ \vec{X}(t)=\vec{0}^{\ep}\right\} \cap NI_{t}\given{\vec{X}(0)=\vec{0}^{\ep}}\right)$
		.
	\end{remark}
	
	\subsection{Determinantal formula}
	
	\begin{proof}[Proof of Proposition \ref{prop:expectation-determinant}]
		Let $\Xi_{k}$ be the random measure on $B^{k}\subset\bR^{k}$
		which keeps track of subsets of size $k$ from $S$:
		\[
		\Xi_{k}=\sum_{\substack{A\subset S\\
				\abs A=k
			}
		}\sum_{\si\in S^{n}}\de_{\left(X_{\si(1)},\ldots,X_{\si(k)}\right)}.
		\]
		Expanding out the product on the left-hand side we obtain
		\begin{align*}
			\prod_{X\in S}\left(1+\ph(X)\right) & =  \sum_{k=0}^{\infty}\sum_{\substack{A\subset S\\
					\abs A=k
				}
			}\prod_{X\in A}\ph(X)
			=\sum_{k=0}^{\infty}\frac{1}{k!}\intop_{B^{k}}\prod_{i=1}^{k}\ph(x_{i})\ \d\Xi_{k}(\vec{x})
		\end{align*}
		We claim that $\bE[\Xi_k(U)]=\int_U\rho_k(x)\, dx$ for any $U\subseteq\bR^k$. Indeed, this follows immediately for $U=\prod_{j=1}^k U_j$ and any $U_j\subseteq\bR$, and the general case can then be deduced from the monotone class theorem. Thus we obtain
		\begin{align*}
			\bE\ab{\prod_{X\in S}\left(1+\ph(X)\right)} & = \sum_{k=0}^{\infty}\frac{1}{k!}\intop_{B^{k}}\prod_{i=1}^{k}\ph(x_{i})\ \det\rb{K\rb{x_i,x_j}} \d x_1\ldots\d x_k
		\end{align*}
		and the result follows by multiplying the $\phi(x_j)$ inside the determinant.
	\end{proof}

    \subsection{Proof of Proposition \ref{prop:spiked-DPP}}
	\label{app:proof-spiked-DPP}

    Proceeding similarly to the `unspiked' case, i.e. by taking the limit of $\epsilon$-of-room watermelons  (see also \cite{Johansson2001CMP}), the spiked DBM process is a biorthogonal ensemble of the form \eqref{eq:biorthog_ensemble} with $\tilde \psi_j(x) = x^j$ and
    \begin{align}
        \notag
        \tilde\phi_j(x) & = \begin{cases}
            x^j \varphi_t(x)\quad & \text{if } j<n\\
            \varphi_t(x-c) & \text{if } j=n.
        \end{cases}
    \end{align}
    recalling the notation $\varphi_t$ for the Gaussian density of variance $t$.
    By applying row and column operations, we can replace $\phi_j$ by any monic polynomial of degree $j$. We will choose $\phi_j= \wt H_j(\cdot,t)$ for some $t$ to be determined later.
    In order to apply Theorem \ref{thm:bi-orthog-thm}, we need to compute the Gram matrix. As for the unspiked case, $G_{j\ell}=$  whenever whenever $j\ne \ell$ and  $j,\ell<n$. Moreover, for any $j\in [n]$, by Proposition \ref{prop:generating-function},
    \begin{align*}
        G_{Nj} & = \intop_{-\infty}^\infty \phi_t(x-c)\wt H_j(x,t)\d x
         =\intop_{-\infty}^\infty \left(\sum_{k=0}^{\infty}\wt H_k(x,t) \left(\frac{c}{t}\right)^{k}\frac{1}{k!}\right)\wt H_{j}(x,t)\phi_t(x)\d x\\
        & =\sum_{k=0}^{\infty} \frac{1}{k!} \left(\frac{c}{t}\right)^{k} \intop_{-\infty}^\infty  \wt H_k(x,t) \wt H_{j}(x,t)\phi_t(x)\d x\\&=\left(\frac{c}{t}\right)^{j}\frac{1}{j!}\left(t^{j}j!\right)= c^j.
    \end{align*}
    Using the inversion formula for block diagonal matrices, we obtain
    \begin{align*}
G^{-1}=\left[\begin{array}{ccccc}
 &  &  & \vdots\\
 & diag\left(\frac{1}{t^{i}i!}\right) &  & \vdots & \vec{0}\\
 &  &  & \vdots\\
\ldots & \ldots & \ldots & .\\
 & -c^{-n}\left(\frac{c}{t}\right)^{j}\frac{1}{j!} &  &  & \frac{1}{c^{n}}
\end{array}\right]
\end{align*}


It follows that the spiked DBM is a DPP with kernel
\begin{align*}
\wt\spikedkernel_{n,t}(x,y)&=\sum_{i,j=0}^{n}\tilde{\ph}_{i}(x)\tilde{\ps}_{j}(y)\left[\tilde{G}^{-t}\right]_{ij}\\
    &=\sum_{i=0}^{n-1}\tilde{\ph}_{i}(x)\tilde{\ps}_{i}(y)\frac{1}{t^{i}i!}+\frac{1}{c^{n}}\tilde{\ph}_{n}(x)\tilde{\ps}_{n}(y)-\frac{1}{c^{n}}\tilde{\ps}_{n}(y)\sum_{j=0}^{n-1}\left(\frac{c}{t}\right)^{j}\frac{1}{j!}\tilde{\ph}_{i}(x)\\
    &=\sum_{i=0}^{n-1}\frac{\wt H_i(x,t) \wt H_i(y,t)}{t^{i}i!}\vp_{t}(x)+\frac{1}{c^{N}}\wt H_n(y,t)\vp_{t}(x-c)-\frac{1}{c^{N}}p_{t}^{N}(y)\sum_{j=0}^{N-1}\left(\frac{c}{t}\right)^{j}\frac{1}{j!}p_{t}^{j}(x)\vp_{t}(x).
\end{align*}
Proposition \ref{prop:spiked-DPP} now follows by conjugating $\wt\spikedkernel$ with the function $\sqrt{\phi_t}$.

\subsection{Proof of \texorpdfstring{\eqref{eq:third-term-spiked}}{the estimate of the third term}}
\label{sec:proof-third-term}

We have
\begin{align*}
    \sum_{j=0}^{n-1}  c^{j-n} \rb{\frac n\sigma}^{(j-n)/2} \sqrt{\frac n{\sigma}} &  \int_{\gamma-\ep}^\infty \abs{\Psi_n\rb{\frac {x\sqrt n}{\sqrt\sigma}}} \abs{\Psi_j\rb{\frac {x\sqrt n}{\sqrt\sigma}}}\, dx\\
    & \leq \sum_{j=0}^{n-1}  c^{j-n} \rb{\frac n\sigma}^{(j-n)/2} \sqrt{\frac n{\sigma}}  \int_{\gamma-\ep}^\infty \frac {C_\delta}{(nj)^{1/4}} e^{-(n+j)(x/(2\sqrt{\sigma})- 1)^2/2} \, dx\\
    & \leq \sqrt{\frac n{\sigma}}  \sum_{j=0}^{n-1}   \rb{\frac {c^2 n}\sigma}^{(j-n)/2} \frac {C_\delta}{(nj)^{1/4}} 
    \frac1{(n+j)(\gamma-\ep-2\sqrt\sigma)} e^{-(n+j)\frac{\gamma-\ep}{2\sqrt \sigma}/2}\\
 & \leq  n^{-3/4} \frac {C_\delta}{\sqrt{\sigma}\delta}  \sum_{j=0}^{n-1}   \rb{\frac {c^2 n}\sigma}^{(j-n)/2} 
 e^{-(n+j)(1+\frac{\delta}{2\sqrt \sigma})/2}\\
 & =  n^{-3/4} \frac {C_\delta}{\sqrt{\sigma}\delta} e^{-n(1+\frac{\delta}{2\sqrt {\sigma}})/2}  \rb{\frac {c^2 n}{\sigma}}^{-n/2}   \sum_{j=0}^{n-1}   \rb{ \sqrt{\frac {c^2 n}\sigma} e^{-(1+\frac{\delta}{2\sqrt \sigma})/2} }^{j} \\
 & \leq C e^{-n(1+\frac{\delta}{2\sqrt {\sigma}})/2} 
\end{align*}
as claimed.

\subsection{Proof of \texorpdfstring{\eqref{eq:spiked-critical-point}}{the critical point}}
\label{subsec:proof-critical-point}
For $x>\gamma-\ep>2\sqrt \sigma$,
\begin{align*}
    f'(x) &= \frac1{2\sqrt\sigma} S'\rb{\frac x{2\sqrt\sigma}}-\frac{x-c}{\sigma}\\ 
    & =  \frac1{\sqrt\sigma} \rb{\frac x{2\sqrt\sigma}-\sqrt{\rb{\frac x {2\sqrt\sigma}}^2-1}}-\frac{x-c}{\sigma}\\
    & = \frac{2c-x}{2\sigma}-\frac1{2\sigma}\sqrt{x^2-4\sigma}.
\end{align*}
Thus $f'(x) =0$ if and only if $2c-x=\sqrt{x^2-4\sigma}$  iff $4c^2-4cx+x^2=x^2-4\sigma$. Thus, the only critical point is $x_\ast = c+\frac\sigma c$.

    \printbibliography

@article{Withers2000,
  author    = {C. S. Withers},
  title     = {A simple expression for the multivariate Hermite polynomials},
  journal   = {Statistics \& Probability Letters},
  volume    = {47},
  number    = {2},
  pages     = {165--169},
  year      = {2000},
  publisher = {Elsevier}
}

@book{LeGall,
    author ={Le Gall, Jean-Fran{\c c}ois},
    title = {Brownian Motion, Martingales, and Stochastic Calculus},
    publisher = {Springer},
    year = {2016},
    series = {Graduate Text in Mathematics},
volume = {274}
}

@article{PolynomialCombinatorics,
author = {Labelle, Jacques and Yeh, Yeong-nan},
year = {1989},
month = {02},
pages = {},
title = {The Combinatorics of Laguerre, Charlier, and Hermite Polynomials},
volume = {80},
journal = {Studies in Applied Mathematics},
doi = {10.1002/sapm198980125}
}

@article{HermiteCombinatorics,
abstract = {We develop a combinatorial model of the associated Hermite polynomials and their moments, and prove their orthogonality with a sign-reversing involution. We find combinatorial interpretations of the moments as complete matchings, connected complete matchings, oscillating tableaux, and rooted maps and show weight-preserving bijections between these objects. Several identities, linearization formulas, the moment generating function, and a second combinatorial model are also derived.},
author = {Drake, Dan},
copyright = {2008 Elsevier Ltd},
issn = {0195-6698},
journal = {European journal of combinatorics},
language = {eng},
number = {4},
pages = {1005-1021},
publisher = {Elsevier Ltd},
title = {The combinatorics of associated Hermite polynomials},
volume = {30},
year = {2009},
}

@book{durrettPTE,
  author       = {Rick Durrett},
  title        = {Probability: Theory and Examples, 4th Edition},
  publisher    = {Cambridge University Press},
  year         = {2010},
  isbn         = {9780511779398},
  timestamp    = {Fri, 25 Dec 2020 01:09:24 +0100},
  biburl       = {https://dblp.org/rec/books/cu/10/D2010.bib},
  bibsource    = {dblp computer science bibliography, https://dblp.org}
}

@techreport{Sobel-tech-1963,
    author = {Milton Sobel},
    title = {Multivariate Hermite polynomials, Gram--Charlier expansions and Edgeworth expansions},
    institution = {University of Minnesota},
    year = {1963}
}

@book{AGZ, 
place={Cambridge}, 
series={Cambridge Studies in Advanced Mathematics}, 
title={An Introduction to Random Matrices}, 
publisher={Cambridge University Press}, 
author={Anderson, Greg W. and Guionnet, Alice and Zeitouni, Ofer}, 
year={2009}, 
collection={Cambridge Studies in Advanced Mathematics}
}

@book{Szego1975,
author = {Szeg{\"o}, G{\'a}bor},
year = {1975},
title = {Orthogonal polynomials},
volume = 4,
address = {Providence, Rhode Island},
publisher = {American Mathematical Society},
isbn = {0-8218-1023-5}
}

@article{BBP,
  author    = {Baik, Jinho and Ben Arous, G{\'e}rard and P{\'e}ch{\'e}, Sandrine},
  title     = {Phase Transition of the Largest Eigenvalue for Nonnull Complex Sample Covariance Matrices},
  journal   = {The Annals of Probability},
  volume    = {33},
  number    = {5},
  pages     = {1643--1697},
  year      = {2005}
}

@book{griffithsQM,
  author    = {David J. Griffiths},
  title     = {Introduction to Quantum Mechanics},
  publisher = {Pearson Education},
  year      = {2018},
  edition   = {3rd},
  isbn      = {9781107189638}
}

@article{JonesOConnell,
    author = {Liza Jones and Neil O'Connell},
    title = {Weyl Chambers, Symmetric Spaces and Number-Variance Saturation} ,
    journal = {Alea},
    year = {2006},
volume = {2},
pages = {91--118}
}

@article{Dyson,
    author = {F. J. Dyson},
    title = {A Brownian-motion model for the eigenvalues of a random matrix} ,
    journal = {J. Mathematical Physics},
    year = {1962},
volume = {3},
pages = {1191--1198}
}

@article{Biane1994,
    author = {Philippe Biane},
    title = {Quelques propri{\'e}t{\'e}s du mouvement Brownien dans un cone},
    journal = {Stochastic Processes and Applications},
    year = 1994,
    volume= {53},
    issue = {2}
}

@incollection{Borodin-HbRMT,
  author    = {Alexei Borodin},
  title     = {Determinantal Point Processes},
  booktitle = {The Oxford Handbook of Random Matrix Theory},
  editor    = {Gernot Akemann and Jinho Baik and Philippe Di Francesco},
  publisher = {Oxford University Press},
  year      = {2011},
  chapter   = {11},
  pages     = {231--249}
}

@article{McKean,
    author ={Henry P. McKean},
    title = {Fredholm determinants},
    journal = {Central European Journal of Mathematics},
    year = {2011},
    volume= {p},
    number = {2},
    pages = {205 -- 243}
}

@article{CepaLepingle,
author = {E. C{\'e}pa and D. L{\'e}pingle},
title = {Diffusing particles with electrostatic repulsion},
journal = {Probab. Theory Related Fields},
volume = {107},
number = {4}, 
pages =  {429--449},
    year = {1997}
}

@article{MillsRatio1941,
author = {Robert D. Gordon},
title = {{Values of Mills' Ratio of Area to Bounding Ordinate and of the Normal Probability Integral for Large Values of the Argument}},
volume = {12},
journal = {The Annals of Mathematical Statistics},
number = {3},
publisher = {Institute of Mathematical Statistics},
pages = {364 -- 366},
year = {1941}
}

@misc{McKean2012,
  author       = {Henry P. McKean},
  title        = {Lecture communication},
  howpublished = {Course: Probability Limit Theorems 2},
  note         = {New York University},
  year         = {2012}
}

@article{Chiani,
title = {Distribution of the largest eigenvalue for real Wishart and Gaussian random matrices and a simple approximation for the Tracy–Widom distribution},
journal = {Journal of Multivariate Analysis},
volume = {129},
pages = {69-81},
year = {2014},
author = {Marco Chiani}
}

@article{KarlinMcGregor,
    author = {Karlin, S. and McGregor, G.},
    title = {Coincidence probabilities},
    journal = {Pacific J. Math},
    year = {1959},
    volume = {9},
pages = {1141--1164}
}

@inproceedings{
nica2024improving,
title={Improving the Gaussian Approximation in Neural Networks: Para-Gaussians and Edgeworth Expansions},
author={Mihai Nica and Janosch Ortmann},
booktitle={NeurIPS 2024 Workshop on Mathematics of Modern Machine Learning},
year={2024},
url={https://openreview.net/forum?id=92q7WV4od7}
}

@article{DepthDegeneracy,
  author  = {Cameron Jakub and Mihai Nica},
  title   = {Depth Degeneracy in Neural Networks: Vanishing Angles in Fully Connected ReLU Networks on Initialization},
  journal = {Journal of Machine Learning Research},
  year    = {2024},
  volume  = {25},
  number  = {239},
  pages   = {1--45},
  url     = {http://jmlr.org/papers/v25/23-0350.html}
}

@article{Johansson2001CMP,
    author = {Kurt Johansson},
    title = {Universality of the local spacing distribution in certain ensembles of Hermitian Wigner matrices},
    journal = {Communications in Mathematical Physics},
    year = {2001},
    volume = {215},
pages = {683--705}
}

@article{Indritz1961,
author = {Jack Indritz},
title = {An inequality for Hermite polynomials},
journal = {Proceedings of the American Mathematical Society},
year = {1961},
volume = {12},
pages = {981--983}
}

@book{feller1971probability2,
  author    = {Feller, William},
  title     = {An Introduction to Probability Theory and Its Applications, Volume II},
  edition   = {2},
  publisher = {John Wiley \& Sons Inc.},
  address   = {New York},
  year      = {1971},
  pages     = {xxiv+669},
}

@incollection{HolcombVirag2019,
  author       = {Diane Holcomb and B{\'a}lint Vir{\'a}g},
  title        = {A Short Introduction to Operator Limits of Random Matrices},
  booktitle    = {Random Matrices},
  editor       = {Alexei Borodin and Ivan Corwin and Alice Guionnet},
  series       = {IAS/Park City Mathematics Series},
  volume       = {26},
  publisher    = {American Mathematical Society},
  year         = {2019},
  doi          = {10.1090/pcms/026}
}

@article{Ledoux2004,
  author       = {Michel Ledoux},
  title        = {Differential Operators and Spectral Distributions of Invariant Ensembles from the Classical Orthogonal Polynomials. The Continuous Case},
  journal      = {Electronic Journal of Probability},
  volume       = {9},
  year         = {2004},
  pages        = {no.\ 7, 177--208},
  doi          = {10.1214/EJP.v9-191},
  url          = {http://ejp.ejpecp.org/article/view/191},
  issn         = {1083-6489}
}

@article{Krasikov,
    author = {Ilia Krasikov},
    title = {New bounds on the Hermite polynomials},
    year = {2004},
archivePrefix = {arXiv},
       eprint = {math/0401310}
}

@article{Robbins-Stirling,
    author = {Robbins, Herbert},
    title = {A Remark on Stirling's Formula}, 
    volume = {62},
    number = {1},
    pages = {26–29},
    journal = {The American Mathematical Monthly},
    year =  {1955}
}

@article{MollVignat,
    author = {Vignat, Christophe and Moll, Victor H.},
    title = {A probabilistic approach to some binomial identities},
    journal = {Elemente der Mathematik},
    year = {2014},
    volume = {69},
    pages = {1--12}
}

@book{janson,
    author = {Svante Janson},
    title = {Gaussian Hilbert Spaces},
    publisher = {Cambridge University Press},
    year = {1997},
series = {Cambridge Tracts in Mathematics},
volume = {129}
}

@book{Temme,
    author = {N.M. Temme},
    title = {Special Functions. An introduction to the Classical Functions of Mathematical Physics},
    publisher = {Wiley},
    year = {1996},
    address = {New York}
}

@article{Appell,
    author = {Appell, Paul},
    title = {Sur les fonctions hypergéométriques de plusieurs variables, les polynômes d'Hermite et autres fonctions sphériques dans l'hyperespace},
    journal = {Mémorial des sciences mathématiques},
    year = {1925},
    volume = 3
}

@book{AKF,
    author = {P.Appell and J.Kampé de Fériet},
    title = {Fonctions Hypergéométriques et Hypersphériques; Polynômes d’Hermite},
    publisher = {Gauthier-Villars},
    year = {1926},
    address = {Paris}
}

@book{ismael,
    author = {M. Ismail},
    title = {Classical and Quantum Orthogonal Polynomials in One Variable},
    publisher = {Cambridge University Press},
    year = {2005}
}
    
\end{document}